\numberwithin{equation}{section}
\theoremstyle{plain}
\newtheorem{theorem}{Theorem}[section]
\newtheorem{lemma}[theorem]{Lemma}
\newtheorem{proposition}[theorem]{Proposition}
\newtheorem{corollary}[theorem]{Corollary}
\theoremstyle{definition}
\newtheorem{definition}[theorem]{Definition}
\theoremstyle{remark}
\newtheorem{remark}[theorem]{Remark}
\newtheorem{example}[theorem]{Example}
\newcommand{\nc}{\newcommand}
\nc{\md}{\operatorname{-}}
\renewcommand{\AA}{{\mathbb{A}}}
\nc{\CC}{{\mathbb{C}}}
\nc{\DD}{{\mathbb{D}}}
\nc{\LL}{{\mathbb{L}}}
\nc{\RR}{{\mathbb{R}}}
\renewcommand{\P}{{\mathbb{P}}}
\nc{\OO}{{\mathbb{O}}}
\nc{\QQ}{{\mathbb{Q}}}
\nc{\ZZ}{{\mathbb{Z}}}
\nc{\Z}{{\mathbb{Z}}}
\nc{\cA}{{\mathcal{A}}}
\nc{\cB}{{\mathcal{B}}}
\nc{\cC}{{\mathcal{C}}}
\nc{\cD}{{\mathcal{D}}}
\nc{\cE}{{\mathcal{E}}}
\nc{\cF}{{\mathcal{F}}}
\nc{\cG}{{\mathcal{G}}}
\nc{\cH}{{\mathcal{H}}}
\nc{\cI}{{\mathcal{I}}}
\nc{\cJ}{{\mathcal{J}}}
\nc{\cK}{{\mathcal{K}}}
\nc{\cL}{{\mathcal{L}}}
\nc{\cM}{{\mathcal{M}}}
\nc{\cN}{{\mathcal{N}}}
\nc{\cO}{{\mathcal{O}}}
\nc{\cP}{{\mathcal{P}}}
\nc{\cQ}{{\mathcal{Q}}}
\nc{\cR}{{\mathcal{R}}}
\nc{\cS}{{\mathcal{S}}}
\nc{\cT}{{\mathcal{T}}}
\nc{\cU}{{\mathcal{U}}}
\nc{\cV}{{\mathcal{V}}}
\nc{\cW}{{\mathcal{W}}}
\nc{\cX}{{\mathcal{X}}}
\nc{\cY}{{\mathcal{Y}}}
\nc{\cZ}{{\mathcal{Z}}}
\nc{\rc}{{\mathrm{c}}}
\nc{\rd}{{\mathrm{d}}}
\nc{\rf}{{\mathrm{f}}}
\nc{\rh}{{\mathrm{h}}}
\nc{\rs}{{\mathrm{s}}}
\nc{\rch}{{\mathrm{ch}}}
\nc{\rtd}{{\mathrm{td}}}
\nc{\rA}{{\mathrm{A}}}
\nc{\rB}{{\mathrm{B}}}
\nc{\rC}{{\mathrm{C}}}
\nc{\rD}{{\mathrm{D}}}
\nc{\rE}{{\mathrm{E}}}
\nc{\rF}{{\mathrm{F}}}
\nc{\rG}{{\mathrm{G}}}
\nc{\rH}{{\mathrm{H}}}
\nc{\rI}{{\mathrm{I}}}
\nc{\rJ}{{\mathrm{J}}}
\nc{\rK}{{\mathrm{K}}}
\nc{\rL}{{\mathrm{L}}}
\nc{\rM}{{\mathrm{M}}}
\nc{\rN}{{\mathrm{N}}}
\nc{\rO}{{\mathrm{O}}}
\nc{\rP}{{\mathrm{P}}}
\nc{\rQ}{{\mathrm{Q}}}
\nc{\rR}{{\mathrm{R}}}
\nc{\rS}{{\mathrm{S}}}
\nc{\rT}{{\mathrm{T}}}
\nc{\rU}{{\mathrm{U}}}
\nc{\rV}{{\mathrm{V}}}
\nc{\rW}{{\mathrm{W}}}
\nc{\rX}{{\mathrm{X}}}
\nc{\rY}{{\mathrm{Y}}}
\nc{\rZ}{{\mathrm{Z}}}
\nc{\bA}{{\mathbf{A}}}
\nc{\bB}{{\mathbf{B}}}
\nc{\bC}{{\mathbf{C}}}
\nc{\bD}{{\mathbf{D}}}
\nc{\bE}{{\mathbf{E}}}
\nc{\bF}{{\mathbf{F}}}
\nc{\bG}{{\mathbf{G}}}
\nc{\bH}{{\mathbf{H}}}
\nc{\bI}{{\mathbf{I}}}
\nc{\bJ}{{\mathbf{J}}}
\nc{\bK}{{\mathbf{K}}}
\nc{\bL}{{\mathbf{L}}}
\nc{\bM}{{\mathbf{M}}}
\nc{\bN}{{\mathbf{N}}}
\nc{\bO}{{\mathbf{O}}}
\nc{\bP}{{\mathbf{P}}}
\nc{\bQ}{{\mathbf{Q}}}
\nc{\bR}{{\mathbf{R}}}
\nc{\bS}{{\mathbf{S}}}
\nc{\bT}{{\mathbf{T}}}
\nc{\bU}{{\mathbf{U}}}
\nc{\bV}{{\mathbf{V}}}
\nc{\bW}{{\mathbf{W}}}
\nc{\bX}{{\mathbf{X}}}
\nc{\bY}{{\mathbf{Y}}}
\nc{\bZ}{{\mathbf{Z}}}
\nc{\ba}{{\mathbf{a}}}
\nc{\bb}{{\mathbf{b}}}
\nc{\bc}{{\mathbf{c}}}
\nc{\bd}{{\mathbf{d}}}
\nc{\be}{{\mathbf{e}}}
\nc{\bg}{{\mathbf{g}}}
\nc{\bh}{{\mathbf{h}}}
\nc{\bi}{{\mathbf{i}}}
\nc{\bj}{{\mathbf{j}}}
\nc{\bk}{{\mathbf{k}}}
\nc{\bl}{{\mathbf{l}}}
\nc{\bm}{{\mathbf{m}}}
\nc{\bn}{{\mathbf{n}}}
\nc{\bo}{{\mathbf{o}}}
\nc{\bp}{{\mathbf{p}}}
\nc{\bq}{{\mathbf{q}}}
\nc{\br}{{\mathbf{r}}}
\nc{\bs}{{\mathbf{s}}}
\nc{\bt}{{\mathbf{t}}}
\nc{\bu}{{\mathbf{u}}}
\nc{\bv}{{\mathbf{v}}}
\nc{\bw}{{\mathbf{w}}}
\nc{\bx}{{\mathbf{x}}}
\nc{\by}{{\mathbf{y}}}
\nc{\bz}{{\mathbf{z}}}
\nc{\fA}{{\mathfrak{A}}}
\nc{\fB}{{\mathfrak{B}}}
\nc{\fC}{{\mathfrak{C}}}
\nc{\fD}{{\mathfrak{D}}}
\nc{\fE}{{\mathfrak{E}}}
\nc{\fF}{{\mathfrak{F}}}
\nc{\fG}{{\mathfrak{G}}}
\nc{\fH}{{\mathfrak{H}}}
\nc{\fI}{{\mathfrak{I}}}
\nc{\fJ}{{\mathfrak{J}}}
\nc{\fK}{{\mathfrak{K}}}
\nc{\fL}{{\mathfrak{L}}}
\nc{\fM}{{\mathfrak{M}}}
\nc{\fN}{{\mathfrak{N}}}
\nc{\fO}{{\mathfrak{O}}}
\nc{\fP}{{\mathfrak{P}}}
\nc{\fQ}{{\mathfrak{Q}}}
\nc{\fR}{{\mathfrak{R}}}
\nc{\fS}{{\mathfrak{S}}}
\nc{\fT}{{\mathfrak{T}}}
\nc{\fU}{{\mathfrak{U}}}
\nc{\fV}{{\mathfrak{V}}}
\nc{\fW}{{\mathfrak{W}}}
\nc{\fX}{{\mathfrak{X}}}
\nc{\fY}{{\mathfrak{Y}}}
\nc{\fZ}{{\mathfrak{Z}}}
\nc{\fa}{{\mathfrak{a}}}
\nc{\fb}{{\mathfrak{b}}}
\nc{\fc}{{\mathfrak{c}}}
\nc{\fd}{{\mathfrak{d}}}
\nc{\fe}{{\mathfrak{e}}}
\nc{\ff}{{\mathfrak{f}}}
\nc{\fg}{{\mathfrak{g}}}
\nc{\fh}{{\mathfrak{h}}}
\nc{\fj}{{\mathfrak{j}}}
\nc{\fk}{{\mathfrak{k}}}
\nc{\fl}{{\mathfrak{l}}}
\nc{\fm}{{\mathfrak{m}}}
\nc{\fn}{{\mathfrak{n}}}
\nc{\fo}{{\mathfrak{o}}}
\nc{\fp}{{\mathfrak{p}}}
\nc{\fq}{{\mathfrak{q}}}
\nc{\fr}{{\mathfrak{r}}}
\nc{\fs}{{\mathfrak{s}}}
\nc{\ft}{{\mathfrak{t}}}
\nc{\fu}{{\mathfrak{u}}}
\nc{\fv}{{\mathfrak{v}}}
\nc{\fw}{{\mathfrak{w}}}
\nc{\fx}{{\mathfrak{x}}}
\nc{\fy}{{\mathfrak{y}}}
\nc{\fz}{{\mathfrak{z}}}
\nc{\sA}{{\mathsf{A}}}
\nc{\sB}{{\mathsf{B}}}
\nc{\sC}{{\mathsf{C}}}
\nc{\sD}{{\mathsf{D}}}
\nc{\sE}{{\mathsf{E}}}
\nc{\sF}{{\mathsf{F}}}
\nc{\sG}{{\mathsf{G}}}
\nc{\sH}{{\mathsf{H}}}
\nc{\sI}{{\mathsf{I}}}
\nc{\sJ}{{\mathsf{J}}}
\nc{\sK}{{\mathsf{K}}}
\nc{\sL}{{\mathsf{L}}}
\nc{\sM}{{\mathsf{M}}}
\nc{\sN}{{\mathsf{N}}}
\nc{\sO}{{\mathsf{O}}}
\nc{\sP}{{\mathsf{P}}}
\nc{\sQ}{{\mathsf{Q}}}
\nc{\sR}{{\mathsf{R}}}
\nc{\sS}{{\mathsf{S}}}
\nc{\sT}{{\mathsf{T}}}
\nc{\sU}{{\mathsf{U}}}
\nc{\sV}{{\mathsf{V}}}
\nc{\sW}{{\mathsf{W}}}
\nc{\sX}{{\mathsf{X}}}
\nc{\sY}{{\mathsf{Y}}}
\nc{\sZ}{{\mathsf{Z}}}
\nc{\sa}{{\mathsf{a}}}
\nc{\sd}{{\mathsf{d}}}
\nc{\se}{{\mathsf{e}}}
\nc{\sg}{{\mathsf{g}}}
\nc{\sh}{{\mathsf{h}}}
\nc{\si}{{\mathsf{i}}}
\nc{\sj}{{\mathsf{j}}}
\nc{\sk}{{\mathsf{k}}}
\nc{\sm}{{\mathsf{m}}}
\nc{\sn}{{\mathsf{n}}}
\nc{\so}{{\mathsf{o}}}
\nc{\sq}{{\mathsf{q}}}
\nc{\sr}{{\mathsf{r}}}
\nc{\st}{{\mathsf{t}}}
\nc{\su}{{\mathsf{u}}}
\nc{\sv}{{\mathsf{v}}}
\nc{\sw}{{\mathsf{w}}}
\nc{\sx}{{\mathsf{x}}}
\nc{\sy}{{\mathsf{y}}}
\nc{\sz}{{\mathsf{z}}}
\nc{\oA}{{\overline{A}}}
\nc{\oB}{{\overline{B}}}
\nc{\oC}{{\overline{C}}}
\nc{\oD}{{\overline{D}}}
\nc{\oE}{{\overline{E}}}
\nc{\oF}{{\overline{F}}}
\nc{\oG}{{\overline{G}}}
\nc{\oH}{{\overline{H}}}
\nc{\oI}{{\overline{I}}}
\nc{\oJ}{{\overline{J}}}
\nc{\oK}{{\overline{K}}}
\nc{\oL}{{\overline{L}}}
\nc{\oM}{{\overline{M}}}
\nc{\oN}{{\overline{N}}}
\nc{\oO}{{\overline{O}}}
\nc{\oP}{{\overline{P}}}
\nc{\oQ}{{\overline{Q}}}
\nc{\oR}{{\overline{R}}}
\nc{\oS}{{\overline{S}}}
\nc{\oT}{{\overline{T}}}
\nc{\oU}{{\overline{U}}}
\nc{\oV}{{\overline{V}}}
\nc{\oW}{{\overline{W}}}
\nc{\oX}{{\overline{X}}}
\nc{\oY}{{\overline{Y}}}
\nc{\oZ}{{\overline{Z}}}
\nc{\oa}{{\overline{a}}}
\nc{\ob}{{\overline{b}}}
\nc{\oc}{{\overline{c}}}
\nc{\od}{{\overline{d}}}
\nc{\of}{{\overline{f}}}
\nc{\og}{{\overline{g}}}
\nc{\oh}{{\overline{h}}}
\nc{\oi}{{\overline{i}}}
\nc{\oj}{{\overline{j}}}
\nc{\ok}{{\overline{k}}}
\nc{\ol}{{\overline{l}}}
\nc{\om}{{\overline{m}}}
\nc{\on}{{\overline{n}}}
\nc{\oo}{{\overline{o}}}
\nc{\op}{{\overline{p}}}
\nc{\oq}{{\overline{q}}}
\nc{\os}{{\overline{s}}}
\nc{\ot}{{\overline{t}}}
\nc{\ou}{{\overline{u}}}
\nc{\ov}{{\overline{v}}}
\nc{\ow}{{\overline{w}}}
\nc{\ox}{{\overline{x}}}
\nc{\oy}{{\overline{y}}}
\nc{\oz}{{\overline{z}}}
\nc{\tA}{{\tilde{A}}}
\nc{\tB}{{\tilde{B}}}
\nc{\tC}{{\tilde{C}}}
\nc{\tD}{{\tilde{D}}}
\nc{\tE}{{\tilde{E}}}
\nc{\tF}{{\tilde{F}}}
\nc{\tG}{{\tilde{G}}}
\nc{\tH}{{\tilde{H}}}
\nc{\tI}{{\tilde{I}}}
\nc{\tJ}{{\tilde{J}}}
\nc{\tK}{{\tilde{K}}}
\nc{\tL}{{\tilde{L}}}
\nc{\tM}{{\tilde{M}}}
\nc{\tN}{{\tilde{N}}}
\nc{\tO}{{\tilde{O}}}
\nc{\tP}{{\tilde{P}}}
\nc{\tQ}{{\tilde{Q}}}
\nc{\tR}{{\tilde{R}}}
\nc{\tS}{{\tilde{S}}}
\nc{\tT}{{\tilde{T}}}
\nc{\tU}{{\tilde{U}}}
\nc{\tV}{{\tilde{V}}}
\nc{\tW}{{\tilde{W}}}
\nc{\tX}{{\tilde{X}}}
\nc{\tY}{{\tilde{Y}}}
\nc{\tZ}{{\tilde{Z}}}
\nc{\tfD}{{\tilde{\fD}}}
\nc{\tcA}{{\tilde{\cA}}}
\nc{\tcB}{{\tilde{\cB}}}
\nc{\tcC}{{\tilde{\cC}}}
\nc{\tcD}{{\tilde{\cD}}}
\nc{\tcE}{{\tilde{\cE}}}
\nc{\tcF}{{\tilde{\cF}}}
\nc{\tcM}{{\tilde{\cM}}}
\nc{\tcP}{{\tilde{\cP}}}
\nc{\tcT}{{\tilde{\cT}}}
\nc{\tphi}{{\tilde{\varphi}}}
\nc{\ta}{{\tilde{a}}}
\nc{\tb}{{\tilde{b}}}
\nc{\tc}{{\tilde{c}}}
\nc{\td}{{\tilde{d}}}
\nc{\te}{{\tilde{e}}}
\nc{\tf}{{\tilde{f}}}
\nc{\tg}{{\tilde{g}}}
\nc{\ti}{{\tilde{\imath}}}
\nc{\tj}{{\tilde{j}}}
\nc{\tk}{{\tilde{k}}}
\nc{\tl}{{\tilde{l}}}
\nc{\tm}{{\tilde{m}}}
\nc{\tn}{{\tilde{n}}}
\nc{\tp}{{\tilde{p}}}
\nc{\tq}{{\tilde{q}}}
\nc{\tr}{{\tilde{r}}}
\nc{\ts}{{\tilde{s}}}
\nc{\tu}{{\tilde{u}}}
\nc{\tv}{{\tilde{v}}}
\nc{\tw}{{\tilde{w}}}
\nc{\tx}{{\tilde{x}}}
\nc{\ty}{{\tilde{y}}}
\nc{\tz}{{\tilde{z}}}
\nc{\hA}{{\hat{A}}}
\nc{\hB}{{\hat{B}}}
\nc{\hC}{{\hat{C}}}
\nc{\hD}{{\hat{D}}}
\nc{\hE}{{\hat{E}}}
\nc{\hF}{{\hat{F}}}
\nc{\hG}{{\hat{G}}}
\nc{\hH}{{\hat{H}}}
\nc{\hI}{{\hat{I}}}
\nc{\hJ}{{\hat{J}}}
\nc{\hK}{{\hat{K}}}
\nc{\hL}{{\hat{L}}}
\nc{\hM}{{\hat{M}}}
\nc{\hN}{{\hat{N}}}
\nc{\hO}{{\hat{O}}}
\nc{\hP}{{\hat{P}}}
\nc{\hQ}{{\hat{Q}}}
\nc{\hR}{{\hat{R}}}
\nc{\hS}{{\hat{S}}}
\nc{\hT}{{\hat{T}}}
\nc{\hU}{{\hat{U}}}
\nc{\hV}{{\hat{V}}}
\nc{\hW}{{\hat{W}}}
\nc{\hX}{{\widehat{X}}}
\nc{\hY}{{\hat{Y}}}
\nc{\hZ}{{\hat{Z}}}
\nc{\ha}{{\hat{a}}}
\nc{\hb}{{\hat{b}}}
\nc{\hc}{{\hat{c}}}
\nc{\hd}{{\hat{d}}}
\nc{\he}{{\hat{e}}}
\nc{\hg}{{\hat{g}}}
\nc{\hh}{{\hat{h}}}
\nc{\hi}{{\hat{i}}}
\nc{\hj}{{\hat{j}}}
\nc{\hk}{{\hat{k}}}
\nc{\hl}{{\hat{l}}}
\nc{\hm}{{\hat{m}}}
\nc{\hn}{{\hat{n}}}
\nc{\ho}{{\hat{o}}}
\nc{\hp}{{\hat{p}}}
\nc{\hq}{{\hat{q}}}
\nc{\hr}{{\hat{r}}}
\nc{\hs}{{\hat{s}}}
\nc{\hu}{{\hat{u}}}
\nc{\hv}{{\hat{v}}}
\nc{\hw}{{\hat{w}}}
\nc{\hx}{{\hat{x}}}
\nc{\hy}{{\hat{y}}}
\nc{\hz}{{\hat{z}}}
\nc{\hcC}{{\widehat{\cC}}}
\nc{\hcT}{{\widehat{\cT}}}
\nc{\eps}{\upepsilon}
\nc{\lan}{\big\langle}
\nc{\ran}{\big\rangle}
\nc{\kk}{{\Bbbk}}
\nc{\io}{\upiota}
\nc{\Kr}{\mathsf{Kr}}
\nc{\cKr}{\mathcal{K}\!\mathit{r}}
\nc{\Dm}{\bD^{-}}
\nc{\Db}{\bD^{\mathrm{b}}}
\nc{\Dbc}{\bD^{\mathrm{b}}_{\mathrm{c}}}
\nc{\Dp}{\bD^{\mathrm{perf}}}
\nc{\Dperf}{\bD^{\mathrm{perf}}}
\nc{\Dqc}{\bD_{\mathrm{qc}}}
\nc{\Du}{\bD}
\nc{\Dsing}{\bD^{\mathrm{sg}}}
\nc{\Dg}{\bD^{\mathrm{sg}}}
\def\ol{\overline}
\newcommand{\hf}{{\mathrm{hf}}}
\newcommand{\sing}{{\mathrm{sg}}}
\newcommand{\opp}{\mathrm{op}}
\nc{\Rn}{\rR_{\mathrm{node}}}
\nc{\Cn}{\cC_{\mathrm{node}}}
\nc{\Dfd}[1]{\bD_{\mathrm{fd}}(#1)}
\def\bw#1#2{\textstyle{\bigwedge\hskip-0.9mm^{#1}}\hskip0.2mm{#2}}
\nc{\xrightiso}[1]{ \xrightarrow[{\ \raisebox{0.5ex}[0ex][0ex]{$\sim$}\ }]{#1} }
\nc{\thick}{\mathbf{thick}}
\DeclareMathOperator{\Hom}{\mathrm{Hom}}
\DeclareMathOperator{\Ext}{\mathrm{Ext}}
\DeclareMathOperator{\cExt}{\mathcal{E}\!\mathit{xt}}
\DeclareMathOperator{\RHom}{\mathrm{RHom}}
\DeclareMathOperator{\cRHom}{\mathrm{R}\mathcal{H}\mathit{om}}
\DeclareMathOperator{\Spec}{\mathrm{Spec}}
\DeclareMathOperator{\Bl}{\mathrm{Bl}}
\DeclareMathOperator{\Pic}{\mathrm{Pic}}
\DeclareMathOperator{\Cl}{\mathrm{Cl}}
\DeclareMathOperator{\Br}{\mathrm{Br}}
\DeclareMathOperator{\Ker}{\mathrm{Ker}}
\DeclareMathOperator{\Ima}{\mathrm{Im}}
\DeclareMathOperator{\Cone}{\mathrm{Cone}}
\DeclareMathOperator{\pr}{\mathrm{pr}}
\DeclareMathOperator{\Gr}{\mathrm{Gr}}
\DeclareMathOperator{\Fl}{\mathrm{Fl}}
\DeclareMathOperator{\id}{\mathrm{id}}
\DeclareMathOperator{\rank}{\mathrm{rk}}
\DeclareMathOperator{\colim}{\mathrm{colim}}
\DeclareMathOperator{\hocolim}{\mathrm{hocolim}}
\DeclareMathOperator{\holim}{\mathrm{holim}}
\def\Pinfty#1{\P^{\infty,{#1}}}
\newenvironment{renumerate}{\begin{enumerate}[label={\textup{(\roman*)}}]}{\end{enumerate}}
\newenvironment{aenumerate}{\begin{enumerate}[label={\textup{(\alph*)}}]}{\end{enumerate}}
\DeclareRobustCommand\longhookrightarrow
\DeclareRobustCommand\longtwoheadrightarrow
     \newcommand{\lra}{\longrightarrow}
\newcommand{\supth}[1]{\ensuremath{#1^{\mathrm{th}}}}
\title{Categorical absorptions of singularities and degenerations}
\author{Alexander Kuznetsov}
\address{Algebraic Geometry Section, Steklov Mathematical Institute of Russian Academy of Sciences, 8 Gubkin str., Moscow 119991, Russia \\
  Laboratory of Algebraic Geometry, HSE, 6 Usacheva Str., Moscow 119048, Russia}
\email{akuznet@mi-ras.ru}
\author{Evgeny Shinder}
\address{School of Mathematics and Statistics, University of Sheffield, Hounsfield Road, S3 7RH, UK \\
Hausdorff Center for Mathematics at the University of Bonn, Endenicher Allee 60, 53115 Bonn, Germany}
\email{eugene.shinder@gmail.com}
\begin{document}

%%%%%%%%%%%%%%%%%%%%%%%%%%%%%%%
% Title page
%%%%%%%%%%%%%%%%%%%%%%%%%%%%%%%

\maketitle

\begin{prelims}

\DisplayAbstractInEnglish

\bigskip

\DisplayKeyWords

\medskip

\DisplayMSCclass

\end{prelims}

%%%%%%%%%%%%%%%%%%%%%
% Table of Contents
%%%%%%%%%%%%%%%%%%%%%

\newpage

\setcounter{tocdepth}{1}

\tableofcontents

%%%%%%%%%%%%%%%%%%%%%
% Content begins here
%%%%%%%%%%%%%%%%%%%%%

\section{Introduction}

Resolution of singularities is a very important instrument in algebraic geometry, that allows one to replace complicated geometry of singular schemes by much more tractable geometry of smooth varieties.  Its categorical version -- categorical resolution of singularities -- has been defined in~\cite{K08,KL}.

Let~$X$ be a singular proper scheme over a field~$\kk$.  One associates with it the pair~$(\Dp(X),\Db(X))$ of triangulated categories, where~$\Dp(X)$ is the category of perfect complexes on~$X$ (this category is proper but not smooth) and~$\Db(X)$ is the bounded derived category of coherent sheaves on~$X$ (this category is smooth but not proper).  Categorical resolution allows one to replace this pair with a single \emph{smooth and proper} triangulated category~$\cD$.

Such a category~$\cD$ is ``larger'' than~$\Db(X)$ and~$\Dp(X)$ in the sense that~$\Db(X)$ is (expected to be) a localization of~$\cD$ and~$\Dp(X)$ is a subcategory of~$\cD$.  In this paper we look at the problem from a different angle suggesting to replace (when possible) the categories~$\Db(X)$ and~$\Dp(X)$ by a ``smaller'' smooth and proper triangulated category in the following sense.

\begin{definition}
\label{def:intro-absorption}
We say that a triangulated subcategory~$\cP \subset \Db(X)$ \emph{absorbs singularities of~$X$} if~$\cP$ is admissible in~$\Db(X)$ and both orthogonals~$\cP^\perp$ and~${}^\perp\cP$ are smooth and proper.
\end{definition}

Note that since we assume $\cP$ to be admissible, we have an equivalence~$\cP^\perp \simeq {}^\perp \cP$ induced by the mutation functors, so 
in the definition of absorption, it suffices to assume that either of the orthogonals is smooth and proper.  Furthermore, if~$\cP$ absorbs singularities of a Gorenstein scheme~$X$, it is easy to check (see Lemma~\ref{lemma:absorption-hocolim}) that the smooth and proper category~$\cD \coloneqq {}^\perp\cP$ (which is equivalent to~$\cP^\perp$) is contained in~$\Dp(X)$ and admissible in both~$\Dp(X)$ and~$\Db(X)$; in this sense the operation replacing~$\Db(X)$ and~$\Dp(X)$ by~$\cD$ is ``opposite'' to the categorical resolution operation.

There is a trivial example~$\cP = \Db(X)$, so that~$\cD = 0$, which of course is not interesting.  Therefore, the idea is to make~$\cP$ as small as possible, so that~$\cD$ maximally reflects the geometry of~$X$.

The following geometric example of absorption illustrates the idea.  Let $\upsigma \colon X \to Y$ be an ``antiresolution'': a proper morphism, where~$X$ is singular, $Y$ is smooth, and both~$X$ and~$Y$ are proper, and assume~\mbox{$\upsigma_*(\cO_X) {}\cong{} \cO_Y$}, where~$\upsigma_* \colon \Db(X) \to \Db(Y)$ is the derived pushforward functor.  Let
\begin{equation*}
\cP \coloneqq \Ker(\upsigma_*) \subset \Db(X).
\end{equation*}
We claim that~$\cP$ absorbs the singularities of~$X$.  Indeed, the two adjoint functors~$\upsigma^* \colon \Db(Y) \to \Db(X)$ and~$\upsigma^! \colon \Db(Y) \to \Db(X)$ of~$\upsigma_*$ are fully faithful and provide semiorthogonal decompositions
\begin{equation*}
\Db(X) = \langle \cP, \upsigma^*(\Db(Y)) \rangle = \langle \upsigma^!(\Db(Y)), \cP \rangle;
\end{equation*}
in particular, the category~$\cP^\perp \simeq \Db(Y) \simeq {}^\perp\cP$ is smooth and proper.

\begin{example}
\label{ex:geometric}
Let us further consider two special cases of this situation:
\begin{aenumerate}
\item 
\label{ex:blowup}
$Y$ is a smooth and proper surface, $Z = \Spec(\kk[\eps_0] / \eps_0^2) \subset Y$, $X = \Bl_Z(Y)$, 
and~$\upsigma \colon X \to Y$ is the blowup morphism.
Let~$E \cong \P^1 \subset X$ be the exceptional locus taken with the reduced scheme structure
(notice that the exceptional divisor~$2E$ is Cartier, while~$E$ is only a Weil divisor).
\item 
\label{ex:gluing}
$Y = \P^1$, $X = Y \cup_{y_0} E$, where~$E \cong \P^1$ intersects~$Y$ transversely at a point~$y_0$, 
and~$\upsigma \colon X \to Y$ is the contraction of~$E$ to~$\{y_0\}$,
so that~$E$ is again, in a sense, the exceptional locus of~$\upsigma$.
\end{aenumerate}
\end{example} 

A simple computation shows that in both cases~$\cP = \langle \cO_E(-1) \rangle$, 
but while in case~\ref{ex:blowup} we have
\begin{equation}
\label{eq:theta-1-eps-0}
\cP \simeq \Db\left(\kk[\eps_0]/\eps_0^2\right) \simeq \Dp(\kk[\uptheta_1])\hphantom{,}
\end{equation}
by the blowup formula (note that~$Z$ is a local complete intersection),
in case~\ref{ex:gluing} we have
\begin{equation}
\label{eq:theta-2-eps-1}
\cP \simeq \Db\left(\kk[\eps_1]/\eps_1^2\right) \simeq \Dp(\kk[\uptheta_2]),
\end{equation}
where~$\eps_p$ is a variable of degree~$-p$, $\uptheta_q$ is a variable of degree~$q$, and the second equivalences in~\eqref{eq:theta-1-eps-0} and~\eqref{eq:theta-2-eps-1} are given by the Koszul duality (see Proposition~\ref{prop:sap-sbq} for details).  The difference in the structure of~$\cP$ in these two cases leads to a completely different behaviour with respect to smoothings of~$X$, where we use the following.

\begin{definition}
\label{def:intro-smoothing}
We say that a flat projective morphism~$f \colon \cX \to B$
to a smooth pointed curve $(B,o)$ is a \emph{smoothing} of~$X$ if:
\begin{itemize}
\item
the central {scheme fiber of~$f$} is isomorphic to~$X$; \textit{i.e.}, $\cX_o \cong X$;
\item
the morphism $f$ is smooth away from the central fiber; and
\item
the total space~$\cX$ is smooth.
\end{itemize}
\end{definition}

In the case of Example~\ref{ex:geometric}\ref{ex:blowup}, a smoothing of~$X$ can be obtained as follows.  Let~$\cY \to B$ be any smooth deformation of~$Y$ (\textit{e.g.}, $\cY = Y \times B$), let~$\cZ \to B$ be a double covering ramified over the central point~$o \in B$ (so that the central fiber~$\cZ_o$ is isomorphic to~$Z$) with smooth~$\cZ$, and let~$\cZ \hookrightarrow \cY$ be a closed embedding over~$B$ (coinciding with the embedding~$Z \hookrightarrow Y$ over the point~$o$).  Then~$\cX \coloneqq \Bl_\cZ(\cY)$ is a smoothing of~$X$.  The blowup formula gives in this case a $B$-linear semiorthogonal decomposition
\begin{align*}
\Db(\cX) &= \langle \Db(\cZ), \Db(\cY) \rangle,
\shortintertext{and for each point~$b \ne o$ in~$B$ we obtain by base change (see~\cite{K11}) a semiorthogonal decomposition}
\Db(\cX_b) &= \langle \Db(\cZ_b), \Db(\cY_b) \rangle.
\end{align*}
So, in this case we see that both components~$\cP = \Db(Z)$ and~${}^\perp\cP = \Db(Y)$ of~$\Db(X)$ deform simultaneously to a pair of smooth and proper categories; thus, in a sense, the smoothing of~$X$ is achieved through a smoothing of the singular component~$\cP$.

In the case of Example~\ref{ex:geometric}\ref{ex:gluing}. the situation is quite different.  In this case~$X$ is a singular conic, so any smoothing of~$X$ is a conic bundle~$\cX \to B$ with~$X = \cX_o$ being the only singular fiber.  Note that~$E \subset \cX$ is a smooth rational curve (one of the components of the central fiber), and a standard computation shows that its normal bundle is~$\cO_E(-1)$.  Therefore, the curve~$E {} \subset \cX$ can be contracted; \textit{i.e.}, there is a smooth surface~$\cY$ with a point~$y_0 \in \cY$ such that~$\cX = \Bl_{y_0}(\cY)$ with~$E$ being the exceptional divisor.  Then~$\cY \to B$ is a smooth $\P^1$-fibration (with~$\cY_o \cong Y$), and the blowup formula gives a $B$-linear semiorthogonal decomposition
\begin{equation}
\label{eq:ex-gluing}
\Db(\cX) = \langle \cO_E(-1), \Db(\cY) \rangle.
\end{equation}
Its first component is generated by the exceptional sheaf~$\cO_E(-1)$ supported in the central fiber of the conic bundle; therefore, it disappears after base change to any point~$b \ne o$ in~$B$, and we obtain an equivalence
\begin{equation*}
\Db(\cX_b) \simeq \Db(\cY_b).
\end{equation*}
So, in this case the component~$\cP$ does not deform away from the central fiber; thus, a smoothing of~$X$ is achieved by dropping ``the singular part''~$\cP$ of~$\Db(X)$ and deforming its ``smooth part''~${}^\perp\cP {} \simeq \Db(\cY_o)$.  In Definition~\ref{def:intro-deformation-absorption} we axiomatize this remarkable situation.

First, we recall some notation.  For a set of objects~$S$ in a triangulated category~$\cT$, we denote by~$\langle S \rangle \subset \cT$ the smallest triangulated and by~$\thick(S) \subset \cT$ the smallest thick (\textit{i.e.}, triangulated and closed under direct summands) subcategory of~$\cT$ containing~$S$.

\begin{definition}
\label{def:intro-deformation-absorption}
Let~$f \colon \cX \to B$ be a smoothing of~$X$, and let~$\io \colon X \hookrightarrow \cX$ be the embedding of the central fiber.  We say that a triangulated subcategory~$\cP \subset \Db(X)$ \emph{provides a deformation absorption of singularities} (with respect to the smoothing~$\cX$\,) if~$\cP$ absorbs singularities of~$X$ and the triangulated subcategory~$\langle \io_*\cP \rangle \subset \Db(\cX)$ generated in~$\Db(\cX)$ by the pushforwards of objects of~$\cP$ is admissible.

Similarly, we say that~$\cP \subset \Db(X)$ \emph{provides a thick deformation absorption} (with respect to the smoothing~$\cX$) if~$\cP$ absorbs singularities of~$X$ and the triangulated subcategory~$\thick(\io_*\cP) \subset \Db(\cX)$ is admissible.

We say that~$\cP$ \emph{provides a universal (thick) deformation absorption of singularities of~$X$} if the corresponding property holds for any smoothing of~$X$.
\end{definition}

If~$\cP$ provides a deformation absorption, so that~$\langle \io_*\cP \rangle \subset \Db(\cX)$ is admissible, then~$\thick(\io_*\cP) = \langle \io_*\cP \rangle$, so we will uniformly use the notation~$\thick(\io_*\cP)$ for the resulting admissible subcategory of~$\Db(\cX)$.

To clarify the definition, note that the category~$\thick(\io_*\cP)$ is contained in~$\Db_X(\cX)$, the subcategory of~$\Db(\cX)$ formed by objects with set-theoretical support on~$X$.  Moreover, by~\cite[Theorem~1.1]{KS:hfd} the category~$\thick(\io_*\cP)$ is always admissible in~$\Db_X(\cX)$.  Thus, the deformation absorption property amounts to the assumption that~$\thick(\io_*\cP)$ keeps being admissible in the larger category~$\Db(\cX)$.

Using the base change technique, \textit{cf.}~\cite{K11}, it is easy to see that Definition~\ref{def:intro-deformation-absorption} leads to the following result.

\begin{theorem}
\label{thm:intro-deformation-absorption-to-smoothing}
Let~$X$ be a projective variety, and let~$\cP \subset \Db(X)$ provide a \textup(thick\textup) deformation absorption of singularities of\, $X$ with respect to a smoothing~$f \colon \cX \to B$.  Then the subcategory
\begin{equation*}
\cD \coloneqq {}^\perp (\thick(\io_*\cP)) \subset \Db(\cX)
\end{equation*}
is $B$-linear, and there is a $B$-linear semiorthogonal decomposition
\begin{equation*}
\Db(\cX) = \langle \thick(\io_*\cP), \cD \rangle.
\end{equation*}
Moreover, the central fiber~$\cD_o$ of\,~$\cD$ 
can be described as
\begin{equation}
\cD_o \simeq {}^\perp \cP \subset \Db(X),
\end{equation}
and if\, $b \ne o$, then~$\cD_b \simeq \Db(\cX_b)$.  In particular, $\cD$ is smooth and proper over~$B$.
\end{theorem}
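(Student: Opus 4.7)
The plan is to establish the $B$-linear semiorthogonal decomposition first and then compute its fibers via base change, proceeding in four steps.

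First, I would verify that $\thick(\io_*\cP)$ is $B$-linear, i.e.\ closed under the $\Dperf(B)$-action given by $(-) \otimes f^*(-)$. For $A \in \cP$ and $\cL \in \Dperf(B)$, the projection formula yields
$$\io_*(A) \otimes f^*\cL \simeq \io_*\bigl(A \otimes (f\io)^*\cL\bigr).$$
Since $f\io \colon X \to B$ factors through the closed point $o$, the object $(f\io)^*\cL$ is a finite complex of trivial bundles on $X$, so the right-hand side lies in $\thick(\io_*\cP)$. The admissibility hypothesis then yields the $B$-linear semiorthogonal decomposition $\Db(\cX) = \langle \thick(\io_*\cP), \cD\rangle$, and the orthogonal $\cD$ is $B$-linear by the standard duality argument: for $F\in\cD$, $G\in\thick(\io_*\cP)$ and $\cL\in\Dperf(B)$, one has $\Hom(G, F \otimes f^*\cL) \simeq \Hom(G \otimes f^*\cL^\vee, F) = 0$.

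Second, I would invoke the base change theorem of~\cite{K11}: the $B$-linear semiorthogonal decomposition of $\Db(\cX)$ induces for every $b\in B$ an SOD $\Db(\cX_b) = \langle \thick(\io_*\cP)_b, \cD_b\rangle$. For $b \ne o$ the fiber $\cX_b$ is disjoint from $X = \supp(\io_*\cP)$, hence each object of $\io_*\cP$ restricts to zero on $\cX_b$; this forces $\thick(\io_*\cP)_b = 0$ and $\cD_b \simeq \Db(\cX_b)$, which is automatically smooth and proper.

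Third, for the central fiber I would use that $X \subset \cX$ is a principal Cartier divisor with trivial normal bundle, since $X = f^{-1}(o)$ for $f$ smooth over a curve $B$. The Koszul computation then gives $\io^*\io_*(A) \simeq A \oplus A[1]$ for $A \in \cP$, so $\io^*$ maps $\io_*\cP$ into $\cP$ and recovers every $A \in \cP$ as a direct summand. Hence the fiber $\thick(\io_*\cP)_o$ equals $\cP$, the induced decomposition reads $\Db(X) = \langle \cP, \cD_o\rangle$, and therefore $\cD_o \simeq {}^\perp\cP$.

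Finally, smoothness and properness of $\cD$ over $B$ follow from the fact that $\cX$ is smooth and projective over $B$, so that $\Db(\cX)$ is smooth and proper over $\Db(B)$, and both properties descend to any admissible $B$-linear subcategory; one can alternatively check this fiberwise using the descriptions above together with the hypothesis that ${}^\perp\cP$ is smooth and proper. The main obstacle I anticipate is the precise interpretation of ``fiber of a $B$-linear subcategory'' and the identification $\thick(\io_*\cP)_o \simeq \cP$: although conceptually transparent, this requires a careful application of the base change formalism of~\cite{K11}, complicated by the failure of $\io_*$ to be fully faithful and the doubling effect of $\io^*\io_*$.
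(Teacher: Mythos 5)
Your overall outline matches the paper's proof, but there is a genuine error in your treatment of the central fiber. You claim that ``the Koszul computation gives $\io^*\io_*(A) \simeq A \oplus A[1]$ for $A \in \cP$.'' This splitting is false in general. What the Cartier divisor embedding $\io \colon X \hookrightarrow \cX$ with trivial normal bundle actually gives is only the distinguished triangle
\begin{equation*}
A[1] \to \io^*\io_*(A) \to A \xrightarrow{\ \delta\ } A[2],
\end{equation*}
and the connecting morphism $\delta$ is typically nonzero. In fact, for every non-perfect $A$ — and $\cP$ must contain non-perfect objects whenever $X$ is singular — one has $\delta \ne 0$: otherwise $A$ would be a direct summand of the perfect complex $\io^*\io_*(A)$ (using smoothness of $\cX$) and hence perfect. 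The paper exploits precisely this in Lemma~\ref{lem:Pinfty-deform}: for a $\Pinfty{2}$-object $\rP$ the map $\delta$ equals $\uptheta \ne 0$ and $\io^*\io_*\rP$ is the non-split canonical self-extension $\rM$, not $\rP \oplus \rP[1]$; for a $\Pinfty{1}$-object $\delta = \uptheta^2 \ne 0$. (A further slip: you justify the triviality of $\cN_{X/\cX}$ by saying that $f$ is smooth over $B$, but by the definition of smoothing $f$ is not smooth at $o$; the normal bundle is trivial simply because $X$ is the scheme fiber $f^{-1}(o)$ of a flat morphism to a smooth curve.)

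Because of this, your step ``hence the fiber $\thick(\io_*\cP)_o$ equals $\cP$'' is unproved, and the shortcut you hoped for does not exist. The paper establishes the equality as two inclusions. The inclusion $\cP \subset \thick(\io_*\cP)_o$ is not a splitting argument but a formal consequence of the base-change machinery of~\cite{K11} (Corollary~5.7 there). For the reverse inclusion the paper uses the (unsplit) triangle to conclude that $\io^*$ maps $\thick(\io_*\cP)$ into $\cP$ (triangle closure plus idempotent completeness of $\cP$), so the ``perfect part'' of the fiber lies in $\cP$, and then appeals to Lemma~\ref{lemma:absorption-hocolim}, which gives that $\cP$ is the right orthogonal of a category of perfect complexes and therefore closed under the homotopy colimits used to define the fiber. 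The remaining parts of your proposal are sound: your $B$-linearity argument via the projection formula is a valid alternative to the paper's observation that categories supported set-theoretically on the central fiber are $B$-linear; the $b \ne o$ case is handled identically; and for the final smoothness and properness over $B$ the paper cites~\cite[Theorem~2.10]{K21}, whereas your claim that these properties descend automatically to any admissible $B$-linear subcategory needs its own reference or should be replaced by the fiberwise argument you sketch.
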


\begin{remark}
\label{rem:contraction}
In the situation of Example~\ref{ex:geometric}\ref{ex:gluing}, if a smoothing of~$X$ is given by a conic bundle~$\cX/B$ obtained from a $\P^1$-bundle~$\cY/B$ as~$\cX = \Bl_{y_0}(\cY)$ with exceptional divisor~$E$, then the object~$\io_*\cO_E(-1)$ is exceptional on $\cX$, so the subcategory it generates in~$\Db(\cX)$ is admissible.  This means that the subcategory~\mbox{$\cP = \langle \cO_E(-1) \rangle \subset \Db(X)$} is an example of a subcategory providing a universal deformation absorption of singularities.  Furthermore, in this case the semiorthogonal decomposition~\eqref{eq:ex-gluing} implies that for the subcategory~$\cD \subset \Db(\cX)$ defined in Theorem~\ref{thm:intro-deformation-absorption-to-smoothing}, we have~$\cD \simeq \Db(\cY)$.
\end{remark}

Next, we observe that the deformation absorption phenomenon of Example~\ref{ex:geometric}\ref{ex:gluing} is not specific to the concrete geometric situation, but rather a consequence of the equivalence~\eqref{eq:theta-2-eps-1}.  To state this result we introduce the following definition.

\begin{definition}
\label{def:pinfty-geometric}
Let~$q \ge 1$.  An object~$\rP \in \Db(X)$ is called a~\emph{$\Pinfty{q}$-object} if
\begin{equation*}
\Ext^\bullet(\rP,\rP) \cong \kk[\uptheta], 
\end{equation*}
where~$\deg(\uptheta) = q$.
When we do not want to specify the parameter~$q$, we just say that~$\rP$ is a~$\P^\infty$-object.
\end{definition}

Since~$\Ext^\bullet(\rP, \rP)$ is infinite-dimensional, $\P^\infty$-objects can only exist on singular varieties.  In the body of the paper we define~$\Pinfty{q}$-objects in the general situation of arbitrary triangulated categories, and then the definition becomes a bit more technical (see Definition~\ref{def:pinfty}), but in the geometric situation it is equivalent to the one stated above (see Remark~\ref{rem:pinfty}).  Our main emphasis is on~$\Pinfty{1}$- and~$\Pinfty{2}$-objects because, as we will see, they are geometrically meaningful. Moreover, we prove that~$\Pinfty{q}$-objects with~$q \ne 1, 2$ do not exist on smoothable varieties; see~Corollary~\ref{cor:no-pinfty-big}.

As~\eqref{eq:theta-2-eps-1} shows, the situation of Example~\ref{ex:geometric}\ref{ex:gluing} corresponds to a~$\Pinfty{2}$-object.
In this case we have the following result, which we formulate in a slightly more general situation.

\begin{theorem}
\label{thm:intro-pinfty2}
Let~$X$ be a projective variety.
Let~$(\rP_1,\dots,\rP_r)$ be a semiorthogonal collection of\,~$\Pinfty{2}$-objects in~$\Db(X)$ such that the subcategory
\begin{equation*}
\cP \coloneqq \langle \rP_1, \dots, \rP_r \rangle
\end{equation*}
absorbs singularities of\,~$X$.  Then for any smoothing~$\cX \to B$ of\,~$X$ the collection~$\io_*\rP_1,\dots,\io_*\rP_r$ in~$\Db(\cX)$ is exceptional.  In particular, $\cP$ provides a universal deformation absorption of singularities of\, $X$.
\end{theorem}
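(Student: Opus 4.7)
The plan is to show that $(\io_*\rP_1, \dots, \io_*\rP_r)$ is an exceptional collection in $\Db(\cX)$, which immediately implies that $\thick(\io_*\rP_1, \dots, \io_*\rP_r) \subset \Db(\cX)$ is admissible and hence that $\cP$ provides a universal (thick) deformation absorption.

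First I would reduce via adjunction to a computation on $X$. Since $f\colon \cX \to B$ is a smoothing over a smooth curve, $X = f^{-1}(o)$ is an effective Cartier divisor on $\cX$ whose conormal bundle $\cN^*_{X/\cX} \simeq f^*\cN^*_{o/B}|_X$ is trivial; Grothendieck duality then gives $\io^!(-) \simeq \io^*(-)[-1]$ and hence
\[
\RHom_\cX(\io_*\rP_i, \io_*\rP_j) \simeq \RHom_X(\rP_i, \io^*\io_*\rP_j)[-1].
\]

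The crux is to identify $\io^*\io_*\rP_j$. I would aim to establish a natural exact triangle
\[
\rP_j[1] \to \io^*\io_*\rP_j \to \rP_j \xrightarrow{\uptheta_j} \rP_j[2]
\]
in $\Db(X)$, with connecting map the $\Pinfty{2}$-generator $\uptheta_j \in \Ext^2_X(\rP_j, \rP_j)$. The existence of such a triangle for \emph{some} class $\kappa(F) \in \Ext^2_X(F,F)$ is a general property of Cartier divisors with trivial conormal bundle: for a perfect $F$ one has $\kappa(F) = 0$ and recovers the familiar formula $\io^*\io_*F \simeq F \oplus F[1]$. For the non-perfect $\Pinfty{2}$-object $\rP_j$ the class $\kappa(\rP_j)$ lies in the one-dimensional space $\Ext^2_X(\rP_j, \rP_j) = \kk\cdot\uptheta_j$, and it must be nonzero: otherwise the self-pullback would split and $\RHom_\cX(\io_*\rP_j, \io_*\rP_j)$ would inherit the infinite-dimensionality of $\RHom_X(\rP_j, \rP_j) = \kk[\uptheta_j]$, contradicting the fact that $\io_*\rP_j$ is a perfect complex on the smooth and proper variety $\cX$. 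Identifying $\kappa(\rP_j)$ with $\uptheta_j$ (after rescaling) is the main obstacle; it relates the intrinsic $\Pinfty{2}$-structure to the deformation-theoretic obstruction for extending $\rP_j$ infinitesimally off $X$, and should follow from the structural definition of $\Pinfty{q}$-objects developed in the body of the paper.

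Granting the triangle, applying $\RHom_X(\rP_i, -)$ gives an exact triangle
\[
\RHom_X(\rP_i, \rP_j)[1] \to \RHom_X(\rP_i, \io^*\io_*\rP_j) \to \RHom_X(\rP_i, \rP_j) \xrightarrow{\uptheta_j\cdot} \RHom_X(\rP_i, \rP_j)[2].
\]
For $i > j$, semiorthogonality of $(\rP_1, \dots, \rP_r)$ forces $\RHom_X(\rP_i, \rP_j) = 0$, whence $\RHom_\cX(\io_*\rP_i, \io_*\rP_j) = 0$. For $i = j$, we have $\RHom_X(\rP_i, \rP_i) = \kk[\uptheta_i]$ with $\deg \uptheta_i = 2$, and multiplication by $\uptheta_i$ on this complex is injective with cokernel $\kk$ in degree $0$; a short diagram chase then gives $\RHom_X(\rP_i, \io^*\io_*\rP_i) \simeq \kk[1]$, and the adjunction shift by $[-1]$ yields $\RHom_\cX(\io_*\rP_i, \io_*\rP_i) \simeq \kk$ concentrated in degree $0$. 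This gives both the semiorthogonality on $\cX$ and the exceptional property of each $\io_*\rP_i$, completing the proof.
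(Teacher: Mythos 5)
Your proof is correct and follows essentially the same route as the paper's: both hinge on the triangle $\rP_j[1] \to \io^*\io_*\rP_j \to \rP_j \to \rP_j[2]$, the identification of its connecting map with $\uptheta_j$, and a subsequent adjunction computation. Two small remarks. First, you flag ``identifying $\kappa(\rP_j)$ with $\uptheta_j$'' as the main obstacle, but in fact there is none: you have already shown the class is nonzero, and since $\Ext^2(\rP_j,\rP_j) = \kk\cdot\uptheta_j$ is one-dimensional, nonzero forces $\kappa(\rP_j) = c\,\uptheta_j$ with $c\neq 0$, which is $\uptheta_j$ after rescaling the generator. This is exactly how the paper's Lemma~\ref{lem:Pinfty-deform} proceeds; the paper phrases the nonvanishing slightly differently (if the connecting map vanished, $\rP_j$ would be a summand of the perfect complex $\io^*\io_*\rP_j$, hence perfect, contradiction), but this is equivalent to your ``finite-dimensionality of $\RHom_\cX(\io_*\rP_j,\io_*\rP_j)$'' argument. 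Second, you pass the computation to $X$ via the $(\io_*,\io^!)$ adjunction with $\io^! \simeq \io^*[-1]$, whereas the paper uses the $(\io^*,\io_*)$ adjunction in the opposite variance, $\Ext^\bullet(\io_*\rP_i,\io_*\rP_j) \cong \Ext^\bullet(\io^*\io_*\rP_i,\rP_j) \cong \Ext^\bullet(\rM_i,\rP_j)$, and then reads off the answer from the already-established $\Ext^\bullet(\rM_i,\rP_j) \cong \kk\,\delta_{ij}$ for $i\geq j$; both are clean, yours just redoes the small cohomology computation directly.
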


If we combine Theorem~\ref{thm:intro-pinfty2} with Theorem~\ref{thm:intro-deformation-absorption-to-smoothing}, we obtain a smooth and proper over~$B$ category~$\cD$, and following the analogy of Remark~\ref{rem:contraction}, we consider passing from~$\Db(\cX)$ to~$\cD$ as a categorical incarnation of the contraction~$\cX \to \cY$.

We have a similar generalization of the situation of Example~\ref{ex:geometric}\ref{ex:blowup},
which by~\eqref{eq:theta-1-eps-0} corresponds to a~$\Pinfty{1}$-object.
Again, we state the result in a slightly more general situation.

\begin{theorem}
\label{thm:intro-pinfty1}
Let~$X$ be a projective variety.
Let~$(\rP_1,\dots,\rP_r)$ be a semiorthogonal collection 
of\,~$\Pinfty{1}$-objects in~$\Db(X)$ such that the subcategory
\begin{equation*}
\cP \coloneqq \langle \rP_1, \dots, \rP_r \rangle
\end{equation*}
absorbs singularities of\, $X$.  Then for any smoothing~$\cX \to B$ of\, $X$ after an \'etale base change, there are~$r$ double coverings~$\cZ_i \to B$, $1 \le i \le r$, \'etale over~$B \setminus \{o\}$, and a $B$-linear semiorthogonal decomposition
\begin{equation*}
\Db(\cX) = \langle \Db(\cZ_1), \dots, \Db(\cZ_r), \cD \rangle,
\end{equation*}
where~$\cD$ is smooth and proper over~$B$, $\cD_o = {}^\perp\cP \subset \Db(X)$, and~$\Db(\cZ_i)_o = \langle \rP_i \rangle$.  Moreover, a semiorthogonal decomposition of\,~$\Db(\cX)$ with these properties is unique Zariski locally around~$o \in B$.
\end{theorem}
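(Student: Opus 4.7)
The plan is to identify, for each $\Pinfty{1}$-object $\rP_i$, a double cover $\cZ_i \to B$ (possibly only after an \'etale base change) together with a $B$-linear fully faithful embedding $\Db(\cZ_i) \hookrightarrow \Db(\cX)$ extending the central-fiber equivalence $\Db(\cZ_{i,o}) \simeq \langle \rP_i \rangle$ of Proposition~\ref{prop:sap-sbq}. Once these embeddings and their mutual semiorthogonality are in place, the universal thick deformation absorption property of $\cP$ holds, and Theorem~\ref{thm:intro-deformation-absorption-to-smoothing} produces the claimed $B$-linear semiorthogonal decomposition of $\Db(\cX)$, with smooth and proper complement $\cD$ and $\cD_o \simeq {}^\perp\cP$.

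The key computation is the structure of the $\cO_B$-linear endomorphism algebra of $\io_*\rP_i$ in $\Db(\cX)$. Using adjunction for the Cartier embedding $\io\colon X \hookrightarrow \cX$ and the canonical trivialization of $N^\vee_{X/\cX}$ given by a local uniformizer of $B$ at $o$, I write $\RHom_\cX(\io_*\rP_i, \io_*\rP_i)$ as an extension built from $\Ext^\bullet_X(\rP_i,\rP_i) = \kk[\uptheta_1]$ via a connecting map governed by the Atiyah class of $\rP_i$ along the normal direction. Smoothness of $\cX$ forces $\End^\bullet_\cX(\io_*\rP_i)$ to be finite-dimensional over $\kk$, which in turn forces this Atiyah class to be nonzero and couple the Ext-groups in the unique way that collapses the algebra to the exterior algebra $\kk \oplus \kk[-1]$. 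Upgrading this from $\kk$-linear to $\cO_B$-linear, the degree-zero piece $\End^0_\cX(\io_*\rP_i)$ becomes a finite flat rank-two $\cO_{B,o}$-algebra $A_i$ with special fiber $\kk[\eps_0]/\eps_0^2$. After an \'etale base change around $o$, the algebra $A_i$ takes the standard form $\cO_B[y]/(y^2-s_i)$ with $s_i$ a local uniformizer, so that $\cZ_i \coloneqq \Spec_B A_i$ is a smooth double cover \'etale over $B \setminus \{o\}$. The $B$-linear Koszul duality underlying Proposition~\ref{prop:sap-sbq} then supplies a Fourier--Mukai functor $\Phi_i \colon \Db(\cZ_i) \to \Db(\cX)$ sending the natural generator to $\io_*\rP_i$, which is fully faithful by the identification of endomorphism algebras and reduces on the central fiber to the equivalence $\Db(\cZ_{i,o}) \simeq \langle\rP_i\rangle$.

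Semiorthogonality of the family $\io_*\rP_1, \dots, \io_*\rP_r$ in $\Db(\cX)$, and hence of the enlargements $\Db(\cZ_i)$ by $B$-linearity, follows from the same adjunction-triangle calculation applied to cross-Hom's: for $i > j$, both $\RHom_X(\rP_i, \rP_j)$ and $\RHom_X(\rP_i[1], \rP_j)$ vanish by semiorthogonality of $(\rP_1, \dots, \rP_r)$ in $\Db(X)$, hence $\RHom_\cX(\io_*\rP_i, \io_*\rP_j) = 0$. Thus $\langle \Db(\cZ_1), \dots, \Db(\cZ_r)\rangle \subset \Db(\cX)$ is admissible and witnesses the universal thick deformation absorption by $\cP$, so Theorem~\ref{thm:intro-deformation-absorption-to-smoothing} applies and produces $\cD$ with the stated properties. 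Zariski-local uniqueness follows because each $\Db(\cZ_i)$ is determined by the $\cO_B$-algebra $\End^0_\cX(\io_*\rP_i)$, and $\cD$ is recovered as the right orthogonal.

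The main obstacle I expect is the precise analysis in the second paragraph, particularly the identification of $\End^0_\cX(\io_*\rP_i)$ with the coordinate ring of a \emph{smooth} double cover rather than some other rank-two degeneration such as the split algebra $\cO_B \oplus \cO_B$ or a higher-order thickening of $\cO_B$. This is exactly where the smoothness of the total space $\cX$ and the $\Pinfty{1}$-nature of $\rP_i$ interact, and it distinguishes the present situation from the $\Pinfty{2}$-case of Theorem~\ref{thm:intro-pinfty2} in which the analogous analysis outputs exceptional objects in $\Db(\cX)$ rather than double covers of $B$.
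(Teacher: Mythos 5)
Your proposal contains the right expectation (étale-locally, a flat rank-two $\cO_B$-algebra whose central fiber is $\kk[\eps_0]/\eps_0^2$, cutting out a double cover branched at~$o$) but the object you want to extract it from, $\io_*\rP_i$, cannot provide it. Since $\io_*\rP_i$ is set-theoretically supported on $X = \cX_o$, the complex $f_*\cRHom_\cX(\io_*\rP_i,\io_*\rP_i)$ is a torsion complex on~$B$ concentrated at~$o$; its cohomology sheaves are skyscrapers and cannot be finite flat of rank two. Your first-paragraph computation is internally fine — one does get $\Ext^\bullet_\cX(\io_*\rP_i,\io_*\rP_i) \cong \kk \oplus \kk[-1]$ from $\io^*\io_*\rP_i \cong \rM_i^{(2)}$ and Lemma~\ref{lem:Pinfty-deform} — but the ``upgrading to $\cO_B$-linear'' step has no mathematical content: there is nothing to spread out, because the support never leaves the special fiber.

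What is missing is the step the paper regards as the heart of the argument: one must \emph{deform} the perfect complex $\rM_i$ (the canonical self-extension of $\rP_i$, living on $X$ with $\Ext^\bullet_X(\rM_i,\rM_i)\cong\sA_0$) to an object $\cM_i\in\Db(\cX)$ flat over a neighbourhood of~$o$. This requires genuine deformation theory: since $\rM_i$ has no negative self-Exts, it defines a $\kk$-point of Lieblich's stack $\cD^{\bb}_{\mathrm{pug}}(\cX/B)$ of universally gluable $B$-perfect complexes, the structure morphism of that stack to $B$ is smooth at $[\rM_i]$, and after an étale base change one can choose a section through $[\rM_i]$, yielding $\cM_i$. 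Only then does $\cR_i := f_*\cRHom(\cM_i,\cM_i)$ make sense as a (shrinkably) locally free rank-two $\cO_B$-algebra, because $\cM_i$ genuinely spreads over $B$, and $\cZ_i := \Spec_B(\cR_i)$ carries the Fourier--Mukai kernel $\tcM_i$. This is not recoverable from $\io_*\rP_i$ alone by any adjunction or Atiyah-class argument.

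A secondary issue: routing through Theorem~\ref{thm:intro-deformation-absorption-to-smoothing} does not yield the stated decomposition. That theorem produces $\Db(\cX) = \langle \thick(\io_*\cP),\cD'\rangle$ with $\cD'_b \simeq \Db(\cX_b)$ for $b\ne o$; here $\thick(\io_*\cP)$ is supported on the central fiber. By contrast, the components $\Db(\cZ_i)$ of Theorem~\ref{thm:intro-pinfty1} are spread out over $B$ with nonzero fibers (a disjoint pair of points) away from $o$, so the resulting $\cD$ has $\cD_b \ne \Db(\cX_b)$ and is strictly smaller than $\cD'$. The claim that $\langle \Db(\cZ_1),\dots,\Db(\cZ_r)\rangle$ ``witnesses'' the thick deformation absorption therefore does not hold; the paper instead establishes full faithfulness of each $\Phi_{\tcM_i}$ directly (via \cite[Proposition~2.16]{FK18} applied to the central fiber, where Lemma~\ref{lem:Pinf} gives full faithfulness), verifies semiorthogonality after shrinking $B$, and concludes without invoking Theorem~\ref{thm:intro-deformation-absorption-to-smoothing} at all.
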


In the paper, besides proving the above results (see~Section~\ref{ss:abs-dabs-proofs}), we discuss possible approaches to absorbing singularities by~$\Pinfty{q}$-objects for~$q \in \{1,2\}$ and obtaining smooth and proper families of categories out of them.  One possibility here is to use the intrinsic structure of the category~$\langle \rP \rangle$ generated by a~$\Pinfty{q}$-object; we call such a category the \emph{categorical ordinary double point} and discuss its properties in detail in~Section~\ref{sec:categorical-odp}.

Another approach is to start with a resolution of singularities~$\pi \colon \tX \to X$ and find appropriate admissible subcategory~$\tcP \subset \Db(\tX)$ such that~$\Ker(\pi_*) \subset \tcP$ and the Verdier localization
\begin{equation*}
\cP \coloneqq \tcP / \Ker(\pi_*) \subset \Db(X)
\end{equation*}
is equivalent to a categorical ordinary double point (or has a semiorthogonal decomposition into several categorical ordinary double points).  Then the category~$\cP^\perp \simeq \tcP^\perp$ (see Theorem~\ref{thm:contractions}) is equivalent to an admissible subcategory in the derived category of a smooth and proper variety~$\tX$, so it is smooth and proper, and hence~$\cP$ absorbs singularities of~$X$.  This approach works well when~$\Ker(\pi_*)$ is generated by a \emph{spherical object}~$\rK \in \Db(\tX)$ and~$\tcP$ is generated by an exceptional pair~$(\cE,\cE')$ such that
\begin{equation*}
\dim \Ext^\bullet(\cE,\rK) = 1
\end{equation*}
(we say in this case that~$\cE$ is \emph{adherent} to~$\rK$; see Definition~\ref{def:adherence}; this generalizes the notion introduced in~\cite{KKS20}) and~$\cE'$ is obtained from~$\cE$ by the spherical twist functor associated with~$\rK$.  We check in Lemma~\ref{lem:kronecker-recogn} that in this case the pair~$(\cE,\cE')$ is indeed exceptional with
\begin{equation*}
\Ext^\bullet(\cE,\cE') \cong \kk \oplus \kk[-q],
\qquad 
q \ge 1.
\end{equation*}
We call the category generated by such a pair the \emph{graded Kronecker quiver category} and discuss its properties in~Section~\ref{sec:kronecker}.
The main result here is Proposition~\ref{prop:contr-Cp}, where we show that
the Verdier localization of the graded Kronecker quiver category 
by the spherical object~$\rK$ is equivalent 
to the categorical ordinary double point.

However, if~$\pi \colon \tX \to X$ is a resolution, the condition that~$\Ker(\pi_*)$ is generated by spherical objects is very restrictive, and for most singularities such resolutions do not exist.  So, to circumvent this problem, we replace a geometric resolution with a \emph{categorical resolution of singularities} in the sense of~\cite{K08,KL}.  In other words, we consider a smooth and proper triangulated category~$\tcT$ and a functor
\begin{equation*}
\pi_* \colon \tcT \lra \Db(X)
\end{equation*}
that behaves as the derived pushforward for a geometric resolution.
There are several ways to make this assumption precise: 
one is to assume that~$\pi_*$ is a Verdier localization, 
but we prefer a slightly weaker assumption that was introduced by Efimov in~\cite{Ef20}.

\begin{definition}
\label{def:cc}
We say that a functor~$\pi_* \colon \tcT \to \cT$ is a \emph{categorical contraction} 
if it factors as
\begin{equation*}
\tcT \longtwoheadrightarrow \tcT / \Ker(\pi_*) \longhookrightarrow \cT,
\end{equation*}
where the first arrow is a Verdier localization and the second arrow is a dense embedding (\textit{i.e.}, a fully faithful functor such that every object of~$\cT$ is a direct summand of an object of~$\tcT / \Ker(\pi_*)$).
\end{definition}

\begin{remark}
\label{rem:disclaimer}
In~\cite[Definition~3.7]{Ef20} the same notion is called a localization.
We find this confusing and change the terminology to avoid possible misunderstanding.
\end{remark}

As we mentioned above, it is useful for our goals to have a categorical contraction~$\pi_* \colon \tcT \to \Db(X)$
such that the subcategory~$\Ker(\pi_*) \subset \tcT$ is generated by spherical objects.
Using Serre duality we observe that this condition 
implies the following property, discussed in more detail in~\cite[Section~5]{KS:hfd}
(in fact, the definition of crepancy~\cite[Definition~5.6]{KS:hfd} is different from the definition given below,
but~\cite[Lemma~5.7]{KS:hfd} says that these definitions are equivalent when~$\tcT$ is smooth and proper).

\begin{definition}
\label{def:ccc}
We say that a categorical contraction~$\pi_* \colon \tcT \to \cT$ from a smooth and proper category~$\tcT$ is \emph{crepant} 
if the orthogonals~$\Ker(\pi_*)^\perp \subset \tcT$ and~${}^\perp\Ker(\pi_*) \subset \tcT$ 
of the kernel~$\Ker(\pi_*) \subset \tcT$ coincide.
\end{definition}

In the last part of the paper we show that the approach to the construction of absorption of singularities via crepant categorical contractions can be made completely effective.  Actually, in~Section~\ref{sec:nodal-varieties} for a variety~$X$ of dimension at least two whose singularities are \emph{ordinary double points}, we construct an admissible subcategory~$\cD \subset \Db(\tX)$ in the derived category of the blowup~$\pi \colon \tX \to X$ of the singular locus of~$X$ such that the restriction~$\pi_*\vert_\cD \colon \cD \to \Db(X)$ of the pushforward functor for the blowup morphism is a crepant categorical contraction and~$\Ker(\pi_*\vert_\cD)$ is generated by spherical objects; see Theorem~\ref{thm:main-nodal}.  When this paper was finished we learned about the work~\cite{CGLMMPS}, where the same crepant categorical resolutions of nodal varieties have been independently constructed.

In the last section, Section~\ref{sec:nodal-absorption}, we combine the above results by making explicit the adherence condition in the constructed crepant categorical resolution of a nodal variety~$X$ (see Theorem~\ref{thm:main-nodal-adherence}).  In particular, we show that if the adherence condition is satisfied, then~$\Db(X)$ has a semiorthogonal collection of~$\Pinfty{p+1}$-objects absorbing singularities of~$X$, where~$p \in \{0,1\}$ is the \emph{parity} of~$\dim(X)$.  This allows us to apply Theorem~\ref{thm:intro-pinfty1} when~$\dim(X)$ is even, or Theorem~\ref{thm:intro-pinfty2} when~$\dim(X)$ is odd.  As a baby example we discuss the case of nodal quadrics in Proposition~\ref{prop:quadric}.

Furthermore, in Section~\ref{ss:obstructions} we show that the singularity category of a variety that admits an absorption by categorical ordinary double points is idempotent complete, and using this observation we deduce explicit necessary conditions for the existence of such absorption for nodal varieties of dimension at most~$3$; see Proposition~\ref{prop:obstruction123}.  In the case of threefolds, this condition is the \emph{maximal nonfactoriality} condition from~\cite{Kalck-Pavic-Shinder}; see Definition~\ref{def:mnf}.  We also check in Corollary~\ref{cor:mnf-suff} that for a projective threefold~$X$ with~$\rH^{>0}(X, \cO_X) = 0$ and a single ordinary double point, the maximal nonfactoriality condition is also sufficient for the existence of absorption.

We conclude the paper with a few geometric applications of our results; see~Section~\ref{ss:examples}.
Namely, we consider
nodal curves (Section~\ref{sss:curves}),
nodal threefolds in general (Section~\ref{sss:threefolds}),
and nodal quintic del Pezzo threefolds in particular (Section~\ref{sss:v5}).
Even more interesting examples of prime Fano threefolds are discussed 
in a separate companion paper~\cite{KS-II}.

When we worked on this paper, one of our principles was to separate the geometric aspects from the categorical ones, and to a large extent Sections~\ref{sec:categorical-odp}--\ref{sec:kronecker} are categorical, Section~\ref{sec:absorption} combines categories with geometry, while Sections~\ref{sec:nodal-varieties}--\ref{sec:nodal-absorption} are mostly geometric.  On the other hand, we managed to develop the categorical aspects much further than our geometric applications required.  We feel these developments are interesting and might be useful for further research; curious readers can find these in~\cite{KS:hfd}.

In fact, \cite{KS:hfd} includes a systematic treatment of the notion of homologically finite-dimensional objects, briefly mentioned in Lemma~\ref{lem:Pinf}, with an emphasis on the relation between semiorthogonal decompositions of a triangulated category and the category of its homologically finite-dimensional objects, that leads to a relation between semiorthogonal decompositions of~$\Db(X)$ and~$\Dp(X)$.  It also contains a development of the concept of categorical contractions and crepancy, and a generalization of the relation between~$\Pinfty2$-objects on the special fiber~$X$ and exceptional objects on the total space~$\cX$ of a smoothing (observed in Theorem~\ref{thm:intro-pinfty2}) to any admissible subcategories of~$\Db(X)$.

We would also like to mention that the results of the present paper are related to the big project aimed at the study of semiorthogonal decompositions of algebraic varieties in degenerating families.  We refer to~\cite{K21:icm} for a survey of other results in this direction.

\subsubsection*{Notation}

Throughout the paper~$\Bbbk$ denotes a base field.  All schemes are assumed to be of finite type over~$\Bbbk$, and all algebras and categories we work with are $\Bbbk$-linear.

We write~$C^\opp$ and~$\cT^\opp$ for the opposite dg-algebra or category, respectively.

For a dg-algebra~$C$ we denote by
\begin{itemize}
\item 
$\bD(C)$ the unbounded derived category of right dg-modules over~$C$;
\item 
$\Dp(C) \coloneqq \thick(C) \subset \bD(C)$ the subcategory of perfect dg-modules, \textit{i.e.}, the smallest thick subcategory of~$\bD(C)$ containing the free dg-module~$C$;
\item 
$\Db(C) \subset \bD(C)$ the subcategory of dg-modules with finite-dimensional total cohomology.
\end{itemize}

Similarly, for a projective scheme~$X$ we write~$\Dqc(X)$ for the unbounded derived category of quasicoherent sheaves, while~$\Dp(X) \subset\Dqc(X)$ and~$\Db(X) \subset\Dqc(X)$ stand for the category of perfect complexes and the bounded derived category of coherent sheaves, respectively.

We write~$\cT = \langle \cA_1, \dots, \cA_m \rangle$ for a semiorthogonal decomposition with components~$\cA_1,\dots,\cA_m$, and we write~$\bL_{\cA_i}$, $\bR_{\cA_i}$ for the left and right mutation functors with respect to the component~$\cA_i$, provided it is both left and right admissible.

All pullback, pushforward, and tensor product functors are derived (unless specified otherwise).

\subsection*{Acknowledgements}

We would like to thank 
Sasha Efimov, 
Haibo Jin,
Martin Kalck,
Shinnosuke Okawa,
Dima Orlov, 
Alex Perry, 
Yura Prokhorov,
Michael Wemyss
for their help and interest in this work.
We are grateful to the anonymous referee for useful comments.

\section{Categorical ordinary double points and \texorpdfstring{$\boldsymbol{\P^\infty}$}{PP\textasciicircum infty}-objects}
\label{sec:categorical-odp}

In this section we define categorical ordinary double points and~$\Pinfty{q}$-objects in arbitrary triangulated categories and explain how to construct fully faithful functors with admissible image from a categorical ordinary double point into a given triangulated category.  Here we work over an arbitrary field~$\kk$.

\subsection{Categorical ordinary double points}
\label{ss:codp}

For~$p \ge 0$ and~$q \ge 1$ consider the following differential graded algebras;
\begin{equation*}
\begin{aligned}
{\sA}_{p} &\coloneqq \Bbbk[\eps]/(\eps^{2}), 
&\qquad
\deg(\eps) &= - p, 
&\qquad
\rd(\eps) &= 0,
\\
{\sB}_{q} &\coloneqq \Bbbk[\uptheta], 
&\qquad
\deg(\uptheta) &= q, 
&\qquad
\rd(\uptheta) &= 0.
\end{aligned}
\end{equation*}
As the differentials are zero, we will sometimes consider these dg-algebras simply as graded algebras.

In this subsection we study the derived categories of dg-modules over~$\sA_p$ and~$\sB_q$ and their relation.  Recall that a graded algebra~$C$ is called \emph{intrinsically formal} if for every dg-algebra~$C'$ such that~$\rH^\bullet(C') \cong C$, there is a quasi-isomorphism~$C \cong C'$.  The following lemma is well known (see, \textit{e.g.}, \cite[Section~2]{KYZ09}).

\begin{lemma}
\label{lem:sap-sbq-formal}
For~$p \ge 0$ and~$q \ge 1$ the graded algebras~$\sA_p$ and~$\sB_q$ are intrinsically formal.
\end{lemma}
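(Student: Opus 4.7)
The plan is to apply $A_\infty$-obstruction theory. By Kadeishvili's minimal model theorem, any dg-algebra $C'$ with cohomology $H = \rH^\bullet(C')$ is quasi-isomorphic to a minimal, strictly unital $A_\infty$-algebra on the graded vector space $H$ whose $m_2$ is the associative multiplication on $H$. Intrinsic formality of $H$ amounts to the statement that every such $A_\infty$-structure is $A_\infty$-equivalent to the one with $m_n = 0$ for all $n \ge 3$; the successive obstructions lie in the Hochschild cohomology groups $\HOH^n(H, H)$ in internal cohomological degree $2-n$.

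For $\sB_q$ with $q \ge 1$, I would use the length-one Koszul bimodule resolution
\begin{equation*}
0 \longrightarrow \sB_q \otimes \sB_q \xrightarrow{\ \uptheta \otimes 1 - 1 \otimes \uptheta\ } \sB_q \otimes \sB_q \longrightarrow \sB_q \longrightarrow 0
\end{equation*}
(with an internal shift on the left-hand term to make the differential homogeneous). Applying $\Hom_{\sB_q\text{-bimod}}(-, \sB_q)$ and using commutativity of $\sB_q$ shows that $\HOH^n(\sB_q, \sB_q) = 0$ for every $n \ge 2$, so every obstruction class vanishes trivially.

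For $\sA_p$ with $p \ge 0$, I would not compute the Hochschild cohomology explicitly, but argue by cohomological degree combined with the strict unit axiom. The algebra $\sA_p$ is supported in cohomological degrees $\{0, -p\}$, with basis $\{1, \eps\}$. An operation $m_n$ has cohomological degree $2 - n$, so on a pure tensor input containing exactly $k$ copies of $\eps$ the output sits in degree $-kp + 2 - n$. For $n \ge 3$ this output can belong to $\{0, -p\}$ only when either $p = 0$ (in which case automatically $m_n = 0$, since $2 - n < 0$) or $k = 0$ and $n = p + 2$. In the remaining case all inputs equal the unit $1 \in \sA_p$, and the strict unit axiom gives $m_n(1, \dots, 1) = 0$. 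Thus every minimal strictly unital $A_\infty$-extension of the multiplication on $\sA_p$ is formal.

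The main obstacle is careful bookkeeping rather than a mathematical subtlety: one has to keep the internal and cohomological gradings aligned in the Koszul resolution of $\sB_q$, and one has to justify that Kadeishvili's minimal model can be chosen strictly unital, which is standard for unital dg-algebras. The degenerate case $p = 0$ is even simpler than the generic one and, as noted, can be dispatched by the degree argument alone.
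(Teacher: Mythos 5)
Your proof is correct, but the two halves compare differently with the paper's. For $\sA_p$ you reproduce the paper's argument --- kill $m_n$ on a strictly unital minimal $A_\infty$-model by a degree count --- being somewhat more explicit: the paper only mentions $m_n(\eps,\dots,\eps)$ and leaves strict unitality implicit, whereas you track general inputs with $k$ copies of $\eps$ and flag the unitality hypothesis. For $\sB_q$ you take a genuinely different route. The paper avoids all $A_\infty$-machinery: since $\sB_q$ is the \emph{free} associative graded algebra on a single degree-$q$ generator, for any dg-algebra $B$ with $\rH^\bullet(B) \cong \kk[\uptheta]$ one lifts $\uptheta$ to a cocycle $\theta \in B^q$ and takes the unique graded-algebra map $\sB_q \to B$ sending $\uptheta \mapsto \theta$; the Leibniz rule makes it a dg-map, and it is tautologically a quasi-isomorphism. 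Your route via Kadeishvili's vanishing criterion ($\HOH^{n,2-n}(H,H)=0$ for $n\ge 3$) and the length-one Koszul bimodule resolution is valid and has the merit of being a reusable general principle (any graded algebra of bimodule projective dimension $\le 1$ is intrinsically formal); the cost is importing the $A_\infty$ obstruction theory and strictly unital minimal models, together with a bit of extra sign bookkeeping in the Koszul resolution when $q$ is odd, where $\sB_q$ is commutative but not graded-commutative, so $\sB_q^{\opp}$ is not literally $\sB_q$ under the Koszul sign convention. These are, as you note, bookkeeping issues rather than gaps, but the paper's direct construction of the quasi-isomorphism for $\sB_q$ simply sidesteps them.
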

\begin{proof}
For~$\sB_q$ there is a standard argument.  Let~$B$ be any dg-algebra with~$\rH^\bullet(B) \cong \kk[\uptheta]$.  Let~$\theta \in B^q$ be any lift of~$\uptheta \in \rH^q(B)$.  Since~$\sB_q$ is a free associative (graded) algebra, there is a unique homomorphism of graded algebras~$\sB_q \to B$, $\uptheta \mapsto \theta$.  It follows from the Leibniz rule that it is a homomorphism of dg-algebras, and it is obvious that it induces an isomorphism on the cohomology; therefore, it is a quasi-isomorphism.

For~$\sA_p$ we argue as follows.  First, any dg-algebra~$A$ with~$\rH^\bullet(A) \cong \kk[\eps]/(\eps^2)$ corresponds to an $A_\infty$-structure on~$\kk[\eps]/(\eps^2)$.  But for~$n \ge 3$ the degree of the operation~$\bm_n$ is negative, so~\mbox{$\bm_n(\eps,\dots,\eps) = 0$}.  Thus, any $A_\infty$-structure on~$A$ is trivial, so that~$A$ is quasi-isomorphic to its cohomology.
\end{proof}

Recall that 
\begin{itemize}
\item 
a dg-algebra~$C$ is \emph{proper} if~$\dim(\rH^\bullet(C)) < \infty$, where~$\rH^\bullet(C)$ is the total cohomology of~$C$;
\item 
a dg-algebra~$C$ is \emph{smooth} if the diagonal bimodule~$\Delta_C$ is \emph{perfect}, 
\textit{i.e.}, $\Delta_C \in \Dp(C \otimes C^\opp)$.
\end{itemize}
Here and below tensor products of dg-algebras are taken over~$\Bbbk$.

Similarly, a dg-category~$\cT$ is \emph{proper} if~$\dim(\rH^\bullet(\Hom_\cT(T_1,T_2))) < \infty$ for any objects~$T_1,T_2 \in \cT$,
and~$\cT$ is \emph{smooth} if~$\Delta_\cT \in \Dp(\cT \otimes \cT^\opp)$, where~$\Delta_\cT$ is the diagonal dg-bimodule over~$\cT$.
Note that a dg-algebra~$C$ is proper or smooth if and only if the dg-category~$\cT = \Dp(C)$ has the same property.

Recall that~$\Db(\sA_p) \subset \bD(\sA_p)$ and~$\Db(\sB_q) \subset \bD(\sB_q)$ 
are the categories of dg-modules over respective dg-algebras 
which have total finite-dimensional cohomology over~$\kk$.
We also denote by~$\kk_\sA$ and~$\kk_\sB$ the simple (1-dimensional) dg-modules over~$\sA_p$ and~$\sB_q$, respectively.

\begin{proposition}
\label{prop:sap-sbq}
Let~$p \ge 0$ and~$q \ge 1$.
\begin{renumerate}
\item 
\label{item:sbq}
The dg-algebra~$\sB_q$ is smooth \textup(but not proper\,\textup), 
and~$\langle \kk_\sB \rangle = \Db(\sB_q) \simeq \Dp(\sA_{q-1})$. 
\item 
\label{item:sap}
The dg-algebra~$\sA_p$ is proper \textup(but not smooth\,\textup), and~$\langle \kk_\sA \rangle = \Db(\sA_p) \simeq \Dp(\sB_{p+1})$.
\end{renumerate}
Moreover, the categories~$\Dp(\sA_p)$ and~$\Dp(\sB_q)$ are generated by the free modules~$\sA_p$ and~$\sB_q$ as triangulated categories, \textit{i.e.}, without additional idempotent completion.
\end{proposition}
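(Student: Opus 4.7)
My plan is to establish both equivalences in the proposition via Koszul duality, using the intrinsic formality of Lemma~\ref{lem:sap-sbq-formal} to identify the dual dg-algebras, and then to deduce smoothness, properness, and the absence of idempotent completion by transport.

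First I would compute the key $\Ext$-algebras. The finite Koszul resolution $0 \to \sB_q[1-q] \xrightarrow{\cdot\uptheta} \sB_q \to \kk_\sB \to 0$ yields $\Ext^\bullet_{\sB_q}(\kk_\sB,\kk_\sB) \cong \kk \oplus \kk[q-1]$, which as a graded algebra is $\sA_{q-1}$. Dually, the unbounded minimal free resolution $\dots \to \sA_p[2(p+1)] \to \sA_p[p+1] \to \sA_p \to \kk_\sA$ with all differentials given by $\cdot \eps$ yields $\Ext^\bullet_{\sA_p}(\kk_\sA, \kk_\sA) \cong \sB_{p+1}$. By Lemma~\ref{lem:sap-sbq-formal} the $\RHom$-dg-algebras $\RHom_{\sB_q}(\kk_\sB,\kk_\sB)$ and $\RHom_{\sA_p}(\kk_\sA,\kk_\sA)$ are quasi-isomorphic to $\sA_{q-1}$ and $\sB_{p+1}$ respectively (no $A_\infty$-corrections). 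Since $\kk_\sB$ is a compact object of $\bD(\sB_q)$, Keller's tilting theorem supplies an equivalence $\thick(\kk_\sB) \simeq \Dp(\sA_{q-1})$ (the opposite algebra equals $\sA_{q-1}$ by graded commutativity); Koszul duality for the proper dg-algebra $\sA_p$ produces the analogous equivalence $\thick(\kk_\sA) \simeq \Dp(\sB_{p+1})$.

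Next I would verify smoothness and properness directly. The infinite-dimensionality of $\rH^\bullet(\sB_q) = \sB_q$ shows $\sB_q$ is not proper, whereas $\sB_q$ is smooth via the finite Koszul bimodule resolution
\[
0 \to (\sB_q \otimes \sB_q)[1-q] \xrightarrow{\,1 \otimes \uptheta - \uptheta \otimes 1\,} \sB_q \otimes \sB_q \to \Delta_{\sB_q} \to 0.
\]
For $\sA_p$, properness is immediate from $\dim \sA_p = 2$; non-smoothness follows because $\Ext^\bullet_{\sA_p}(\kk_\sA, \kk_\sA) \cong \sB_{p+1}$ is infinite-dimensional, so $\kk_\sA \in \Db(\sA_p) \setminus \Dp(\sA_p)$, while a smooth proper dg-algebra would satisfy $\Db = \Dp$. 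To finish the proof I would identify $\thick(\kk_\sB) = \Db(\sB_q)$ and $\thick(\kk_\sA) = \Db(\sA_p)$ by a cohomology-filtration argument: any $M \in \Db(\sB_q)$ is built from its cohomology by finitely many extensions, and each cohomology group is a finite-dimensional graded $\sB_q$-module on which $\uptheta$ acts nilpotently (it has positive degree), hence is a successive extension of copies of $\kk_\sB$. The ``no idempotent completion'' assertion transports under the equivalences to $\langle \sA_p \rangle = \Dp(\sA_p)$ and $\langle \sB_q \rangle = \Dp(\sB_q)$, which hold because both algebras are graded local with zero differential, so Krull--Schmidt forces every indecomposable perfect complex to be a shift of the free module, making the triangulated and thick closures of the free module coincide.

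I expect the main technical obstacle to be the identification $\thick(\kk_\sB) = \Db(\sB_q)$ (and the analogous statement for $\sA_p$): a general $M \in \Db(\sB_q)$ need not be formal, and since $\uptheta$ has positive cohomological degree the naive smart truncation of $M$ need not be a dg-submodule, so the cohomology filtration must be set up carefully, for instance after passing to an h-projective replacement or by constructing the filtration directly in the derived category.
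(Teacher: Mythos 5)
Your overall strategy — compute both Ext-algebras, invoke Lemma~\ref{lem:sap-sbq-formal} to pass to Koszul duality via Keller tilting, then deduce smoothness/properness and the last claim by transport — is exactly the paper's. The computation of $\Ext^\bullet_{\sB_q}(\kk_\sB,\kk_\sB)$, the bimodule resolution for smoothness of $\sB_q$, and the argument that $\kk_\sA\notin\Dp(\sA_p)$ excludes smoothness of $\sA_p$ are all correct. But there are two real problems.

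First, you flag but do not close the gap in showing $\thick(\kk_\sB)=\Db(\sB_q)$. You correctly note that since $\uptheta$ has \emph{positive} cohomological degree, naive truncation of an $M\in\Db(\sB_q)$ is not a dg-submodule and $M$ need not be formal, and then offer only a vague remedy. The paper's fix is concrete and you would need it (or an equivalent): pick the top degree $i$ with $\rH^i(M)\ne 0$; applying $\Ext^\bullet(-,M)$ to the two-term resolution $\sB_q[-q]\xrightarrow{\uptheta}\sB_q\to\kk_\sB$ gives an exact sequence $\Ext^i(\kk_\sB,M)\to\rH^i(M)\to\rH^{i+q}(M)$ whose last term vanishes, so a nonzero top class lifts to a map $\kk_\sB[-i]\to M$ that is injective on $\rH^i$; the cone has strictly smaller total cohomology and induction finishes. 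Without this (or the Hoshino--Kato--Miyachi t-structure argument the paper cites for $\sA_p$, where $\eps$ has nonpositive degree so truncation \emph{does} work) your proof of the generation statements does not go through. Note the two cases are genuinely asymmetric in exactly this way.

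Second, the Krull--Schmidt argument for the final statement is false: it is not true that every indecomposable object of $\Dp(\sA_p)$ is a shift of $\sA_p$. For instance $\sA_p^{(2)}=\Cone(\sA_p[p]\xrightarrow{\eps}\sA_p)$ is perfect, indecomposable (its endomorphism ring is local), and not a shift of the free module — indeed by Corollary~\ref{cor:ap2} it thickly generates all of $\Dp(\sA_p)$. Fortunately the extra argument is unnecessary: once you know $\langle\kk_\sB\rangle=\Db(\sB_q)$ and $\langle\kk_\sA\rangle=\Db(\sA_p)$, the Keller equivalences send the simple module to the free module, so $\langle\sA_{q-1}\rangle=\Dp(\sA_{q-1})$ and $\langle\sB_{p+1}\rangle=\Dp(\sB_{p+1})$ follow immediately by transport, covering all $p\ge 0$, $q\ge 1$. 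This is the paper's (one-line) argument; drop the Krull--Schmidt claim and use only transport. Finally, your computation of $\Ext^\bullet_{\sA_p}(\kk_\sA,\kk_\sA)$ from the infinite minimal semifree resolution is correct in spirit, but since $\kk_\sA$ is not compact one should justify passing to the limit; the paper does this carefully via homotopy colimits in Lemma~\ref{lemma:ap-k}, and you should at least cite such an argument rather than treat the infinite resolution as self-evidently computing $\Ext$.
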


\begin{proof}
\ref{item:sbq}~ The smoothness of the dg-algebra~$\sB_q$ is evident: the diagonal bimodule can be written as
\begin{equation*}
\Delta_{\sB_q} 
\cong \Cone\left(\sB_q \otimes \sB_q[-q] \xrightarrow{\ \uptheta' - \uptheta''\ } \sB_q \otimes \sB_q\right),
\end{equation*}
where~$\uptheta'$ and~$\uptheta''$ are the generators of the first and second factor, respectively.

The containment~$\langle \kk_\sB \rangle \subset \Db(\sB_q)$ is evident, and to prove the converse containment, let~$M \in \Db(\sB_q)$.  We prove~$M \in \langle \kk_\sB \rangle$ by induction on~$\dim(\rH^\bullet(M))$.  Let~$i$ be the maximal integer such that~$\rH^i(M) \ne 0$ (it exists because~$\rH^\bullet(M)$ is finite-dimensional).  Using the distinguished triangle
\begin{equation}
\label{eq:sbq-k}
\sB_q[-q] \xrightarrow{\ \uptheta\ } \sB_q \xrightarrow{\quad} \kk_\sB
\end{equation}
and the equality~$\Ext^\bullet_{\sB_q}(\sB_q,M) = \rH^\bullet(M)$, we obtain 
an exact sequence
\begin{equation*}
\cdots \lra \Ext^i(\kk_\sB, M) \lra \rH^i(M) \lra \rH^{i+q}(M) \lra \cdots.
\end{equation*}
Since~$q > 0$, the group~$\rH^{i+q}(M)$ is zero, so any class from $\rH^i(M)$ lifts to~$\Ext^i(\kk_\sB, M)$.  Clearly, the corresponding morphism~$\kk_\sB[-i] \to \rM$ induces an embedding on~$\rH^i$.  If~$M'$ is the cone of this morphism, we have~$\dim(\rH^\bullet(M')) = \dim(\rH^\bullet(M)) - 1$.  By induction~$M' \in \langle \kk_\sB \rangle$, so also~$M \in \langle \kk_\sB \rangle$.  Thus,~\mbox{$\Db(\sB_q) = \langle \kk_\sB \rangle$}.

Finally, applying the functor~$\Ext^\bullet(-, \kk_\sB)$ to~\eqref{eq:sbq-k}, we obtain
\begin{equation*}
\Ext^\bullet_{\sB_q}(\kk_\sB, \kk_\sB) \cong 
\kk \oplus \kk[q-1],
\end{equation*}
and denoting by~$\eps$ a generator of the second summand, it is easy to see that~$\eps^2 = 0$.  Now, since~$\Db(\sB_q)$ is generated by~$\kk_\sB$, we obtain an equivalence~$\Db(\sB_q) \simeq \Dp(\RHom(\kk_\sB, \kk_\sB))$ from Keller's Morita theory~\cite[Theorem~8.5(c)]{Keller:tilting}, and the proof is finished since Lemma~\ref{lem:sap-sbq-formal} implies that~$\RHom(\kk_\sB, \kk_\sB)$ is quasi-isomorphic to~$\sA_{q-1}$.

\ref{item:sap}~
The dg-algebra~$\sA_p$ is proper because it is finite-dimensional,
and the equality~$\langle \kk_\sA \rangle = \Db(\sA_p)$ 
can be obtained easily by using the standard t-structure 
(constructed, \textit{e.g.}, in~\cite[Theorem 1.3]{HoshinoKatoMiyachi}).
Thus, it remains to check that~$\Ext^\bullet_{\sA_p}(\kk_\sA, \kk_\sA) \cong \sB_{p+1}$ 
and apply Lemma~\ref{lem:sap-sbq-formal} as in~\ref{item:sbq}.
We postpone the computation of~$\Ext^\bullet_{\sA_p}(\kk_\sA, \kk_\sA)$ till Lemma~\ref{lemma:ap-k}\ref{item:ext-sap-k-k} below.

Assume that~$\sA_p$ is smooth.  Then the dg-module~$\kk_\sA$ is perfect by~\cite[Lemmas~3.5 and~3.6]{Lunts} and~\cite[Theorem 3.18]{Orlov16}, and since~$\sA_p$ is proper, the space~$\Ext^\bullet_{\sA_p}(\kk_\sA, \kk_\sA)$ must be finite-dimensional.  But this space is actually isomorphic to~$\sB_{p+1}$, hence infinite-dimensional.  Therefore, $\sA_p$ is not smooth.

The last statement of Proposition~\ref{prop:sap-sbq} follows from the fact that~$\Db(\sB_q)$ and~$\Db(\sA_p)$ are generated by the simple modules~$\kk_\sB$ and~$\kk_\sA$, without additional idempotent completions, and from the equivalences of~\ref{item:sap} and~\ref{item:sbq} since under these equivalences, simple modules go to free modules.
\end{proof}

The module~$\kk_\sA$ is not contained in~$\Dp(\sA_p)$ and thus cannot be directly expressed in terms of~$\sA_p$; to compute~$\Ext^\bullet_{\sA_p}(\kk_\sA, \kk_\sA)$ we use an interpretation of~$\kk_\sA$ as homotopy colimit.  Recall from~\cite{BN93} that the \emph{homotopy colimit} of an infinite chain of morphisms~$F_1 \xrightarrow{\ \phi_1\ } F_2 \xrightarrow{\ \phi_{2}\ } \cdots$ in a \emph{cocomplete} (\textit{i.e.}, admitting arbitrary direct sums) triangulated category is defined as
\begin{equation*}
\hocolim F_i \coloneqq
\Cone\left( \bigoplus_{i=1}^\infty F_i 
\xrightarrow{\ \id - \mathop{\oplus}\limits_{i = 1}^\infty \phi_i\ } 
\bigoplus_{i=1}^\infty F_i \right).
\end{equation*}

Let $\hcT$ be a cocomplete triangulated category.  Recall that a set of objects~$G_\alpha \in \hcT$ is a set of \emph{compact generators} if the functors~$\Hom(G_\alpha, - )$ commute with direct sums for each~$\alpha$ and the right orthogonal of all~$G_\alpha$ in~$\hcT$ is zero, \textit{i.e.}, $\Ext^\bullet(G_\alpha, F) = 0$ for~$F \in \hcT$ and all~$\alpha$ implies~$F = 0$.

\begin{lemma}
\label{lem:hocolim-shift}
Let~$\hcT$ be a cocomplete triangulated category with a set of compact generators~$G_\alpha \in \hcT$.  If for an object~$F \in \hcT$ the complex~$\Ext^\bullet(G_\alpha,F)$ is bounded above for each~$\alpha$, then~$\hocolim F[s_i] = 0$ for any strictly increasing sequence~$\{s_i\}$ of integers and any chain of morphisms~$F[s_1] \to F[s_2] \to \cdots$.
\end{lemma}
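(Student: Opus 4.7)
The plan is to use the standard criterion: an object of a cocomplete triangulated category with a set of compact generators $\{G_\alpha\}$ vanishes if and only if $\Hom(G_\alpha, -[n]) = 0$ for every $\alpha$ and every $n \in \ZZ$. Writing $H \coloneqq \hocolim F[s_i]$, I would aim to verify this vanishing for $H$.

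First I would apply $\Hom(G_\alpha, -)$ to the defining triangle
\begin{equation*}
\bigoplus_{i=0}^\infty F[s_i] \xrightarrow{\ \id - \oplus\phi_i\ } \bigoplus_{i=0}^\infty F[s_i] \longrightarrow H.
\end{equation*}
By compactness of $G_\alpha$, the functor $\Hom(G_\alpha, -)$ commutes with arbitrary direct sums, so that in each cohomological degree $n$ I obtain the standard long exact sequence computing a filtered (sequential) colimit of abelian groups; since filtered colimits are exact, this gives the isomorphism
\begin{equation*}
\Hom(G_\alpha, H[n]) \;\cong\; \colim_i \Hom(G_\alpha, F[s_i + n]) \;=\; \colim_i \Ext^{n+s_i}(G_\alpha, F).
\end{equation*}

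Now I would use the hypotheses on $F$ and on the sequence $\{s_i\}$. By assumption $\Ext^\bullet(G_\alpha, F)$ is bounded above, say vanishes in all degrees $> N_\alpha$; since $\{s_i\}$ is a strictly increasing sequence of integers, eventually $n + s_i > N_\alpha$, so all terms of the colimit are zero past some index. Therefore the colimit vanishes, giving $\Hom(G_\alpha, H[n]) = 0$ for every $\alpha$ and every $n$. Applying the compact-generator criterion, $H = 0$.

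I do not expect any real obstacle here: the argument is entirely formal, the only inputs being (a) the interplay of compactness with the cone-of-shift definition of $\hocolim$, and (b) the exactness of filtered colimits in the category of abelian groups, together with the elementary fact that $s_i \to \infty$ forces any upper-bounded sequence of $\Ext$-groups to become eventually zero along the colimit.
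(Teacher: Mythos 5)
Your proof is correct and follows essentially the same route as the paper's: both use compactness to reduce to the formula $\Hom(G_\alpha, \hocolim F[s_i]) \cong \colim_i \Hom(G_\alpha, F[s_i])$ (the paper cites this as~\cite[Remark~2.2]{BN93}, while you derive it directly from the defining triangle and exactness of filtered colimits), then observe that boundedness above of $\Ext^\bullet(G_\alpha,F)$ together with $s_i \to \infty$ kills the colimit, and conclude by the compact-generator criterion for vanishing.
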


\begin{proof}
Since~$G_\alpha$ is compact in~$\hcT$, the functor~$\RHom(G_\alpha, -)$ commutes with homotopy colimits by~\cite[Lemma~2.11]{K11}, so~\cite[Remark~2.2]{BN93} gives for each~$n \in \ZZ$ an isomorphism
\begin{equation*}
\Ext^n(G_\alpha, \hocolim F[s_i]) \cong \colim \Ext^n(G_\alpha, F[s_i]).    
\end{equation*}
The right-hand side is zero because~$\Ext^\bullet(G_\alpha, F)$ is bounded above.  Thus,~$\hocolim F[s_i]$ is right orthogonal to all~$G_\alpha$ and therefore is zero.
\end{proof}

Note that by the definition of~$\sA_p$, we have a distinguished triangle
\begin{equation}
\label{eq:sap-triangle}
\sA_p \lra \kk_\sA \lra \kk_\sA[p + 1].
\end{equation}
The computation of~$\Ext^\bullet_{\sA_p}(\kk_\sA, \kk_\sA)$ is based on using~\eqref{eq:sap-triangle} iteratively and passing to the homotopy colimit.

\begin{lemma}
\label{lemma:ap-k}
Let~$p \ge 0$.
\begin{renumerate}
\item 
\label{item:ap-k}
There are a chain of morphisms~$\sA_p = \sA_p^{(1)} \to \sA_p^{(2)} \to \cdots$ in~$\Dp(\sA_p)$ such that
\begin{equation*}
\sA_p^{(i+1)} = \Cone\left(\sA_p[i(p+1) - 1] \to \sA_p^{(i)}\right)
\end{equation*}
and a sequence of morphisms~$\sA_p^{(i)} \to \kk_\sA$ compatible with the chain maps~$\sA_p^{(i-1)} \to \sA_p^{(i)}$ such that
\begin{equation}
\label{eq:cone-sapi-k}
\Cone\left(\sA_p^{(i)} \to \kk_\sA\right) \cong \kk_\sA[i(p+1)].
\end{equation}
\item 
\label{item:hocolim-ap-k}
In~$\bD(\sA_p)$ one has~$\hocolim \kk_\sA[i(p+1)] = 0$ and\,~$\hocolim \sA_p^{(i)} \cong \kk_\sA$.
\item 
\label{item:ext-sap-k-k}
One has~$\Ext^\bullet_{\sA_p}(\kk_\sA, \kk_\sA) \cong \sB_{p+1}$.
\end{renumerate}
\end{lemma}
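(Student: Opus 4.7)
My plan is to establish the three parts in order, since (iii) relies on the homotopy colimit of (ii), which in turn requires the Postnikov-type tower constructed in (i).

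For part (i), I proceed by induction on $i$. The base case $i = 1$ is immediate from~\eqref{eq:sap-triangle}, which gives $\Cone(\sA_p^{(1)} \to \kk_\sA) \cong \kk_\sA[p+1]$. For the inductive step, suppose $\sA_p^{(i)} \to \kk_\sA$ has been constructed with cone $\kk_\sA[i(p+1)]$. Rotating this triangle produces a morphism $\kk_\sA[i(p+1)-1] \to \sA_p^{(i)}$; precomposing with the appropriate shift of the augmentation $\sA_p \to \kk_\sA$ from~\eqref{eq:sap-triangle} yields the required map $\sA_p[i(p+1)-1] \to \sA_p^{(i)}$, and I define $\sA_p^{(i+1)}$ to be its cone. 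The composition $\sA_p[i(p+1)-1] \to \sA_p^{(i)} \to \kk_\sA$ vanishes since it contains two consecutive arrows of a distinguished triangle, so $\sA_p^{(i)} \to \kk_\sA$ descends to a compatible map $\sA_p^{(i+1)} \to \kk_\sA$. The octahedral axiom applied to the factorization $\sA_p^{(i)} \to \sA_p^{(i+1)} \to \kk_\sA$ then places $\Cone(\sA_p^{(i+1)} \to \kk_\sA)$ in a triangle $\sA_p[i(p+1)] \to \kk_\sA[i(p+1)] \to \Cone(\sA_p^{(i+1)} \to \kk_\sA)$, in which the first arrow is a shift of the augmentation; its cone $\kk_\sA[(i+1)(p+1)]$ gives the desired identification.

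For part (ii), the free module $\sA_p$ is a compact generator of $\bD(\sA_p)$ and $\Ext^\bullet_{\sA_p}(\sA_p, \kk_\sA) \cong \kk_\sA$ is concentrated in degree $0$, hence bounded above. Lemma~\ref{lem:hocolim-shift} therefore yields $\hocolim \kk_\sA[i(p+1)] = 0$. The rotated triangles $\kk_\sA[i(p+1)-1] \to \sA_p^{(i)} \to \kk_\sA$ from part (i) form a morphism of compatible systems, with target the constant system along identities. Exactness of the homotopy colimit produces a triangle $0 \to \hocolim \sA_p^{(i)} \to \kk_\sA$, so $\hocolim \sA_p^{(i)} \cong \kk_\sA$.

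For part (iii), applying $\RHom(-, \kk_\sA)$ to the hocolim presentation of $\kk_\sA$ yields a Milnor-type short exact sequence relating $\Ext^n(\kk_\sA, \kk_\sA)$ to $\lim$ and ${\lim}^1$ of $\Ext^n(\sA_p^{(i)}, \kk_\sA)$. Inductively, the triangles of part (i) together with $\RHom(\sA_p, \kk_\sA) \cong \kk_\sA$ show that $\Ext^\bullet(\sA_p^{(i)}, \kk_\sA)$ is one-dimensional in each degree $k(p+1)$ for $0 \le k \le i - 1$ and vanishes elsewhere. In every cohomological degree the inverse system stabilizes, so ${\lim}^1$ vanishes and one obtains a graded isomorphism $\Ext^\bullet_{\sA_p}(\kk_\sA, \kk_\sA) \cong \sB_{p+1}$. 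Intrinsic formality of $\sB_{p+1}$ (Lemma~\ref{lem:sap-sbq-formal}) then promotes this to a quasi-isomorphism of dg-algebras, completing the proof. The main obstacle is part (i): carefully verifying via the octahedral axiom that the induced map $\sA_p[i(p+1)] \to \kk_\sA[i(p+1)]$ is indeed a shift of the augmentation, since the identification of the higher cones depends on this; parts (ii) and (iii) are then routine.
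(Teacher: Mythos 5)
Parts~\ref{item:ap-k} and~\ref{item:hocolim-ap-k} track the paper's proof closely: your inductive construction of $\sA_p^{(i)}$ and the passage to the homotopy colimit agree with the paper's (the paper takes $\hocolim$ of the unrotated triangles $\sA_p^{(i)} \to \kk_\sA \to \kk_\sA[i(p+1)]$ where you first rotate, but the conclusion is the same), and your caution about verifying the octahedral-axiom identification is fair --- the paper is also informal at that step.

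Part~\ref{item:ext-sap-k-k} contains a genuine gap. The Milnor $\lim$--${\lim}^1$ argument you describe correctly establishes the \emph{graded vector space} isomorphism $\Ext^\bullet_{\sA_p}(\kk_\sA,\kk_\sA)\cong\kk[\uptheta]$, but this does not by itself determine the \emph{graded algebra} structure. Knowing the Poincar\'e series $\sum_{k\ge 0}t^{k(p+1)}$ does not preclude, for instance, an algebra of the form $\kk[\xi]\otimes\kk[\uptheta]/(\uptheta^2)$ with $\deg(\xi)=2(p+1)$, which has the same Hilbert series but is not isomorphic to $\sB_{p+1}$. The paper closes this with a short additional step: applying $\Ext^\bullet(-,\kk_\sA)$ to the triangle~\eqref{eq:sap-triangle} gives a long exact sequence in which the connecting map is multiplication by $\uptheta$, and since $\Ext^n_{\sA_p}(\sA_p,\kk_\sA)$ vanishes for $n\ne 0$, multiplication by $\uptheta$ is injective on $\Ext^\bullet_{\sA_p}(\kk_\sA,\kk_\sA)$; combined with the computed Poincar\'e series this forces $\Ext^\bullet_{\sA_p}(\kk_\sA,\kk_\sA)\cong\kk[\uptheta]=\sB_{p+1}$ as graded algebras. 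Your appeal to intrinsic formality cannot stand in for this step: Lemma~\ref{lem:sap-sbq-formal} \emph{upgrades} a graded \emph{algebra} isomorphism $\rH^\bullet(B')\cong\sB_{p+1}$ to a quasi-isomorphism of dg-algebras, so it presupposes exactly the algebra structure you still need to establish; and in any case part~\ref{item:ext-sap-k-k} asserts only the Ext-algebra isomorphism --- the dg quasi-isomorphism is invoked afterwards, in the proof of Proposition~\ref{prop:sap-sbq}, not within this lemma.
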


\begin{proof}
\ref{item:ap-k}~
We construct the chain inductively.
Consider the composition
\begin{equation*}
\sA_p[i(p + 1) - 1] \lra \kk_\sA[i(p + 1) - 1] \lra \sA_p^{(i)},
\end{equation*}
where the first arrow is the (shifted) morphism from~\eqref{eq:sap-triangle} and the second arrow is the connecting morphism in~\eqref{eq:cone-sapi-k}, and define~$\sA_p^{(i+1)}$ as the cone of the composition~$\sA_p[i(p + 1) - 1] \to \sA_p^{(i)}$.  Then~\eqref{eq:cone-sapi-k} for~$i + 1$ follows easily from the octahedral axiom.

\ref{item:hocolim-ap-k}~ As $p + 1 > 0$, we have~$\hocolim \kk_\sA[i(p+1)] = 0$ by Lemma~\ref{lem:hocolim-shift} because~$\Ext^\bullet_{\sA_p}(\sA_p,\kk_\sA) \cong \kk$ is bounded above and~$\sA_p$ is a compact generator for~$\bD(\sA_p)$.  To compute~$\hocolim \sA_p^{(i)}$ we apply the functor~$\hocolim$ to the chain of morphisms~$\sA_p^{(i)} \to \kk_\sA$ constructed in~\ref{item:ap-k}.  From~\eqref{eq:cone-sapi-k} and the exactness of the homotopy colimit, we deduce a distinguished triangle
\begin{equation*}
\hocolim \sA_p^{(i)} \lra \hocolim{} \kk_\sA \lra \hocolim \kk_\sA[i(p+1)],
\end{equation*}
and since the last term was shown to be zero, we have~$\hocolim \sA_p^{(i)} \cong \hocolim \kk_\sA \cong {} \kk_\sA$.

\ref{item:ext-sap-k-k}~
Applying the functor~$\Ext_{\sA_p}^\bullet(-,\kk_\sA)$ to the triangle~$\sA_p^{(i)} \to \sA_p^{(i+1)} \to \sA_p[i(p+1)]$, 
we deduce that
\begin{equation*}
\Ext_{\sA_p}^\bullet\left(\sA_p^{(i)}, \kk_\sA\right) \cong \kk[\uptheta]/\uptheta^i,
\qquad\text{where }
\deg(\uptheta) = p + 1.
\end{equation*}
Now passing to the homotopy colimit and using the isomorphism of~\ref{item:hocolim-ap-k}, we conclude that
\begin{equation*}
\Ext^\bullet_{\sA_p}(\kk_\sA, \kk_\sA) \cong
\Ext^\bullet_{\sA_p}\left(\hocolim \sA_p^{(i)}, \kk_\sA\right) \cong
\holim \Ext^\bullet_{\sA_p}\left(\sA_p^{(i)}, \kk_\sA\right) \cong
\holim \kk[\uptheta]/\uptheta^i,
\end{equation*}
where~$\holim$ is the homotopy limit defined as~$\holim F_i \coloneqq \Cone(\prod F_i \to \prod F_i)[-1]$, analogously to the homotopy colimit.

It follows that~$\Ext_{\sA_p}^\bullet(\kk_\sA, \kk_\sA) \cong \kk[\uptheta]$ as graded vector spaces, and it remains to identify the algebra structure.  For this, using~\eqref{eq:sap-triangle}, we obtain a distinguished triangle
\begin{equation*}
\Ext_{\sA_p}^\bullet(\kk_\sA, \kk_\sA)[-p-1] \xrightarrow{\ \uptheta\ }
\Ext_{\sA_p}^\bullet(\kk_\sA, \kk_\sA) \xrightarrow{\quad}
\kk,
\end{equation*}
which implies that the multiplication by~$\uptheta$ is injective, so~$\Ext_{\sA_p}^\bullet(\kk_\sA, \kk_\sA) \cong \kk[\uptheta]$ as algebras.
\end{proof}

Later we will use the following simple observation.

\begin{corollary}
\label{cor:ap2}
The category~$\Dp(\sA_p)$ is thickly generated by~$\sA_p^{(2)}$; \textit{i.e.}, $\Dp(\sA_p) = \thick(\sA_p^{(2)})$.
\end{corollary}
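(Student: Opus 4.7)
\emph{Proof plan.} The inclusion $\thick(\sA_p^{(2)}) \subseteq \Dp(\sA_p)$ is immediate, since $\sA_p^{(2)}$ is a cone of a morphism between copies of $\sA_p$ and $\Dp(\sA_p) = \thick(\sA_p)$ by Proposition~\ref{prop:sap-sbq}. For the reverse inclusion, I will invoke the Thomason--Neeman criterion: in a compactly generated triangulated category---here $\bD(\sA_p)$, compactly generated by the free module $\sA_p$---a compact object classically generates the subcategory of compact objects if and only if its right orthogonal in the ambient category is zero. Thus it suffices to show: if $N \in \bD(\sA_p)$ satisfies $\RHom(\sA_p^{(2)}, N) = 0$, then $N = 0$.

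Applying $\RHom(-, N)$ to the defining triangle $\sA_p[p] \xrightarrow{\eps} \sA_p \to \sA_p^{(2)}$ and using the identification $\RHom(\sA_p, N) \simeq N$, the long exact sequence shows that the vanishing of $\RHom(\sA_p^{(2)}, N)$ forces the induced map $\RHom(\sA_p, N) \to \RHom(\sA_p[p], N)$---which unwinds to multiplication by $\eps$ viewed as a morphism $N \to N[-p]$---to be an isomorphism. Iterating, the composition $N \xrightarrow{\eps\,\cdot} N[-p] \xrightarrow{\eps\,\cdot} N[-2p]$ is then simultaneously an isomorphism (being a composition of isomorphisms) and zero (being multiplication by $\eps^2 = 0$); this forces $N = 0$.

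The main subtlety is verifying that the connecting map in the long exact sequence is genuinely multiplication by $\eps$, but this is a routine unwinding of definitions. The argument works uniformly for all $p \ge 0$: the nilpotence $\eps^2 = 0$ collides with the hypothetical isomorphism regardless of the degree $-p$ of $\eps$, so no case analysis is needed.
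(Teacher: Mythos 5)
Your proof is correct, but it takes a genuinely different route from the one in the paper. The paper works entirely inside the small category $\Dp(\sA_p)$: it considers the composition $\sA_p^{(2)}[-1-p]\to\sA_p\to\sA_p^{(2)}$ (connecting morphism followed by the structure map) and uses the octahedral axiom to identify its cone with $\Cone(\sA_p[p]\xrightarrow{\eps^2}\sA_p[-p])\cong\sA_p[-p]\oplus\sA_p[p+1]$, directly exhibiting $\sA_p$ as a summand of an object of $\thick(\sA_p^{(2)})$. You instead pass to the ambient compactly generated category $\bD(\sA_p)$ and invoke the Neeman--Thomason generation criterion, reducing the claim to the statement that $\RHom(\sA_p^{(2)},N)=0$ forces $N=0$, which you extract from the long exact sequence and the nilpotence $\eps^2=0$. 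Both arguments hinge on $\eps^2=0$ at the decisive step, and both handle $p=0$ uniformly. The trade-off: the paper's argument is constructive, self-contained, and stays in the world of perfect modules at the cost of an octahedral-axiom manipulation; yours avoids that manipulation but imports the heavier machinery of compact generation (Ravenel/Neeman/Thomason) and the identification $\bD(\sA_p)^{\mathrm{c}}=\Dp(\sA_p)$. One small housekeeping remark: you cite Proposition~\ref{prop:sap-sbq} for $\Dp(\sA_p)=\thick(\sA_p)$, but that equality is just the definition of perfect modules; the last sentence of Proposition~\ref{prop:sap-sbq} gives the sharper statement $\Dp(\sA_p)=\langle\sA_p\rangle$ without idempotent completion, which you do not need here.
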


\begin{proof}
Recall the defining triangle~$\sA_p[p] \xrightarrow{\eps} \sA_p \xrightarrow{\quad} \sA_p^{(2)}$, and consider the composition
\begin{equation*}
\sA_p^{(2)}[-1-p] \lra \sA_p \lra \sA_p^{(2)},
\end{equation*}
where the first arrow is a shift of the connecting morphism in the above triangle.  Using the octahedral axiom, it is easy to see that its cone is isomorphic to the cone of the morphism~$\sA_p[p] \xrightarrow{\eps^2} \sA_p[-p]$, and as~$\eps^2 = 0$, it is isomorphic to~$\sA_p[-p] \oplus \sA_p[p + 1]$.  Thus, $\sA_p$ belongs to the thick subcategory generated by~$\sA_p^{(2)}$, and hence the latter thickly generates~$\Dp(\sA_p)$.
\end{proof}

\subsection{$\P^\infty$-objects}

As by Proposition~\ref{prop:sap-sbq} the category~$\Db(\sA_p) \simeq \Dp(\sB_{p+1})$ is generated by the simple module~$\kk_\sA$, to embed it into another triangulated category~$\cT$, we need to find in~$\cT$ an object that could serve as the image of~$\kk_\sA$.  Looking at the properties of~$\kk_\sA$ proved in Lemma~\ref{lemma:ap-k}, we arrive at the following. 

\begin{definition}
\label{def:pinfty}
Let~$q \ge 1$.  An object~$\rP$ in a triangulated category~$\cT$ is called a~\emph{$\Pinfty{q}$-object} if
\begin{aenumerate}
\item
\label{def:pinfty:ext}
$\Ext^\bullet(\rP,\rP) \cong \kk[\uptheta]$, where~$\deg(\uptheta) = q$, and
\item
\label{def:pinfty:hocolim}
$\hocolim (\rP \xrightarrow{\uptheta} \rP[q] \xrightarrow{\uptheta} \rP[2q]  \xrightarrow{\uptheta} \cdots) = 0$,
\end{aenumerate}
where the homotopy colimit in~\ref{def:pinfty:hocolim} is taken in any cocomplete category~$\hcT$ containing~$\cT$.  For instance, if~$\cT$ is a small triangulated dg-category, we can take~$\hcT = \bD(\cT)$.

When we do not want to specify the parameter~$q$, we just say that~$\rP$ is a~$\P^\infty$-object.
\end{definition}

\begin{remark}
\label{rem:pinfty}
When~$\cT = \Db(X)$, property~\ref{def:pinfty:hocolim} is satisfied automatically by Lemma~\ref{lem:hocolim-shift} because if~$\rP \in \Db(X)$, then~$\Ext^\bullet(G,\rP)$ is bounded above for any compact (\textit{i.e.}, perfect) object~$G \in \Dqc(X)$.  On the other hand, there are examples of objects for which~\ref{def:pinfty:ext} is true but~\ref{def:pinfty:hocolim} fails.  The simplest such example (maybe somewhat unexpectedly) is the free module~$\sB_q$ in~$\cT = \Dp(\sB_q)$.  Indeed, in this case
\begin{equation*}
\widehat\sB_q \coloneqq 
\hocolim \left(\sB_q \xrightarrow{\ \uptheta\ } \sB_q[q] \xrightarrow{\ \uptheta\ } \sB_q[2q]  \xrightarrow{\ \uptheta\ } \cdots\right) \cong \kk[\uptheta, \uptheta^{-1}].
\end{equation*}
On the other hand, one can check that the object~$\Cone(\sB_q \to \widehat\sB_q) \cong \uptheta^{-1}\kk[\uptheta^{-1}]$ is a $\Pinfty{q}$-object.
\end{remark}

\begin{definition}
\label{def:cse}
For any $\Pinfty{q}$-object~$\rP \in \cT$ we define another object~$\rM$ from the distinguished triangle
\begin{equation}
\label{def:cm}
\rM \xrightarrow{\quad} \rP \xrightarrow{\ \uptheta\ } \rP[q].
\end{equation}
We call~$\rM \in \cT$ the \emph{canonical self-extension of~$\rP$}.
\end{definition}

It follows easily from~\eqref{def:cm} and Definition~\ref{def:pinfty}\ref{def:pinfty:ext} that
\begin{equation}
\label{eq:ext-cm-cr}
\Ext^\bullet(\rM, \rP) \cong \kk.
\end{equation}
Furthermore, the following computation is obvious.

\begin{lemma}
\label{lem:ext-cm}
If a pair of objects~$(\rM,\rP)$ in a triangulated category~$\cT$ satisfies~\eqref{def:cm} and~\eqref{eq:ext-cm-cr}, then
\begin{equation}
\label{eq:ext-cM}
\Ext^\bullet(\rM, \rM) \cong \sA_{q-1}.
\end{equation}
\end{lemma}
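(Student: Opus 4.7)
My plan is to apply the functor $\Hom(\rM, -)$ to the distinguished triangle $\rM \xrightarrow{a} \rP \xrightarrow{\uptheta} \rP[q] \xrightarrow{b} \rM[1]$ extending~\eqref{def:cm}, thereby reducing the unknown $\Ext^\bullet(\rM, \rM)$ to the one-dimensional $\Ext^\bullet(\rM, \rP) \cong \kk$ provided by~\eqref{eq:ext-cm-cr}. Setting $E^\bullet \coloneqq \Ext^\bullet(\rM, \rM)$, the resulting long exact sequence reads
\[
\cdots \to E^i \xrightarrow{\ a_*\ } \Ext^i(\rM, \rP) \xrightarrow{\ \uptheta_*\ } \Ext^{i+q}(\rM, \rP) \xrightarrow{\ b_*\ } E^{i+1} \to \cdots.
\]
Since the source and target of $\uptheta_*$ are concentrated in degrees $0$ and $-q$ respectively, and $q \geq 1$, they are never simultaneously nonzero, so $\uptheta_* = 0$ throughout. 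The sequence therefore splits into short exact sequences $0 \to \Ext^{i+q-1}(\rM, \rP) \to E^i \to \Ext^i(\rM, \rP) \to 0$, from which I would read off that $E^i$ is one-dimensional in degrees $i = 0$ and $i = 1-q$ and zero elsewhere (when $q = 1$ these two degrees coincide and $E^0$ is two-dimensional). This matches $\sA_{q-1} = \kk[\eps]/(\eps^2)$, $\deg(\eps) = 1 - q$, as a graded vector space.

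It then remains to pin down the algebra structure. Chasing a generator $a \in \Hom(\rM, \rP) \cong \Ext^{-q}(\rM, \rP[q])$ through the boundary map $b_*$ produces an explicit generator $\eps = b[-q] \circ a \in E^{1-q}$, so the only nontrivial relation in $\sA_{q-1}$ left to verify is $\eps^2 = 0$.

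The hard step is the case $q = 1$, where $\eps^2$ lives in $E^0$ alongside the unit, so its vanishing cannot be detected from graded dimensions alone; for $q \geq 2$, by contrast, $\eps^2 \in E^{2-2q}$ and $2-2q \notin \{0, 1-q\}$, so $\eps^2 = 0$ is automatic from the previous step. A uniform argument covering both cases invokes the rotated triangle $\rP \xrightarrow{\uptheta} \rP[q] \xrightarrow{b} \rM[1] \xrightarrow{-a[1]} \rP[1]$, whose consecutive composition $a[1] \circ b = 0$ shifts to $a[1-q] \circ b[-q] = 0$, whence
\[
\eps^2 \;=\; \eps[1-q] \circ \eps \;=\; b[1-2q] \circ \bigl(a[1-q] \circ b[-q]\bigr) \circ a \;=\; 0,
\]
completing the identification $\Ext^\bullet(\rM, \rM) \cong \sA_{q-1}$.
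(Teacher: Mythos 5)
The paper offers no proof of this lemma at all — it simply prefaces it with ``Furthermore, the following computation is obvious.'' Your argument therefore cannot be compared against the paper's proof, but it can be checked on its own terms, and it is correct. Applying $\Hom(\rM,-)$ to the triangle and using that $\Ext^\bullet(\rM,\rP)\cong\kk$ is concentrated in degree $0$ to kill $\uptheta_*$ gives exactly the graded vector space $\kk\oplus\kk[q-1]$, and you are right that the only issue is pinning down $\eps^2$, which for $q\ge 2$ is forced by degree reasons but for $q=1$ genuinely requires an argument. Your uniform treatment --- writing $\eps = b[-q]\circ a$ for $a,b$ the triangle maps, then factoring $\eps^2$ through $a[1-q]\circ b[-q]$, which vanishes because consecutive maps in a rotated distinguished triangle compose to zero --- is clean and correct. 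One small thing worth making explicit in the $q=1$ case (you implicitly use it) is that $1$ and $\eps$ are linearly independent in $E^0$: this follows since $a_*(1)=a\neq 0$ while $a_*(\eps) = a\circ b[-1]\circ a = 0$. With that remark added, your proof is a complete replacement for the paper's ``obvious.''
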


In these terms we can state a criterion for a full embedding of a categorical double point into a $\kk$-linear dg-enhanced triangulated category~$\cT$.  We will say that an object~$F \in \cT$ is \emph{left} or \emph{right homologically finite-dimensional} if~$\Ext^\bullet(F,T)$ or~$\Ext^\bullet(T,F)$, respectively, is finite-dimensional over~$\kk$ for any~$T \in \cT$ (\textit{cf.}~\cite[Section~4.1]{KS:hfd}).

{\samepage
\begin{lemma}
\label{lem:Pinf}
Let $\cT$ be a dg-enhanced triangulated category, and let~$(\rM,\rP)$ be a pair of objects in~$\cT$ satisfying~\eqref{eq:ext-cm-cr} and such that~$\Cone(\rM \to \rP) \cong \rP[q]$ and\,~$\hocolim \rP[iq] = 0$ for the chain from Definition~\textup{\ref{def:pinfty}\ref{def:pinfty:hocolim}}.

\begin{enumerate}[label={\textup{(\roman*)}}]
\item
\label{item:phi-rp}
We have~$\RHom_\cT(\rM, \rM) \cong \sA_{q - 1}$, and the functor
\begin{equation*}
\Phi_\rM \colon \bD(\sA_{q - 1}) \lra \bD(\cT),
\qquad
F \longmapsto F \otimes_{\sA_{q - 1}} \rM
\end{equation*}
is fully faithful, with~$\Phi_\rM(\sA_{q-1}) \cong \rM$ and\,~$\Phi_\rM(\kk_\sA) \cong \rP$.  Furthermore,
\begin{equation*}
\Phi_\rM\left(\Db(\sA_{q - 1})\right) = \langle \rP \rangle \subset \cT
\qquad\text{and}\qquad
\Phi_\rM\left(\Dp(\sA_{q - 1})\right) = \langle \rM \rangle \subset \cT.
\end{equation*}
In particular, $\rP$ is a $\Pinfty{q}$-object, and~$\rM$ is its canonical self-extension.
\item
\label{item:rp-admissible}
The functor\,~$\Phi_\rM\vert_{\Db(\sA_{q-1})} \colon \Db(\sA_{q-1}) \to \cT$ has a right adjoint if and only if the subcategory~$\langle \rP \rangle \subset \cT$ is right admissible, which holds if and only if\,~$\rM $ is left homologically finite-dimensional.

\item
\label{item:rp-left-admissible}
The functor\,~$\Phi_\rM\vert_{\Db(\sA_{q-1})} \colon \Db(\sA_{q-1}) \to \cT$ has a left adjoint if and only if the subcategory~$\langle \rP \rangle \subset \cT$ is left admissible, and if\,~$\cT \simeq \cT^\opp$, this is equivalent to~$\rM$ being right homologically finite-dimensional.
\end{enumerate}
In particular, if\,~$\cT = \Db(X)$, $X$ is a projective Gorenstein scheme, 
and~$\rM \in \Dp(X)$, then~$\langle \rP \rangle \subset \Db(X)$ is admissible.
\end{lemma}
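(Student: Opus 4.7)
My plan is to prove part~(i) in detail (from which most of the work follows) and then to derive parts (ii), (iii), and the Gorenstein consequence by routine adjunction and Serre-duality arguments.

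For part~(i), Lemma~\ref{lem:ext-cm} applied to $(\rM,\rP)$ gives $\Ext^\bullet_\cT(\rM,\rM)\cong\sA_{q-1}$ as graded algebras, and intrinsic formality (Lemma~\ref{lem:sap-sbq-formal}) upgrades this in the dg-enhancement of $\cT$ to a dg-algebra quasi-isomorphism $\RHom_\cT(\rM,\rM)\simeq\sA_{q-1}$. Keller's Morita theorem~\cite{Keller} then produces the fully faithful functor $\Phi_\rM=-\otimes_{\sA_{q-1}}\rM\colon\Db(\sA_{q-1})\to\cT$ with $\Phi_\rM(\sA_{q-1})\cong\rM$. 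To identify $\Phi_\rM(\kk_\sA)\cong\rP$, I apply $\Phi_\rM$ to the triangle~\eqref{eq:sap-triangle} for $p=q-1$ and obtain $\rM\to\Phi_\rM(\kk_\sA)\to\Phi_\rM(\kk_\sA)[q]$; comparing with the assumed triangle $\rM\to\rP\to\rP[q]$, and using that morphisms $\rM\to\rP$ and $\rM\to\Phi_\rM(\kk_\sA)$ are each essentially unique by~\eqref{eq:ext-cm-cr} and $\Ext^\bullet_{\sA_{q-1}}(\sA_{q-1},\kk_\sA)\cong\kk$, yields the desired identification. The image identities $\Phi_\rM(\Db(\sA_{q-1}))=\langle\rP\rangle$ and $\Phi_\rM(\Dp(\sA_{q-1}))=\langle\rM\rangle$ then follow from Proposition~\ref{prop:sap-sbq}\ref{item:sap}, which asserts that these categories are triangulated-generated (without idempotent completion) by $\kk_\sA$ and $\sA_{q-1}$ respectively. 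Condition~(a) of Definition~\ref{def:pinfty} for $\rP$ is then Lemma~\ref{lemma:ap-k}\ref{item:ext-sap-k-k} transported along the equivalence, while~(b) is one of the hypotheses, so $\rP$ is a $\Pinfty{q}$-object and $\rM$ is its canonical self-extension.

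For parts~(ii) and~(iii), since $\Phi_\rM$ factors as the equivalence $\Db(\sA_{q-1})\xrightarrow{\sim}\langle\rP\rangle$ followed by the inclusion into $\cT$, existence of a right (resp.\ left) adjoint to $\Phi_\rM$ is equivalent to right (resp.\ left) admissibility of $\langle\rP\rangle$. The candidate right adjoint sends $T\in\cT$ to $\RHom_\cT(\rM,T)$ with its natural $\sA_{q-1}$-module structure, and lands in $\Db(\sA_{q-1})$ precisely when $\Ext^\bullet_\cT(\rM,T)$ is finite-dimensional for every $T$, that is, when $\rM$ is left homologically finite. This gives~(ii); the same mechanism applied to left adjoints gives the first equivalence in~(iii), and if $\cT\simeq\cT^\opp$ this duality interchanges left and right adjoints, hence left and right homological finiteness, yielding the second equivalence.

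For the Gorenstein conclusion, perfectness of $\rM$ gives left homological finiteness directly. Gorensteinness means the dualising complex of $X$ is a shifted line bundle, so $\Db(X)$ admits Serre duality $\Ext^i(T,\rM)\cong\Ext^{\dim X-i}(\rM,T\otimes\omega_X)^*$, which is finite-dimensional for every $T\in\Db(X)$; hence $\rM$ is also right homologically finite, and parts~(ii) and~(iii) together yield admissibility of $\langle\rP\rangle$. The main obstacle I anticipate is the first step of part~(i): applying the dg-enhancement and Keller's theorem carefully enough that the resulting functor is identified on objects as claimed (in particular $\Phi_\rM(\kk_\sA)\cong\rP$, not merely isomorphic after further manipulation), which rests on the essential uniqueness of morphisms out of~$\rM$ through the one-dimensional $\Ext$-group computed in~\eqref{eq:ext-cm-cr}.
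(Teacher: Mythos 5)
The plan for parts (ii), (iii), and the Gorenstein consequence is correct and matches the paper essentially verbatim. The issue is in part (i), specifically the identification $\Phi_\rM(\kk_\sA) \cong \rP$, which is the heart of the lemma.

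You argue: apply $\Phi_\rM$ to the defining triangle of $\kk_\sA$ to obtain $\rM \to \Phi_\rM(\kk_\sA) \to \Phi_\rM(\kk_\sA)[q]$, then compare with the given triangle $\rM \to \rP \to \rP[q]$ and appeal to ``essential uniqueness'' of a morphism out of~$\rM$ to conclude $\Phi_\rM(\kk_\sA) \cong \rP$. This does not work. Knowing that $\Hom(\rM,\rP)$ and $\Hom(\rM,\Phi_\rM(\kk_\sA))$ are both one-dimensional gives a distinguished morphism into each target, but supplies no comparison morphism between $\rP$ and $\Phi_\rM(\kk_\sA)$ themselves; the triangle $\rM \to \rP \to \rP[q]$ characterises $\rP$ recursively (as the cone of a map $\rP[q-1]\to\rM$), so this data alone does not pin $\rP$ down. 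Crucially, your argument never uses the hypothesis $\hocolim \rP[iq] = 0$ in establishing the identification --- you invoke it only afterwards, to say $\rP$ satisfies Definition~\ref{def:pinfty}\ref{def:pinfty:hocolim}. But the paper shows in Remark~\ref{rem:pinfty} that an object can satisfy the $\Ext$-condition and the triangle condition without the homotopy-colimit vanishing, and in that case the conclusion is false: $\Phi_\rM(\kk_\sA)$ always satisfies the vanishing (being the continuous image of $\kk_\sA = \hocolim \sA_{q-1}^{(i)}$), so it cannot coincide with a $\rP$ that does not.

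The paper's proof handles this by using continuity of $\Phi_\rM$ to write $\Phi_\rM(\kk_\sA) \cong \hocolim \rM^{(i)}$ where $\rM^{(i)} = \Phi_\rM(\sA_{q-1}^{(i)})$, then constructing (via the octahedral axiom) compatible triangles $\rM^{(i)} \to \rP \to \rP[iq]$, and finally passing to the homotopy colimit and invoking $\hocolim\rP[iq]=0$ to conclude $\hocolim\rM^{(i)}\cong\rP$. This is where the hypothesis earns its keep, and is the step you need to supply. A second, smaller imprecision: invoking ``Keller's Morita theorem'' to produce a fully faithful functor directly on~$\Db(\sA_{q-1})$ glosses over the fact that the Morita-theoretic statement is about perfect modules; the paper instead sets up the derived tensor-product functor on the unbounded categories $\bD(\sA_{q-1}) \to \bD(\cT)$ (citing~\cite[\S3.8]{Kel06}) and then restricts, which is what makes the subsequent continuity/hocolim argument available.
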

}

\begin{proof}
\ref{item:phi-rp}~ By Lemma~\ref{lem:ext-cm} a combination of~\eqref{eq:ext-cm-cr} with the assumption~\mbox{$\Cone(\rM \to \rP) \cong \rP[q]$} implies~\eqref{eq:ext-cM}.  Using Lemma~\ref{lem:sap-sbq-formal} we deduce~$\RHom_\cT(\rM,\rM) \cong \sA_{q-1}$, so ~$\rM$ is a $\sA_{q-1}\md\cT$-bimodule, and it gives a continuous (\textit{i.e.}, commuting with arbitrary direct sums) fully faithful derived tensor product functor
\begin{equation*}
\Phi_\rM \colon \bD(\sA_{q-1}) \lra \bD(\cT),
\qquad
F \longmapsto F \otimes_{\sA_{q - 1}} \rM; 
\end{equation*}
see~\cite[Section~3.8]{Kel06}.  Obviously, $\Phi_\rM(\sA_{q-1}) \cong \rM$.  Furthermore, it follows from the continuity of~$\Phi_\rM$ and Lemma~\ref{lemma:ap-k} that
\begin{equation*}
\Phi_\rM(\kk_\sA) \cong
\Phi_\rM(\hocolim \sA_{q-1}^{(i)}) \cong
\hocolim \rM^{(i)},
\end{equation*}
where $\rM^{(i)} \coloneqq \Phi_\rM(\sA_{q-1}^{(i)})$.
Using the inductive construction of the objects~$\sA_{q-1}^{(i)}$ in Lemma~\ref{lemma:ap-k} and the octahedral axiom,
it is easy to find distinguished triangles
\begin{equation}
\label{eq:rmi}
\rM^{(i)} \lra \rP \xrightarrow{\ \uptheta^i\ } \rP[iq]
\end{equation}
for all~$i \ge 1$ such that the maps~$\rM^{(i)} \to \rP$ are compatible with the chain maps~$\rM^{(1)} \to \rM^{(2)} \to \cdots$.
Therefore, taking the homotopy colimit of the above triangles, we obtain the triangle
\begin{equation*}
\hocolim \rM^{(i)} \lra \hocolim{} \rP \lra \hocolim \rP[iq],
\end{equation*}
and since the last term is zero by assumption, we obtain an isomorphism~$\Phi_\rM(\kk_\sA) \cong \hocolim \rP \cong \rP$.  As the functor~$\Phi_\rM$ is fully faithful, we conclude from Lemma~\ref{lemma:ap-k}\ref{item:ext-sap-k-k} that~$\rP$ is a $\Pinfty{q}$-object.

Finally, since by Proposition~\ref{prop:sap-sbq} the categories~$\Db(\sA_{q-1})$ and~$\Dp(\sA_{q-1})$ are generated by the objects~$\kk_\sA$ and~$\sA_{q-1}$, respectively, we conclude that their images are~$\langle \rP \rangle$ and~$\langle \rM \rangle$.

\ref{item:rp-admissible}~ Since the functor~$\Phi_\rM$ is fully faithful, right admissibility of~$\langle \rP \rangle$ is equivalent to the existence of a right adjoint for~$\Phi_\rM$ on~$\Db(\sA_{q-1})$; this proves the first equivalence.

Now note that the functor~$\Phi_\rM$ on~$\bD(\sA_{q-1})$ always has the right adjoint given by
\begin{equation*}
\RHom_\cT(\rM,-) \colon \bD(\cT) \lra \bD\left(\sA_{q-1}\right);
\end{equation*}
see~\cite[Section~3.8]{Kel06}.  If~$\rM$ is left homologically finite-dimensional, this functor takes~$\cT$ to~$\Db(\sA_{q-1})$, so it gives a right adjoint to~$\Phi_\rM$ on~$\cT$.  Conversely, if a right adjoint functor~$\Phi_\rM^! \colon \cT \to \Db(\sA_{q-1})$ exists, then for any~$T \in \cT$
\begin{equation*}
\Ext^\bullet_\cT(\rM, T) \cong 
\Ext^\bullet_\cT\left(\Phi_\rM\left(\sA_{q-1}\right), T\right) \cong 
\Ext^\bullet_{\sA_{q-1}}\left(\sA_{q-1}, \Phi_\rM^!(T)\right) 
\cong \rH^\bullet\left(\Phi_\rM^!(T)\right)
\end{equation*}
is finite-dimensional, so~$\rM$ is left homologically finite-dimensional.  This proves the second equivalence.

\ref{item:rp-left-admissible}~ We apply~\ref{item:rp-admissible} to the opposite categories, using an equivalence~$\sA_{q-1}^\opp \simeq \sA_{q-1}$ (which holds because~$\sA_{q-1}$ is commutative) and the equivalence~$\cT \simeq \cT^\opp$.  Passing to opposite categories swaps left and right adjoints and left and right homological finite-dimensionality, and the result follows.

The final statement is a combination of parts~\ref{item:rp-admissible} and~\ref{item:rp-left-admissible} because left homological finite-dimensionality for an object~$\rM \in \Db(X)$ is equivalent to the condition~$\rM \in \Dp(X)$, see~\cite[Proposition~1.11]{Orl06}, and when~$X$ is Gorenstein, the Grothendieck duality implies that the same is true for right homological finite-dimensionality.
\end{proof}

We conclude this section with an obvious consequence of Corollary~\ref{cor:ap2}.

\begin{corollary}
\label{cor:sm2}
Under the assumptions of Lemma~\textup{\ref{lem:Pinf}}, we have~$\rM \in \thick(\rM^{(2)}) \subset \cT$.
\end{corollary}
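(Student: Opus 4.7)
The plan is to deduce this corollary as a formal consequence of Corollary~\ref{cor:ap2} via the fully faithful embedding $\Phi_\rM \colon \Db(\sA_{q-1}) \to \cT$ constructed in Lemma~\ref{lem:Pinf}\ref{item:phi-rp}. The whole point is that the assertion of Corollary~\ref{cor:ap2} lives inside $\Dp(\sA_{q-1})$ and can simply be transported into $\cT$.

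First, I would recall that in the proof of Lemma~\ref{lem:Pinf}\ref{item:phi-rp} the objects $\rM^{(i)} \in \cT$ are defined precisely as $\rM^{(i)} \coloneqq \Phi_\rM(\sA_{q-1}^{(i)})$; in particular $\rM = \Phi_\rM(\sA_{q-1})$ and $\rM^{(2)} = \Phi_\rM(\sA_{q-1}^{(2)})$. Next, Corollary~\ref{cor:ap2} gives the containment $\sA_{q-1} \in \Dp(\sA_{q-1}) = \thick(\sA_{q-1}^{(2)})$.

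Finally, since $\Phi_\rM$ is a fully faithful exact functor of triangulated categories, it preserves cones, shifts, and direct summands, so the image of a thick subcategory lands in the thick subcategory generated by the images of its generators:
\begin{equation*}
\Phi_\rM\bigl(\thick(\sA_{q-1}^{(2)})\bigr) \subset \thick\bigl(\Phi_\rM(\sA_{q-1}^{(2)})\bigr) = \thick(\rM^{(2)}).
\end{equation*}
Applying this to $\sA_{q-1}$ yields $\rM = \Phi_\rM(\sA_{q-1}) \in \thick(\rM^{(2)})$, as required.

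I do not foresee any real obstacle: the only thing one needs is the bookkeeping identification $\rM^{(2)} = \Phi_\rM(\sA_{q-1}^{(2)})$, which is built into the proof of Lemma~\ref{lem:Pinf}, and the trivial fact that a fully faithful exact functor preserves thick hulls. The corollary is essentially a one-line transport of Corollary~\ref{cor:ap2} along $\Phi_\rM$.
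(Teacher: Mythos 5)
Your proof is correct and matches the paper's intent: the paper labels this "an obvious consequence of Corollary~\ref{cor:ap2}" without further elaboration, and the mechanism you spell out---transporting $\sA_{q-1}\in\thick(\sA_{q-1}^{(2)})$ along the exact fully faithful functor $\Phi_\rM$, using the identifications $\rM=\Phi_\rM(\sA_{q-1})$ and $\rM^{(2)}=\Phi_\rM(\sA_{q-1}^{(2)})$ from Lemma~\ref{lem:Pinf}---is exactly what is meant. The auxiliary fact you invoke, that an exact (additive) functor $\Phi$ satisfies $\Phi(\thick(G))\subset\thick(\Phi(G))$, is standard and holds as you say.
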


We will also need the following simple consequence of~\eqref{eq:rmi}, generalizing~\eqref{eq:ext-cm-cr}.

\begin{corollary}
\label{cor:ext-rmi-rp}
We have~$\Ext^\bullet(\rM^{(i)}, \rP) \cong \kk[\uptheta]/\uptheta^i$.
\end{corollary}

Despite the apparent symmetry between the algebras~$\sA_p$ and~$\sB_q$ and their derived categories, the category we will use mostly is the category~$\Db(\sA_p)$.  In particular, in~Section~\ref{sec:nodal-absorption} we will show that varieties with ordinary double points often admit subcategories equivalent to~$\Db(\sA_p)$ in their bounded derived categories.  For this reason we call~$\Db(\sA_p)$ \emph{the categorical ordinary double point} of degree~$p$.

%%%%%%%%%%%%%%%%%%%%%%%%%%%%%%%%%%%%%%%%%%%%%%%%%%%%%%%%%%%%%%%%%%%%%%%%%%%%%%%
%%%%%%%%%%%%%%%%%%%%%%%%%%%%%%%%%%%%%%%%%%%%%%%%%%%%%%%%%%%%%%%%%%%%%%%%%%%%%%%
%%%%%%%%%%%%%%%%%%%%%%%%%%%%%%%%%%%%%%%%%%%%%%%%%%%%%%%%%%%%%%%%%%%%%%%%%%%%%%%
%%%%%%%%%%%%%%%%%%%%%%%%%%%%%%%%%%%%%%%%%%%%%%%%%%%%%%%%%%%%%%%%%%%%%%%%%%%%%%%
%%%%%%%%%%%%%%%%%%%%%%%%%%%%%%%%%%%%%%%%%%%%%%%%%%%%%%%%%%%%%%%%%%%%%%%%%%%%%%%

\section{Kronecker quiver and categorical ordinary double points}
\label{sec:kronecker}

In this section we introduce the graded Kronecker quiver category, study its properties, and show that the categorical ordinary double point can be obtained as its Verdier localization with respect to a spherical object.  Also, we explain how to construct fully faithful functors with admissible image from the graded Kronecker quiver category.  We keep working over an arbitrary field~$\kk$.

\subsection{The graded Kronecker quiver}

Let~$q \ge 1$ be a positive integer.  We define the \emph{graded Kronecker quiver of degree~$q$} as the dg-quiver
\begin{equation}
\label{eq:krq}
\Kr_q \coloneqq
\left\{
\
\xymatrix@1@C=4em{
\bullet \ar@/^/[r]^{\upalpha_q} \ar@/_/[r]_{\upalpha_0} & \bullet,
}
\qquad
\deg(\upalpha_0) = 0,\quad
\deg(\upalpha_q) = q,
\qquad
\rd(\upalpha_0) = \rd(\upalpha_q) = 0
\
\right\}.
\end{equation}
We will also use the same notation~$\Kr_q$ for the path dg-algebra of this quiver.

\begin{remark}
The same definition makes sense for any~$q$, {not necessarily positive}.  However, for $q < 0$ the resulting dg-algebra~$\Kr_q$ is Morita equivalent to~$\Kr_{-q}$, while for~$q = 0$ the derived category of~$\Kr_0$ is equivalent to the derived category of the projective line~$\P^1$ (\textit{cf.}~the proof of Lemma~\ref{lem:rkpm-spherical}).
\end{remark}

\begin{lemma}
\label{lemma:kr-formal}
The dg-algebra~$\Kr_{q}$ is intrinsically formal, smooth, and proper.
\end{lemma}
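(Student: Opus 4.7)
The plan is to verify the three properties of~$\Kr_q$ separately. Properness is immediate: since the differential vanishes, $\rH^\bullet(\Kr_q) = \Kr_q$ has total dimension~$4$ over~$\kk$, spanned by the two vertex idempotents and the two arrows.

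For smoothness, I would write down the standard two-term projective bimodule resolution of the diagonal for the path algebra of a quiver, adapted to the internal grading. Let~$S = \kk e_0 \oplus \kk e_1 \subset \Kr_q$ be the semisimple subalgebra spanned by the vertex idempotents and let~$V = \kk\upalpha_0 \oplus \kk\upalpha_q$ be the~$S$-bimodule of arrows, viewed as a graded vector space with~$\upalpha_0$ in internal degree~$0$ and~$\upalpha_q$ in internal degree~$q$. Then one has a short exact sequence
\begin{equation*}
0 \to \Kr_q \otimes_S V \otimes_S \Kr_q \to \Kr_q \otimes_S \Kr_q \to \Delta_{\Kr_q} \to 0
\end{equation*}
of graded~$\Kr_q$-bimodules, with the left map sending~$x \otimes \upalpha \otimes y$ to~$x\upalpha \otimes y - x \otimes \upalpha y$ and the right map being multiplication. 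Both non-terminal terms split as finite direct sums of free bimodules of the form~$\Kr_q e_i \otimes_\kk e_j \Kr_q$ (with an internal shift by~$q$ on the summand coming from~$\upalpha_q$), so~$\Delta_{\Kr_q}$ is perfect.

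For intrinsic formality, the above resolution has length one, so~$\mathrm{HH}^n(\Kr_q, M) = 0$ for every~$\Kr_q$-bimodule~$M$ and every~$n \geq 2$. By standard~$A_\infty$-obstruction theory (Kadeishvili), the higher operations~$\bm_n$ for~$n \geq 3$ in any minimal model of a dg-algebra~$B$ with~$\rH^\bullet(B) \cong \Kr_q$ can be removed inductively by gauge transformations, since the successive obstructions live in~$\mathrm{HH}^{\geq 3}(\Kr_q, \Kr_q) = 0$. Hence every such~$B$ is quasi-isomorphic to~$\Kr_q$.

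The main technical step is verifying exactness of the bimodule resolution with the correct internal grading, so that the shift by~$q$ corresponding to~$\upalpha_q$ is placed consistently and the differential has total internal degree~$0$; this is routine but needs care. A direct alternative to the Hochschild-cohomology argument, parallel to the proof of Lemma~\ref{lem:sap-sbq-formal}, would be to lift the vertex idempotents to orthogonal closed elements of~$B^0$ (after replacing~$B$ by a quasi-isomorphic connective model if necessary) and then lift~$\upalpha_0, \upalpha_q$ to closed elements of~$e_1 B e_0$ in degrees~$0$ and~$q$; no relations need to be checked because any product of two arrows already vanishes in~$\Kr_q$ for source-target reasons.
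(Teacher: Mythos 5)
Your proof is correct, but it takes a genuinely different route for both smoothness and formality. The paper dispatches smoothness with ``clear, since the quiver is directed'' and proves intrinsic formality by the argument of Lemma~\ref{lem:sap-sbq-formal} (for~$\Kr_q$ the relevant observation is that the two arrows are not composable, so any higher $A_\infty$-operation~$\bm_n$, $n\ge 3$, fed with non-identity morphisms must vanish for source-target reasons --- or alternatively one lifts the two vertex idempotents and the two arrows to closed elements of the appropriate degrees, exactly as you sketch in your last paragraph). You instead write down the standard two-term projective bimodule resolution of~$\Delta_{\Kr_q}$ coming from the quiver presentation and read off \emph{both} properties from it: perfectness of the diagonal directly, and intrinsic formality by observing that the length-one resolution forces $\mathrm{HH}^{\ge 2}(\Kr_q,-)=0$ so Kadeishvili's obstruction theory kills the higher~$\bm_n$. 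This is a coherent, self-contained package (and arguably illuminating: the resolution that witnesses smoothness is the same one that makes the formality obstructions vanish), at the cost of invoking a little more machinery. One small caveat on your ``direct alternative'': lifting idempotents requires either a connective replacement of~$B$ or an appeal to the nilpotence of the kernel of $B^0\twoheadrightarrow \rH^0(B)$, and since~$\Kr_q$ is not connective this should be said a bit more carefully --- but this is a side remark, not your main argument, and your main argument is sound.
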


\begin{proof}
The intrinsic formality is proved by the argument of Lemma~\ref{lem:sap-sbq-formal}; see also~\cite[proof of Proposition~A.3]{K19}.  The smoothness is clear since the quiver~\eqref{eq:krq} is directed, and the properness is evident.
\end{proof}

A combination of the smoothness and properness of the dg-algebra~$\Kr_q$ implies that the category~$\Dp(\Kr_q)$ of perfect dg-modules over~$\Kr_q$ coincides with the category~$\Db(\Kr_q)$ of dg-modules with bounded total cohomology.  We denote this category by
\begin{equation*}
\cKr_q \coloneqq \Dp\left(\Kr_q\right) = \Db\left(\Kr_q\right) 
\end{equation*}
and call it \emph{the graded Kronecker quiver category of degree~$q$}.

The vertices of the quiver provide two idempotents in~$\Kr_q$, and since the quiver is directed, the corresponding direct summands of the free module form an exceptional pair~$(\cE,\cE')$ in~$\cKr_q$ generating the category.  In other words, we have a semiorthogonal decomposition
\begin{equation*}
\cKr_q \coloneqq \langle \cE, \cE' \rangle,
\end{equation*}
with exceptional objects~$\cE$ and~$\cE'$ and
\begin{equation}
\label{eq:ext-ce-cep}
\Ext^\bullet(\cE, \cE') = \kk \oplus \kk[-q].
\end{equation}
The intrinsic formality (Lemma~\ref{lemma:kr-formal}) of the dg-algebra~$\Kr_q$ has the following useful consequence.

\begin{corollary}
\label{cor:krq-category}
Every dg-enhanced triangulated category~$\cT$ generated by an exceptional pair~$(\cE,\cE')$ satisfying~\eqref{eq:ext-ce-cep} is equivalent to~$\cKr_q$.
\end{corollary}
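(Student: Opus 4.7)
The plan is to identify $\cT$ with $\cKr_q$ via Keller's Morita theorem applied to the generator $\cE \oplus \cE'$, using the intrinsic formality from Lemma~\ref{lemma:kr-formal} to upgrade a cohomology-level identification to a quasi-isomorphism of dg-algebras.

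First I would form the dg-algebra $C \coloneqq \RHom_\cT(\cE \oplus \cE', \cE \oplus \cE')$ in the dg-enhancement of $\cT$. Its total cohomology is computed component by component: since $(\cE, \cE')$ is an exceptional pair we have $\Ext^\bullet(\cE,\cE) = \Ext^\bullet(\cE',\cE') = \kk$ and $\Ext^\bullet(\cE',\cE) = 0$, while the assumption~\eqref{eq:ext-ce-cep} gives $\Ext^\bullet(\cE,\cE') = \kk \oplus \kk[-q]$. The idempotents projecting onto $\cE$ and $\cE'$ equip $\rH^\bullet(C)$ with the structure of the path algebra of the quiver~\eqref{eq:krq}, so $\rH^\bullet(C) \cong \Kr_q$ as graded algebras.

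Next, by intrinsic formality of $\Kr_q$ (Lemma~\ref{lemma:kr-formal}), there is a quasi-isomorphism of dg-algebras $C \simeq \Kr_q$. Since $(\cE,\cE')$ generates $\cT$ as a triangulated category, the object $\cE \oplus \cE'$ is a classical generator of $\cT$, and Keller's Morita theorem~\cite{Keller} then yields an equivalence
\begin{equation*}
\cT \simeq \Dp(C) \simeq \Dp(\Kr_q) = \cKr_q,
\end{equation*}
sending $\cE$ and $\cE'$ to the indecomposable projective modules corresponding to the two vertices.

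There is no real obstacle here; the only subtlety is that Keller's theorem requires a dg-enhancement (which is assumed) and that the generation hypothesis be the correct one (generation as a triangulated category suffices because $\Kr_q$ is smooth and proper, so $\Dp(\Kr_q) = \Db(\Kr_q)$ is already idempotent complete after triangulated generation by the free module, as recorded in Proposition~\ref{prop:sap-sbq}). The intrinsic formality step is what makes the proof entirely formal rather than requiring any $A_\infty$-bookkeeping.
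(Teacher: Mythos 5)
Correct, and this is essentially the argument the paper intends: the corollary is presented as an immediate consequence of intrinsic formality (Lemma~\ref{lemma:kr-formal}), and your route — compute $\rH^\bullet(\RHom_\cT(\cE\oplus\cE',\cE\oplus\cE'))\cong\Kr_q$, invoke intrinsic formality to obtain a quasi-isomorphism of dg-algebras, then apply Keller's Morita theory — is precisely the way to spell it out. The only small slip is the final citation: the fact that $\cKr_q=\Dp(\Kr_q)=\Db(\Kr_q)$ is generated as a triangulated category (without extra idempotent completion) by the exceptional pair of indecomposable projectives is recorded in the text immediately after Lemma~\ref{lemma:kr-formal}, not in Proposition~\ref{prop:sap-sbq}, which makes the analogous statement for $\sA_p$ and $\sB_q$.
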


We define the objects~$\rK_+, \rK_- \in \cKr_q$ from the distinguished triangles
\begin{align}
\label{def:ck}
\rK_+ & \xrightarrow{\quad} \cE \xrightarrow{\ \upalpha_0\ } \cE',\\
\label{def:tcm}
\rK_- & \xrightarrow{\quad} \cE \xrightarrow{\ \upalpha_q\ } \cE'[q].
\end{align}

\begin{lemma}
\label{lemma:exts-tcap}
The~$\Ext$-groups between {the objects~$\rK_-,\cE,\cE',\rK_+ \in \cKr_q$} are given in the table below
\begin{equation*}
  \renewcommand{\arraystretch}{1.1} 
\begin{array}{|c||c|c|c|c|}
\hline
& \hphantom{1234.}\rK_-\hphantom{1234.} &
\hphantom{12345}\cE\hphantom{12345} &
\hphantom{12345}\cE'\hphantom{12345} &
\rK_+ \\
\hline
\hline
\hphantom{12}\rK_-\hphantom{12} & \kk \oplus \kk[q-1] & \kk & \kk & 0 \\
\hline
\cE & \kk[q-1]  & \kk & \kk \oplus \kk[-q] & \kk[-q-1] \\
\hline
\cE' & \kk[q-1]  & 0 & \kk  & \kk[-1] \\
\hline
\rK_+ & 0  & \kk & \kk[-q] & \kk \oplus \kk[-q-1] \\
\hline
\end{array}
\end{equation*}
where each entry shows the~$\Ext$-groups from the object in the given row to the object in the given column.
\end{lemma}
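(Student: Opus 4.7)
My plan is to compute all sixteen entries by running long exact sequences obtained from the two defining triangles
\begin{equation*}
\rK_+ \to \cE \xrightarrow{\ \upalpha_0\ } \cE' \to \rK_+[1]
\qquad\text{and}\qquad
\rK_- \to \cE \xrightarrow{\ \upalpha_q\ } \cE'[q] \to \rK_-[1].
\end{equation*}
The four entries in the central $2\times 2$ block (sources and targets among $\cE,\cE'$) read off directly from the exceptional-pair structure: $\Ext^\bullet(\cE,\cE)=\Ext^\bullet(\cE',\cE')=\kk$, $\Ext^\bullet(\cE',\cE)=0$, and $\Ext^\bullet(\cE,\cE')=\kk\oplus\kk[-q]$ by~\eqref{eq:ext-ce-cep}, generated by $\upalpha_0$ in degree $0$ and by $\upalpha_q$ in degree~$q$.

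For the eight ``mixed'' entries pairing $\rK_\pm$ with $\cE$ or $\cE'$, I would apply each of the four functors $\Hom(\cE,-)$, $\Hom(\cE',-)$, $\Hom(-,\cE)$, $\Hom(-,\cE')$ to each of the two defining triangles. In every resulting LES the source and the target inputs are each concentrated in a single degree (either $0$ or $q$), and the only nontrivial connecting map is the Yoneda composition with $\upalpha_0$ or $\upalpha_q$. The key point making all these computations uniform is that composition with $\upalpha_0$ realizes the canonical inclusion $\kk\cdot\id \hookrightarrow \kk\cdot\upalpha_0 \oplus \kk\cdot\upalpha_q$ between $\Ext^\bullet(\cE',\cE')$ and $\Ext^\bullet(\cE,\cE')$ (and likewise for postcomposition into $\cE$), while composition with $\upalpha_q$ hits the degree-$q$ summand. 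So each LES splits into short exact pieces in which one of the two neighbouring terms vanishes, reading off the desired $\Ext$-groups for the mixed entries.

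Once the eight mixed entries are known, the four ``corner'' entries $\Ext^\bullet(\rK_\epsilon,\rK_{\epsilon'})$ with $\epsilon,\epsilon'\in\{+,-\}$ are obtained by applying a second triangle. For example, to compute $\Ext^\bullet(\rK_+,\rK_+)$ I would apply $\Hom(\rK_+,-)$ to the first triangle and feed in $\Ext^\bullet(\rK_+,\cE)=\kk$ and $\Ext^\bullet(\rK_+,\cE')=\kk[-q]$; as these live in disjoint degrees, the LES immediately produces $\kk$ in degree $0$ and $\kk$ in degree $q+1$. The remaining three corner entries are analogous: each LES has at most one connecting map in a given degree that can be nonzero, and one checks it either vanishes for degree reasons (because source and target are in different degrees) or is forced to be an isomorphism by the identity $\id\in\Ext^0$.

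The only delicate point — and thus the ``main obstacle'' — is tracking which precomposition/postcomposition maps in the LES are zero versus nonzero, so as to correctly split the exact sequences. This amounts to identifying $\upalpha_0,\upalpha_q$ with a basis of $\Ext^\bullet(\cE,\cE')$, which is built into the very definition of $\Kr_q$; once this is invoked, the entire table drops out by bookkeeping.
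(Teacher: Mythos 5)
Your proposal is correct and follows exactly the same route as the paper, whose own proof is the one-line statement that the entries ``are easily computed from the defining triangles~\eqref{def:ck} and~\eqref{def:tcm}, using exceptionality of the pair~$(\cE,\cE')$ and~\eqref{eq:ext-ce-cep}''; you have simply unpacked that sentence into the bookkeeping of long exact sequences. One small imprecision: it is not true that in every long exact sequence ``the source and the target inputs are each concentrated in a single degree'' -- the term $\Ext^\bullet(\cE,\cE')\cong\kk\oplus\kk[-q]$ occupies two degrees -- but this does not affect the argument, since the essential point (precomposition/postcomposition by $\upalpha_0$ hits the degree-$0$ summand and by $\upalpha_q$ hits the degree-$q$ summand, by the very definition of $\Kr_q$) is exactly what you invoke, and once that is in place the cones of these maps give all sixteen entries by inspection.
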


\begin{proof}
These are easily computed from the defining triangles~\eqref{def:ck} and~\eqref{def:tcm}, using the exceptionality of the pair~$(\cE,\cE')$ and~\eqref{eq:ext-ce-cep}.
\end{proof}

Recall that an object~$\rK \in \cT$ in a triangulated category with a Serre functor~$\bS_\cT$ is \emph{$r$-spherical}, \textit{cf.}~\cite{Seidel-Thomas}, if
\begin{equation*}
\Ext^\bullet(\rK, \rK) = \kk \oplus \kk[-r]
\qquad\text{and}\qquad
\bS_\cT(\rK) \cong \rK[r].
\end{equation*}
Note that since the Kronecker quiver category~$\cKr_q$ is generated by an exceptional pair, it has a Serre functor (\textit{e.g.}, by~\cite[Proposition~3.8]{BK}).

\begin{lemma}
\label{lem:rkpm-spherical}
The objects~$\rK_+$ and~$\rK_-$ in $\cKr_q$ are~$(1+q)$-spherical and~$(1-q)$-spherical, respectively.
\end{lemma}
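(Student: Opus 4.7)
The self-Ext part of the sphericity conditions is already recorded in Lemma~\ref{lemma:exts-tcap}:
\begin{equation*}
\Ext^\bullet(\rK_+, \rK_+) \cong \kk \oplus \kk[-(q+1)], \qquad \Ext^\bullet(\rK_-, \rK_-) \cong \kk \oplus \kk[q-1] = \kk \oplus \kk[-(1-q)],
\end{equation*}
which matches the required form~$\kk \oplus \kk[-r]$ with~$r = q+1$ and~$r = 1-q$ respectively. Since~$\cKr_q$ is smooth and proper by Lemma~\ref{lemma:kr-formal} it admits a Serre functor~$\bS$, so it remains to verify the Serre-duality identifications~$\bS(\rK_\pm) \cong \rK_\pm[1 \pm q]$.

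The plan is to apply Yoneda's lemma. Fixing a nonzero~$\tau \in \Ext^{1 \pm q}(\rK_\pm, \rK_\pm) \cong \kk$ one obtains a composition-and-trace pairing
\begin{equation*}
\Hom^\bullet(\rK_\pm, X) \otimes \Hom^\bullet(X, \rK_\pm[1 \pm q]) \xrightarrow{\ \circ\ } \Hom^\bullet(\rK_\pm, \rK_\pm[1 \pm q]) \xrightarrow{\ \tau^\ast\ } \kk,
\end{equation*}
where~$\tau^\ast$ extracts the coefficient of~$\tau$ in the degree~$0$ component. The desired identification~$\rK_\pm[1 \pm q] \cong \bS(\rK_\pm)$ is equivalent to this pairing being non-degenerate for every~$X \in \cKr_q$, and since~$\cE, \cE'$ generate~$\cKr_q$ as a triangulated category it is enough to check the four cases~$X \in \{\cE, \cE'\}$. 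In each case Lemma~\ref{lemma:exts-tcap} shows that both tensor factors are one-dimensional, so non-degeneracy reduces to the non-vanishing of a single scalar.

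The remaining step, which I expect to be the main technical obstacle, is to pin down those scalars. I would do so by tracing through the long exact sequences associated with~\eqref{def:ck} and~\eqref{def:tcm}. For~$\rK_+$ with~$X = \cE$, for example: the generator of~$\Hom^0(\rK_+, \cE)$ is the structure morphism~$\beta_+ \colon \rK_+ \to \cE$ of~\eqref{def:ck}; the generator of~$\Ext^{q+1}(\cE, \rK_+)$ is~$\delta_+ \circ \upalpha_q$, where~$\delta_+ \colon \cE' \to \rK_+[1]$ is the connecting morphism of~\eqref{def:ck} (as seen by applying~$\Hom^\bullet(\cE, -)$ to~\eqref{def:ck}); and their composition~$\delta_+ \circ \upalpha_q \circ \beta_+$ is sent to the generator~$\tau$ of the~$\kk[-(q+1)]$ summand of~$\Ext^\bullet(\rK_+, \rK_+)$ under the connecting map~$\Ext^{q+1}(\cE, \rK_+) \to \Ext^{q+1}(\rK_+, \rK_+)$ coming from applying~$\Hom^\bullet(-, \rK_+)$ to~\eqref{def:ck}, hence is nonzero. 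The three remaining cases are entirely analogous, with~$\upalpha_0$ replacing~$\upalpha_q$ in the~$\rK_-$ computations, and the argument is otherwise routine bookkeeping parallel to the proof of Lemma~\ref{lemma:exts-tcap}.
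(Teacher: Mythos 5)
Your argument is correct but takes a genuinely different route from the paper's. You verify the identification $\bS(\rK_\pm) \cong \rK_\pm[1\pm q]$ directly via the Yoneda criterion: build the composition-and-trace pairing, reduce nondegeneracy to the generators $\cE, \cE'$ (this works because the induced natural transformation $\Hom(-,\rK_\pm[1\pm q]) \to \Hom(\rK_\pm,-)^\vee$ is a map of cohomological functors, so the subcategory on which it is an isomorphism is triangulated and closed under shifts), and then chase the long exact sequences from~\eqref{def:ck} and~\eqref{def:tcm} to see each one-dimensional pairing is nonzero. The paper instead develops the structural analogy between $\cKr_q$ and $\Db(\P^1)$: it builds the infinite helix of exceptional objects $\cE_i$ by iterated mutations (analogues of $\cO(i)$), constructs the autoequivalence $\tau$ shifting the helix (analogue of $\otimes\cO(1)$), computes $\bS(\cE_i) \cong \cE_{i-2}[1-q]$ from the Euler-type triangle~\eqref{eq:ce012}, and then feeds that through the defining triangles of $\rK_\pm$. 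Your approach is shorter and more self-contained; the paper's is longer but makes visible the $\P^1$-like geometry of $\cKr_q$ ($\rK_\pm$ as ``skyscrapers at $0,\infty$''), which is conceptually clarifying even though that extra structure is not used again. The only thing you should make explicit in a final writeup is the five-lemma step just mentioned, which you currently assert without stating.
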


\begin{proof}
The spaces~$\Ext^\bullet(\rK_+, \rK_+)$ and~$\Ext^\bullet(\rK_-, \rK_-)$ have already been computed in Lemma~\ref{lemma:exts-tcap}, so it remains to compute~$\bS_{\cKr_q}(\rK_\pm)$.  We start with a few remarks about the structure of the category~$\cKr_q$, which is very similar to the structure of~$\Db(\P^1) \simeq{} \cKr_0$ (\textit{e.g.}, the objects~$\cE_i$ constructed below are analogous to the line bundles~$\cO(i)$, while~$\rK_+$ and~$\rK_-$ are analogous to the structure sheaves of the points~$0,\infty \in \P^1$).

Consider the infinite sequence of exceptional objects~$\cE_i \in \cKr_q$, $i \in \Z$, defined by
\begin{equation*}
\cE_0 = \cE,
\qquad
\cE_1 = \cE',
\qquad
\cE_{i+1} = \bR_{\cE_i}(\cE_{i-1})[1-q],
\qquad
\cE_{i-1} = \bL_{\cE_i}(\cE_{i+1})[q-1],
\end{equation*}
where~$\bR$ and~$\bL$ stand for the right and left mutation functors.  Spelling out the definition of~$\cE_2$, we have a distinguished triangle (which is analogous to the Euler sequence on~$\P^1$)
\begin{equation}
\label{eq:ce012}
\cE_0 \xrightarrow{\ (\upalpha_0,\upalpha_q)\ } \cE_1 \oplus \cE_1[q] \lra
%xrightarrow{\ \hphantom{(\upalpha_0,\upalpha_q)}\ }
\cE_2[q].
\end{equation}
It follows from this that~$\Ext^\bullet(\cE_1,\cE_2) \cong \kk \oplus \kk[-q] \cong \Ext^\bullet(\cE_0, \cE_1)$.  Since both~$(\cE_0,\cE_1)$ and~$(\cE_1,\cE_2)$ are exceptional pairs generating~$\cKr_q$, Corollary~\ref{cor:krq-category} implies that there is an autoequivalence
\begin{equation*}
\tau \colon \cKr_q \to \cKr_q
\qquad
\text{such that $\tau(\cE_0) = \cE_1$ and $\tau(\cE_1) = \cE_2$}
\end{equation*}
(analogous to the~$\cO(1)$-twist in~$\Db(\P^1)$), and upon replacing~\eqref{eq:ce012} with an isomorphic triangle, we may assume that the second arrow in~\eqref{eq:ce012} takes the form~$(-\tau(\upalpha_q),\tau(\upalpha_0))$.  Then it follows from the definition of the sequence~$\cE_i$ that
\begin{equation}
\label{eq:tau}
\tau(\cE_i) \cong \cE_{i+1}
\qquad\text{for all~$i \in \Z$.}
\end{equation}
Also note that the octahedral axiom applied to~\eqref{eq:ce012} implies that
\begin{alignat}{3}
\Cone\left(\cE_1[q] \xrightarrow{\ \tau(\upalpha_0)\ } \cE_2[q]\right) &{}\cong
\Cone\left(\cE_0 \xrightarrow{\ \upalpha_0\ } \cE_1\right) &{}\cong \rK_+[1],\\
\Cone\left(\cE_1 \xrightarrow{\ \tau(\upalpha_q)\ } \cE_2[q]\right) &{}\cong
\Cone\left(\cE_0 \xrightarrow{\ \upalpha_q\ } \cE_1[q]\right) &{}\cong \rK_-[1];
\end{alignat}
hence~$\tau(\rK_+) \cong \rK_+[-q]$, $\tau(\rK_-) \cong \rK_-$, and therefore, we have distinguished triangles
\begin{equation}
\label{eq:rkpm-new}
\rK_+[-iq] \xrightarrow{\quad} \cE_i \xrightarrow{\ \tau^i(\upalpha_0)\ } \cE_{i+1}
\qquad\text{and}\qquad
\rK_- \xrightarrow{\quad} \cE_i \xrightarrow{\ \tau^i(\upalpha_q)\ } \cE_{i+1}[q]
\end{equation}
for any~$i \in \Z$.  On the other hand, it follows from~\eqref{eq:ce012} that~$\Ext^\bullet(\cE_2,\cE_0) \cong \kk[q-1]$, and therefore 
\begin{equation*}
\Ext^\bullet(\cE_0,\bS_{\cKr_q}(\cE_2)) \cong
\Ext^\bullet(\cE_2, \cE_0)^\vee \cong \kk[1-q],
\qquad
\Ext^\bullet(\cE_1,\bS_{\cKr_q}(\cE_2)) \cong
\Ext^\bullet(\cE_2, \cE_1)^\vee = 0.
\end{equation*}
Since the category~$\cKr_q$ is generated by the exceptional pair~$(\cE_0,\cE_1)$, the above isomorphisms imply that~$\bS_{\cKr_q}(\cE_2) \cong \cE_0[1-q]$.  Since the Serre functor commutes with any autoequivalence, we conclude from this and~\eqref{eq:tau} that for all~$i \in \Z$ we have
\begin{equation*}
\bS_{\cKr_q}(\cE_i) \cong \cE_{i-2}[1-q].
\end{equation*}
Applying this to triangle~\eqref{def:ck} and using the first triangle in~\eqref{eq:rkpm-new} for~$i = -2$, we conclude that
\begin{equation*}
\bS_{\cKr_q}(\rK_+) \cong
\Cone\left(\bS_{\cKr_q}(\cE_0) \lra \bS_{\cKr_q}(\cE_1)\right)[-1] \cong
\Cone\left(\cE_{-2}[-q] \lra \cE_{-1}[-q]\right) \cong
\rK_+[1+q].
\end{equation*}
Thus, $\rK_+$ is $(1+q)$-spherical, and a similar computation shows~$\rK_-$ is~$(1-q)$-spherical.
\end{proof}

\begin{remark}\label{rem:sphere}
The category~$\cKr_q$ also has a topological interpretation (see~\cite{BBD81} and~\cite{deCat-Mig} for the required material).  Let~$\rS^{q+1}$ be a $(q+1)$-dimensional real sphere with a point~$\{P\} \hookrightarrow \rS^{q+1}$ and its open complement~$U \cong \RR^{q+1} \hookrightarrow \rS^{q+1}$.  Then~$\cKr_q$ is equivalent to the category of complexes of sheaves of $\Bbbk$-vector spaces on~$\rS^{q+1}$ constructible with respect to the stratification~$\{P, U\}$.  Under this equivalence the object~$\rK_{+}$ corresponds to the constant sheaf~$\Bbbk_{\rS^{q+1}}$.

Note that this interpretation gives an alternative explanation for the fact that~$\rK_+ = \Bbbk_{\rS^{q+1}}$ is spherical.  Indeed, the self Ext-algebra of the constant sheaf~$\Bbbk_{\rS^{q+1}}$ is isomorphic to the cohomology of the sphere, and using Verdier duality one can show that the Serre functor of~$\cKr_{q+1}$ acts on~$\Bbbk_{\rS^{q+1}}$ as the shift~$[q+1]$.
\end{remark}

In fact, in~\cite{Kalck-Yang-graded} a more general ``orbifold version'' of the graded Kronecker quiver has been studied.  Its special case, the graded quiver~$\Gamma(1,1,r)$ defined in~\cite[Theorem~1.1]{Kalck-Yang-graded}, is isomorphic to the quiver~$\Kr_r$, and Lemma~\ref{lem:rkpm-spherical} can be deduced from~\cite[Theorem~1.2(a)]{Kalck-Yang-graded}.

\subsection{Localization and adherence}

The next proposition relates the graded Kronecker quiver category~$\cKr_{p+1}$ to the categorical double point~$\Db(\sA_p)$.

\begin{proposition}
\label{prop:contr-Cp}
There is an equivalence
\begin{equation*}
\cKr_{p+1} / \langle \rK_+ \rangle \simeq \Db(\sA_p)
\end{equation*}
between the Verdier localization of the Kronecker quiver category and the categorical double point such that the corresponding localization functor~$\uprho_* \colon \cKr_{p+1} \to \Db(\sA_p)$ satisfies
\begin{renumerate}
\item
\label{item:ckr-a-simple}
$\uprho_*(\cE) \cong \uprho_*(\cE') = \kk_\sA$, 
$\uprho_*(\upalpha_0) = \id_{\kk_\sA} \in \Hom_{\sA_p}(\kk_\sA, \kk_\sA)$, 
\mbox{$\uprho_*(\upalpha_{p+1}) = \uptheta \in \Ext^{p+1}_{\sA_p}(\kk_\sA, \kk_\sA)$};
\item
\label{item:ckr-a-free}
$\uprho_*(\rK_-) = \sA_p$.
\end{renumerate}
\end{proposition}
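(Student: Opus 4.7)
The plan is to realize $\uprho_*$ as a derived tensor product with an explicit $\Kr_{p+1}$-$\sA_p$ dg-bimodule, verify the stated properties directly, and then identify the induced functor on the Verdier quotient with an equivalence via Lemma~\ref{lem:Pinf}.

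For the construction I take as bimodule $\rN = \kk_\sA \oplus \kk_\sA$, viewed as a right $\sA_p$-module, with left $\Kr_{p+1}$-action sending the two vertex idempotents to the projections onto the summands, the arrow $\upalpha_0 \in \Kr_{p+1}^0$ to the identity map between the summands, and $\upalpha_{p+1} \in \Kr_{p+1}^{p+1}$ to the generator $\uptheta \in \Ext^{p+1}_{\sA_p}(\kk_\sA, \kk_\sA) \subset \End^\bullet(\kk_\sA \oplus \kk_\sA)$. Intrinsic formality of $\Kr_{p+1}$ (Lemma~\ref{lemma:kr-formal}) lifts this graded module structure to a genuine dg-bimodule, and $\uprho_* \coloneqq - \otimes^L_{\Kr_{p+1}} \rN$ gives a dg-functor $\cKr_{p+1} \to \Db(\sA_p)$. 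Property~(i) is immediate. Applying $\uprho_*$ to~\eqref{def:ck} and~\eqref{def:tcm} produces triangles
\[
\uprho_*(\rK_+) \to \kk_\sA \xrightarrow{\ \id\ } \kk_\sA
\qquad\text{and}\qquad
\uprho_*(\rK_-) \to \kk_\sA \xrightarrow{\ \uptheta\ } \kk_\sA[p+1],
\]
so $\uprho_*(\rK_+) = 0$ and, by comparison with~\eqref{eq:sap-triangle}, $\uprho_*(\rK_-) \cong \sA_p$, proving~(ii). The vanishing on $\rK_+$ means $\uprho_*$ factors through the Verdier quotient as $\bar\uprho_* \colon \cKr_{p+1}/\langle\rK_+\rangle \to \Db(\sA_p)$.

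To show $\bar\uprho_*$ is an equivalence, I note that since $\rK_+ = 0$ in the quotient the morphism $\upalpha_0$ becomes invertible, so the images of $\cE$ and $\cE'$ coincide there; as $\{\cE, \cE'\}$ generates $\cKr_{p+1}$, the image of $\cE$ alone generates the quotient, and it maps to $\kk_\sA$, which generates $\Db(\sA_p)$ by Proposition~\ref{prop:sap-sbq}, giving essential surjectivity. For fully faithfulness I apply Lemma~\ref{lem:Pinf} to the pair $(\rK_-, \cE)$ viewed in the dg-enhanced quotient: the isomorphism $\Cone(\rK_- \to \cE) \cong \cE[p+1]$ comes from~\eqref{def:tcm} together with $\cE \simeq \cE'$ in the quotient, while the identity $\Ext^\bullet(\rK_-, \cE) = \kk$ follows from Lemma~\ref{lemma:exts-tcap} combined with $\rK_- \in {}^\perp\langle\rK_+\rangle$ (from the table, $\Ext^\bullet(\rK_-, \rK_+) = 0$), which guarantees that Hom-groups out of $\rK_-$ are unchanged by the localization. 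The resulting fully faithful functor $\Phi_{\rK_-} \colon \Db(\sA_p) \to \cKr_{p+1}/\langle\rK_+\rangle$ sends $\sA_p \mapsto \rK_-$ and $\kk_\sA \mapsto \cE$, matches generators with $\bar\uprho_*$, and hence is a quasi-inverse to it.

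The hardest step is verifying the homotopy colimit condition of Definition~\ref{def:pinfty}\ref{def:pinfty:hocolim} for the image of $\cE$ in a cocomplete enhancement of the Verdier quotient, as required for the application of Lemma~\ref{lem:Pinf}. I would address this by passing to an explicit dg-model of the quotient: localize $\Kr_{p+1}$ at the degree-zero arrow $\upalpha_0$ (equivalently, freely adjoin a contracting homotopy for $\rK_+$) and perform a Morita reduction collapsing the two vertices. The resulting dg-algebra is quasi-isomorphic to $\sB_{p+1}$, and the required vanishing of the homotopy colimit of shifts of $\cE$ in the unbounded derived category of this model reduces to an instance of Lemma~\ref{lem:hocolim-shift} applied to the standard compact generator.
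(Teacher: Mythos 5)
Your overall strategy agrees with the paper's: realize the statement via Lemma~\ref{lem:Pinf} applied to the pair $(\rK_-, \cE)$ in the quotient, using $\Ext^\bullet(\rK_-, \rK_+) = 0$ from Lemma~\ref{lemma:exts-tcap} to transfer the Ext computation across the localization and triangle~\eqref{def:tcm} to get the cone condition, then conclude by essential surjectivity. Your explicit bimodule construction of $\uprho_*$ is a reasonable alternative set-up, though lifting the graded $\Kr_{p+1}$-action on $\kk_\sA \oplus \kk_\sA$ to a genuine dg-action requires slightly more care than you allot: intrinsic formality of $\Kr_{p+1}$ alone does not produce the bimodule — one needs to exhibit a dg-algebra morphism $\Kr_{p+1} \to \REnd_{\sA_p}(\kk_\sA \oplus \kk_\sA)$, which is where the freeness/formality of $\Kr_{p+1}$ and formality of the target $\cong M_2(\sB_{p+1})$ are actually used. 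Similarly, the final assertion that $\bar\uprho_*$ is a quasi-inverse of $\Phi_{\rK_-}$ is not automatic from agreement on generators; it requires checking compatibility of the two constructions on morphisms, which you do not do.

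The genuine gap is in your hocolim verification. You propose to identify a dg-model of the quotient with $\sB_{p+1}$ and then appeal to Lemma~\ref{lem:hocolim-shift} applied to the standard compact generator. But this would actually \emph{fail}: if the model is $\sB_{p+1}$ with the image of $\cE$ corresponding to the free module $\sB_{p+1}$, then the compact generator has $\Ext^\bullet(\sB_{p+1}, \sB_{p+1}) \cong \sB_{p+1} = \kk[\uptheta]$, which is \emph{not} bounded above, so the hypothesis of Lemma~\ref{lem:hocolim-shift} is violated. In fact
\begin{equation*}
\hocolim\bigl(\sB_{p+1} \xrightarrow{\,\uptheta\,} \sB_{p+1}[p+1] \xrightarrow{\,\uptheta\,} \cdots\bigr) \cong \kk[\uptheta,\uptheta^{-1}] \neq 0
\end{equation*}
in $\bD(\sB_{p+1})$, which is exactly the counterexample flagged in Remark~\ref{rem:pinfty}. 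So your argument would output a nonzero hocolim, the opposite of what is needed. In addition, identifying the dg-model as $\sB_{p+1}$ presupposes knowing $\Ext^\bullet$ of the image of $\cE$ in the quotient, which is part of the content of the proposition, so this step is also circular. The paper handles the hocolim quite differently and sidesteps both problems: it works in $\bD(\Kr_{p+1})$, where $\cE$ and $\cE'$ compactly generate and every $\Ext$-group is bounded because $\Kr_{p+1}$ is \emph{proper}, so Lemma~\ref{lem:hocolim-shift} gives $\hocolim \cE[i(p+1)] = 0$ upstairs, and then transports the vanishing down to $\bD(\cT)$ via the induced continuous functor $\bD(\Kr_{p+1}) \to \bD(\cT)$.
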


\begin{proof}
Set~$\cT \coloneqq \cKr_{p+1} / \langle \rK_+ \rangle$, and let~$\uprho_* \colon \cKr_{p+1} \to \cT$ be the localization functor.  We will use Lemma~\ref{lem:Pinf} to construct the equivalence~$\cT \simeq \Db(\sA_p)$.  So, let
\begin{equation*}
\rP \coloneqq \uprho_*(\cE) \cong \uprho_*(\cE')  \in \cT,
\qquad
\rM \coloneqq \uprho_*(\rK_-) \in \cT
\end{equation*}
(the isomorphism~$\uprho_*(\cE) \cong \uprho_*(\cE')$ follows from~\eqref{def:ck}; it is induced by~$\uprho_*(\upalpha_0)$).  Since~$\cKr_{p+1}$ is generated by~$\cE$ and~$\cE'$, it follows that~$\cT$ is generated by~$\rP$.  By Lemma~\ref{lemma:exts-tcap} the object~$\rK_-$ is orthogonal to~$\rK_+$, so by the definition of Verdier localization, we have
\begin{equation*}
\Ext^\bullet_{\cT}(\rM, \rP) \cong \Ext^\bullet_{{\cKr_{p+1}}}(\rK_-, \cE) \cong \Bbbk,
\end{equation*}
where the second isomorphism follows again from Lemma~\ref{lemma:exts-tcap}.  Furthermore, applying~$\uprho_*$ to~\eqref{def:tcm} we conclude that~$\Cone(\rM \to \rP) \cong \rP[p + 1]$, with the morphism~$\rP \to \rP[p+1]$ given by~$\uprho_*(\upalpha_{p+1})$.  Finally, note that~$\hocolim \cE[i(p+1)] = 0$ in~$\bD(\Kr_{p+1})$ by Lemma~\ref{lem:hocolim-shift} because~$\cE$ and~$\cE'$ compactly generate~$\bD(\Kr_{p+1})$, and since~$\uprho_*$ induces a continuous (\textit{i.e.}, commuting with arbitrary direct sums) functor~$\bD(\Kr_{p+1}) \to {\bD(\cT)}$, it follows that~$\hocolim \rP[i(p+1)] = 0$.

Now applying Lemma~\ref{lem:Pinf}\ref{item:phi-rp} we obtain a fully faithful functor~$\Phi_\rM \colon \Db(\sA_p) \to \cT$ that takes~$\kk_\sA$ to~$\rP$.  Since~$\cT$ is generated by~$\rP$, this functor is essentially surjective, so it is an equivalence.  This completes the proof of the first statement of the proposition.

Since~$\uptheta \in \Ext^{p+1}(\kk_\sA,\kk_\sA)$ is taken by the functor~$\Phi_\rM$ to a nontrivial element in~$\Hom(\rP,\rP[p+1])$, we conclude that, rescaling~$\uptheta$ if necessary, we obtain~$\uprho_*(\upalpha_{p+1}) = \uptheta$.  This proves~\ref{item:ckr-a-simple}.  Similarly, since~$\Phi_\rM$ takes~$\sA_p$ to~$\rM$, comparing with the definition of~$\rM$, we deduce~\ref{item:ckr-a-free}.
\end{proof}

\begin{remark}
Since~$\Ker(\uprho_*) = \langle \rK_+ \rangle$ is generated by a spherical object, the functor~$\uprho_*$ is an example of a crepant categorical contraction (see Definitions~\ref{def:cc} and~\ref{def:ccc}).  One can also show that~$\uprho_*$ has a left adjoint functor~$\uprho^* \colon \Dp(\sA_p) \to \cKr_{p+1}$ such that~\mbox{$\uprho^*(\sA_p) \cong \rK_-$} and~$(\cKr_{p+1},\uprho^*,\uprho_*)$ is a weakly crepant categorical resolution in the sense of~\cite{K08}.

Conversely, it is easy to check that the dg-algebra~$\Kr_{p+1}$ is Morita equivalent to the \emph{Auslander resolution} of the dg-algebra~$\sA_p$; see the definition in~\cite[Section~5]{KL} for the case of algebras, \cite[Section~2.3]{O20} for the case of dg-algebras, and~\cite[Example~5.3]{KL} for {a computation in} the case~$p = 0$.  Therefore, the graded Kronecker quiver category~$\cKr_{p+1}$ provides a categorical resolution in the sense of~\cite{KL} for the categorical ordinary double point~$\Db(\sA_p)$.
\end{remark}

We conclude this section by explaining how an admissible subcategory equivalent to~$\cKr_q$ can be embedded into a given triangulated category.  For this the following definition, generalizing~\cite[Definition~3.6]{KKS20}, is convenient.

\begin{definition}
\label{def:adherence}
Let~$\tcT$ be a proper triangulated category with a Serre functor, and let~$\rK \in \tcT$ be a spherical object.  We say that an exceptional object~$\cE \in \tcT$ is \emph{adherent} to~$\rK$ if
\begin{equation}
\label{eq:dim-ck-ce-condition}
\dim \Ext^\bullet(\rK,\cE) = \dim \Ext^\bullet(\cE, \rK) = 1.
\end{equation}
Note that the first equality follows from Serre duality and the definition of a spherical object.
\end{definition}

Recall that a spherical object~$\rK$ in a dg-enhanced proper triangulated category~$\tcT$ gives rise to an autoequivalence
\begin{equation}
\label{eq:sph-twist}
\bT_\rK(-) \coloneqq{} \Cone\left(\RHom^\bullet(\rK, -) \otimes \rK \xrightarrow{\ \mathrm{ev}\ } -\right),
\end{equation}
which is called a \emph{spherical twist}; \textit{cf.}~\cite{Seidel-Thomas}.

\begin{lemma}
\label{lem:kronecker-recogn}
Let~$\tcT$ be a dg-enhanced proper triangulated category with a Serre functor~$\bS_\tcT$, and let~$\rK \in \tcT$ be a~$(p+2)$-spherical object with~$p \ge 0$.  If an exceptional object~$\cE \in \tcT$ is adherent to~$\rK$, then
\begin{equation}
\label{eq:ce-bt-ce}
(\cE, \bT_{\rK}(\cE))
\end{equation}
is an exceptional pair and the subcategory~$\langle \cE, \bT_{\rK}(\cE) \rangle \subset \tcT$ generated by~\eqref{eq:ce-bt-ce} is equivalent to\,~$\cKr_{p+1}$ and admissible in~$\tcT$.
\end{lemma}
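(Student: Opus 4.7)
The plan is to apply Corollary~\ref{cor:krq-category}, which recognizes $\cKr_{p+1}$ as any dg-enhanced triangulated category generated by an exceptional pair satisfying~\eqref{eq:ext-ce-cep} with $q = p+1$. Thus the task reduces to verifying that $(\cE, \bT_\rK(\cE))$ is an exceptional pair with the required $\Ext$-groups, plus admissibility in $\tcT$.

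I would begin by fixing notation. Let $V := \RHom(\rK, \cE)$ and $W := \RHom(\cE, \rK)$; by the adherence hypothesis both are one-dimensional, so write $V \cong \kk[-n]$ and $W \cong \kk[-m]$. Serre duality applied to the $(p+2)$-spherical object $\rK$ yields the crucial degree identity $n + m = p + 2$. The defining triangle~\eqref{eq:sph-twist} for the spherical twist then reads $V \otimes \rK \xrightarrow{\mathrm{ev}} \cE \to \cE'$, where $\cE' := \bT_\rK(\cE)$.

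The three $\Ext$ conditions I must verify are handled as follows. Exceptionality of $\cE'$ is immediate since $\bT_\rK$ is an autoequivalence. For $\RHom(\cE', \cE) = 0$, apply $\RHom(-, \cE)$ to the defining triangle and identify the map $\End(\cE) \to \RHom(V \otimes \rK, \cE) \cong V^\vee \otimes V$ with the one sending $\id_\cE$ to $\id_V$: both sides are $\kk$ in degree $0$, so this is an isomorphism and $\RHom(\cE', \cE)$ vanishes. For $\RHom(\cE, \cE')$, apply $\RHom(\cE, -)$ to obtain the triangle $W \otimes V \to \kk \to \RHom(\cE, \cE') \to (W \otimes V)[1]$. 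Since $W \otimes V \cong \kk[-(p+2)]$ and $p + 2 > 0$, the first map vanishes for degree reasons, so the triangle splits and yields $\RHom(\cE, \cE') \cong \kk \oplus \kk[-(p+1)]$, matching~\eqref{eq:ext-ce-cep}.

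Corollary~\ref{cor:krq-category} then produces the equivalence $\langle \cE, \cE' \rangle \simeq \cKr_{p+1}$. Admissibility in $\tcT$ follows from the existence of a Serre functor: each of $\langle \cE \rangle$ and $\langle \cE' \rangle$ is admissible as it is generated by an exceptional object, and the semi-orthogonal join of admissible subcategories remains admissible. I expect the main obstacle to be the $\Ext$ computation just above---specifically, the vanishing of the boundary map $W \otimes V \to \kk$. This vanishing is what forces the $\Ext$-groups to have the Kronecker shape rather than collapsing, and crucially relies on the Serre-duality identity $n + m = p + 2$ combined with $p \geq 0$.
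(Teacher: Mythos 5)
Your proof is correct and follows essentially the same route as the paper's: both apply $\RHom(-,\cE)$ and $\RHom(\cE,-)$ to the twist triangle, use Serre duality on the spherical object to fix the degree of $\Ext^\bullet(\cE,\rK)$ relative to $\Ext^\bullet(\rK,\cE)$, observe that the connecting map vanishes for degree reasons, and then invoke Corollary~\ref{cor:krq-category}. The only cosmetic difference is that the paper first shifts $\rK$ so that $\Ext^\bullet(\rK,\cE)=\kk$ sits in degree~$0$, which makes the degree bookkeeping trivial, whereas you carry the general shifts $n,m$ and explicitly use $n+m=p+2$; both arrive at the same place.
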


\begin{proof}
Shifting~$\rK$ appropriately (note that this does not affect the corresponding spherical twist), we may assume~$\Ext^\bullet(\rK, \cE) = \Bbbk$.  Since~$\bT_{\rK}$ is an autoequivalence, $\bT_{\rK}(\cE)$ is also an exceptional object.  Furthermore, by definition~\eqref{eq:sph-twist} of the spherical twist, we have a distinguished triangle
\begin{equation*}
\rK \lra \cE \lra \bT_{\rK}(\cE).
\end{equation*}
Applying the functor~$\Ext^\bullet(-, \cE)$ to this triangle, we see that the pair~$(\cE, \bT_{\rK}(\cE))$ is exceptional.  {On the other hand, using Serre duality in~$\tcT$, we obtain
\begin{equation*}
\Ext^\bullet(\cE, \rK)
\cong \Ext^\bullet(\bS_\tcT^{-1}(\rK), \cE)^\vee
\cong \Ext^\bullet(\rK[-p-2], \cE)^\vee
= \Bbbk[-p-2], 
\end{equation*}
and so, applying~$\Ext^\bullet(\cE, -)$ to the above triangle}, we see that the $\Ext$-groups between~$\cE$ and~$\bT_{\rK}(\cE)$ are those of the graded Kronecker quiver of degree~$p + 1$.  Thus, by Corollary~\ref{cor:krq-category} the subcategory in~$\tcT$ generated by the pair~$(\cE, \bT_{\rK}(\cE))$ is equivalent to~$\cKr_{p+1}$, and since this subcategory is generated by an exceptional pair in a proper triangulated category, it is admissible.
\end{proof}

In Section~\ref{sec:nodal-absorption} we will present several geometric applications of Lemma~\ref{lem:kronecker-recogn} (or rather of Theorem~\ref{thm:contractions} that relies on it), where the category~$\tcT$ will be taken to be a crepant categorical resolution of~$\Db(X)$.  Here we mention the following simpler example; see~\cite[Example~2.14]{KKS20} for details.

\begin{example}
\label{ex:surf-node-resol}
Let~$\tcT = \Db(\tX)$, where~$\tX$ is a smooth projective surface such that~$\rH^{>0}(\tX, \cO_{\tX}) = 0$.  If~$E \subset \tX$ is a $(-2)$-curve, then~$\rK = \cO_E(-1)$ is a $2$-spherical object.  Furthermore, if~$D \in \Pic(\tX)$ is a divisor class satisfying~$D \cdot E = 1$, then the exceptional object~$\cE = \cO_\tX(D)$ is adherent to~$\rK$ and~$\bT_{\rK}(\cE) = \cO(E+D)$.  In this situation Lemma~\ref{lem:kronecker-recogn} says that the subcategory
\begin{equation*}
\langle \cO_\tX(D), \cO_\tX(E+D) \rangle
\end{equation*}
is admissible in~$\Db(\tX)$ and is equivalent to~$\cKr_1$.
\end{example}

%%%%%%%%%%%%%%%%%%%%%%%%%%%%%%%%%%%%%%%%%%%%%%%%%%%%%%%%%%%%%%%%%%%%%%%%%%%%%%%
%%%%%%%%%%%%%%%%%%%%%%%%%%%%%%%%%%%%%%%%%%%%%%%%%%%%%%%%%%%%%%%%%%%%%%%%%%%%%%%
%%%%%%%%%%%%%%%%%%%%%%%%%%%%%%%%%%%%%%%%%%%%%%%%%%%%%%%%%%%%%%%%%%%%%%%%%%%%%%%
%%%%%%%%%%%%%%%%%%%%%%%%%%%%%%%%%%%%%%%%%%%%%%%%%%%%%%%%%%%%%%%%%%%%%%%%%%%%%%%
%%%%%%%%%%%%%%%%%%%%%%%%%%%%%%%%%%%%%%%%%%%%%%%%%%%%%%%%%%%%%%%%%%%%%%%%%%%%%%%

\section{Adherence and categorical absorption of singularities}
\label{sec:absorption}

In this section we relate the notion of adherence introduced in Definition~\ref{def:adherence} to the notion of categorical absorption of singularities from Definition~\ref{def:intro-absorption}.  After that we prove Theorems~\ref{thm:intro-deformation-absorption-to-smoothing}, \ref{thm:intro-pinfty2}, and~\ref{thm:intro-pinfty1} from the introduction.  We keep working over an arbitrary field~$\kk$.

\subsection{From adherence to categorical absorption}

The following result allows us to descend semiorthogonal decompositions along Verdier localizations.

\begin{proposition}
\label{prop:sod-crepancy}
Let $\pi_* \colon \tcT \to \cT$ be a Verdier localization with the kernel category~\mbox{$\cK \coloneqq \Ker(\pi_*) \subset \tcT$}.  Let
\begin{equation*}
\tcT = \left\langle \tcT_1, \dots, \tcT_n \right\rangle
\end{equation*}
be a semiorthogonal decomposition compatible with~$\cK$, \textit{i.e.}, such that the subcategories~\mbox{$\cK_i \coloneqq \cK \cap \tcT_i$} form a semiorthogonal decomposition~$\cK = \langle \cK_1, \dots, \cK_n \rangle$.  Then there is a semiorthogonal decomposition
\begin{equation*}
\cT = \langle \cT_1, \dots, \cT_n \rangle
\end{equation*}
with~$\cT_i \coloneqq \pi_*(\tcT_i) \subset \cT$, and the restriction~$\pi_*\vert_{\tcT_i} \colon \tcT_i \to \cT_i$ is a Verdier localization with kernel~$\cK_i$.
\end{proposition}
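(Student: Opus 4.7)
The plan is to use the calculus of fractions description of the Verdier quotient: for $A, B \in \tcT$ one has
\[
\Hom_\cT(\pi_* A, \pi_* B) = \colim_{A' \to A} \Hom_\tcT(A', B),
\]
where the colimit runs over morphisms $A' \to A$ in $\tcT$ whose cone lies in~$\cK$. The main work is to show that, when $A \in \tcT_i$, one can cofinally restrict this colimit to morphisms $A'' \to A$ with $A'' \in \tcT_i$ and $\Cone(A'' \to A) \in \cK_i$.

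The key step (and main technical obstacle) is the following cofinality lemma. Given $A' \to A$ with $A \in \tcT_i$ and cone $C \in \cK$, I decompose $C$ using the SOD $\cK = \langle \cK_1,\dots,\cK_n\rangle$ to obtain a triangle $C_{>i} \to C \to C_{\le i}$ with $C_{>i} \in \langle \cK_{i+1},\dots,\cK_n\rangle$ and $C_{\le i} \in \langle \cK_1,\dots,\cK_i\rangle$. Semiorthogonality gives $\Hom(A,C_{>i})=0$, so $A \to C$ factors through $C_{\le i}$. Decomposing further, $C_i \to C_{\le i} \to C_{<i}$ with $C_i \in \cK_i$ and $C_{<i}\in\langle\cK_1,\dots,\cK_{i-1}\rangle$, and semiorthogonality $\Hom(A,C_{<i}[1])=0$ lifts $A \to C_{\le i}$ to a morphism $A \to C_i$. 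Let $A''$ be the fiber of $A \to C_i$; then $A'' \in \tcT_i$ (as an extension of objects of $\tcT_i$), and TR3 applied to the commuting square $A \to C_i \to C$ produces a factorization $A'' \to A' \to A$.

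From this cofinality, three of the four claims follow at once:
\begin{renumerate}
\item \textbf{Semiorthogonality.} For $B_i \in \tcT_i$ and $B_j \in \tcT_j$ with $i>j$, every element of $\Hom_\cT(\pi_* B_i, \pi_* B_j)$ is represented by a morphism $B_i'' \to B_j$ with $B_i'' \in \tcT_i$, hence vanishes by the semiorthogonality of the original SOD.
\item \textbf{$\pi_*|_{\tcT_i}$ is a Verdier localization with kernel $\cK_i$.} For $A,B \in \tcT_i$, cofinality identifies $\Hom_\cT(\pi_* A, \pi_* B)$ with the analogous colimit computing $\Hom_{\tcT_i/\cK_i}(\bar A,\bar B)$, giving an equivalence $\tcT_i/\cK_i \xrightarrow{\sim} \cT_i$; moreover $\cK_i = \tcT_i \cap \cK$ is exactly the kernel.
\item \textbf{Generation.} Since $\pi_*$ is essentially surjective onto $\cT$, every $X \in \cT$ lifts to $\tilde X \in \tcT$, and applying $\pi_*$ to the SOD filtration of $\tilde X$ shows $X \in \langle \cT_1,\dots,\cT_n\rangle$.
\end{renumerate}
Combined with (i), the generation statement upgrades to the semiorthogonal decomposition $\cT = \langle \cT_1,\dots,\cT_n\rangle$.

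The delicate point is the cofinality construction, because one must carefully use both halves of the SOD of $\cK$ (to kill the $>i$ part via $\Hom(A,C_{>i})$ and to lift past the $<i$ part via $\Hom(A,C_{<i}[1])$), and then check that the resulting morphism of triangles $(A'' \to A \to C_i) \to (A' \to A \to C)$ can actually be constructed via TR3 with the prescribed middle map being the identity on $A$. Once cofinality is in place, the remainder of the argument is formal.
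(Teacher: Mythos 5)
The paper's proof reduces to $n = 2$ and cites~\cite[Lemma~1.1]{Orl06} both for the identification $\cT_i \simeq \tcT_i/\cK_i$ and for the semiorthogonality, while you attempt a self-contained argument via cofinality of denominators. This is a reasonable plan, but the cofinality lemma is false and its proof sketch contains two concrete errors. First, for $A \in \tcT_i$ and $C_{>i} \in \langle\cK_{i+1},\dots,\cK_n\rangle$ the SOD gives $\Hom(C_{>i},A)=0$, \emph{not} $\Hom(A,C_{>i})=0$: the latter space is a $\Hom$ from an earlier SOD component to a later one and is in general nonzero. Second, in your triangles $C_i$ maps to $C_{\le i}$, which is a quotient of $C$, so there is no morphism $C_i\to C$, and the ``commuting square $A\to C_i\to C$'' to which you apply TR3 does not exist.

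What actually works is the other decomposition of $C$: from $\cK=\langle\cK_{<i},\cK_{\ge i}\rangle$ one gets a triangle $C_{\ge i}\to C\to C_{<i}$, and now $\Hom(A,C_{<i})=0$ is the correct semiorthogonality, so $A\to C$ factors through $C_{\ge i}\to C$; TR3 (legitimately, since $C_{\ge i}$ does map to $C$) then produces $A''\to A'$ with $A''\in\langle\tcT_i,\dots,\tcT_n\rangle$ and $\Cone(A''\to A)=C_{\ge i}\in\langle\cK_i,\dots,\cK_n\rangle$. This suffices for your item (i), because for $B\in\tcT_j$ with $j<i$ one has $\Hom(A'',B)=0$; it is \emph{not} enough for item (ii), since neither $A''\in\tcT_i$ nor $\Cone(A''\to A)\in\cK_i$ holds, and indeed the stronger cofinality fails. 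Take $\tcT$ generated by an exceptional pair $(P_1,P_2)$ with $\dim\Hom(P_1,P_2)=1$, $\tcT_i=\langle P_i\rangle$, $\cK=\tcT_2$, so $\cK_1=0$ and $\cK_2=\tcT_2$ and the hypotheses hold. For $A=P_1$ ($i=1$) and $A'$ the fiber of a generator $P_1\to P_2$, one has $\Cone(A'\to A)\cong P_2\in\cK$ but $\Ext^\bullet(P_1,A')=0$, so since $\cK_1=0$ forces $A''\cong P_1$, no admissible refinement exists. The functor $\tcT_i/\cK_i\to\cT$ is still fully faithful here and in general, but proving it needs an argument that is not cofinality-based (Orlov's lemma); note also that item (iii) implicitly uses that $\cT_i:=\pi_*(\tcT_i)$ is a triangulated subcategory of $\cT$, which is not clear without (ii), as $\pi_*$ is not full.
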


\begin{proof}
We assume~$n = 2$, as the general case follows by induction.  The natural functors
\begin{equation*}
\cT_i \simeq \tcT_i / \cK_i \longhookrightarrow \tcT / \cK = \cT
\end{equation*}
are fully faithful by~\cite[Lemma~1.1]{Orl06}.  Moreover, it also follows that the subcategories~$\cT_i$ are semiorthogonal; see~\cite[Proposition~1.11]{Orl06}.  Indeed, the semiorthogonality of~$\tcT_1$ and~$\tcT_2$ can be rephrased by saying that the left adjoint functor for the embedding~$\tcT_1 \hookrightarrow \tcT$ vanishes on the subcategory~$\tcT_2$, and~\cite[Lemma~1.1]{Orl06} then implies that the left adjoint functor for the embedding~$\cT_1 \hookrightarrow \cT$ vanishes on~$\cT_2$, which implies the semiorthogonality of~$\cT_1$ and~$\cT_2$.  Finally, the categories~$\cT_i$ generate~$\cT$ because~$\pi_*$ is essentially surjective and the~$\tcT_i$ generate~$\tcT$.  Thus, $\cT = \langle \cT_1, \cT_2 \rangle$ is a semiorthogonal decomposition.
\end{proof}

We now explain how to construct subcategories absorbing singularities.  Let~$\delta_{ij}$ be the Kronecker delta.  Recall the definition of a crepant categorical contraction (Definitions~\ref{def:cc} and~\ref{def:ccc}).

\begin{theorem}
\label{thm:contractions}
Let~$\tcT$ be a dg-enhanced smooth and proper triangulated category, and let~$\rK_1, \dots, \rK_r \in \tcT$ be a collection of~$r$ completely orthogonal $(p+2)$-spherical objects {with}~$p \ge 0$.  Assume that there is a \textup(nonfull\,\textup) exceptional collection~$\cE_1, \dots, \cE_r$ in~$\tcT$ such that
\begin{equation}
\label{eq:dim-ext-ce-ck}
\dim \Ext^\bullet(\cE_i,\rK_j) = \delta_{ij};
\end{equation}
in particular, $\cE_i$ is adherent to~$\rK_i$ for each~$i$.  Set~$\cK \coloneqq \langle \rK_1, \dots, \rK_r \rangle \subset \tcT$, and consider the localization
\begin{equation*}
\pi_* \colon \tcT \lra \cT \coloneqq \tcT/\cK.
\end{equation*}
Then the following hold:

\begin{enumerate}[label={\textup{(\roman*)}}]
\item
\label{item:tcc}
For each~$1 \le i \le r$ the triangulated subcategory
\begin{equation*}
\tcP_i \coloneqq \langle \cE_i, \bT_{\rK_i}\cE_i \rangle
\end{equation*}
generated in~$\tcT$ by~$\cE_i$ and~$\bT_{\rK_i}(\cE_i)$ \textup(or, equivalently, by~$\cE_i$ and~$\rK_i$\textup) is equivalent to {the graded Kronecker quiver category}~$\cKr_{p+1}$ and is admissible in~$\tcT$.  Moreover, the collection {of subcategories~$\tcP_1, \dots, \tcP_r \subset \cT$} is semiorthogonal in~$\tcT$, and the subcategory
\begin{equation*}
\tcP \coloneqq \left\langle \tcP_1, \dots, \tcP_r \right\rangle
\end{equation*}
is admissible in~$\tcT$.

\item
\label{item:cc}
For each~$i$ the object~$\rP_i \coloneqq \pi_*(\cE_i)$ is a~$\Pinfty{p+1}$-object, and the category~$\cP_i \coloneqq \pi_*(\tcP_i) = \langle \rP_i \rangle$ is equivalent to the categorical ordinary double point~$\Db(\sA_p)$ and is admissible in~$\cT$.  Moreover, the collection~$\cP_1, \dots, \cP_r$ is semiorthogonal in~$\cT$, the subcategory
\begin{equation*}
\cP \coloneqq \langle \cP_1, \dots, \cP_r \rangle = \langle \rP_1, \dots, \rP_r \rangle
\end{equation*}
is equal to~$\pi_*(\tcP)$ and absorbs singularities of~$\cT$, and the functor~$\pi_*$ induces equivalences
\begin{equation}
\label{eq:nodal-cc-perp}
{}^\perp\tcP \simeq {}^\perp\cP,
\qquad
\tcP^\perp \simeq \cP^\perp.
\end{equation}

\item
\label{item:ccc-and-gorenstein}
The functor~$\pi_*$ is a crepant categorical contraction.
\end{enumerate}
\end{theorem}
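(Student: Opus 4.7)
The plan is to handle the three parts in order, deducing each from a general categorical result established earlier in the paper, and to isolate the main work in the verification that the semiorthogonal decomposition on $\tcT$ is compatible with the kernel $\cK$ so that Proposition~\ref{prop:sod-crepancy} can be applied.

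For part~\ref{item:tcc}, the fact that each $\tcP_i \simeq \cKr_{p+1}$ and is admissible in $\tcT$ is a direct application of Lemma~\ref{lem:kronecker-recogn} to the pair $(\cE_i, \rK_i)$ (the equality $\langle \cE_i, \bT_{\rK_i}\cE_i \rangle = \langle \cE_i, \rK_i \rangle$ is immediate from the defining triangle~\eqref{eq:sph-twist} of the spherical twist). For semiorthogonality of the collection $\tcP_1, \dots, \tcP_r$ the generators of each $\tcP_i$ are $\cE_i$ and $\rK_i$, so we must verify four types of $\Hom$-vanishings between indices $i \ne j$: $\Hom^\bullet(\cE_j, \cE_i) = 0$ for $j > i$ (exceptionality), $\Hom^\bullet(\rK_j, \rK_i) = 0$ (complete orthogonality), $\Hom^\bullet(\cE_j, \rK_i) = 0$ (hypothesis~\eqref{eq:dim-ext-ce-ck}), and $\Hom^\bullet(\rK_j, \cE_i) = 0$. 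The last one I would get by Serre duality in the smooth proper $\tcT$: $\Hom^\bullet(\rK_j, \cE_i) \cong \Hom^\bullet(\cE_i, \bS_\tcT \rK_j)^\vee \cong \Hom^\bullet(\cE_i, \rK_j[p+2])^\vee$, which vanishes again by~\eqref{eq:dim-ext-ce-ck}. Then $\tcP$ is generated by a semiorthogonal collection of $2r$ exceptional objects in the smooth proper $\tcT$, hence is admissible.

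For part~\ref{item:cc} the key move is to apply Proposition~\ref{prop:sod-crepancy} to the semiorthogonal decomposition $\tcT = \langle {}^\perp\tcP, \tcP_1, \dots, \tcP_r \rangle$ with respect to $\cK$. The compatibility requires showing $\cK \cap \tcP_i = \langle \rK_i \rangle$ and $\cK \cap {}^\perp\tcP = 0$; the latter is immediate since $\cK \subset \tcP$, and the former follows from the Serre-duality argument above applied now to conclude that $\rK_j$ is two-sided orthogonal to $\tcP_i$ for $j \ne i$, so any object of $\cK$ in $\tcP_i$ reduces to its $i$-th component. Then Proposition~\ref{prop:sod-crepancy} yields
\begin{equation*}
\cT = \langle \pi_*({}^\perp\tcP),\ \pi_*(\tcP_1),\ \dots,\ \pi_*(\tcP_r) \rangle,
\end{equation*}
with $\pi_*\vert_{\tcP_i} \colon \tcP_i \to \pi_*(\tcP_i)$ a Verdier localization of kernel $\langle \rK_i \rangle$. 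Comparing with Proposition~\ref{prop:contr-Cp} under the equivalence $\tcP_i \simeq \cKr_{p+1}$ (which identifies $\rK_i$ with the $(p+2)$-spherical object $\rK_+$), this localization is identified with $\cKr_{p+1} / \langle \rK_+ \rangle \simeq \Db(\sA_p)$, so $\cP_i \simeq \Db(\sA_p)$ is generated by the $\Pinfty{p+1}$-object $\rP_i = \pi_*(\cE_i)$. Since $\cK \cap {}^\perp\tcP = 0$, the restriction $\pi_*\vert_{{}^\perp\tcP}$ is an equivalence onto its image, giving ${}^\perp\cP \simeq {}^\perp\tcP$; running the same argument with the dual decomposition $\tcT = \langle \tcP_1, \dots, \tcP_r, \tcP^\perp \rangle$ yields $\cP^\perp \simeq \tcP^\perp$. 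Both $\tcP^\perp$ and ${}^\perp\tcP$ are admissible subcategories of the smooth proper $\tcT$, hence smooth and proper, which is exactly the absorption property.

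Part~\ref{item:ccc-and-gorenstein} is then nearly automatic: $\pi_*$ is a Verdier localization by construction, hence a categorical contraction, so only crepancy remains. For each $(p+2)$-spherical object $\rK_i$ in the smooth proper $\tcT$ we have $\bS_\tcT(\rK_i) \cong \rK_i[p+2]$, so Serre duality gives $\rK_i^\perp = {}^\perp\rK_i$; complete orthogonality of the $\rK_i$'s then upgrades this to $\cK^\perp = \bigcap_i \rK_i^\perp = \bigcap_i {}^\perp\rK_i = {}^\perp\cK$. I expect the main technical obstacle to be the compatibility verification in part~\ref{item:cc}, namely pinning down the intersections $\cK \cap \tcP_i$; once that is in place, both the absorption statement and the crepancy fall out of Propositions~\ref{prop:sod-crepancy} and~\ref{prop:contr-Cp} with essentially no extra computation.
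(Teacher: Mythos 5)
Your argument follows the same route as the paper's proof, and most of it is correct, but there is one genuine gap and one bookkeeping error.

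The gap: part~\ref{item:cc} of the theorem asserts not only that $\cP = \langle \cP_1, \dots, \cP_r \rangle$ is admissible in~$\cT$ (which your two induced decompositions of~$\cT$ do give, since $\cP$ stands at the far end of each), but also that each individual $\cP_i$ is admissible in~$\cT$. Your two decompositions place $\cP_i$ in the interior of the SOD for $1 < i < r$, which by itself provides no admissibility information for those components (and only one-sided admissibility even for $i=1$ or $i=r$); recall that $\cT$, being the derived category of a singular variety in the application, has no Serre duality to fall back on. The paper handles this by first using the Serre functor of the smooth proper~$\tcT$ together with the equality $\bS_\tcT(\rK_i) \cong \rK_i[p+2]$ (and the two-sided orthogonality of $\rK_j$ to $\tcP_i$ for $j \neq i$) to rearrange the ambient decomposition of~$\tcT$ so that $\tcP_i$ sits at the far left or far right while remaining compatible with~$\cK$, and only then applies Proposition~\ref{prop:sod-crepancy}; this yields SODs of~$\cT$ with $\cP_i$ on the extreme left and on the extreme right, hence both left and right admissibility. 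You should add this step.

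Two smaller remarks. First, your orderings $\tcT = \langle {}^\perp\tcP, \tcP_1, \dots, \tcP_r \rangle$ and $\tcT = \langle \tcP_1, \dots, \tcP_r, \tcP^\perp \rangle$ are reversed: with the usual convention $\Hom(\cB, \cA) = 0$ in $\langle \cA, \cB \rangle$, one has $\tcT = \langle \tcP_1, \dots, \tcP_r, {}^\perp\tcP \rangle$ and $\tcT = \langle \tcP^\perp, \tcP_1, \dots, \tcP_r \rangle$. This does not disturb the logic of your proof but should be corrected. Second, on the positive side, you explicitly verify the compatibility of the SOD with $\cK$ — namely $\cK \cap \tcP_i = \langle \rK_i \rangle$ and $\cK \cap {}^\perp\tcP = 0$, using two-sided orthogonality of $\rK_j$ to $\tcP_i$ for $j \ne i$ — which the paper's proof of part~\ref{item:cc} asserts without comment; that is a useful addition. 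The identification of $\pi_*\vert_{\tcP_i}$ with $\uprho_*$ of Proposition~\ref{prop:contr-Cp} and the crepancy argument in part~\ref{item:ccc-and-gorenstein} match the paper.
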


\begin{proof}
\ref{item:tcc}~ By Lemma~\ref{lem:kronecker-recogn} the subcategory~$\tcP_i \subset \tcT$ is equivalent to~{$\cKr_{p+1}$} and admissible.  To check that the subcategories~$\tcP_1, \dots, \tcP_r \subset \tcT$ are semiorthogonal, it is enough to note that the generating sets of these subcategories are semiorthogonal; \textit{i.e.}, for all~$i > j$ we have
\begin{equation*}
\Ext^\bullet(\cE_i, \cE_j) = 0,
\qquad
\Ext^\bullet(\cE_i, \rK_j) = 0,
\qquad
\Ext^\bullet(\rK_i, \cE_j) = 0,
\qquad
\Ext^\bullet(\rK_i, \rK_j) = 0.
\end{equation*}
Indeed, the first holds for~$i > j$ since the collection~$\cE_1, \dots, \cE_r$ is exceptional, the second holds for~$i \ne j$ {by~\eqref{eq:dim-ext-ce-ck}}, the third {follows from the second and Serre duality since}~$\rK_i$ is spherical, and the last holds for all~$i \ne j$ since the spherical objects are assumed to be orthogonal.  Since each~$\tcP_i$ is admissible in~$\tcT$, the subcategory~$\tcP$ is admissible as well.

\ref{item:cc}~
Consider the semiorthogonal decompositions
\begin{equation*}
\tcT = \left\langle \tcP_1, \dots, \tcP_r, {}^\perp\tcP \right\rangle,
\qquad
\tcT = \left\langle \tcP^\perp, \tcP_1, \dots, \tcP_r \right\rangle.
\end{equation*}
Applying Proposition~\ref{prop:sod-crepancy} to the subcategories~$\langle \rK_1 \rangle \subset \tcP_1$, \dots, $\langle \rK_r \rangle \subset \tcP_r$ and~$0 \subset {}^\perp\tcP$ or $0 \subset \tcP^\perp$ (and defining~$\cK \subset{} \tcT$ as~$\cK \coloneqq \langle \rK_1, \dots, \rK_r \rangle$), we obtain semiorthogonal decompositions
\begin{equation*}
\tcT / \cK = \left\langle \cP_1, \dots, \cP_r, {}^\perp\cP \right\rangle,
\qquad
\tcT / \cK = \left\langle \cP^\perp, \cP_1, \dots, \cP_r \right\rangle,
\end{equation*}
where $\cP_i = \pi_*(\tcP_i) \simeq \tcP_i / \langle \rK_i \rangle$, $\cP = \pi_*(\tcP) \subset \cT / \cK$ is the subcategory generated by~$\cP_1, \dots, \cP_r$, and the functor~$\pi_*$ induces equivalences~\eqref{eq:nodal-cc-perp}.  Furthermore, we deduce the equivalence~$\cP_i \simeq \Db(\sA_{p})$ from Proposition~\ref{prop:contr-Cp}, and since~$\pi_*\vert_{\tcP_i}$ is isomorphic to the localization functor~$\uprho_*$ from Proposition~\ref{prop:contr-Cp}, we also conclude that~$\rP_i \coloneqq \pi_*(\cE_i)$ is the $\Pinfty{p+1}$-generator of~$\cP_i$.

To show that~$\cP_i$ is right admissible, we consider the semiorthogonal decomposition
\begin{equation*}
\tcT = \left\langle \bS_{\tcT}\left(\tcP_{i+1}\right), \dots, \bS_{\tcT}\left(\tcP_n\right), \tcP^\perp, \tcP_1, \dots, \tcP_i \right\rangle,
\end{equation*}
where~$\bS_\tcT$ is the Serre functor of~$\tcT$, and note that since the objects~$\rK_j$ are spherical, we have
\begin{equation*}
\langle\bS_{\tcT}(\rK_{i+1}), \dots, \bS_{\tcT}(\rK_{n}), 0, \rK_1, \dots, \rK_i \rangle = 
\langle \rK_{i+1}, \dots, \rK_n, 0, \rK_1, \dots, \rK_n \rangle = \cK. 
\end{equation*}
Hence the above arguments provide a semiorthogonal decomposition of~$\cT = \tcT/\cK$, where~$\cP_i = \tcP_i/\langle \rK_i\rangle$ stands on the right, so it is right admissible.  Similarly, using the semiorthogonal decomposition
\begin{equation*}
\tcT = \left\langle \tcP_i, \dots, \tcP_n, {}^\perp\tcP, \bS_{\tcT}^{-1}\left(\tcP_{1}\right), \dots, \bS_{\tcT}^{-1}\left(\tcP_{i-1}\right) \right\rangle
\end{equation*}
and the spherical property of the objects~$\rK_j$, we show that~$\cP_i$ is left admissible.

Finally, $\cP$ absorbs singularities of~$\cT$ because~${}^\perp \cP$ and~$\cP^\perp$ are equivalent to admissible subcategories~{${}^\perp\tcP$ and~$\tcP^\perp$} in the smooth and proper category~$\tcT$, so they are smooth and proper.

\ref{item:ccc-and-gorenstein}~ This is obvious because~$\Ker(\pi_*)$ is generated by spherical objects.
\end{proof}

See Section~\ref{sec:nodal-absorption}, in particular Theorem~\ref{thm:main-nodal-adherence}, for geometric applications of Theorem~\ref{thm:contractions} to nodal varieties.  Meanwhile, just note that in the situation of Example~\ref{ex:surf-node-resol}, if~$\pi \colon \tX \to X$ is the contraction of~$E$, we obtain a fully faithful embedding~$\Db(\sA_0) \hookrightarrow \Db(X)$; \textit{cf.}~\cite[Example~3.17]{KKS20}.

\subsection{Absorption and deformation absorption}
\label{ss:abs-dabs-proofs}

Recall the notion of absorption from Definition~\ref{def:intro-absorption}.  We will need the following observation.  Let~$\omega_X^\bullet$ be the dualizing complex of~$X$.

\begin{lemma}
\label{lemma:absorption-hocolim}
Assume that~$\cP \subset \Db(X)$ absorbs singularities of a projective scheme~$X$, and let~${}^\perp \cP, \cP^\perp \subset \Db(X)$ be its orthogonals in~$\Db(X)$.  Then
\begin{equation*}
{}^\perp \cP \subset \Dp(X)
\qquad\text{and}\qquad
\cP^\perp \subset \Dp(X) \otimes \omega_X^\bullet,
\end{equation*}
and these embeddings are admissible. 
\end{lemma}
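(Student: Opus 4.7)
The plan is to invoke Orlov's perfectness criterion \cite[Proposition~1.11]{Orl06}, which says that an object of~$\Db(X)$ is perfect if and only if it is left homologically finite, together with its Grothendieck-dual version, and to produce the required finiteness from the two semiorthogonal decompositions~$\Db(X) = \langle {}^\perp\cP, \cP \rangle = \langle \cP, \cP^\perp \rangle$ provided by admissibility of~$\cP$.

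First I would handle the embedding~${}^\perp \cP \subset \Dp(X)$. Take any~$F \in {}^\perp\cP$ and any~$G \in \Db(X)$; decompose~$G$ using the first SOD in a triangle~$G_{{}^\perp\cP} \to G \to G_\cP$ with~$G_{{}^\perp\cP} \in {}^\perp\cP$ and~$G_\cP \in \cP$. The left-semiorthogonality gives~$\Ext^\bullet(F, G_\cP) = 0$, so~$\Ext^\bullet(F, G) \cong \Ext^\bullet(F, G_{{}^\perp\cP})$, which is finite-dimensional because both objects lie in the proper category~${}^\perp\cP$. Thus~$F$ is left homologically finite, hence perfect.

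For the second embedding~$\cP^\perp \subset \Dp(X) \otimes \omega_X^\bullet$, I would apply the analogous argument with the SOD~$\langle \cP, \cP^\perp \rangle$ and properness of~$\cP^\perp$ to deduce that each~$F \in \cP^\perp$ satisfies~$\dim \Ext^\bullet(G, F) < \infty$ for every~$G \in \Db(X)$. Then I would invoke the Grothendieck duality anti-equivalence~$\bD \coloneqq \cRHom(-, \omega_X^\bullet)$ on~$\Db(X)$: biduality yields~$\Ext^\bullet(G, F) \cong \Ext^\bullet(\bD F, \bD G)$ for all~$G \in \Db(X)$, so~$\bD F$ is left homologically finite and hence perfect by Orlov. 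Therefore~$F = \bD \bD F \in \bD(\Dp(X)) = \Dp(X) \otimes \omega_X^\bullet$, since~$\bD$ sends a perfect complex~$E$ to~$E^\vee \otimes \omega_X^\bullet$.

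For admissibility of both embeddings I would argue uniformly: admissibility of~$\cP$ in~$\Db(X)$ implies that each of the subcategories~${}^\perp \cP$ and~$\cP^\perp$ is itself admissible in~$\Db(X)$ (by symmetry of admissibility in the two SODs), hence the inclusion into~$\Db(X)$ has both a left and a right adjoint whose images already lie inside~${}^\perp\cP$, respectively~$\cP^\perp$. By the containments just established, these adjoint functors restrict to adjoints of the inclusions~${}^\perp \cP \hookrightarrow \Dp(X)$ and~$\cP^\perp \hookrightarrow \Dp(X) \otimes \omega_X^\bullet$, yielding admissibility. The main obstacle is the second part, where one must identify~$\Dp(X) \otimes \omega_X^\bullet$ correctly as the image of~$\Dp(X)$ under~$\bD$; this relies on projectivity of~$X$ so that~$\omega_X^\bullet \in \Db(X)$ exists and biduality holds on all of~$\Db(X)$.
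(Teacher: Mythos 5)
Your argument for the two inclusions is essentially the paper's: decompose~$G$ via the semiorthogonal decomposition~$\Db(X) = \langle {}^\perp\cP, \cP \rangle$ (note the order of the triangle should be~$G_\cP \to G \to G_{{}^\perp\cP}$ with~$G_\cP \in \cP$, $G_{{}^\perp\cP} \in {}^\perp\cP$, but this is cosmetic), deduce left homological finiteness from properness of~${}^\perp\cP$ plus semiorthogonality, apply Orlov's criterion, and transport by Grothendieck duality to get the second inclusion. All of that is correct.

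The admissibility part, however, has a genuine gap. You assert that admissibility of~$\cP$ in~$\Db(X)$ forces~${}^\perp\cP$ and~$\cP^\perp$ to be admissible in~$\Db(X)$ ``by symmetry of admissibility in the two SODs.'' That symmetry does not exist here. From the two decompositions~$\Db(X) = \langle \cP^\perp, \cP \rangle$ and~$\Db(X) = \langle \cP, {}^\perp\cP \rangle$ one only reads off that~$\cP^\perp$ is \emph{left} admissible and~${}^\perp\cP$ is \emph{right} admissible; nothing gives the missing adjoints. For a smooth proper~$\cT$ (or, more generally, one with a Serre functor) one-sided admissibility of a subcategory upgrades to two-sided, but~$\Db(X)$ for singular~$X$ has no Serre functor --- Grothendieck duality is an anti-equivalence~$\Db(X) \simeq \Db(X)^\opp$, which sends~${}^\perp\cP$ to~$(\bD\cP)^\perp$ rather than back to something known. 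So the left adjoint of~${}^\perp\cP \hookrightarrow \Db(X)$, which you would need in order to restrict it to~$\Dp(X)$, is exactly what is not available. The paper sidesteps this by using the \emph{intrinsic} structure of~${}^\perp\cP$: since it is smooth it has a strong generator (Lunts), and since it is also proper and idempotent complete it is saturated (Bondal--van den Bergh); a saturated subcategory of a proper triangulated category is automatically admissible, which applies to~$\Dp(X)$ and~$\Dp(X) \otimes \omega_X^\bullet$ but says nothing about (and needs nothing from) admissibility inside the non-proper~$\Db(X)$.
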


\begin{proof}
Let~$F \in {}^\perp\cP$.  Then~$\Ext^\bullet(F,F')$ is finite-dimensional for any~$F' \in {}^\perp\cP$ because~${}^\perp\cP$ is proper, and~$\Ext^\bullet(F,P) = 0$ for any~$P \in \cP$ by the definition of the orthogonal.  Therefore, $\Ext^\bullet(F,G)$ is finite-dimensional for any~$G \in \Db(X)$, and hence~$F$ is perfect by~\cite[Proposition~1.11]{Orl06}, and the first inclusion is proved.  The second inclusion follows from the first and Grothendieck duality.

Finally, since the category~${}^\perp\cP \simeq \cP^\perp$ is smooth, it is equivalent to the derived category of a dg-algebra, see, \textit{e.g.}, \cite[Lemma~2.6]{Toen}, so it has a strong generator by~\cite[Lemmas~3.5 and~3.6(a)]{Lunts}.  Since~${}^\perp\cP \simeq \cP^\perp$ is also proper and idempotent complete, it is saturated by~\cite[Theorem~1.3]{Bondal-vdB}.  Therefore, its image in the proper categories~$\Dp(X)$ and~$\Dp(X) \otimes \omega_X^\bullet$ is admissible.
\end{proof}

\begin{example}
Assume that~$\cP$ absorbs singularities of a projective Gorenstein variety~$X$.  Then~$\omega_X^\bullet$ is a shift of a line bundle, and by Lemma~\ref{lemma:absorption-hocolim} we have~${}^\perp\cP, \cP^\perp \subset \Dperf(X)$.  If in addition~$\cP$ has an admissible semiorthogonal decomposition
\begin{equation}\label{eq:kawamata-dec}
\cP = \langle \Db(R_1), \dots, \Db(R_m) \rangle,
\end{equation}
where the~$R_i$ are finite-dimensional associative algebras, then~$\langle \cP^\perp, \cP \rangle$ is a ``Kawamata semiorthogonal decomposition'' in the sense of~\cite{Kalck-Pavic-Shinder}.  Conversely, a Kawamata semiorthogonal decomposition provides absorption of singularities.  For examples of Kawamata decompositions, see~\cite{Kalck-Pavic-Shinder, Pavic-Shinder-delPezzo, Fei-delPezzo}.
\end{example}

Recall the definitions of smoothing and (thick) deformation absorption from Definitions~\ref{def:intro-smoothing} and~\ref{def:intro-deformation-absorption}, respectively.  Given a smoothing~$f \colon \cX \to B$ of~$X$, we denote by~$\io \colon X \hookrightarrow \cX$ the embedding of the central fiber.  Note that since~$X$ is a hypersurface in a smooth variety~$\cX$, it follows that~$X$ has (at worst) Gorenstein singularities.  Furthermore, since~$X \subset \cX$ is a Cartier divisor with trivial normal bundle, for each~$F \in \Db(X)$ we have the standard distinguished triangle
\begin{equation}
\label{eq:rrr-triangle}
F[1] \lra \io^*\io_*(F) \lra F \lra F[2].
\end{equation}
Recall the definition of a~$\Pinfty{q}$-object (Definition~\ref{def:pinfty}) and its canonical self-extension (Definition~\ref{def:cse}).  Also recall the objects~$\rM^{(i)}$ defined in the proof of Lemma~\ref{lem:Pinf}.

\begin{lemma}
\label{lem:Pinfty-deform}
Let~$X$ be a Gorenstein projective variety with a smoothing~$f \colon \cX \to B$, and let~$\io \colon X \hookrightarrow \cX$ be the embedding of the central fiber.  If\,~$\rP \in \Db(X)$ is a $\Pinfty{q}$-object, then~$q \in \{1,2\}$.  Moreover,
\begin{itemize}
\item if~$q = 2$, the morphism~$\rP \to \rP[2]$ in~\eqref{eq:rrr-triangle} for~$F = \rP$ is given by~$\uptheta$ and~$\io^*\io_*\rP \cong \rM$, and
\item if~$q = 1$, the morphism~$\rP \to \rP[2]$ in~\eqref{eq:rrr-triangle} for~$F = \rP$ is given by~$\uptheta^2$ and~$\io^*\io_*\rP \cong \rM^{(2)}$,
\end{itemize}
where~$\rM$ is the canonical self-extension of\,~$\rP$ and the object~$\rM^{(2)}$ is defined in~\eqref{eq:rmi}.  In both cases~$\rM$ is a perfect complex on~$X$.
\end{lemma}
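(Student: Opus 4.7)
The plan is to analyze the connecting morphism $\delta \colon \rP \to \rP[2]$ in triangle~\eqref{eq:rrr-triangle} applied to $F = \rP$. Since $\rP$ is a $\Pinfty{q}$-object, the group $\Ext^2(\rP,\rP)$ is one-dimensional exactly when $q \in \{1,2\}$ (spanned by $\uptheta$ when $q=2$ and by $\uptheta^2$ when $q=1$) and vanishes otherwise. Consequently, once one shows that $\delta$ is nonzero, both the bound $q \in \{1,2\}$ and the identification of $\delta$ with $\uptheta$ (respectively $\uptheta^2$) up to a nonzero scalar follow at once.

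To establish $\delta \ne 0$ I would argue by contradiction using perfectness of $\io_*\rP$ on $\cX$. Since $\cX$ is smooth and proper, the bounded complex $\io_*\rP$ is perfect on $\cX$, so $\Ext^\bullet_\cX(\io_*\rP,\io_*\rP)$ is finite-dimensional. On the other hand, the $(\io^*,\io_*)$-adjunction yields
\[
\Ext^\bullet_\cX(\io_*\rP,\io_*\rP) \cong \Ext^\bullet_X(\io^*\io_*\rP,\rP),
\]
and were $\delta$ zero, triangle~\eqref{eq:rrr-triangle} would split, giving $\io^*\io_*\rP \cong \rP \oplus \rP[1]$ and reducing the right-hand side to $\Ext^\bullet(\rP,\rP) \oplus \Ext^\bullet(\rP,\rP)[-1] \cong \kk[\uptheta] \oplus \kk[\uptheta][-1]$, which is infinite-dimensional. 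This contradiction forces $\delta \ne 0$. I expect this to be the main obstacle: one must resist the temptation to compute $\io^*\io_*\rP$ by a naive Koszul-style argument that would misleadingly suggest a splitting, and instead control it indirectly via the perfectness-plus-adjunction mechanism above.

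With $\delta \ne 0$ established, I rescale $\uptheta$ (using that it is in any case defined only up to a nonzero scalar) so that $\delta = \uptheta$ for $q = 2$ and $\delta$ is a nonzero multiple of $\uptheta^2$ for $q = 1$; since the fiber of a morphism in a one-dimensional $\Ext$ is well-defined up to isomorphism regardless of the scalar chosen, the object $\io^*\io_*\rP$, being the fiber of $\rP \xrightarrow{\delta} \rP[2]$ in~\eqref{eq:rrr-triangle}, is isomorphic to $\rM$ by Definition~\ref{def:cse} when $q = 2$ and to $\rM^{(2)}$ by~\eqref{eq:rmi} when $q = 1$.

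Finally, perfectness of $\rM$ follows at once: since $\io_*\rP \in \Dp(\cX)$ and $\io^*$ preserves perfect complexes, we have $\io^*\io_*\rP \in \Dp(X)$. For $q = 2$ this gives $\rM \in \Dp(X)$ directly. For $q = 1$ it gives $\rM^{(2)} \in \Dp(X)$, and Corollary~\ref{cor:sm2} places $\rM$ in $\thick(\rM^{(2)}) \subset \Dp(X)$, completing the argument.
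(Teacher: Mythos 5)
Your proposal is correct and follows essentially the same strategy as the paper: the key observation in both is that $\io_*\rP$ is perfect on the smooth total space $\cX$, hence $\io^*\io_*\rP \in \Dp(X)$, and then a splitting of the triangle~\eqref{eq:rrr-triangle} would produce a contradiction. The only genuine difference is in how the contradiction is realized: the paper simply notes that if $\delta = 0$ then $\rP$ would be a direct summand of the perfect complex $\io^*\io_*\rP$, hence itself perfect, contradicting $\dim \Ext^\bullet(\rP,\rP) = \infty$; you instead route through the adjunction $\Ext^\bullet_\cX(\io_*\rP,\io_*\rP) \cong \Ext^\bullet_X(\io^*\io_*\rP,\rP)$ and the finite-dimensionality of the left-hand side. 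Both are valid, but the paper's version is a touch more direct (it only uses that perfect complexes on a projective variety form a thick subcategory with finite $\Ext$), and, more importantly, it states the fact $\io^*\io_*\rP \in \Dp(X)$ explicitly, which is then reused for the perfectness of $\rM$ at the end.

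One small inaccuracy to flag: you write ``Since $\cX$ is smooth and proper,'' but by Definition~\ref{def:intro-smoothing} the total space $\cX$ is smooth and \emph{projective over $B$}, where $B$ is a smooth pointed curve that need not be proper over $\kk$; so $\cX$ need not be proper. Smoothness alone gives $\io_*\rP \in \Dp(\cX)$, and the finite-dimensionality of $\Ext^\bullet_\cX(\io_*\rP,\io_*\rP)$ then follows not from properness of $\cX$ but from the fact that $\io_*\rP$ has proper support (namely $X$). The conclusion is unaffected, but the justification should be repaired. The remainder of your argument --- the scalar-independence of the fiber, the identification with $\rM$ for $q = 2$ and with $\rM^{(2)}$ for $q = 1$, and the appeal to Corollary~\ref{cor:sm2} to get $\rM \in \thick(\rM^{(2)}) \subset \Dp(X)$ --- matches the paper exactly.
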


\begin{proof}
First, we note that the smoothness of~$\cX$ implies that~$\io_*\rP \in \Db(\cX)$ is a perfect complex, so
\begin{equation*}
\io^*\io_*\rP \in \Dp(X).
\end{equation*}
On the other hand, consider the triangle~\eqref{eq:rrr-triangle} for~$F = \rP$.  Since~$\rP$ itself is not perfect (because~$\Ext^*(\rP, \rP)$ is infinite-dimensional), it follows that the connecting morphism~$\rP \to \rP[2]$ is nonzero.  By the definition of a~$\Pinfty{q}$-object, it follows that~$q \le 2$ and, moreover, if~$q = 2$, the connecting morphism is isomorphic to~$\uptheta$, and if~$q = 1$, it is isomorphic to~$\uptheta^2$.  Finally, if~$q = 2$, it follows that~$\rM \cong \io^* \io_* \rP$ is perfect, and if~$q = 1$, it follows that~$\rM^{(2)} \cong \io^* \io_* \rP$ is perfect, so~$\rM$ is also perfect by Corollary~\ref{cor:sm2}.
\end{proof}

We would like to emphasize the following consequence. 

\begin{corollary}
\label{cor:no-pinfty-big}
If\,~$X$ is a smoothable projective variety and~$\rP \in \Db(X)$ is a $\Pinfty{q}$-object, then~$q \in \{1,2\}$.
\end{corollary}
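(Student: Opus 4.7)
The plan is to observe that this corollary is essentially a direct consequence of Lemma~\ref{lem:Pinfty-deform}, so the only thing to verify is that the hypotheses of that lemma are satisfied automatically for any smoothable projective variety.

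First I would unpack Definition~\ref{def:intro-smoothing}: a smoothing of~$X$ is a flat projective morphism~$f \colon \cX \to B$ with smooth total space~$\cX$, with~$\cX_o \cong X$, and smooth away from~$o$. Since~$f$ is flat with one-dimensional smooth target and smooth total space, the central fiber~$X$ is a Cartier divisor in~$\cX$. A Cartier divisor in a smooth (in particular Gorenstein) scheme is Gorenstein by adjunction, so~$X$ is automatically a Gorenstein projective variety.

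Next I would simply invoke Lemma~\ref{lem:Pinfty-deform} applied to~$X$ together with the chosen smoothing~$f \colon \cX \to B$ and the~$\Pinfty{q}$-object~$\rP$: that lemma immediately yields~$q \in \{1,2\}$.

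There is no real obstacle here; the corollary is a packaging statement whose content is that smoothability of~$X$ implies both the Gorenstein hypothesis and the existence of the closed embedding~$\io \colon X \hookrightarrow \cX$ needed for Lemma~\ref{lem:Pinfty-deform}. If one wanted to be slightly more careful, one might remark that the conclusion is independent of the chosen smoothing, since each individual smoothing already constrains~$q$ to~$\{1,2\}$, so the existence of any smoothing suffices.
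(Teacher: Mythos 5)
Your proof is correct and follows exactly the paper's (implicit) argument: the text preceding Lemma~\ref{lem:Pinfty-deform} already records that the central fiber of a smoothing is a Cartier divisor in a smooth total space and hence Gorenstein, after which the corollary is an immediate application of that lemma. Nothing to add.
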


We are ready to prove Theorem~\ref{thm:intro-deformation-absorption-to-smoothing}.  Recall that for a variety~$\cX/B$ a subcategory~$\cD \subset \Db(\cX)$ is called \emph{$B$-linear} if it is closed under tensor products with pullbacks of perfect complexes on~$B$.  Given a $B$-linear triangulated category~$\cD \subset \Db(\cX)$ and a morphism~$\varphi \colon B' \to B$, the base change category~$\cD_{B'} \subset \Db(\cX \times_B B')$ is defined in~\cite[Theorem~5.6]{K11} under appropriate assumptions (admissibility of~$\cD$ and of either of its orthogonals and finiteness of cohomological amplitude of the projection functor to~$\cD$) as
\begin{equation}
\label{eq:base-change}
\cD_{B'} \coloneqq \hocolim \left\langle \tphi^*(\cD \cap \Dp(\cX)) \right\rangle \cap \Db(\cX \times_B B'),
\end{equation}
where~$\tphi \colon \cX\times_B B' \to \cX$ is the morphism induced by~$\varphi$.  In other words, first one considers ``the perfect part''~$\cD \cap \Dp(\cX)$ of~$\cD$, then one considers the triangulated hull of the pullback~$\tphi^*(\cD \cap \Dp(\cX))$ along~$\tphi$ (this gives ``the perfect part'' of the base change category~$\cD_{B'}$), and finally one considers all homotopy colimits contained in~$\Db(\cX\times_B B')$ of chains of morphisms of objects in~$\langle\tphi^*(\cD \cap \Dp(\cX))\rangle$.

In the special case where~$B'$ is a point~$b \in B$, this defines the \emph{fiber}~$\cD_b$ of~$\cD$.

\begin{proof}[Proof of Theorem~\textup{\ref{thm:intro-deformation-absorption-to-smoothing}}]
By the definition of {deformation absorption, the subcategory~$\thick(\io_*\cP) \subset \Db(\cX)$ is admissible,} so we have a semiorthogonal decomposition
\begin{equation}
\label{eq:sod-dbcx}
\Db(\cX) = \langle \thick(\io_*\cP), \cD \rangle.
\end{equation}
Its first component is supported set-theoretically on the central fiber of~$\cX$ over~$B$, so it is~$B$-linear.  By~\cite[Lemma~2.36]{K06} the second component~$\cD \coloneqq {}^\perp(\thick(\io_*\cP))$ is~$B$-linear as well.  The components of~\eqref{eq:sod-dbcx} are both admissible because~$\cX$ is smooth over~$\kk$ and proper over~$B$, and their projection functors have finite cohomology amplitude by~\cite[Proposition~2.5]{K08} since~$\cX$ is smooth and quasiprojective.  Therefore, by~\cite[Theorem~5.6]{K11} we can talk about base change of the $B$-linear category~$\cD$; in particular, the fibers~$\cD_b$ and~$\cD_o$ are defined.

Now the equality~$\cD_b = \Db(\cX_b)$ for~$b \ne o$ follows from base change along the point embedding~$\{b\} \hookrightarrow B$ applied to~\eqref{eq:sod-dbcx} since the base change of the first component of~\eqref{eq:sod-dbcx} is zero (because the support of any object in it does not intersect the fiber~$\cX_b$).  Similarly, by base change we have a semiorthogonal decomposition
\begin{equation*}
\Db(X) = \langle \thick(\io_*\cP)_o, \cD_o \rangle.
\end{equation*}
Thus, we need to identify its first component with~$\cP$.

On the one hand, since~$\io$ is the embedding of a fiber, \cite[Corollary~5.7]{K11} shows that
\begin{equation*}
\thick(\io_*\cP)_o = \{ F \in \Db(X) \mid \io_*F \in \thick(\io_*\cP) \}.
\end{equation*}
It follows immediately that~$\cP \subset \thick(\io_*\cP)_o$.

On the other hand, triangle~\eqref{eq:rrr-triangle} applied to an object~$F \in \cP$ shows that~$\io^*\io_*F \in \cP$.  Since the functor~$\io^*$ is triangulated, we have~$\io^*G \in \cP$ for any~$G$ in~$\langle \io_*\cP \rangle$, and since~$\cP$ is idempotent complete, we have $\io^*G \in \cP$ for any~$G \in \thick(\io_*\cP)$.  This proves that~$\io^*(\thick(\io_*\cP) \cap \Dp(\cX)) = \io^*(\thick(\io_*\cP))$ is contained in~$\cP$.  Furthermore, since~$\cP$ is closed under homotopy colimits contained in~$\Db(X)$ (because by Lemma~\ref{lemma:absorption-hocolim} the category~$\cP$ is an orthogonal to a subcategory consisting of perfect objects), we have the inclusion
\begin{equation*}
\hocolim(\cP) \cap \Db(X) \subset \cP.
\end{equation*}
Combining the two inclusions proved above with~\eqref{eq:base-change}, we see that~$\thick(\io_*\cP)_o \subset \cP$, and comparing it with the opposite inclusion proved above, we obtain the equality~$\cP = \thick(\io_*\cP)_o$.

Finally, the smoothness and properness of~$\cD$ over~$B$ follow from~\cite[Theorem~2.10]{K21}.
\end{proof}

\begin{proof}[Proof of Theorem~\textup{\ref{thm:intro-pinfty2}}]
Let~$f \colon \cX \to B$ be a smoothing of~$X$, and recall that~$\io \colon X \to \cX$ denotes the embedding of the central fiber.  Consider the distinguished triangle~\eqref{eq:rrr-triangle} for~$F = \rP_i$.  Applying Lemma~\ref{lem:Pinfty-deform} we conclude that
\begin{equation*}
\io^*\io_*(\rP_i) \cong \rM_i
\end{equation*}
is the canonical self-extension of~$\rP_i$.  As a consequence, using adjunction and~\eqref{eq:ext-cm-cr}, we obtain
\begin{equation*}
\Ext^\bullet(\io_*(\rP_i), \io_*(\rP_i)) \cong
\Ext^\bullet(\io^*\io_*(\rP_i), \rP_i) \cong
\Ext^\bullet(\rM_i, \rP_i) \cong \kk,
\end{equation*}
which means that~$\io_*(\rP_i) \in \Db(\cX)$ is an exceptional object.  Similarly, if~$i > j$, we have
\begin{equation*}
\Ext^\bullet(\io_*(\rP_i), \io_*(\rP_j)) \cong
\Ext^\bullet(\io^*\io_*(\rP_i), \rP_j) \cong
\Ext^\bullet(\rM_i, \rP_j),
\end{equation*}
and using the triangle~\eqref{def:cm} for~$\rM_i$ and~$\rP_i$, we see that~$\Ext^\bullet(\rM_i, \rP_j) = 0$.  Thus, $\io_*(\rP_1), \dots, \io_*(\rP_r)$ is an exceptional collection of compactly supported objects in~$\Db(\cX)$, so
\begin{equation*}
\langle \io_*\cP \rangle = \langle \io_*(\rP_1), \dots, \io_*(\rP_r) \rangle \subset \Db(\cX)
\end{equation*}
is admissible in~$\Db(\cX)$, and therefore, $\cP$ provides a deformation absorption for~$X$.  Since this is true for any smoothing of~$X$, the subcategory~\mbox{$\cP \subset \Db(X)$} provides a universal deformation absorption.
\end{proof}

\begin{remark}
The fact that~$\io_*\rP$ is an exceptional object on the total space of a smoothing is an analogue and the limiting case of the fact that the direct image of a~$\P^n$-object with respect to the embedding into the total space of an appropriate deformation is a $(2n+1)$-spherical object; see~\cite{Huybrechts-Thomas}.
\end{remark}

Combining Theorem~\ref{thm:intro-pinfty2} with Theorem~\textup{\ref{thm:contractions}}, we obtain the following. 

\begin{corollary}
\label{cor:deformation-absorption-by-codp}
Under the conditions of Theorem~\textup{\ref{thm:contractions}}, if\,~$\cT \simeq \Db(X)$ for a projective variety~$X$ and~$p = 1$, then the category~$\cP$ constructed therein provides a universal deformation absorption of singularities of\,~$X$.
\end{corollary}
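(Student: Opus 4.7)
The plan is to obtain the corollary as a direct chaining of Theorem~\ref{thm:contractions} and Theorem~\ref{thm:intro-pinfty2}, so the work really just amounts to matching up the hypotheses. First, I would invoke Theorem~\ref{thm:contractions}\ref{item:cc} with the specialization~$p = 1$: this produces in~$\cT$ objects~$\rP_i = \pi_*(\cE_i)$ which are~$\Pinfty{2}$-objects, one-object subcategories~$\cP_i = \langle \rP_i \rangle$ equivalent to~$\Db(\sA_1)$ forming a semiorthogonal collection in~$\cT$, and an overall subcategory~$\cP = \langle \rP_1, \dots, \rP_r \rangle$ that absorbs singularities of~$\cT$. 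Semiorthogonality of the generators~$\rP_i$ themselves is automatic from the semiorthogonality of the one-object subcategories they generate.

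Under the assumed identification~$\cT \simeq \Db(X)$, the data~$(\rP_1, \dots, \rP_r)$ is therefore exactly a semiorthogonal collection of~$\Pinfty{2}$-objects in~$\Db(X)$ whose span absorbs singularities of~$X$ in the sense of Definition~\ref{def:intro-absorption}. These are precisely the hypotheses of Theorem~\ref{thm:intro-pinfty2}, whose conclusion states that for every smoothing~$\cX \to B$ of~$X$ the pushforwards~$\io_*\rP_1, \dots, \io_*\rP_r$ form an exceptional collection in~$\Db(\cX)$, and consequently that~$\cP$ provides a universal deformation absorption of singularities of~$X$---which is precisely the content of the corollary.

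Since the argument is just a composition of two already-established results, there is no genuine obstacle at this stage; the real content has been front-loaded into Theorem~\ref{thm:contractions} (which converts the spherical kernel plus adherence data in a smooth and proper enhancement into~$\Pinfty{p+1}$-absorption downstairs) and into Theorem~\ref{thm:intro-pinfty2} (whose nontrivial input is the perfectness of~$\io_*\rP_i$ together with the identification~$\io^*\io_*\rP_i \cong \rM_i$ from Lemma~\ref{lem:Pinfty-deform}, which forces the constraint~$q \in \{1,2\}$ and yields exceptionality via the computation~$\Ext^\bullet(\rM_i,\rP_i) \cong \kk$). The only point worth double-checking while writing is that the value~$p = 1$ is indeed the correct parity to invoke Theorem~\ref{thm:intro-pinfty2} rather than Theorem~\ref{thm:intro-pinfty1}, i.e.\ that~$\Pinfty{p+1} = \Pinfty{2}$ when~$p = 1$.
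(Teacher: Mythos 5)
Your proposal is correct and is exactly the paper's argument: the paper introduces the corollary with the phrase ``Combining Theorem~\ref{thm:intro-pinfty2} with Theorem~\ref{thm:contractions} we obtain the following'' and gives no further proof, so the content is precisely the chaining of Theorem~\ref{thm:contractions}\ref{item:cc} (which with~$p=1$ produces a semiorthogonal collection of~$\Pinfty{2}$-objects whose span absorbs singularities) into Theorem~\ref{thm:intro-pinfty2}. Your hypothesis-matching, including the check that~$p+1 = 2$ is the right exponent and that~$X$ projective is available, is all that was needed.
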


Finally, we prove Theorem~\ref{thm:intro-pinfty1}.

\begin{proof}[Proof of Theorem~\textup{\ref{thm:intro-pinfty1}}]
Let~$f \colon \cX \to B$ be a smoothing of~$X$, and recall that~$\io \colon X \to \cX$ denotes the embedding of the central fiber.  Consider the stack~$\cD^\bb_{\mathrm{pug}}(\cX/B)$ of universally gluable~$B$-perfect complexes on~$\cX$ as defined in~\cite[Section~2.1]{lieblich}.  Note that under the assumption of smoothness of the base~$B$, an object~$\cF \in\Dqc(\cX)$ is $B$-perfect if and only if~$\cF \in \Db(\cX)$. Also recall that the ``gluability condition'' for an object~$\cF \in \Db(\cX)$ is just
\begin{equation*}
f_*\cRHom(\cF,\cF) \in \Db(B)^{\ge 0},
\end{equation*}
where the right-hand side stands for the subcategory of objects with zero sheaf cohomology in negative degrees, and ``universal gluability'' is gluability after arbitrary base change.

For each~\mbox{$1 \le i \le r$} consider the canonical self-extension~$\rM_i \in \Db(X)$ of the $\Pinfty{1}$-object~$\rP_i \in \Db(X)$.  By Lemma~\ref{lem:Pinfty-deform} we have~$\rM_i \in \Dp(X)$, and by Lemma~\ref{lem:ext-cm} we have
\begin{equation*}
\Ext^\bullet(\rM_i, \rM_i) \cong \sA_0 \cong \kk[\eps]/\eps^2,
\qquad \deg(\eps) = 0,
\end{equation*}
so~$\rM_i$ gives a $\kk$-point of the stack~$\cD^\bb_{\mathrm{pug}}(\cX/B)$ over the point~$o \in B$.  Moreover, the stack~$\cD^\bb_{\mathrm{pug}}(\cX/B)$ is locally of finite presentation by~\cite[Theorem~4.2.1]{lieblich}, and it follows from~\cite[Theorem~3.1.1]{lieblich} that the natural morphism of stacks
\begin{equation*}
\cD^\bb_{\mathrm{pug}}(\cX/B) \lra B
\end{equation*}
is smooth at the point~$[\rM_i]$.  Therefore, \'etale locally this morphism admits a section passing through~$[\rM_i]$.  In other words, after an \'etale base change, there is an object~$\cM_i \in \Db(\cX)$ such that
\begin{equation*}
\io^*(\cM_i) \cong \rM_i,
\end{equation*}
and applying~\cite[Corollary~2.12]{K21}, we conclude that, shrinking~$B$, we may assume~$\cM_i \in \Dp(\cX)$.

Now consider the object
\begin{equation*}
\cR_i \coloneqq f_*\cRHom(\cM_i,\cM_i) \cong f_*\left(\cM_i \otimes_{\cO_\cX} \cM_i^\vee\right) \in \Db(B).
\end{equation*}
Since the morphism~$f$ is flat, using base change we obtain an isomorphism
\begin{equation*}
\cR_i\vert_o \cong 
(f_*\cRHom(\cM_i,\cM_i))\vert_o \cong 
\rH^\bullet(X, \io^*\cRHom(\cM_i,\cM_i)) \cong
\rH^\bullet(X, \cRHom(\rM_i,\rM_i)) \cong
\RHom(\rM_i,\rM_i),
\end{equation*}
where the left side is the derived restriction of~$\cR_i$ to the point~$o$ and the right side is isomorphic to~$\sA_0$; in particular, it is a 2-dimensional vector space sitting in degree zero.  Therefore, shrinking~$B$ further, we may assume that~$\cR_i$ is a locally free sheaf of rank~$2$.

By construction~$\cR_i$ is a sheaf of~$\cO_B$-algebras.  Consider the adjoint pair of $B$-linear functors
\begin{align*}
\Phi_{\cM_i} &\colon \bD(B,\cR_i) \lra \bD(\cX), \qquad
\cF \longmapsto f^*\cF \otimes_{f^*\cR_i} \cM_i,\\
\Phi^!_{\cM_i} &\colon \bD(\cX) \lra \bD(B,\cR_i), \qquad
\cG \longmapsto f_*\cRHom(\cM_i,\cG).
\end{align*}
Using the containment~$\cM_i \in \Dp(\cX)$, the projection formula, and the definition of~$\cR_i$, we compute
\begin{equation*}
f_*\cRHom\left(\cM_i, f^*\cF \otimes_{f^*\cR_i} \cM_i\right) \cong
f_*\left(f^*\cF \otimes_{f^*\cR_i} \cM_i \otimes_{\cO_\cX} \cM_i^\vee\right) \cong
\cF \otimes_{\cR_i} f_*\left(\cM_i \otimes_{\cO_\cX} \cM_i^\vee\right) \cong
\cF \otimes_{\cR_i} \cR_i \cong
\cF
\end{equation*}
for any~$\cF \in \bD(B, \cR_i)$.  Thus, $\Phi^!_{\cM_i} \circ \Phi_{\cM_i} \cong \id$, so~$\Phi_{\cM_i}$ is fully faithful.

Also note that the functor~$\Phi_{\cM_i}$ is $B$-linear and takes the free module~$\sA_0 \otimes \cO_o$ at point~$o \in B$ to~$\io_*\rM_i$.  Using the notation of~Section~\ref{sec:categorical-odp} and a simple induction, it is easy to check that~$\Phi_{\cM_i}(\sA_0^{(j)} \otimes \cO_o) \cong \io_*\rM_i^{(j)}$.  Now, since the functors~$\Phi_{\cM_i}$ and~$\io_*$ commute with direct sums, and hence also with homotopy colimits, we conclude that
\begin{equation*}
\Phi_{\cM_i}(\kk_\sA \otimes \cO_o) \cong
\Phi_{\cM_i}\left(\hocolim \sA_0^{(j)} \otimes \cO_o\right) \cong
\hocolim \io_*\rM_i^{(j)} \cong
\io_*\rP_i,
\end{equation*}
where we used Lemma~\ref{lemma:ap-k}\ref{item:hocolim-ap-k} and an argument of Lemma~\ref{lem:Pinf}.

Next, we interpret this computation geometrically.  Since the rank of the sheaf of~$\cO_B$-algebras~$\cR_i$ is~$2$, the sheaf is commutative, so we can consider the $B$-scheme~$\cZ_i \coloneqq \Spec_B(\cR_i)$.  Then~$\bD(\cZ_i) \simeq \bD(B, \cR_i)$ and the simple module~$\kk_\sA \otimes \cO_o$ corresponds to the structure sheaf of the unique point~$z_i \in \cZ_i$ over~$o$.  Abusing notation, we will write~$\Phi_{\cM_i}$ for the induced functor~$\bD(\cZ_i) \simeq \bD(B, \cR_i) \to \bD(\cX)$ and~$\Phi^!_{\cM_i}$ for its adjoint.  Then the above computation gives an isomorphism~$\Phi_{\cM_i}(\cO_{z_i}) \cong \io_*\rP_i$.  Using the full faithfulness of the functor~$\Phi_{\cM_i}$, Lemma~\ref{lem:Pinfty-deform}, and Corollary~\ref{cor:ext-rmi-rp}, we therefore obtain
\begin{equation*}
\Ext^\bullet_{\cZ_i}\left(\cO_{z_i}, \cO_{z_i}\right) \cong 
\Ext^\bullet_{\cX}(\io_*\rP_i, \io_*\rP_i) \cong
\Ext^\bullet_X(\io^*\io_*\rP_i, \rP_i) \cong
\Ext^\bullet_X\left(\rM_i^{(2)}, \rP_i\right) \cong
\kk[\uptheta]/\uptheta^2.
\end{equation*}
Since~$\deg(\uptheta) = 1$, we conclude that the scheme~$\cZ_i$ is smooth at~$z_i$, so, shrinking~$B$ further, we may assume that~$\cZ_i$ is smooth everywhere.

Finally, since both~$\cZ_i$ and~$\cX$ are smooth over~$\kk$ and proper over~$B$, the restricted Fourier--Mukai functor~$\Phi_{\cM_i}\vert_{\Db(\cZ_i)}$ has both adjoints, and therefore the subcategory~$\Phi_{\cM_i}(\Db(\cZ_i)) \subset \Db(\cX)$ is admissible.  Since the objects~$\rP_i$, $1 \le i \le n$, are semiorthogonal, shrinking~$B$ further, we may assume that the subcategories~$\Phi_{\cM_i}(\Db(\cZ_i)) \subset \Db(\cX)$ are semiorthogonal (see the argument of~\cite[Proposition~2.16]{FK18}).  Thus, we obtain a $B$-linear semiorthogonal decomposition
\begin{equation}
\label{eq:dbcx-dbcz-cd}
\Db(\cX) = \langle \Db(\cZ_1), \dots, \Db(\cZ_r), \cD \rangle,
\end{equation}
where the last component~$\cD$ is the orthogonal complement of the other components.

Now, consider the base change of this decomposition to any geometric point~$b \ne o$.  We obtain
\begin{equation*}
\Db\left(\cX_b\right) = \left\langle \Db\left((\cZ_1)_b\right), \dots, \Db\left((\cZ_r)_b\right), \cD_b \right\rangle.
\end{equation*}
On the one hand,~$\Db((\cZ_i)_b) \simeq \Db((\cR_i)_b)$, where~$(\cR_i)_b$ is a 2-dimensional algebra.  On the other hand, since this is a semiorthogonal component in the derived category of a smooth projective variety~$\cX_b$, the algebra should have finite homological dimension, so it should be \'etale over the residue field of the point~$b$.  Therefore, $\cZ_i \to B$ is \'etale over~$B \setminus \{o\}$.  Moreover, $\cD_b$ is also a smooth and proper category.

Furthermore, by construction the base change of~$\Db(\cZ_i)$ to the point~$o$ coincides with the subcategory~$\langle \rP_i \rangle \subset \Db(X)$, so~$\cD_o \simeq {}^\perp\cP$; in particular, it is smooth and proper.  Finally, we use~\cite[Theorem~2.10]{K21} to conclude that~$\cD$ is smooth and proper over~$B$.

It remains to prove the uniqueness of~\eqref{eq:dbcx-dbcz-cd}.  For this we note that if~$\cF \in \Db(\cX)$ is an object such that~$\io^*\cF \in \Db(\cZ_i)_o$ for some~$1 \le i \le r$, then there is a Zariski neighbourhood~$U \subset B$ of~$o$ such that the pullback of~$\cF$ to~$\cX_U = \cX \times_B U$ is contained in~$\Db(\cZ_i \times_B U) \subset \Db(\cX_U)$.  Indeed, if~$\cF_j \in \Db(\cZ_j)$ and~$\cF_\cD \in \cD$ are the components of~$\cF$ with respect to~\eqref{eq:dbcx-dbcz-cd}, then the assumption means that
\begin{equation*}
\io^*\cF_j = 0 \quad 
\text{for $j \ne i$}
\qquad\text{and}\qquad 
\io^*\cF_\cD = 0.
\end{equation*}
Then the objects~$\cF_j$ for~$j \ne i$ and~$\cF_\cD$ vanish on a Zariski neighbourhood of~$X \subset \cX$.  But since~$f \colon \cX \to B$ is proper, any Zariski neighbourhood of~$X$ contains~$\cX_U$ for an appropriate Zariski neighbourhood~$U \subset B$ of~$o$.  Therefore, the pullbacks of~$\cF_j$ for~$j \ne i$ and~$\cF_\cD$ to~$\cX_U$ vanish, and hence the pullback of~$\cF$ to~$\cX_U$ is equal to the pullback of~$\cF_i$.  Thus, the pullback of~$\cF$ to~$\cX_U$ is contained in~$\Db(\cZ_i \times_B U)$.

Now, if
\begin{equation*}
\Db(\cX) = \left\langle \Db(\cZ'_1), \dots, \Db(\cZ'_r), \cD' \right\rangle
\end{equation*}
is another $B$-linear semiorthogonal decomposition such that its base change to~$o$ gives the decomposition~$\Db(X) = \langle \rP_1, \dots, \rP_r, {}^\perp\cP \rangle$, then applying the above observation to generators~$\cG_1,\dots,\cG_r$ of the components~$\Db(\cZ'_1),\dots,\Db(\cZ'_r)$, we find Zariski neighbourhoods~$U_1,\dots,U_r \subset B$ of~$o$ such that the pullback of~$\cG_i$ to~$\cX \times_B U_i$ is contained in~$\Db(\cZ_i \times_B U_i)$.  Taking~$U = U_1 \cap \dots \cap U_r$, we deduce that
\begin{equation*}
\Db\left(\cZ'_i \times_B U\right) \subset \Db(\cZ_i \times_B U)
\end{equation*}
for all~$i$.  The same argument shows the opposite inclusion (possibly after shrinking the base further), and therefore we finally obtain an equality~$\Db(\cZ'_i \times_B U) = \Db(\cZ_i \times_B U)$.
\end{proof}

%%%%%%%%%%%%%%%%%%%%%%%%%%%%%%%%%%%%%%%%%%%%%%%%%%%%%%%%%%%%%%%%%%%%%%%%%%%%%%%
%%%%%%%%%%%%%%%%%%%%%%%%%%%%%%%%%%%%%%%%%%%%%%%%%%%%%%%%%%%%%%%%%%%%%%%%%%%%%%%
%%%%%%%%%%%%%%%%%%%%%%%%%%%%%%%%%%%%%%%%%%%%%%%%%%%%%%%%%%%%%%%%%%%%%%%%%%%%%%%
%%%%%%%%%%%%%%%%%%%%%%%%%%%%%%%%%%%%%%%%%%%%%%%%%%%%%%%%%%%%%%%%%%%%%%%%%%%%%%%
%%%%%%%%%%%%%%%%%%%%%%%%%%%%%%%%%%%%%%%%%%%%%%%%%%%%%%%%%%%%%%%%%%%%%%%%%%%%%%%

\section{Crepant categorical resolutions for nodal varieties}
\label{sec:nodal-varieties}

In this section we construct crepant categorical resolutions for varieties with ordinary double points or \emph{nodal varieties}.  In~Section~\ref{ss:bo-localization} we consider the blowup morphism~$\pi \colon \Bl_x(X) \to X$ for an ordinary double point~\mbox{$x \in X$} and check that the pushforward functor~$\pi_* \colon \Db(\Bl_x(X)) \to \Db(X)$ is a Verdier localization.  In~Section~\ref{ss:ccc-odp}, we consider a nodal variety with singular points~$x_1, \dots, x_r \in X$ and find an admissible subcategory~$\cD \subset \Db(\Bl_{x_1, \dots, x_r}(X))$ such that~$\pi_*\vert_\cD \colon \cD \to \Db(X)$ is a crepant categorical resolution and~$\Ker(\pi_*\vert_\cD)$ is generated by a completely orthogonal collection of spherical objects.

Starting from~Section~\ref{ss:ccc-odp}, we work over an algebraically closed field~$\kk$ of characteristic not equal to~$2$.

\subsection{Bondal--Orlov localization}
\label{ss:bo-localization}

It is a conjecture of Bondal and Orlov (see~\cite[Section~5]{BO02} and~\cite[Conjecture~1.9]{Ef20}) that if~$X$ is a variety with rational singularities and~$\pi \colon \tX \to X$ is a resolution, then {the pushforward functor}~$\pi_* \colon \Db(\tX) \to \Db(X)$ is a Verdier localization.  The main advance in this direction was obtained in~\cite{Ef20}; we prove a slight extension of his results in a convenient form.

Recall from Definition~\ref{def:cc} the notion of a categorical contraction.

\begin{lemma}
\label{lem:lon-contr}
A functor~$\pi_* \colon \tcT \to \cT$ is a Verdier localization if and only if it is a categorical contraction and the induced map on the Grothendieck groups~$\rK_0(\tcT) \to \rK_0(\cT)$ is surjective.
\end{lemma}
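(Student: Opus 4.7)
The plan is to prove both directions separately, with the forward direction being essentially formal and the converse relying on Thomason's classification of dense triangulated subcategories.

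For the forward direction, suppose $\pi_*$ is a Verdier localization. Then $\pi_*$ is essentially surjective (up to isomorphism) onto $\cT$, so $\Ima(\pi_*) = \cT$, which is trivially a dense subcategory of itself and trivially hits every class in $\rK_0(\cT)$. Both required properties hold.

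For the converse, suppose $\pi_*$ is a categorical contraction and $\rK_0(\tcT) \to \rK_0(\cT)$ is surjective. The strategy is to show $\Ima(\pi_*) = \cT$; then by Definition~\ref{def:cc} the corestriction $\tcT \to \Ima(\pi_*) = \cT$ is a Verdier localization. By definition of categorical contraction, $\pi_* \colon \tcT \to \Ima(\pi_*)$ is a Verdier localization, so in particular every object of $\Ima(\pi_*)$ is isomorphic to $\pi_*(F)$ for some $F \in \tcT$, whence the induced map $\rK_0(\tcT) \to \rK_0(\Ima(\pi_*))$ is surjective. The composition
\begin{equation*}
\rK_0(\tcT) \twoheadrightarrow \rK_0(\Ima(\pi_*)) \to \rK_0(\cT)
\end{equation*}
agrees with the given surjection $\rK_0(\tcT) \to \rK_0(\cT)$, so the second arrow is also surjective.

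The key step is now to invoke Thomason's classification theorem (Thomason, \emph{The classification of triangulated subcategories}, Compositio Math.\ 1997), which asserts that dense triangulated subcategories $\cT' \subset \cT$ correspond bijectively to subgroups of $\rK_0(\cT)$ via $\cT' \mapsto \mathrm{image}(\rK_0(\cT') \to \rK_0(\cT))$. In particular, a dense subcategory $\cT' \subset \cT$ equals $\cT$ precisely when $\rK_0(\cT') \to \rK_0(\cT)$ is surjective. Applying this to $\cT' = \Ima(\pi_*)$ gives $\Ima(\pi_*) = \cT$, completing the proof.

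The main (indeed only) nontrivial input is Thomason's theorem; the rest is routine diagram chasing. There is nothing further to verify about cardinality/smallness since the categories $\tcT$ and $\cT$ in the paper's geometric applications are essentially small, and Thomason's theorem applies in that setting.
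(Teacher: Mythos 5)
Your proof is correct and follows the same strategy as the paper: reduce the converse to showing $\Ima(\pi_*) = \cT$ and invoke Thomason's classification of dense triangulated subcategories by subgroups of $\rK_0$. You simply spell out in more detail the (routine) step that the given surjectivity of $\rK_0(\tcT) \to \rK_0(\cT)$ forces $\rK_0(\Ima(\pi_*)) \to \rK_0(\cT)$ to be surjective, which the paper states as an equality without further comment.
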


\begin{proof}
If~$\pi_*$ is a categorical contraction and~$\rK_0(\tcT) \to \rK_0(\cT)$ is surjective, then~\mbox{$\rK_0(\Ima(\pi_*)) = \rK_0(\cT)$} and by Thomason's theorem on classification of dense subcategories~\cite[Theorem~2.1]{Thomason}, we have the equivalence~$\tcT / \Ker(\pi_*) \simeq \Ima(\pi_*) = \cT$.  The other implication is obvious.
\end{proof}

If~$X$ is a variety with rational singularities over a field~$\kk$ of characteristic zero, $\pi \colon \tX \to X$ is a resolution of singularities, and~$\pi_* \colon \Db(\tX) \to \Db(X)$ is the pushforward functor, the surjectivity of the induced map of the Grothendieck groups was proved recently in~\cite[Theorem~1.2]{MauriShinder}.  Consequently, the functor~$\pi_*$ is a Verdier localization if and only if it is a categorical contraction; see~\cite[Corollary~1.3]{MauriShinder}.

We denote by
\begin{equation*}
\rG_0(X) \coloneqq \rK_0(\Db(X))
\end{equation*}
the Grothendieck group of the bounded derived category of~$X$.

\begin{theorem}[\textit{cf.}~\cite{Ef20}]
\label{thm:efimov}
Let~$\pi \colon \tX \to X$ be a proper birational morphism.  Let~$\upzeta \colon Z \hookrightarrow X$ be a closed subscheme such that~$E \coloneqq \pi^{-1}(Z)$ is a Cartier divisor and the restriction~$\pi \colon \tX \setminus E \to X \setminus Z$ is an isomorphism.  Assume the following isomorphisms for the derived pushforwards of the sheaves~$\cO_{\tX}(-mE)$:
\begin{equation}
\label{eq:pis-jm}
\pi_* \cO_{\tX}(-mE) \cong \cJ^m_{Z}
\qquad 
\text{for all~$m \ge 0$.}
\end{equation}
Let $p \colon E \to Z$ and $\upeta \colon E \hookrightarrow \tX$ be the natural morphisms so that we have a Cartesian square
\begin{equation*}
\xymatrix{
E \ar[d]_p \ar@{^{(}->}[r]^\upeta & \tX \ar[d]^\pi \\
Z \ar@{^{(}->}[r]^\upzeta & X\rlap{.}
}
\end{equation*}
If~$p_* \colon \Db(E) \to \Db(Z)$ is a Verdier localization or a categorical contraction, then~$\pi_* \colon \Db(\tX) \to \Db(X)$ is also a Verdier localization or a categorical contraction, respectively.  In both cases the category~$\Ker(\pi_*)$ is generated by~$\upeta_*(\Ker(p_*))$.
\end{theorem}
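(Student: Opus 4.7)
The plan is to follow Efimov's strategy, proving the two statements in parallel by first reducing both to an identification of the kernel of $\pi_*$, and only afterwards promoting ``categorical contraction'' to ``Verdier localization'' via Lemma~\ref{lem:lon-contr}. The easy half of the kernel description goes first: from $\pi \circ \upeta = \upzeta \circ p$ one has $\pi_* \upeta_* = \upzeta_* p_*$, and since $\upzeta_*$ is conservative on bounded coherent complexes, $\upeta_*(\Ker p_*) \subset \Ker \pi_*$. So the substantive task is the reverse inclusion together with density of $\Ima(\pi_*)$ in $\Db(X)$.

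For density, I would note that $\pi_* \cO_\tX = \cO_X$ (the $m=0$ case of the hypothesis), and more generally $\cJ_Z^m$ and the quotients $\cO_X/\cJ_Z^m = \upzeta_* \cO_{Z^{(m)}}$ all lie in $\Ima(\pi_*)$; combining this with the short exact sequence $0 \to \cJ_Z \to \cO_X \to \upzeta_* \cO_Z \to 0$ and the assumption that $p_*\Db(E)$ is dense in $\Db(Z)$ should give density of $\Ima(\pi_*)$ in $\Db(X)$ (use that $\Db(X)$ is generated by objects of the form $\upzeta_* G$ with $G \in \Db(Z)$ together with objects not meeting $Z$, the latter of which lift trivially via the isomorphism $\tX \setminus E \cong X \setminus Z$).

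The main obstacle is identifying $\Ker \pi_*$ with the triangulated subcategory generated by $\upeta_*(\Ker p_*)$. Given $F \in \Ker \pi_*$, set-theoretic support lies on $E$ since $\pi$ is an isomorphism off $E$. I would then use the filtration by powers of $\cO_\tX(-E)$: because $E$ is Cartier, any $F$ supported on $E$ fits into a finite tower with graded pieces of the form $\upeta_* G_i$ with $G_i \in \Db(E)$. Applying $\pi_*$ yields a filtration with graded pieces $\upzeta_* p_* G_i$, and the essential computation is to twist the whole argument by $\cO_\tX(-mE)$ and use the key hypothesis~\eqref{eq:pis-jm} together with the projection formula to show
\begin{equation*}
\pi_*\bigl(\upeta_* G \otimes \cO_\tX(-mE)\bigr) \cong \cJ_Z^m \otimes \upzeta_* p_* G,
\end{equation*}
so that $\pi_*F = 0$ forces, via the $\cJ_Z$-adic filtration on the image side, each $p_* G_i = 0$ (after possibly refining). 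This is exactly Efimov's argument in \cite[\S3]{Ef20}; the key formal input is that the cone of $\cO_\tX(-(m+1)E) \to \cO_\tX(-mE)$ is $\upeta_* \cO_E(-mE)$, so the hypothesis on $\pi_*$ translates to vanishing after pushdown of each graded piece individually.

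Having shown $\Ker \pi_* = \langle \upeta_*(\Ker p_*) \rangle$ as triangulated subcategories, the categorical-contraction case follows: $\pi_* \colon \tcT/\Ker \pi_* \to \Ima(\pi_*)$ is an equivalence by a diagram-chase using that $p_* \colon \Db(E)/\Ker p_* \to \Ima(p_*)$ is one, combined with density in $\Db(X)$ proved above. Finally, for the Verdier localization statement, Lemma~\ref{lem:lon-contr} reduces everything to surjectivity of $\pi_* \colon \rG_0(\tX) \to \rG_0(X)$. This is immediate from the filtration by $\cO_\tX(-mE)$-twists together with surjectivity of $p_* \colon \rG_0(E) \to \rG_0(Z)$ (which follows from $p_*$ being a Verdier localization) and the fact that $\rG_0(X)$ is spanned by classes supported on $Z$ and classes pulled back from $\Dp(\tX \setminus E) \cong \Dp(X \setminus Z)$.
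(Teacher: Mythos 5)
The proposal diverges from the paper's proof in a way that creates a genuine gap. The paper invokes Efimov's~\cite[Theorem~8.22]{Ef20} as a black box to get the categorical contraction conclusion and the statement that $\Ker(\pi_*)$ is \emph{thickly} generated by $\upeta_*(\Ker(p_*))$, and then does additional K-theoretic work (Schlichting's exact sequences for Verdier localizations up to direct summands plus Quillen's devissage) to upgrade thick generation to generation as a triangulated category. You instead try to reconstruct the core of Efimov's argument by a filtration, and this reconstruction does not hold up.

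Two concrete problems. First, the claimed identity
$\pi_*\bigl(\upeta_* G \otimes \cO_\tX(-mE)\bigr) \cong \cJ_Z^m \otimes \upzeta_* p_* G$
is false: by the projection formula the left side equals $\upzeta_* p_*(G \otimes \cO_E(-mE))$, an object scheme-theoretically supported on $Z$, while tensoring $\upzeta_* p_* G$ by the ideal sheaf $\cJ_Z^m$ would typically destroy such a sheaf rather than produce a twist of it; the hypothesis~\eqref{eq:pis-jm} enters Efimov's argument in a much more delicate way. Second, even if $F \in \Ker\pi_*$ with support on $E$ can be built from pieces $\upeta_*G_i$, the deduction that $\pi_*F = 0$ forces each $p_*G_i = 0$ is not valid: a filtration does not interact with $\pi_*$ piece-by-piece, and the long exact sequences only give $\upzeta_*p_*G_i \cong \upzeta_*p_*G_{i+1}[\pm 1]$-type relations, not vanishing. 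This step is precisely the nontrivial content of Efimov's theorem and cannot be obtained by a naive tower argument.

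Finally, even with a correct reconstruction one would only reach \emph{thick} generation of $\Ker(\pi_*)$ by $\upeta_*(\Ker(p_*))$, which is what Efimov's theorem actually gives. The paper's further step — passing from thick generation to generation as a triangulated category by applying Thomason's theorem, which reduces to surjectivity of $\rK_0(\Ker(p_*)) \to \rK_0(\Ker(\pi_*))$ and is established through the commutative ladder of K-groups and devissage — is missing from your proposal and cannot simply be dropped. Your treatment of the Verdier-localization half (reducing to surjectivity on $\rG_0$ and splitting into classes supported on $Z$ versus away from $Z$) is sound and close to the paper's, but the categorical-contraction half has a real gap.
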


\begin{proof}
Assume that~$p_*$ is a categorical contraction.  Then the conditions of~\cite[Theorem~8.22]{Ef20} (recall the difference in terminology mentioned in Remark~\ref{rem:disclaimer}) are satisfied, so that~$\pi_*$ is also a categorical contraction and~$\Ker(\pi_*)$ is thickly generated by~$\upeta_*(\Ker(p_*))$.

To show that $\Ker(\pi_*)$ is generated by $\upeta_*(\Ker(p_*))$ as a triangulated category, by Thomason's theorem (see the proof of Lemma \ref{lem:lon-contr}) it suffices to check that the induced map on the Grothendieck groups
\begin{equation}\label{eq:K0-eta}
\rK_0(\Ker(p_*)) \xrightarrow{\ \upeta_*\ } \rK_0(\Ker(\pi_*))
\end{equation}
is surjective.  For this we consider the diagram of categories
\begin{equation*}
\xymatrix{
\Ker(p_*) \ar[d]_{\upeta_*} \ar[r] & 
\Db(E) \ar[d]_{\upeta_*} \ar[r]^{p_*} & 
{\Db(Z)} \ar[d]_{\upzeta_*} 
\\
\Ker(\pi_*) \ar[r] & 
\Db_E(\tX) \ar[r]^{\pi_*} & 
{\Db_Z(X)},
}
\end{equation*}
where~$\Db_E(\tX) \subset \Db(\tX)$ and~$\Db_Z(X) \subset \Db(X)$ are the full subcategories of objects set-theoretically supported on~$E$ and~$Z$, respectively.  Its rows are ``exact sequences'', \textit{i.e.}, the left arrows are fully faithful, and the right arrows are categorical contractions.  Indeed, for the top row this is the assumption of the theorem, and for the bottom row this is proved in~\cite[Theorem~8.22(2)]{Ef20}.  Now we apply Schlichting's K-theory machinery as explained in \cite[Section~1.2]{PS18} to this diagram; in this way we obtain a commutative diagram of K-groups
\begin{equation*}
\xymatrix{
\rG_1(E) \ar[d]_{\upeta_*} \ar[r]^{p_*} & 
\rG_1(Z) \ar[d]_{\upzeta_*} \ar[r] &
\rK_0(\Ker(p_*)) \ar[d]_{\upeta_*} \ar[r] & 
\rG_0(E) \ar[d]_{\upeta_*} \ar[r]^{p_*} & 
\rG_0(Z) \ar[d]_{\upzeta_*} 
\\
\rK_1(\Db_E(\tX)) \ar[r]^{\pi_*} & 
\rK_1(\Db_Z(X)) \ar[r] &
\rK_0(\Ker(\pi_*)) \ar[r] & 
\rK_0(\Db_E(\tX)) \ar[r]^{\pi_*} & 
\rK_0(\Db_Z(X))\rlap{.} 
}
\end{equation*}
Applying Quillen's devissage (see~\cite[Section~5, Theorem~4]{Q73} and~\cite[Lemma~2.1]{Orl11}), we conclude that the first two and the last two vertical arrows in the diagram are isomorphisms, so the middle arrow~\eqref{eq:K0-eta} is also an isomorphism.

Assume that~$p_*$ is a Verdier localization.  By Lemma~\ref{lem:lon-contr} we only need to show that the map
\begin{equation}
\label{eq:pis-g0}
\pi_* \colon \rG_0(\tX) \lra \rG_0(X)
\end{equation}
is surjective.  For any coherent sheaf~$\cF$ on~$X$ the natural morphism~$\cF \to \rR^0\pi_*(\rL_0\pi^*\cF) \to \pi_*(\rL_0\pi^*\cF)$ is an isomorphism on the complement of~$Z$, so it is enough to show that the classes of sheaves set-theoretically supported at~$Z$ are in the image of~\eqref{eq:pis-g0}.  Since any such sheaf is filtered by sheaves scheme-theoretically supported on~$Z$ and~$p_* \colon \rG_0(E) \to \rG_0(Z)$ is surjective (again by Lemma~\ref{lem:lon-contr}), we conclude that~\eqref{eq:pis-g0} is surjective as well.
\end{proof}

Let~$x \in X$ be a $\kk$-point which is a normal isolated singularity, and let~$\pi \colon \tX \coloneqq \Bl_x(X) \to X$ be the blowup of~$x \in X$ with exceptional divisor~$E \subset X$; we denote by~$\cO_E(1) \coloneqq \cO_E(-E)$ its conormal bundle.  We will use the following definition to verify the assumption~\eqref{eq:pis-jm}.

\begin{definition}
\label{def:acyclic-conical}
We say that a normal isolated singularity~$x \in X$ with the maximal ideal~$\fm_{X,x} \subset \cO_{X,x}$ is \emph{acyclic projectively normal} if the following conditions hold for all~$m \ge 0$:
\begin{aenumerate}
\item
\label{item:h0e}
The canonical map~$\fm_{X,x}^m / \fm_{X,x}^{m+1} \to \rH^0(E, \cO_E(m))$ is an isomorphism.
\item
\label{item:hie}
The vanishining $\rH^i(E, \cO_E(m)) = 0$ holds for all~$i > 0$.
\end{aenumerate}
\end{definition}

\begin{example}
\label{ex:apn}
If~$X$ is a cone over a smooth Fano complete intersection of positive dimension, or more generally~$x \in X$ is an isolated singular point such that the projective tangent cone~$\rC_{X,x}$ is a Fano complete intersection, then~$x \in X$ is acyclic projectively normal.  Indeed, complete intersections are projectively normal; combining this with~\cite[Exercise~II.5.14]{Hartshorne}, we obtain~\ref{item:h0e}.  The vanishing in~\ref{item:hie} follows easily from the standard computation of cohomology of line bundles on a complete intersection; \textit{cf.}~the proof of Lemma~\ref{lemma:odp-ac}.
\end{example}

\begin{lemma}
\label{lem:normal-cone}
Let~$x \in X$ be an acyclic projectively normal singularity.  Then~$\pi_*(\cO_\tX) \cong \cO_X$; in particular, if~$\tX$ is smooth, then~$(X,x)$ is a rational singularity.  Moreover, condition~\eqref{eq:pis-jm} is satisfied for~$Z = \{x\}$.
\end{lemma}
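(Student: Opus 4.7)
The plan is to work locally at $x$ and prove both claims together by descending induction on $m \ge 0$, driven by the Koszul-type exact sequences relating $\cO_\tX(-mE)$ and $\cO_E(m)$. Replacing $X$ by $\Spec A$ with $A = \cO_{X,x}$ and maximal ideal $\fm$, we have $\tX = \Proj(R)$ for the Rees algebra $R = \bigoplus_{m \ge 0} \fm^m$, the bundle $\cO_\tX(-E)$ is $\pi$-ample, and $\fm^n \cdot \cO_\tX = \cO_\tX(-nE)$. Since $\pi|_E$ factors through the reduced point $\{x\}$, assumption~\ref{item:h0e} gives $\pi_*\cO_E(m) \cong \fm^m/\fm^{m+1}$ as a skyscraper sheaf at $x$, and assumption~\ref{item:hie} gives $R^i\pi_*\cO_E(m) = 0$ for $i > 0$.

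The inductive claim is that for every $m \ge 0$ one has (i) $R^i\pi_*\cO_\tX(-mE) = 0$ for $i > 0$, and (ii) the natural adjunction map $\fm^m \to \pi_*\cO_\tX(-mE)$ (coming from $\fm^m \cdot \cO_\tX = \cO_\tX(-mE)$) is an isomorphism. The base case is $m = N$ for $N$ sufficiently large: (i) is Serre vanishing for the $\pi$-ample bundle $\cO_\tX(-E)$, and (ii) is the standard identification $H^0(\tX,\cO_\tX(-NE)) = \fm^N$ for $N \gg 0$. For the inductive step, the long exact sequence of higher direct images of
\begin{equation*}
0 \to \cO_\tX(-(m+1)E) \to \cO_\tX(-mE) \to \cO_E(m) \to 0,
\end{equation*}
combined with the vanishing of $R^\bullet\pi_*\cO_E(m)$ above and the inductive vanishing (i) at level $m+1$, propagates (i) to level $m$ in every positive degree. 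It also produces a short exact sequence
\begin{equation*}
0 \to \fm^{m+1} \to \pi_*\cO_\tX(-mE) \to \fm^m/\fm^{m+1} \to 0,
\end{equation*}
which via the comparison map of (ii) fits into a ladder with the tautological $0 \to \fm^{m+1} \to \fm^m \to \fm^m/\fm^{m+1} \to 0$. The outer vertical maps are isomorphisms by induction and by~\ref{item:h0e}, so the five lemma upgrades the middle to an isomorphism, giving (ii) at level $m$.

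Specializing (i) and (ii) to $m = 0$ yields $\pi_*\cO_\tX \cong \cO_X$ and $R^i\pi_*\cO_\tX = 0$ for $i > 0$, so $X$ has rational singularities whenever $\tX$ is smooth; the full isomorphism $\fm^m \xrightarrow{\sim} \pi_*\cO_\tX(-mE)$ for all $m \ge 0$ globalizes to the identification $\pi_*\cO_\tX(-mE) \cong \cJ_Z^m$ of~\eqref{eq:pis-jm}. The one point I expect to require genuine checking rather than a mere invocation of hypotheses is the commutativity of the ladder, namely that the comparison map of (ii) reduces modulo $\fm^{m+1}$ to the map of~\ref{item:h0e}; this is a diagram chase traceable to the construction of $\cO_E(m)$ as $\cO_\tX(-mE)|_E$ together with $\fm \cdot \cO_\tX = \cO_\tX(-E)$, but is the compatibility on which the whole induction hinges.
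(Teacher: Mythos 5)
Your proof is correct but takes a genuinely different route from the paper's. The paper first establishes $\pi_*\cO_\tX \cong \cO_X$ via the formal functions theorem, using the filtration of $\cO_{mE}$ by the line bundles $\cO_E(k)$ together with assumption~\ref{item:hie}, and then proves $\pi_*\cO_\tX(-mE) \cong \cJ_{X,x}^m$ by \emph{ascending} induction on $m$ starting from that base case. You instead run a \emph{descending} induction, anchoring the base at $m = N \gg 0$ via Serre vanishing for the $\pi$-ample line bundle $\cO_\tX(-E)$ together with the standard large-degree identification $H^0(\tX, \cO_\tX(-NE)) \cong \fm^N$, and then propagating both the higher-direct-image vanishing and the ideal-sheaf identification downward simultaneously through the five lemma on the ladder you describe. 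Your approach avoids the formal functions theorem and treats all $m$ uniformly; the paper's sidesteps the base-case comparison between the Rees algebra and the section ring of its $\Proj$, which is standard but deserves a line (e.g., embed $\tX$ into a relative projective space over $X$ via a set of generators of $\fm$ and apply Serre vanishing to the ideal sheaf of $\tX$ there to get surjectivity of $\fm^N \to \rH^0(\tX,\cO_\tX(-NE))$; injectivity follows from birationality). Both proofs hinge on the same compatibility that you correctly flag as the crux: the adjunction map $\fm^m \to \pi_*\cO_\tX(-mE)$ must reduce, modulo $\fm^{m+1}$, to the map of Definition~\ref{def:acyclic-conical}\ref{item:h0e} --- the paper addresses this by observing that the connecting map $\cJ_{X,x}^m \to \rH^0(E, \cO_E(m)) \otimes \cO_x$ factors through $\cJ_{X,x}^m/\cJ_{X,x}^{m+1}$.
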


\begin{proof}
Let~$m \ge 1$.  The structure sheaf~$\cO_{mE} \coloneqq \cO_\tX/\cI_E^m$ of the~$\supth{m}$ infinitesimal neighbourhood of the exceptional divisor~$E$ of the blowup has a filtration with factors~$\cI_E^{k}/\cI_E^{k+1} \cong \cO_E(k)$, {$0 \le k \le m - 1$}; all these sheaves have no higher cohomology by Definition~\ref{def:acyclic-conical}\ref{item:hie}, so 
\begin{equation*}
\rH^{>0}\left(\tX, \cO_{mE}\right) = 0
\end{equation*}
for all~$m \ge 1$.  It follows from the formal functions theorem~\cite[Theorem~III.11.1]{Hartshorne} that
\begin{equation*}
\rR^{>0} \pi_*\left(\cO_\tX\right) = 0.
\end{equation*}
Since~$X$ is normal near~$x$ and~$\pi$ is an isomorphism over the complement of~$x$, we have~$\rR^0 \pi_*(\cO_\tX) \cong \cO_X$, so that~$\pi_*(\cO_\tX) \cong \cO_X$.

Let~$\cJ_{X,x} \subset \cO_X$ be the ideal sheaf of~$x$.  We now prove that~$\pi_*(\cO_{{\tX}}(-mE)) \cong \cJ_{X,x}^m$ by induction on~$m \ge 0$.  The base of induction, $m = 0$, is proved above.  Now assume that the claim is proved for some~$m \ge 0$, and consider the exact sequence
\begin{equation*}
0 \lra \cO_{{\tX}}(-(m+1)E) \lra \cO_{{\tX}}(-mE) \lra \cO_E(m) \lra 0.
\end{equation*}
Applying~$\pi_*$ and using the induction hypothesis and Definition~\ref{def:acyclic-conical}\ref{item:hie}, we obtain a distinguished triangle
\begin{equation*}
\pi_*(\cO_{{\tX}}(-(m+1)E)) \lra \cJ_{X,x}^m \lra \rH^0(E, \cO_E(m)) \otimes \cO_{x}.
\end{equation*}
The second arrow in it factors as the composition
\begin{equation*}
\cJ_{X,x}^m \longtwoheadrightarrow 
\cJ_{X,x}^m / \cJ_{X,x}^{m+1} = 
\fm_{X,x}^m / \fm_{X,x}^{m+1} \lra 
\rH^0(E, \cO_E(m)).
\end{equation*}
By Definition~\ref{def:acyclic-conical}\ref{item:h0e} the last arrow is an isomorphism, so~$\pi_*(\cO_{{\tX}}(-(m+1)E)) \cong \cJ_{X,x}^{m+1}$.
\end{proof}

Now we deduce the Bondal--Orlov localization conjecture for acyclic projectively normal singularities.  Note that when~$x$ is a $\kk$-point, Definition~\ref{def:acyclic-conical} for~$m = 0$ ensures that~$\cO_E$ is an exceptional object in~$\Db(E)$, so we have a semiorthogonal decomposition~$\Db(E) = \langle \cO_E^\perp, \cO_E \rangle$; its first component~$\cO_E^\perp$ plays the crucial role.

\begin{corollary}
\label{cor:bo-localization}
Let~$\pi \colon \tX = \Bl_{x}(X) \to X$ be the blowup of an acyclic projectively normal $\kk$-point~$x \in X$.  Then the functor~$\pi_* \colon \Db(\tX) \to \Db(X)$ is a Verdier localization, and the subcategory~$\Ker(\pi_*) \subset \Db(\tX)$ is generated by~$\upeta_*(\cO_E^\perp) \subset \Db(\tX)$, where~$\upeta \colon E \to \tX$ is the embedding of the exceptional divisor.

Moreover, the map~$\pi_* \colon \rG_0(\tX) \to \rG_0(X)$ is surjective with kernel generated by~$\upeta_*(\rK_0(\cO_E^\perp)) \subset \rG_0(\tX)$.
\end{corollary}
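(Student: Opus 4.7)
The plan is to reduce the statement to Theorem~\ref{thm:efimov} by taking $Z = \{x\}$ and $E = \pi^{-1}(x)$. The first hypothesis of that theorem, namely~\eqref{eq:pis-jm}, is exactly the content of Lemma~\ref{lem:normal-cone}, so no additional work is required there. The remaining hypothesis to verify is that the morphism $p \colon E \to Z = \Spec(\kk)$ induces a Verdier localization $p_* \colon \Db(E) \to \Db(\kk)$.

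To check this, I would first observe that Definition~\ref{def:acyclic-conical} applied with $m = 0$ gives $\rH^0(E,\cO_E) = \kk$ and $\rH^{>0}(E,\cO_E) = 0$, so that $\cO_E$ is an exceptional object of $\Db(E)$. This yields a semiorthogonal decomposition
\begin{equation*}
\Db(E) = \langle \cO_E^\perp, \cO_E \rangle,
\end{equation*}
and since $p_*(\cO_E) \cong \rR\Gamma(E,\cO_E) \cong \kk$, the restriction of $p_*$ to the admissible component $\langle \cO_E \rangle$ is an equivalence onto $\Db(\kk)$. Consequently $p_*$ is itself a Verdier localization with $\Ker(p_*) = \cO_E^\perp$. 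Theorem~\ref{thm:efimov} then gives that $\pi_* \colon \Db(\tX) \to \Db(X)$ is a Verdier localization with $\Ker(\pi_*)$ generated by $\upeta_*(\cO_E^\perp)$, proving the first assertion of the corollary.

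For the second assertion, I would invoke the K-theory machinery already used in the proof of Theorem~\ref{thm:efimov}. Since $\pi_*$ is a Verdier localization (not merely a contraction up to summands) with $\Ker(\pi_*) = \langle \upeta_*(\cO_E^\perp) \rangle$, Schlichting's localization sequence yields the exact sequence
\begin{equation*}
\rK_0(\Ker(\pi_*)) \xrightarrow{\ \ } \rG_0(\tX) \xrightarrow{\ \pi_*\ } \rG_0(X) \to 0,
\end{equation*}
which gives the surjectivity of $\pi_* \colon \rG_0(\tX) \to \rG_0(X)$ and identifies its kernel with the image of $\rK_0(\Ker(\pi_*))$. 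Since $\Ker(\pi_*)$ is generated as a triangulated category by $\upeta_*(\cO_E^\perp)$, this image is generated by the classes $\upeta_*([\cO_E^\perp])$, as required.

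The only mildly subtle step is verifying that $p_*$ is a Verdier localization rather than merely a categorical contraction; but the exceptionality of $\cO_E$ on $E$ gives the required semiorthogonal decomposition cleanly, so even this is essentially immediate once the hypotheses of Definition~\ref{def:acyclic-conical} are unwound. I do not anticipate any genuine obstacle — the corollary is really a packaging of Lemma~\ref{lem:normal-cone} and Theorem~\ref{thm:efimov} with the trivial observation that the singleton fiber case of the base is handled by the exceptional object $\cO_E$.
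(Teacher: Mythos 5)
Your proposal is correct and follows essentially the same route as the paper: verify that $\cO_E$ is exceptional via Definition~\ref{def:acyclic-conical} at $m=0$, use Lemma~\ref{lem:normal-cone} to feed the hypotheses of Theorem~\ref{thm:efimov}, and invoke the K-theory localization exact sequence for the $\rG_0$ statement. The paper cites the SGA~5 localization sequence where you cite Schlichting, but this is the same tool; no substantive difference.
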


\begin{proof}
Since~$\cO_E \in \Db(E)$ is an exceptional object, the pushforward~$p_*\colon \Db(E) \to \Db(\Spec(\kk))$ is a Verdier localization with~$\Ker(p_*) = \cO_E^\perp$.  By Lemma~\ref{lem:normal-cone} the conditions of Theorem~\ref{thm:efimov} are satisfied for~$\pi$, so~$\pi_*$ is a Verdier localization with kernel generated by~$\upeta_*(\cO_E^\perp)$.  Finally, the statement about~$\rG_0$ follows from the localization exact sequence
\begin{equation*}
\rK_0(\Ker(\pi_*)) \lra \rK_0\left(\Db(\tX)\right) \lra \rK_0\left(\Db(X)\right) \lra 0;
\end{equation*}
see~\cite[Proposition~VIII.3.1]{SGA5}.
\end{proof}

\subsection{Crepant localization for ordinary double points}
\label{ss:ccc-odp}

From now on we work over an algebraically closed field~$\kk$ of characteristic not equal to~$2$.

Recall that an isolated singularity~$x \in X$ is an \emph{ordinary
  double point} or a \emph{node} if it is a hypersurface singularity and
the exceptional divisor~$E \subset \Bl_x(X)$ of the blowup is a smooth
quadric with conormal bundle~$\cO_E(-E)$ isomorphic to the hyperplane
line bundle~$\cO_E(1)$.  Since~$E$ is a Cartier divisor, it follows
that~$\Bl_x(X)$ is smooth along~$E$.

\begin{lemma}
\label{lemma:odp-ac}
An ordinary double point of dimension $n \ge 2$ is an acyclic projectively normal singularity.
\end{lemma}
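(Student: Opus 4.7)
The plan is as follows. First, using the standing hypothesis that $\kk$ is algebraically closed of characteristic $\ne 2$ and $n \coloneqq \dim X \ge 2$, I would put the local ring into Morse-type normal form: $\hat\cO_{X,x} \cong \kk[[x_0,\dots,x_n]]/(q)$ for a non-degenerate quadratic form $q$ in $n+1$ variables. This is classical --- a defining equation of the hypersurface at $x$ has non-degenerate quadratic leading term (since the projectivized tangent cone $E$ is required to be a smooth quadric), and higher-order terms are absorbed by completing the square inside the complete local ring. The associated graded of a local ring coincides with that of its completion, so
\begin{equation*}
\bigoplus_{m \ge 0} \fm_{X,x}^m/\fm_{X,x}^{m+1} \cong \kk[x_0,\dots,x_n]/(q),
\end{equation*}
and the exceptional divisor of $\Bl_x(X)$ is $E = \Proj(\kk[x_0,\dots,x_n]/(q)) = V(q) \subset \P^n$, the smooth quadric hypersurface of dimension $n-1 \ge 1$, with $\cO_E(1)$ the hyperplane class.

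I would then verify both conditions of Definition~\ref{def:acyclic-conical} simultaneously from the short exact sequence
\begin{equation*}
0 \to \cO_{\P^n}(m-2) \xrightarrow{\ \cdot q\ } \cO_{\P^n}(m) \to \cO_E(m) \to 0, \qquad m \ge 0.
\end{equation*}
For~\ref{item:hie}: both $\rH^i(\P^n,\cO(m))$ and $\rH^{i+1}(\P^n,\cO(m-2))$ vanish for every $i \ge 1$ when $m \ge 0$ and $n \ge 2$, by the standard cohomology of line bundles on projective space; hence $\rH^i(E,\cO_E(m)) = 0$ for $i > 0$. For~\ref{item:h0e}: the degree-zero part of the long exact sequence identifies $\rH^0(E,\cO_E(m))$ with $\Coker(q \cdot - \colon \kk[x_0,\dots,x_n]_{m-2} \to \kk[x_0,\dots,x_n]_m)$, which is exactly the degree-$m$ component of $\kk[x_0,\dots,x_n]/(q)$; under the identification of the first paragraph this is precisely the canonical map $\fm_{X,x}^m/\fm_{X,x}^{m+1} \to \rH^0(E,\cO_E(m))$.

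This entire argument is in fact a direct instance of Example~\ref{ex:apn}, since a smooth quadric hypersurface in $\P^n$ with $n \ge 2$ is a Fano complete intersection. The only step requiring any real care is the Morse-type normal-form reduction for $\hat\cO_{X,x}$, which is standard in characteristic $\ne 2$ (and fails otherwise), justifying the hypothesis imposed at the beginning of \S\ref{ss:ccc-odp}.
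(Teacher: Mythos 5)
Your proof is correct and follows essentially the same strategy as the paper: identify the associated graded ring of $\cO_{X,x}$ with $\kk[z_0,\dots,z_n]/(\text{quadric})$ and read off both conditions of Definition~\ref{def:acyclic-conical} from the Koszul resolution $0 \to \cO_{\P^n}(m-2) \to \cO_{\P^n}(m) \to \cO_E(m) \to 0$. The one place you diverge from the paper is the Morse normal-form reduction in the completed local ring. The paper avoids this: it writes $f = f_2 + f_{\ge 3}$ with $f_2$ nondegenerate and uses directly that the associated graded of a hypersurface local ring $R/(f)$ is $\gr R / (\operatorname{in}(f)) = \kk[z]/(f_2)$ (a fact valid for any principal ideal in a regular local ring, since $\gr R$ is a domain so the initial ideal is generated by the initial form). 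This makes the normal-form step superfluous, and also shows the lemma itself does not actually require $\mathrm{char}\,\kk \ne 2$ — that hypothesis is used later for spinor bundles, not here, so your closing remark slightly overstates where it enters. Your observation that this is an instance of Example~\ref{ex:apn} is fair, though note the paper's Example~\ref{ex:apn} in turn points back to the proof of Lemma~\ref{lemma:odp-ac} for the cohomology vanishing, so the two are really the same argument rather than a logical dependence one way.
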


\begin{proof}
Let~$(X,x)$ be an ordinary double point.  Since the question is \'etale local at~$x$ and~$(X,x)$ is a hypersurface singularity, we can assume that~$X$ is a hypersurface in the affine space~$\AA^{n+1}$ with equation~$f = 0$ and~$x \in X$ is the origin.  Then it is easy to see that the exceptional divisor of~$\Bl_x(X)$ is given in~$\P^n$ by the lowest-degree homogeneous component of~$f$.  Since this is a smooth quadric by assumption, we have~$f = f_2 + f_{\ge 3}$, where~$f_2$ is a nondegenerate quadratic form and~$f_{\ge 3}$ is the sum of components of higher degree.  Then, denoting by~$z_i$ coordinates on~$\AA^{n+1}$, it is easy to see that
\begin{equation*}
\fm_{X,x}^m / \fm_{X,x}^{m+1} = \kk[z_0,z_1,\dots,z_n]_m / f_2 \cdot \kk[z_0,z_1,\dots,z_n]_{m-2} \cong \rH^0(E,\cO_E(m)),
\end{equation*}
so~\ref{item:h0e} is satisfied.  Condition~\ref{item:hie} is evident since~$E$ is a quadric of dimension $n - 1 \ge 1$.
\end{proof}

Let~$X$ be a variety of dimension~$n \ge 2$ with ordinary double points, and let~$\pi \colon \tX \to X$ be the blowup of its singular locus.  A combination of Lemma~\ref{lemma:odp-ac} with Corollary~\ref{cor:bo-localization} shows that the functor~\mbox{$\pi_* \colon \Db(\tX) \to \Db(X)$} is a Verdier localization.  However, unless~$n = 2$, the functor~$\pi_*$ is not crepant (see Definition~\ref{def:ccc}).  The goal of this subsection is to construct a crepant categorical resolution of~$X$ in all dimensions~$n \ge 2$.

To state our result we need to recall some properties of spinor bundles on quadrics following \cite{Ottaviani}.  Let~$E$ be a smooth quadric over an algebraically closed field of characteristic not equal to~$2$.  In the case where~\mbox{$\dim(E) = n - 1$} is odd, we let~$\cS$ be the spinor bundle on~$E$, and if~\mbox{$\dim(E) = n - 1$} is even, we let~$\cS$ be one of the two spinor bundles on~$E$ and denote the other by~$\cS'$.  We have
\begin{equation}
\label{eq:rank-spinor}
N \coloneqq \rank(\cS) = 2^{\left\lfloor (n-2)/2 \right\rfloor}.
\end{equation}
For example~$\rank(\cS) = 1$ when~$E$ is $1$-dimensional or $2$-dimensional, in which case~$\cS = \cO(-1)$ on~$E \cong \P^1$ and~$\cS = \cO(-1,0)$ or~$\cO(0,-1)$ on~$E \cong \P^1 \times \P^1$.

All statements which follow are symmetric with respect to the swap of~$\cS$ and~$\cS'$.

The twists and duals of spinor bundles are related by~\cite[Theorem~2.8]{Ottaviani}:
\begin{equation}
\label{eq:spinor-dual}
\begin{cases}
\cS^\vee \cong \cS(1) 
& \text{if~$\dim(E) \equiv 1 \bmod{2}$},\\
\cS^\vee \cong \cS(1),\hphantom{{}'} \; \cS'^\vee \cong \cS'(1)
&  \text{if~$\dim(E) \equiv 0 \bmod{4}$},\\
\cS^\vee \cong \cS'(1), \; \cS'^\vee \cong \cS(1)
&  \text{if~$\dim(E) \equiv 2 \bmod{4}$}.\\
\end{cases}
\end{equation}
Moreover, by~\cite[Theorem 2.8]{Ottaviani} if~$\dim(E)$ is odd, there is an exact sequence
\begin{equation}
\label{eq:spinor-sequence-odd}
0 \lra \cS \lra \cO^{\oplus 2N} \lra \cS(1) \lra 0,
\end{equation}
and if~$\dim(E)$ is even, there are exact sequences
\begin{equation}
\label{eq:spinor-sequence-even}
0 \lra \cS' \lra \cO^{\oplus 2N} \lra \cS(1) \lra 0
\qquad\text{and}\qquad
0 \lra \cS \lra \cO^{\oplus 2N} \lra \cS'(1) \lra 0,
\end{equation}
where~$N$ is defined in~\eqref{eq:rank-spinor}.

By~\cite{Kapranov} spinor bundles are exceptional, completely orthogonal to each other if~$\dim(E)$ is even and can be used to construct a full exceptional collection in~$\Db(E)$.  For our purposes the following collections are convenient:
\begin{equation}
\label{eq:quadric-sod}
\Db(E) =
\begin{cases}
\langle \cO_E(1 - \dim(E)), \dots, \cO_E(-2), \cO_E(-1), \cS, \cO_E \rangle & \text{if~$\dim(E)$ is odd},\\
\langle \cO_E(1 - \dim(E)), \dots, \cO_E(-2), \cS(-1), \cO_E(-1), \cS, \cO_E \rangle & \text{if~$\dim(E)$ is even};
\end{cases}
\end{equation}
see~\cite[Lemma~2.4]{KP21}.  When~$\dim(E)$ is odd, the above collection coincides with the one from~\cite[Section~4]{Kapranov}, while if~$\dim(E)$ is even, Kapranov's collection takes the form
\begin{equation}
\label{eq:quadric-sod-even}
\Db(E) = \langle \cO_E(1 - \dim(E)), \dots, \cO_E(-2), \cO_E(-1), \cS', \cS, \cO_E \rangle,
\end{equation}
and to pass between these two, one can use the mutation given by a twist of~\eqref{eq:spinor-sequence-even}.

For future reference, we also record that if $\dim(E)$ is odd, we have
\begin{equation}
\label{eq:spinor-ext-odd}
\Ext^\bullet(\cS(1),\cS) \cong \kk[-1].
\end{equation}
and if $\dim(E)$ is even, then
\begin{equation}
\label{eq:spinor-ext-even}
\Ext^\bullet(\cS(1),\cS') \cong \Ext^\bullet(\cS'(1),\cS) \cong \kk[-1].
\end{equation}
These {isomorphisms} follow from~\eqref{eq:spinor-sequence-odd} and~\eqref{eq:spinor-sequence-even}, respectively, using exceptionality of {the spinor bundle}~$\cS$ and the vanishing~$\Ext^\bullet(\cO_E, \cS) = 0$ which is a part of semiorthogonality of~\eqref{eq:quadric-sod}.

Now we are ready to state the main result of this subsection.

\begin{theorem}
\label{thm:main-nodal}
Let~$X$ be a variety of dimension~$n \ge 2$ over an algebraically closed field~$\kk$ of characteristic not equal to~$2$ with ordinary double points~$x_1, \dots, x_r$ and no other singularities.  Let
\begin{equation*}
\pi \colon \tX = \Bl_{x_1,\dots,x_r}(X) \lra X
\end{equation*}
be the blowup of all singular points,
let~$\upeta_i \colon E_i \hookrightarrow \tX$ be the embedding of the exceptional divisor over~$x_i$,
and let~$\cS_i$ be a spinor bundle on~$E_i$.
Then the subcategory
\begin{equation}
\label{eq:def-cD}
\cD \coloneqq \left\{ F \in \Db(\tX) \mid \upeta_i^*F \in \langle \cS_i, \cO_{E_i} \rangle \ \text{for each~$1 \le i \le r$} \right\}
\end{equation}
is admissible in~$\Db(\tX)$.
Moreover,
\begin{enumerate}[label={\textup{(\roman*)}}]
\item
\label{item:pis-cd-localization}
the induced functor~$\pi_* \colon \cD \to \Db(X)$ is a crepant Verdier localization;
\item
\label{item:pis-cd-kernel}
the kernel~$\Ker(\pi_*|_\cD)$ is generated by completely orthogonal spherical objects $\rK_1, \dots, \rK_r \in \cD$,
where each~$\rK_i$ is $2$-spherical when~$\dim(X)$ is even and $3$-spherical when~$\dim(X)$ is odd.
\end{enumerate}
\end{theorem}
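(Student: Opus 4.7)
The strategy is to exhibit $\cD$ as an admissible component of a carefully chosen semiorthogonal decomposition of $\Db(\tX)$, and then descend the Bondal--Orlov Verdier localization from Corollary~\ref{cor:bo-localization} via Proposition~\ref{prop:sod-crepancy}. The first step will be to construct an SOD
\begin{equation*}
\Db(\tX) = \langle \cC_1, \dots, \cC_r, \cD \rangle,
\end{equation*}
in which each $\cC_i$ is a subcategory of $\Db(\tX)$ supported set-theoretically on $E_i$ and corresponding, via pushforward, to the ``left part'' $\cA_i$ of~\eqref{eq:quadric-sod}. Since the divisors $E_i$ are pairwise disjoint the $\cC_i$ will be automatically mutually semiorthogonal, and the defining property~\eqref{eq:def-cD} of $\cD$ is precisely right-orthogonality to each $\cC_i$. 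To construct $\cC_i$ and verify the SOD I would work \'etale-locally near each singular point: there $\tX$ is the total space of $\cO_{E_i}(1)$ over the quadric $E_i$, and a Beilinson-type decomposition for this line-bundle geometry combined with~\eqref{eq:quadric-sod} produces the required local SOD. Gluing over the smooth open set $\tX \setminus \bigcup_i E_i$ will then yield the global SOD and, with it, the admissibility of $\cD$.

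For part~(i), I would combine Lemma~\ref{lemma:odp-ac} with Corollary~\ref{cor:bo-localization} to conclude that $\pi_*\colon \Db(\tX) \to \Db(X)$ is a Verdier localization whose kernel is generated by the images $\upeta_{i*}(\cO_{E_i}^\perp)$. The SOD~\eqref{eq:quadric-sod} restricts to a decomposition of $\cO_{E_i}^\perp$ into $\cA_i$ and $\langle \cS_i \rangle$ (plus additional exceptional objects absorbed into $\cA_i$ in the even $\dim E_i$ case). Their pushforwards land in $\cC_i$ and $\cD$ respectively; the assertion $\upeta_{i*}\cS_i \in \cD$ will follow from the triangle $\cS_i(-1)[1] \to \upeta_i^*\upeta_{i*}\cS_i \to \cS_i$ combined with~\eqref{eq:spinor-sequence-odd}--\eqref{eq:spinor-sequence-even}, which places $\upeta_i^*\upeta_{i*}\cS_i$ inside $\langle \cS_i, \cO_{E_i}\rangle$. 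Thus $\Ker(\pi_*)$ will be compatible with the SOD $\langle \cC_1,\dots,\cC_r,\cD\rangle$, and Proposition~\ref{prop:sod-crepancy} then produces the induced Verdier localization $\pi_*|_\cD\colon \cD \to \Db(X)$ with kernel generated by the objects $\rK_i \coloneqq \upeta_{i*}\cS_i$.

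For~(ii), complete orthogonality of $\rK_i$ and $\rK_j$ for $i\neq j$ is immediate from $E_i \cap E_j = \emptyset$. The self-Ext computation will use adjunction and the above triangle to reduce everything to $\RHom_{E_i}(\cS_i(-1),\cS_i)$; combining the spinor sequences~\eqref{eq:spinor-sequence-odd}/\eqref{eq:spinor-sequence-even}, the vanishings from~\eqref{eq:quadric-sod}, and~\eqref{eq:spinor-ext-odd}/\eqref{eq:spinor-ext-even} one obtains $\kk[-1]$ in the odd-$\dim E_i$ case and $\kk[-2]$ in the even case, giving $\Ext^\bullet(\rK_i,\rK_i) = \kk \oplus \kk[-2]$ or $\kk \oplus \kk[-3]$ respectively. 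The Serre-duality half of the spherical condition will be checked using $K_\tX = \pi^*K_X + (n-2)\sum_i E_i$, the projection formula, and~\eqref{eq:spinor-dual}. Crepancy of $\pi_*|_\cD$ is then automatic: Serre duality in the admissible subcategory $\cD$ gives $\rK_i^\perp = {}^\perp \rK_i$ for each spherical object, and complete orthogonality upgrades this to $\Ker(\pi_*|_\cD)^\perp = {}^\perp\Ker(\pi_*|_\cD)$.

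The hardest step will be the construction of the SOD $\Db(\tX) = \langle \cC_1,\dots,\cC_r,\cD\rangle$, and consequently the admissibility of $\cD$. The naive pushforward $\upeta_{i*}\colon \cA_i \to \Db(\tX)$ is not fully faithful, because the triangle $G \otimes \cO_{E_i}(-1)[1] \to \upeta_i^*\upeta_{i*}G \to G$ introduces extra morphisms between pushforwards of neighboring line bundles in $\cA_i$. Circumventing this will require the \'etale-local identification of $\tX$ as a line bundle over $E_i$ in order to pinpoint the correct generating objects of $\cC_i$ and to verify the needed semiorthogonality in $\Db(\tX)$; this is the genuinely geometric part of the argument.
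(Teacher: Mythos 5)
Your high-level strategy—exhibit $\cD$ as an admissible SOD component and descend the Bondal--Orlov Verdier localization of Corollary~\ref{cor:bo-localization} using Proposition~\ref{prop:sod-crepancy}—is exactly what the paper does. But there is a genuine gap in the case of odd $\dim(X)$, i.e.\ even-dimensional $E_i$, and your proposed construction of the SOD is considerably more labored than necessary.

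The critical error is the definition $\rK_i \coloneqq \upeta_{i*}\cS_i$, which fails when $\dim(E_i)$ is even. There the object $\upeta_{i*}\cS_i$ is \emph{exceptional}, not $3$-spherical. Redoing your adjunction calculation carefully: the normal-bundle triangle (with the correct sign — the conormal is $\cO_{E_i}(1)$, so it reads $\cS_i(1)[1]\to\upeta_i^*\upeta_{i*}\cS_i\to\cS_i$, not $\cS_i(-1)[1]\to\cdots$) reduces $\Ext^\bullet(\upeta_{i*}\cS_i,\upeta_{i*}\cS_i)$ to $\Ext^\bullet_{E_i}(\cS_i,\cS_i)\oplus\Ext^{\bullet-2}_{E_i}(\cS_i(1),\cS_i)$; in the even case the second summand vanishes because $\Ext^\bullet(\cS(1),\cS')=\kk[-1]$ while $\Ext^\bullet(\cS(1),\cS)=0$, so one gets $\kk$, not $\kk\oplus\kk[-3]$. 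Moreover $\upeta_{i*}\cS_i\notin\cD$ when $\dim(E_i)$ is even: your own argument via the normal-bundle triangle and~\eqref{eq:spinor-sequence-even} only places $\upeta_i^*\upeta_{i*}\cS_i$ in $\langle\cS'_i,\cS_i,\cO_{E_i}\rangle$, not in $\langle\cS_i,\cO_{E_i}\rangle$. The correct spherical object is the right mutation of $\upeta_{i*}\cS_i$ through $\upeta_{i*}\cS'_i$, i.e.\ the object fitting in a triangle $\rK_i\to\upeta_{i*}\cS_i\to\upeta_{i*}\cS'_i[2]$ where the second arrow is a nonzero element of $\Ext^2(\upeta_{i*}\cS_i,\upeta_{i*}\cS'_i)\cong\kk$; this $\rK_i$ \emph{is} the projection of $\upeta_{i*}\cS_i$ to $\cD$, and it is $3$-spherical. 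Without this correction part~\ref{item:pis-cd-kernel} is simply false as you have stated it.

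On the ``hardest step'': you do not need an \'etale-local construction of a gluable SOD. The category $\cD$ of~\eqref{eq:def-cD} is an admissible component by a single application of~\cite[Proposition~4.1]{K08}, which directly produces (for each singular point) a semiorthogonal decomposition whose ``left part'' consists of the individual exceptional objects $\upeta_{i*}\cO_{E_i}(-s)$, $1\le s\le\dim(E_i)-1$ (together with $\upeta_{i*}\cS'_i$ when $\dim(E_i)$ is even), not a subcategory $\cC_i$ that would need a separate gluing argument. These objects visibly lie in $\Ker(\pi_*)$, so compatibility of the SOD with $\Ker(\pi_*)$ is immediate and Proposition~\ref{prop:sod-crepancy} applies. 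You are correct that the left part of $\cO_{E_i}^\perp$ in~\eqref{eq:quadric-sod} projects to zero in $\cD$ and only $\cS_i$ survives, but the surviving object is the projection $\pr_\cD(\upeta_{i*}\cS_i)$, not $\upeta_{i*}\cS_i$ itself. The crepancy argument you outline at the end (complete orthogonality plus $\rK_i^\perp={}^\perp\rK_i$ from the spherical property) is correct and matches the paper.
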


We note that in the case where~$X$ is a cone over a smooth quadric~$Q$, 
the category~$\cD$ appeared in~\cite{KP23} under the name of the \emph{categorical cone} over~$Q$.

\begin{remark}
Note that the category~$\cD$ is smooth over~$\kk$ and proper over~$X$ because it is an admissible subcategory of~$\Db(\tX)$.
Therefore, by~\cite{K08} it provides a crepant categorical resolution for~$\Db(X)$.
\end{remark}

The proof {of the theorem} takes  most of this subsection.
In what follows we construct the objects~$\rK_i$ explicitly.
To start with, consider the case~$r = 1$.
In this case we denote by~$E$ the only exceptional divisor of~$\pi$,
by~$\upeta \colon E \hookrightarrow \tX$ its embedding,
and by~$\cS$ a spinor bundle chosen on~$E$.
Then by~\cite[Proposition~4.1]{K08} applied to~\eqref{eq:quadric-sod}, 
the category~$\cD$ defined by~\eqref{eq:def-cD} is a part of the semiorthogonal decomposition
\begin{equation}
\label{eq:nodal-sod-tx}
\Db(\tX) =
\begin{cases}
\langle \upeta_*\cO_E(1 - \dim(E)), \dots, \upeta_*\cO_E(-2), \upeta_*\cO_E(-1), \cD \rangle & \text{if~$\dim(E)$ is odd},\\
\langle \upeta_*\cO_E(1 - \dim(E)), \dots, \upeta_*\cO_E(-2), \upeta_*\cO_E(-1), \upeta_*\cS', \cD \rangle & \text{if~$\dim(E)$ is even.}
\end{cases}
\end{equation}
To define the object~$\rK$, consider the natural distinguished triangle
\begin{equation}
\label{eq:eps-eps-cs}
\upeta^*\upeta_*\cS \lra \cS \lra \cS(1)[2]
\end{equation}
(which uses the isomorphism~$\cO_E(-E) \cong \cO_E(1)$).
If~$\dim(E)$ is odd, using adjunction and~\eqref{eq:spinor-ext-odd} we obtain
\begin{equation}\label{eq:ext_eps-cs-odd}
\Ext^\bullet(\upeta_*\cS,\upeta_*\cS) \cong \kk \oplus \kk[-2].
\end{equation}
If $\dim(E)$ is even, we have
\begin{equation}
\label{eq:ext_eps-cs-even}
\Ext^\bullet(\upeta_*\cS,\upeta_*\cS) \cong \kk,
\qquad
\Ext^\bullet(\upeta_*\cS,\upeta_*\cS') \cong 
\Ext^\bullet(\upeta_*\cS',\upeta_*\cS) \cong 
\kk[-2],
\end{equation}
where the first isomorphism follows from~\eqref{eq:nodal-sod-tx} 
as the object~$\upeta_*\cS'$ (and by symmetry also~$\upeta_*\cS$) is exceptional,
and the second and third isomorphisms follow as in the odd-dimensional case 
using~\eqref{eq:spinor-ext-even} instead of~\eqref{eq:spinor-ext-odd}.

Now we define the object~$\rK \in \Db(\tX)$ as follows:
\begin{align}
\label{def:ck-odd}
\rK \coloneqq \upeta_*\cS\qquad
& \text{if~$\dim(E)$ is odd}, 
\\
\label{def:ck-even}
\rK \lra \upeta_*\cS \lra \upeta_*\cS'[2]\qquad
& \text{if~$\dim(E)$ is even},
\end{align}
where in~\eqref{def:ck-even} the second arrow is nontrivial (see~\eqref{eq:ext_eps-cs-even}).

The following lemma provides the key computation for the proof of the theorem.

\begin{lemma}
\label{lemma:ck-spherical}
The object~$\rK$ has the following properties:
\begin{enumerate}[label={\textup{(\roman*)}}]
\item
\label{item:ck-projection}
$\rK$ is the projection of\, $\upeta_*\cS \in \Db(\tX)$ to~$\cD$ with respect to~\eqref{eq:nodal-sod-tx}; in particular, $\rK \in \cD$.
\item
\label{item:ker-pis-cd}
$\Ker(\pi_*|_\cD) = \langle \rK \rangle$.
\item
\label{item:ck-spherical}
The object~$\rK$ is $(p+2)$-spherical, where~$p \in \{0,1\}$ is the parity of\,~$\dim(X)$.
\end{enumerate}
\end{lemma}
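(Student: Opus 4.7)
The plan is to address parts~(i)--(iii) in order, leveraging the semiorthogonal decomposition~\eqref{eq:nodal-sod-tx}, the SODs~\eqref{eq:quadric-sod}--\eqref{eq:quadric-sod-even} of~$\Db(E)$, Corollary~\ref{cor:bo-localization}, and the spinor relations. For part~(i), I would verify $\rK \in \cD$ by computing $\upeta^*\rK$ from the defining distinguished triangles and the spinor bundle sequences~\eqref{eq:spinor-sequence-odd}--\eqref{eq:spinor-sequence-even}. In the odd-$\dim(E)$ case, triangle~\eqref{eq:eps-eps-cs} exhibits $\upeta^*\upeta_*\cS$ as an extension of $\cS(1)[1]$ and~$\cS$; since~\eqref{eq:spinor-sequence-odd} places $\cS(1) \in \langle \cS,\cO_E\rangle$, the restriction lies in~$\langle \cS,\cO_E\rangle$. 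In the even-$\dim(E)$ case, I would apply $\upeta^*$ to~\eqref{def:ck-even} and track the $\cS'$-contributions in the analogous expansions via~\eqref{eq:spinor-sequence-even}; the specific choice of the nontrivial morphism $\upeta_*\cS \to \upeta_*\cS'[2]$ makes the $\cS'$-parts collapse, leaving $\upeta^*\rK \in \langle \cS, \cO_E\rangle$. The projection statement is then immediate: in the odd case $\rK = \upeta_*\cS$ is already in $\cD$, while in the even case the defining triangle $\rK \to \upeta_*\cS \to \upeta_*\cS'[2]$, combined with $\upeta_*\cS'[2] \in \langle \upeta_*\cS'\rangle \subset \cA$, identifies $\rK$ as the $\cD$-component of $\upeta_*\cS$ with respect to~\eqref{eq:nodal-sod-tx}.

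For part~(ii), the key observation is that the left component~$\cA$ of~\eqref{eq:nodal-sod-tx} lies entirely inside $\Ker(\pi_*)$: each $\upeta_*\cO_E(k)$ with $1-n \le k \le -1$ has vanishing pushforward because $\RGamma(E, \cO_E(k)) = 0$ on a quadric in that range, and in the even case $\upeta_*\cS'$ likewise has vanishing pushforward by the semiorthogonality $\cS' \in \cO_E^\perp$ in~\eqref{eq:quadric-sod-even}. Restricting the SOD $\Db(\tX) = \langle \cA, \cD\rangle$ to~$\Ker(\pi_*)$ thus yields $\Ker(\pi_*) = \langle \cA, \Ker(\pi_*|_\cD)\rangle$. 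By Lemma~\ref{lemma:odp-ac} and Corollary~\ref{cor:bo-localization}, $\Ker(\pi_*)$ is generated by~$\upeta_*\cO_E^\perp$, and the SODs~\eqref{eq:quadric-sod}/\eqref{eq:quadric-sod-even} display~$\cO_E^\perp$ as generated by line bundles together with the spinor(s); all of these generators except~$\upeta_*\cS$ already lie in~$\cA$, so projecting to~$\cD$ reduces everything to the projection of $\upeta_*\cS$, which is $\rK$ by part~(i). This gives $\Ker(\pi_*|_\cD) = \langle \rK\rangle$.

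For part~(iii), the Ext computation is direct: in the odd-$\dim(E)$ case $\Ext^\bullet(\rK, \rK) = \Bbbk \oplus \Bbbk[-2]$ by~\eqref{eq:ext_eps-cs-odd}, and in the even-$\dim(E)$ case applying $\Ext^\bullet(\rK, -)$ to~\eqref{def:ck-even} and using~\eqref{eq:ext_eps-cs-even} together with an analogous adjunction-plus-spinor-sequence computation of $\Ext^\bullet(\upeta_*\cS', \upeta_*\cS)$ yields $\Ext^\bullet(\rK, \rK) = \Bbbk \oplus \Bbbk[-3]$, matching $\Bbbk \oplus \Bbbk[-(p+2)]$ in each case. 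The main obstacle is establishing the Serre-invariance $\bS_\cD(\rK) \cong \rK[p+2]$. My approach would be to use the relation $\bS_\cD = \bL_\cA \circ \bS_{\Db(\tX)}|_\cD$ arising from admissibility of $\cD$ in $\Db(\tX)$, compute $\bS_{\Db(\tX)}(\rK) = \rK \otimes \omega_\tX [n]$ via the projection formula and the adjunction identity $\upeta^*\omega_\tX = \cO_E(2-n)$, and then filter the resulting twisted spinor by iterating the spinor sequences~\eqref{eq:spinor-sequence-odd}--\eqref{eq:spinor-sequence-even} to decompose it into $\cS$ and line bundles $\cO_E(j)$ for $j \in \{2-n,\dots,-1\}$. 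The line-bundle layers fall into $\cA$ and are annihilated by $\bL_\cA$, leaving a shift of $\rK$. Tracking the precise cumulative shift through this cascade of mutations is the delicate part; alternatively, one may verify Serre-invariance by checking $\Ext^\bullet(\rK, F) \cong \Ext^\bullet(F, \rK[p+2])^\vee$ on a generating set for $\cD$ (for instance on objects pulled back from $\Db(X)$ together with $\rK$ itself), thereby reducing sphericality to Ext computations of the type already used.
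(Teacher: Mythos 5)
Your proposal follows the same route as the paper's proof, and the overall structure of parts (i) and (ii) matches the paper's closely. Two places deserve comment.

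For part~(i) in the even-$\dim(E)$ case, you propose to verify $\upeta^*\rK \in \langle\cS,\cO_E\rangle$ directly by "tracking the $\cS'$-contributions" through the spinor sequences; while morally this should work, the cancellation you assert is not immediate and would require keeping careful track of the connecting maps. The paper avoids this entirely: since $\cD = {}^\perp\cA$ where $\cA$ is the left complement of $\cD$ in~\eqref{eq:nodal-sod-tx}, it suffices to check $\Hom(\rK,\cA)=0$, which follows because both $\upeta_*\cS$ and $\upeta_*\cS'$ are left-orthogonal to the $\upeta_*\cO_E(k)$'s (the latter by the decomposition, the former by $\cS\leftrightarrow\cS'$ symmetry), and because the defining triangle~\eqref{def:ck-even} realizes $\rK$ as the \emph{right} mutation $\bR_{\upeta_*\cS'}(\upeta_*\cS)$, hence left-orthogonal to $\upeta_*\cS'$ as well. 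That is cleaner and essentially immediate.

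For part~(iii), note a convention issue: with $\Db(\tX)=\langle\cA,\cD\rangle$ and $\cA=\cD^\perp$, the projection of $\bS_\tX(F)$ to $\cD$ is $\bR_\cA$ (which lands in ${}^\perp\cA=\cD$), not $\bL_\cA$ (which lands in $\cA^\perp\neq\cD$). So the correct formula is $\bS_\cD \cong \bR_\cA\circ\bS_\tX\vert_\cD$. You then explicitly flag "tracking the precise cumulative shift through this cascade of mutations" as the delicate part and leave it incomplete; this is indeed the crux of (iii), and the paper resolves it concretely by pushing forward twists of the spinor sequence to produce a chain $\upeta_*\cS[2]\to\upeta_*\cS(-1)[3]\to\dots\to\upeta_*\cS(1-d)[d+1]$ with cones in $\cA$, so that the right mutation of $\bS_\tX(\rK)$ through $\cA$ is $\upeta_*\cS[2]$. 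In the even case one has to combine this with one further mutation through $\upeta_*\cS'$; the paper does this via the triangle obtained by applying $\pr_\cD\circ\bS_\tX$ to~\eqref{def:ck-even}. Lastly, for the Ext computation in the even case, rather than computing $\Ext^\bullet(\upeta_*\cS',\upeta_*\cS)$ afresh, the paper more economically observes that $\Ext^\bullet(\rK,\upeta_*\cS')=0$ is already a consequence of part~(i), which immediately gives $\Ext^\bullet(\rK,\rK)\cong\Ext^\bullet(\rK,\upeta_*\cS)$.
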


\begin{proof}
\ref{item:ck-projection}~
If~$\dim(E)$ is odd, then by the definition~\eqref{eq:def-cD} of~$\cD$, we must check that~$\upeta^*\upeta_*\cS \in \langle \cS, \cO \rangle$.  Using~\eqref{eq:eps-eps-cs} we see that it is enough to check that~$\cS(1) \in \langle \cS, \cO \rangle$, which follows immediately from~\eqref{eq:spinor-sequence-odd}.

If~$\dim(E)$ is even, we use~\eqref{eq:nodal-sod-tx} instead of~\eqref{eq:def-cD}.  Note that~$\rK$ is orthogonal to the exceptional collection
\begin{equation*}
\upeta_*\cO_E(1 - \dim(E)), \dots, \upeta_*\cO_E(-2), \upeta_*\cO_E(-1)
\end{equation*}
(because both $\upeta_*\cS$, $\upeta_*\cS'$ are, the latter by~\eqref{eq:nodal-sod-tx}, and the former by symmetry).  Furthermore, by~\eqref{eq:ext_eps-cs-even} the triangle~\eqref{def:ck-even} realizes $\rK$ as the right mutation of $\upeta_*\cS$ with respect to $\upeta_*\cS'$; therefore,~$\rK$ is orthogonal to~$\upeta_*\cS'$ as well, and hence~\eqref{eq:nodal-sod-tx} implies that~$\rK$ is the projection of~$\upeta_*\cS$ to~$\tcD$.

\ref{item:ker-pis-cd}~
First, note that~$\Ker(\pi_*\vert_\cD) = \Ker(\pi_*) \cap \cD$ and that~\eqref{eq:nodal-sod-tx} implies a semiorthogonal decomposition
\begin{equation*}
\Ker(\pi_*) =
\begin{cases}
\langle \upeta_*\cO_E(1 - \dim(E)), \dots, \upeta_*\cO_E(-2), \upeta_*\cO_E(-1),  \Ker(\pi_*) \cap \cD \rangle & \text{if~$\dim(E)$ is odd},\\
\langle \upeta_*\cO_E(1 - \dim(E)), \dots, \upeta_*\cO_E(-2), \upeta_*\cO_E(-1), \upeta_*\cS',  \Ker(\pi_*) \cap \cD \rangle & \text{if~$\dim(E)$ is even},
\end{cases}
\end{equation*}
just because all components of~\eqref{eq:nodal-sod-tx} except for the last one are contained in~$\Ker(\pi_*)$.  Therefore,
\begin{equation*}
\Ker(\pi_*\vert_\cD) = \pr_\cD(\Ker(\pi_*)),
\end{equation*}
where~$\pr_\cD$ is the projection to~$\cD$ with respect to~\eqref{eq:nodal-sod-tx}.

On the other hand, recall from Corollary~\ref{cor:bo-localization} that~$\Ker(\pi_*)$ is generated by pushforwards to~$\tX$ of objects from~$\cO_E^\perp$.  The latter category has a full exceptional collection induced by the first row in~\eqref{eq:quadric-sod} if~$\dim(E)$ is odd or by~\eqref{eq:quadric-sod-even} if~$\dim(E)$ is even.  Therefore, $\Ker(\pi_*\vert_\cD)$ is generated by the projections to~$\cD$ of the objects~$\upeta_*\cO_E(1-\dim(E))$, \dots, $\upeta_*\cO_E(-1)$, $\upeta_*\cS$ (and additionally of~$\upeta_*\cS'$ if~$\dim(E)$ is even).  But all these objects project to zero except for~$\upeta_*\cS$, which projects to~$\rK$ by part~\ref{item:ck-projection}.  Thus, $\Ker(\pi_*\vert_\cD) = \langle \rK \rangle$.

\ref{item:ck-spherical}~
The description of~$\Ext^\bullet(\rK, \rK)$ is given by~\eqref{eq:ext_eps-cs-odd} in the odd case, and in the even case it can be computed as follows.  It follows from~\ref{item:ck-projection} that~$\Ext^\bullet(\rK,\upeta_*\cS') = 0$, so that applying the functor~$\Ext^\bullet(\rK,-)$ to~\eqref{def:ck-even}, we get~$\Ext^\bullet(\rK, \rK) \cong \Ext^\bullet(\rK, \upeta_*\cS)$, and the latter group is computed easily using~\eqref{def:ck-even} and~\eqref{eq:ext_eps-cs-even}.

To check that~$\rK$ is spherical, it remains to show that~$\bS_{\cD}(\rK) \cong \rK[p + 2]$, where recall~$p \in \{0,1\}$ is the parity of~$\dim(X)$ and~$\bS_{\cD}$ is the Serre functor of~$\cD$.  Let us write~$d = \dim(E) {{}= \dim(X) - 1}$.

We first consider the case where~$d$ is odd, so~$p = 0$.  Let~$\omega_\tX$ be the canonical line bundle of~$\tX$.  Since by adjunction formula $\omega_{\tX}|_E \cong \cO(1-d)$, the Serre functor~$\bS_\tX$ of~$\tX$ acts on~$\rK = \upeta_*\cS$ as
\begin{equation*}
\bS_\tX(\upeta_*\cS) \cong \upeta_*(\cS(1-d))[d+1].
\end{equation*}
On the other hand, by~\eqref{eq:nodal-sod-tx} the Serre functor~$\bS_\cD$ is equal to the composition
\begin{equation*}
\bS_\cD \cong \bR_{\langle \upeta_*\cO_E(1 - d), \dots, \upeta_*\cO_E(-2), \upeta_*\cO_E(-1) \rangle} \circ \bS_\tX\vert_\cD,
\end{equation*}
where~$\bR$ stands for the right mutation functor.  Twisting the sequence~\eqref{eq:spinor-sequence-odd} by~$\cO_E(-i)$ with $1 \le i \le d - 1$ and pushing it forward to~$\tX$, we obtain a chain of morphisms
\begin{equation*}
\upeta_*\cS[2] \lra \upeta_*\cS(-1)[3] \lra \cdots \lra \upeta_*(\cS(1 - d))[d+1]
\end{equation*}
with cones isomorphic to shifts of~$\upeta_*\cO_E(-1)^{\oplus 2N}$, \dots, $\upeta_*\cO_E(1-d)^{\oplus 2N}$, so the cone of the composition is contained in the subcategory~$\langle \upeta_*\cO_E(1 - d), \dots, \upeta_*\cO_E(-2), \upeta_*\cO_E(-1) \rangle = \cD^\perp \subset \Db(\tX)$.  Since, on the other hand,~$\upeta_*\cS \in \cD$ as we proved in~\ref{item:ck-projection}, we conclude that~$\upeta_*\cS[2]$ is the right mutation of~$\upeta_*(\cS(1 - d))[d+1]$ through~$\cD^\perp$. Thus, we have 
\begin{equation*}
\bS_\cD(\upeta_*\cS) \cong \upeta_*\cS[2],
\end{equation*}
and~$\rK \cong \upeta_*\cS$ is 2-spherical.

In the case where~$d$ is even, {so that~$p = 1$,} we can refer to~\cite[Proposition~3.15(iii)]{K21}.  Alternatively, we can argue in a way similar  to that in the odd case.  Let~$\pr_\cD$ be the projection functor to~$\cD$ with respect to the second decomposition in~\eqref{eq:nodal-sod-tx}, so that applying~$\bS_\cD \cong \pr_\cD \circ\, \bS_\tX\vert_\cD$ to~\eqref{def:ck-even}, we obtain the triangle
\begin{equation}\label{eq:Serre-cK-even}
\bS_{\cD}(\rK) \lra \pr_\cD (\upeta_* \cS(1-d))[d+1] \lra \pr_\cD (\upeta_* \cS'(1-d))[d+3].
\end{equation}
As before, we have $\pr_\cD = \bR_{\upeta_* \cS'} \circ \bR_{\langle \upeta_*\cO_E(1 - d), \dots, \upeta_*\cO_E(-2), \upeta_*\cO_E(-1) \rangle}$.  Using twists of~\eqref{eq:spinor-sequence-even} repeatedly as in the odd case to project to $\langle \upeta_*\cO_E(1 - d), \dots, \upeta_*\cO_E(-2), \upeta_*\cO_E(-1) \rangle^\perp$, we obtain
\begin{align*}
\pr_\cD (\upeta_* \cS\hphantom{{}'}(1-d))[d+1] &\cong \bR_{\upeta_* \cS'} (\upeta_* \cS'[2]) = 0,\\
\pr_\cD (\upeta_* \cS'(1-d))[d+3] &\cong \bR_{\upeta_* \cS'} (\upeta_* \cS\hphantom{{}'}[4]) \cong \rK[4],
\end{align*}
where in the last isomorphism we used~\ref{item:ck-projection}.  Thus, \eqref{eq:Serre-cK-even} implies that~$\bS_{\cD}(\rK) \cong \rK[3]$.
\end{proof}

Now we are ready to prove the theorem.

\begin{proof}[Proof of Theorem~\textup{\ref{thm:main-nodal}}]
Recall that~$\pi \colon \tX = \Bl_{x_1,\dots,x_r}(X) \to X$ is the blowup of a variety with~$r$ ordinary double points~$x_i$ (in particular, $\tX$ is smooth), $\upeta_i \colon E_i \to \tX$, $1 \le i \le r$, is the embedding of the exceptional divisor over the point~$x_i$, $\cS_i$ is a spinor bundle on~$E_i$, and the category~$\cD$ is defined by~\eqref{eq:def-cD}.  By~\cite[Proposition 4.1]{K08}, the category~$\cD$ is admissible in~$\Db(\tX)$ and comes with a semiorthogonal decomposition analogous to~\eqref{eq:nodal-sod-tx}, where the left orthogonal to~$\cD$ is generated by the exceptional objects~$\upeta_{i*}\cO_{E_i}(-s)$ with~\mbox{$1 \le s \le \dim(E_i) - 1$} (and {also} by~$\upeta_{i*}\cS'_i$ if~$\dim(E_i)$ is even) and~$1 \le i \le r$.  Since all these objects are contained in~$\Ker(\pi_*)$, it follows from Corollary~\ref{cor:bo-localization} and Proposition~\ref{prop:sod-crepancy} that~$\pi_*\vert_\cD \colon \cD \to \Db(X)$ is a Verdier localization.  Furthermore, the arguments of Lemma~\ref{lemma:ck-spherical} show that
\begin{equation*}
\Ker(\pi_*\vert_\cD) = \langle \rK_1, \dots, \rK_r \rangle,
\end{equation*}
where~$\rK_i = \pr_\cD(\upeta_{i*}\cS_i)$ is a~${(p+2)}$-spherical object, and as each of these objects is defined by the isomorphism~\eqref{def:ck-odd} or the triangle~\eqref{def:ck-even} for the corresponding divisor~$E_i$, their supports are disjoint, so they are completely orthogonal.  Finally, we have
\begin{equation*}
\Ker(\pi_*\vert_\cD)^\perp = 
\bigcap_{i=1}^r \rK_i^\perp = 
\bigcap_{i=1}^r {}^\perp(\bS_\cD\rK_i) = 
\bigcap_{i=1}^r {}^\perp \rK_i = 
{}^\perp \Ker(\pi_*\vert_\cD),
\end{equation*}
so the localization~$\pi_*\vert_\cD$ is crepant by Definition~\ref{def:ccc}.
\end{proof}

If~$\dim(X) = 2$, decomposition~\eqref{eq:nodal-sod-tx} shows that the category~$\cD$ from Theorem~\ref{thm:main-nodal} coincides with~$\Db(\tX)$, and if~$\dim(X) = 3$, we prove below that~$\cD$ is equivalent to the derived category of a small resolution of~$X$.  In higher dimensions~$\cD$ should be considered as a categorical version of a small resolution.

Recall that if~$x \in X$ is a 3-dimensional node and~$\pi \colon \tX \to X$ is the blowup of~$x$, the exceptional divisor of~$\pi$ is~$E \cong \P^1 \times \P^1$, and its normal bundle is~$\cN_{E/\tX} \cong \cO_E(-1,-1)$.  The two contractions~$E \to \P^1$ induce two factorizations of the blowup morphism that fit into a commutative diagram of algebraic spaces
\begin{equation*}
\vcenter{\xymatrix@=10pt{
& 
\tX \ar[dl]_{\sigma_-} \ar[dr]^{\sigma_+} \ar[dd]|-{\ \mathstrut\pi\ }
\\
\hX_- \ar[dr]_{\varpi_-} &&
\hX_+ \ar[dl]^{\varpi_+}
\\
&
X\rlap{.}
}}
\end{equation*}
The algebraic spaces~$\hX_\pm$ are smooth (they are known as the small resolutions of~$X$), the exceptional loci of the maps~$\varpi_\pm \colon \hX_\pm \to X$ are smooth rational curves~$C_\pm \subset \hX_\pm$, the maps~$\sigma_\pm \colon \tX \to \hX_\pm$ are the blowups of these curves, and the birational isomorphism~$\sigma_+ \circ \sigma_-^{-1} = \varpi_+^{-1} \circ \varpi_- \colon \hX_- \dashrightarrow \hX_+$ is the Atiyah flop.

Similarly, if~$X$ is a nodal threefold with nodes~$x_1,\dots,x_r$, one can choose one of the two small resolutions for each of the nodes independently and obtain~$2^r$ small resolutions in the category of algebraic spaces.  Some of these small resolutions may also exist in the category of projective varieties.

\begin{corollary}
\label{cor:resolution-choices}
If\,~$X$ is a nodal threefold, $\pi \colon \tX \to X$ is the blowup of the nodes, and~$\varpi \colon \hX \to X$ is a small resolution by an algebraic space~$\hX$, then for an appropriate choice of spinor bundles on the exceptional divisors of~$\pi$, we have a canonical equivalence~$\cD \simeq \Db(\hX)$ such that~$\pi_*\vert_\cD \cong \varpi_*$.  Under this equivalence the~$3$-spherical object~$\rK_i$ corresponds to~$\cO_{C_i}(-1)$, where~$C_i$ is the exceptional curve in~$\hX$ over~$x_i \in X$.
\end{corollary}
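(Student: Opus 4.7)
The plan is to reduce to the case of a single node and exhibit the equivalence via $\sigma^*$ after factoring $\pi = \varpi\circ\sigma$ through the given small resolution. Since the category $\cD$ and the spherical objects $\rK_i$ are constructed locally around each node, and the $2^r$ small resolutions of $X$ correspond bijectively to independent choices of a spinor bundle at each node, I may assume $X$ has a unique node $x$. Let $\varpi\colon\hX\to X$ be the fixed small resolution, with exceptional smooth rational $(-1,-1)$-curve $C\subset\hX$; the induced $\sigma\coloneqq\varpi^{-1}\circ\pi\colon\tX\to\hX$ is the blowup of $C$, its exceptional divisor coincides with $E\subset\tX$, and $p\coloneqq\sigma\vert_E\colon E\to C$ is one of the two projections of $\P^1\times\P^1$. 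Set $\cS\coloneqq p^*\cO_C(-1)$, which equals either $\cO_E(-1,0)$ or $\cO_E(0,-1)$ and so is one of the two spinor bundles on $E$; this is the ``appropriate choice'' in the corollary.

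Using $\sigma\circ\upeta=\imath_C\circ p$, for each $F\in\Db(\hX)$ one has
\begin{equation*}
\upeta^*\sigma^*F\cong p^*(F\vert_C)\in p^*\Db(C)=\langle p^*\cO_C(-1),\cO_E\rangle=\langle \cS,\cO_E\rangle,
\end{equation*}
so $\sigma^*F\in\cD$ by~\eqref{eq:def-cD}. By Orlov's blowup formula applied to $\sigma$, the functor $\sigma^*$ is fully faithful with both adjoints, hence $\sigma^*\Db(\hX)$ is admissible in $\Db(\tX)$ and in $\cD$.

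Next I would identify $\rK$ with $\cO_C(-1)$ and simultaneously place $\rK$ inside the image of $\sigma^*$. Apply $\sigma_*$ to the defining triangle $\rK\to\upeta_*\cS\to\upeta_*\cS'[2]$ from~\eqref{def:ck-even}, where $\cS'$ denotes the other spinor bundle. The projection formula gives $\sigma_*\upeta_*\cS=\imath_{C*}p_*\cS=\imath_{C*}\cO_C(-1)$, while $\cS'$ restricts to $\cO_{\P^1}(-1)$ on each fiber of $p$, so $p_*\cS'=0$ and $\sigma_*\upeta_*\cS'=0$. Thus $\sigma_*\rK\cong\imath_{C*}\cO_C(-1)$, and the adjunction counit provides a canonical morphism $\sigma^*\cO_C(-1)\to\rK$. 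A local Koszul computation on the blowup of the codimension-two regularly embedded curve $C$ shows $\sigma^*\cO_C(-1)$ is a two-term complex on $\tX$ with cohomology sheaves $\upeta_*\cS$ and $\upeta_*\cS'$, matching those of $\rK$; combined with $\dim_\kk\Ext^2_\tX(\upeta_*\cS,\upeta_*\cS')=1$ (by a K\"unneth computation on the quadric $E$), this pins down the extension class up to scalar and yields $\sigma^*\cO_C(-1)\cong\rK$. In particular $\rK\in\sigma^*\Db(\hX)$.

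Finally, to conclude $\sigma^*\Db(\hX)=\cD$, let $\cA$ be the right orthogonal of $\sigma^*\Db(\hX)$ inside $\cD$. For $F\in\cA$, adjunction gives $\sigma_*F=0$, hence $\pi_*F=\varpi_*\sigma_*F=0$ and $F\in\Ker(\pi_*\vert_\cD)=\thick(\rK)\subset\sigma^*\Db(\hX)$. But then $F$ lies in both $\sigma^*\Db(\hX)$ and its right orthogonal $\cA$, so $\End(F)=\Hom(F,F)=0$ and $F=0$. Thus $\cA=0$, giving the desired equivalence $\sigma^*\colon\Db(\hX)\xrightarrow{\sim}\cD$, and the compatibility $\pi_*\vert_\cD\circ\sigma^*=\varpi_*\circ\sigma_*\sigma^*=\varpi_*$ follows from $\sigma_*\sigma^*\cong\id$ (projection formula with $R\sigma_*\cO_\tX\cong\cO_\hX$). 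The main obstacle I anticipate is the bookkeeping of sign and twist conventions: identifying $\cO_{E/\hX}(1)=\cO_E(-E)=\cO_E(1,1)$ seen from the two blowup descriptions, confirming that $p^*\cO_C(-1)$ coincides with the spinor bundle rather than its partner, and checking that the natural counit realizes the unique nontrivial extension in $\Ext^2_\tX(\upeta_*\cS,\upeta_*\cS')$ so that $\sigma^*\cO_C(-1)\cong\rK$ rather than a shift or a different extension.
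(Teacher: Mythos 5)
Your approach is correct in outline but genuinely different from the paper's. Both arguments reduce to the single-node case, factor $\pi=\varpi\circ\sigma$ through the small resolution, and compute $\sigma_*\rK\cong\cO_C(-1)$ by applying $\sigma_*$ to the triangle~\eqref{def:ck-even}. The divergence is in how the equality $\cD=\sigma^*\Db(\hX)$ is established. The paper applies Orlov's blowup formula to $\sigma$, writing $\Db(\tX)=\langle\upeta_*\cO_E(-1,-1),\upeta_*\cO_E(0,-1),\sigma^*\Db(\hX)\rangle$, and notes that the first two terms are exactly the complementary terms in the decomposition~\eqref{eq:nodal-sod-tx} defining $\cD$ for $\dim E=2$, namely $\upeta_*\cO_E(-1)=\upeta_*\cO_E(-1,-1)$ and $\upeta_*\cS'=\upeta_*\cO_E(0,-1)$; the equality $\cD=\sigma^*\Db(\hX)$ then drops out at once, and $\sigma^*\cO_C(-1)\cong\rK$ is automatic since $\sigma_*|_\cD$ inverts $\sigma^*$. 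You instead prove the inclusion $\sigma^*\Db(\hX)\subseteq\cD$ directly from the pointwise criterion~\eqref{eq:def-cD}, identify $\sigma^*\cO_C(-1)\cong\rK$ by comparing cohomology sheaves and extension classes, and close the loop with the trapping argument via $\Ker(\pi_*|_\cD)=\langle\rK\rangle$. The paper's route is shorter because it never has to compute the derived pullback of $\cO_C(-1)$; yours produces the identification $\sigma^*\cO_C(-1)\cong\rK$ more intrinsically.

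The two places you flag as anticipated obstacles are exactly where the detail is needed. For the Koszul step, the precise global statement is that for a sheaf $\cF$ on $C$, pushed forward to $\hX$, one has $\rL_1\sigma^*\cong\upeta_*(p^*\cF\otimes\Omega^1_{E/C}\otimes\cO_E(-E))$; substituting $\Omega^1_{E/C}\cong\cO_E(0,-2)$ and $\cO_E(-E)\cong\cO_E(1,1)$ gives $\upeta_*\cO_E(0,-1)=\upeta_*\cS'$ as you claim, but a chart-local Koszul argument that only tracks fiber degrees will not see the base twist correctly. For the extension argument, one-dimensionality of $\Ext^2(\upeta_*\cS,\upeta_*\cS')$ pins down the isomorphism type only among \emph{nontrivial} extensions, so you must rule out the split possibility for $\sigma^*\cO_C(-1)$; this follows because $\Ext^\bullet_{\tX}(\sigma^*\cO_C(-1),\sigma^*\cO_C(-1))\cong\Ext^\bullet_{\hX}(\cO_C(-1),\cO_C(-1))\cong\kk\oplus\kk[-3]$ by adjunction and $\sigma_*\sigma^*\cong\id$, which is incompatible with a direct sum of the two shifted cohomology sheaves.
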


\begin{proof}
To simplify the notation, assume that~$x \in X$ is the only ordinary double point of~$X$.  Then the blowup~$\pi \colon \tX \to X$ of~$x \in X$ admits a factorization through the small resolution
\begin{equation*}
\tX \xrightarrow{\ \sigma\ } \hX \xrightarrow{\ \varpi\ } X,
\end{equation*}
where~$\sigma$ is the blowup of the exceptional curve~$C \subset \hX$.  Let~$E \cong \bP^1 \times \bP^1$ be the exceptional divisor of~$\sigma$ (it coincides with the exceptional divisor of~$\pi$).  Note that~$\sigma\vert_E \colon \bP^1 \times \bP^1 \to \bP^1$ is the projection to the first factor.  Therefore, the blowup formula applied to~$\sigma$ gives
\begin{equation*}
\Db(\tX) = \left\langle \cO_E(-1,-1), \cO_E(0,-1), \sigma^*\left(\Db(\hX)\right)\right \rangle.
\end{equation*}
Since the spinor bundles on~$E$ are precisely the line bundles~$\cS = \cO_E(-1,0)$ and~$\cS' = \cO_E(0,-1)$, comparing the decomposition above with the description of~$\cD$ {given in}~\eqref{eq:nodal-sod-tx}, we obtain the equivalence~\mbox{$\sigma_* \colon \cD \simeq \Db(\hX)$}.

Finally, let us compute~$\sigma_*(\rK)$.  Applying~$\sigma_*$ to~\eqref{def:ck-even} and using the fact that~${\sigma \circ \upeta} \colon \bP^1 \times \bP^1 \to \bP^1$ is the projection~$p_1$ onto the first factor, we obtain the distinguished triangle
\begin{equation*}
\sigma_*(\rK) \lra {p_1}_* \cO_E(-1,0) \lra {p_1}_* \cO_E(0,-1)[2].
\end{equation*}
But we have~$p_{1*} \cO_E(-1,0) \cong{} \cO_C(-1)$ and~$p_{1*} \cO_E(0,-1) = 0$, hence~$\sigma_*(\rK) \cong \cO_C(-1)$.
\end{proof}

If~$\dim(X)$ is odd and~$X$ has~$r$ ordinary double points, the definition of~$\cD$ in~\eqref{eq:def-cD} depends on a choice of one of the two spinor bundles on each of the quadrics~$E_i$, so we have constructed~$2^r$ categorical resolutions.  However, all these resolutions are equivalent and related by \emph{categorical flops}, see~\cite[Proposition~3.15]{K21}, which are analogous to Atiyah flops relating~$2^r$ small resolutions in the $3$-dimensional case.  On the other hand, if~$\dim(X)$ is even, the constructed categorical resolution~$\cD$ is canonical; \textit{i.e.}, it does not depend on any choice.

%%%%%%%%%%%%%%%%%%%%%%%%%%%%%%%%%%%%%%%%%%%%%%%%%%%%%%%%%%%%%%%%%%%%%%%%%%%%%%%
%%%%%%%%%%%%%%%%%%%%%%%%%%%%%%%%%%%%%%%%%%%%%%%%%%%%%%%%%%%%%%%%%%%%%%%%%%%%%%%
%%%%%%%%%%%%%%%%%%%%%%%%%%%%%%%%%%%%%%%%%%%%%%%%%%%%%%%%%%%%%%%%%%%%%%%%%%%%%%%
%%%%%%%%%%%%%%%%%%%%%%%%%%%%%%%%%%%%%%%%%%%%%%%%%%%%%%%%%%%%%%%%%%%%%%%%%%%%%%%
%%%%%%%%%%%%%%%%%%%%%%%%%%%%%%%%%%%%%%%%%%%%%%%%%%%%%%%%%%%%%%%%%%%%%%%%%%%%%%%

\section{Absorption of singularities for nodal varieties}
\label{sec:nodal-absorption}

In this section we combine the results obtained in the previous sections and apply them for nodal varieties.  In~Section~\ref{ss:db-nodal} we construct (under appropriate assumptions) an absorption of singularities for a nodal variety~$X$, in~Section~\ref{ss:obstructions} we discuss an obstruction to the existence of absorption of singularities by categorical ordinary double points, and in~Section~\ref{ss:examples} we demonstrate how our approach works for nodal curves and threefolds.

We keep working over an algebraically closed field~$\kk$ of characteristic not equal to~$2$.

\subsection{Absorption for nodal varieties}
\label{ss:db-nodal}

First, we apply the construction of Theorem~\ref{thm:contractions} to the crepant Verdier localization of Theorem~\ref{thm:main-nodal}.  Recall that for an exceptional divisor~$E$ of the blowup of an ordinary double point, we write~$\cO_E(1)$ for the conormal bundle~$\cO_E(-E)$ and denote by~$\cS$ a spinor bundle on~$E$; we also write~$\cS'$ for the other spinor bundle if~$\dim(E)$ is even, or for the same spinor bundle if~$\dim(E)$ is odd.

\begin{theorem}
\label{thm:main-nodal-adherence}
Let~$X$ be a proper variety of dimension~$n \ge 2$ with ordinary double points~$x_1, \dots, x_r$ and no other singularities.  Let~$\pi \colon \tX = \Bl_{x_1,\dots,x_r}(X) \to X$ be the blowup, and let~$\upeta_i \colon E_i \hookrightarrow \tX$ be the embedding of the exceptional divisor over~$x_i$.  Let~$p \in \{0,1\}$ be the parity of~$n$.

Assume that there exists a \textup(nonfull\,\textup) exceptional collection~$\cE_1, \dots, \cE_r \in \Db(\tX)$ such that
\begin{equation}
\label{eq:cei-restricted-to-ej}
\upeta_j^*(\cE_i) \cong
\begin{cases}
\cS_j\text{ or }\ \cS'_j(1) & \text{if\,~$i = j$},\\
V_{i,j} \otimes \cO_{E_j} & \text{if\, $i \ne j$},
\end{cases}
\end{equation}
for~$V_{i,j} \in \Db(\kk)$ and some choice of spinor bundles~$\cS_j$ on~$E_j$.  Then the following statements hold: 
\begin{enumerate}[label={\textup{(\roman*)}}]
\item
\label{item:rpi-cci-in-dbx}
Each~$\rP_i \coloneqq \pi_*(\cE_i) \in \Db(X)$ is a $\Pinfty{p+1}$-object, and each~$\cP_i \coloneqq \langle \rP_i \rangle \subset \Db(X)$ is an admissible subcategory, equivalent to {the} categorical ordinary double point of degree~$p$.
\item
\label{item:nodal-absorption}
The collection~$\cP_1,\dots,\cP_r$ is semiorthogonal, the subcategory~$\cP \coloneqq \langle \cP_1, \dots, \cP_r\rangle \subset \Db(X)$ absorbs singularities of~$X$, and the categories~${}^\perp\cP$ and~$\cP^\perp$ are equivalent to admissible subcategories in~$\Db(\tX)$.

\item
\label{item:nodal-deformation-absorption}
If~$n$ is odd and~$X$ projective, $\cP$ provides a universal deformation absorption of singularities for~$X$.
\end{enumerate}
\end{theorem}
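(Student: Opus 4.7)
The plan is to apply Theorem~\ref{thm:contractions} with $\tcT = \cD$, using the spherical objects $\rK_1,\dots,\rK_r$ and the crepant Verdier localization $\pi_*\vert_\cD \colon \cD \to \Db(X)$ furnished by Theorem~\ref{thm:main-nodal}: these objects are $(p+2)$-spherical, completely orthogonal, and generate $\Ker(\pi_*\vert_\cD)$. So all hypotheses of Theorem~\ref{thm:contractions} are already in place except for the existence of an exceptional collection $\cE_1,\dots,\cE_r$ inside $\cD$ satisfying the adherence condition~\eqref{eq:dim-ext-ce-ck}. The whole argument reduces to checking that the collection given in the hypothesis plays this role.

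First I would verify that each $\cE_i$ belongs to $\cD$. By definition~\eqref{eq:def-cD} this amounts to $\upeta_j^*(\cE_i) \in \langle \cS_j, \cO_{E_j}\rangle$ for every $j$. For $i \ne j$ the restriction is a sum of shifts of $\cO_{E_j}$ by hypothesis, and for $i = j$ it is either $\cS_j$ (trivially in the subcategory) or $\cS'_j(1)$, which lies in $\langle \cS_j, \cO_{E_j}\rangle$ by the spinor sequences~\eqref{eq:spinor-sequence-odd} and~\eqref{eq:spinor-sequence-even}.

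The step I expect to be the most delicate is the adherence computation $\dim \Ext^\bullet(\cE_i,\rK_j) = \delta_{ij}$. By adjunction $\Ext^\bullet(\cE_i, \upeta_{j*}\cF) \cong \Ext^\bullet(\upeta_j^*\cE_i,\cF)$ for any $\cF$ on $E_j$. For $i \ne j$ one plugs in $\cF = \cS_j$ (and, in the even-dimensional case, also $\cF = \cS'_j$ using the defining triangle for $\rK_j$); both groups vanish by the semiorthogonality in~\eqref{eq:quadric-sod} and~\eqref{eq:quadric-sod-even}. For $i = j$ there are four subcases indexed by the parity of $\dim(E_j)$ and by the choice between $\cS_j$ and $\cS'_j(1)$ in~\eqref{eq:cei-restricted-to-ej}; each is handled using exceptionality of the spinor bundles, complete orthogonality of $\cS_j$ and $\cS'_j$ when $\dim(E_j)$ is even, and the identifications~\eqref{eq:spinor-ext-odd}, \eqref{eq:spinor-ext-even}, together with (in the even case) the defining triangle $\rK_j \to \upeta_{j*}\cS_j \to \upeta_{j*}\cS'_j[2]$. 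Each subcase produces a single one-dimensional Ext-space (in degree $0$ or $1$), so $\dim\Ext^\bullet(\cE_j,\rK_j) = 1$.

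Once these two points are settled, parts~\ref{item:rpi-cci-in-dbx} and~\ref{item:nodal-absorption} follow verbatim from Theorem~\ref{thm:contractions}: the objects $\rP_i \coloneqq \pi_*(\cE_i)$ are $\Pinfty{p+1}$-objects, the $\cP_i \simeq \Db(\sA_p)$ are admissible, the whole $\cP$ absorbs singularities of $X$, and the equivalences~\eqref{eq:nodal-cc-perp} identify ${}^\perp\cP$ and $\cP^\perp$ with admissible subcategories of $\cD$ and hence of $\Db(\tX)$. For part~\ref{item:nodal-deformation-absorption}, odd $n$ corresponds to $3$-spherical kernel objects, i.e.\ $p = 1$, which is exactly the regime of Corollary~\ref{cor:deformation-absorption-by-codp}; that corollary then immediately yields the universal deformation absorption property.
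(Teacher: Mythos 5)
Your proposal is correct and follows essentially the same route as the paper: reduce to Theorem~\ref{thm:contractions} applied to the crepant contraction $\pi_*\vert_\cD$ from Theorem~\ref{thm:main-nodal}, verify $\cE_i \in \cD$ using the spinor sequences, check the adherence condition~\eqref{eq:dim-ext-ce-ck}, and close off part~\ref{item:nodal-deformation-absorption} via Corollary~\ref{cor:deformation-absorption-by-codp}. The only cosmetic difference is in the adherence computation: the paper exploits the identity $\rK_j = \pr_\cD(\upeta_{j*}\cS_j)$ from Lemma~\ref{lemma:ck-spherical}\textup{(i)} to get $\Ext^\bullet_\cD(\cE_i,\rK_j) \cong \Ext^\bullet_{\tX}(\cE_i,\upeta_{j*}\cS_j) \cong \Ext^\bullet(\upeta_j^*\cE_i,\cS_j)$ in one stroke, so that only the group $\Ext^\bullet(\upeta_j^*\cE_i,\cS_j)$ needs to be evaluated, whereas you compute through the defining triangle $\rK_j \to \upeta_{j*}\cS_j \to \upeta_{j*}\cS'_j[2]$ in the even case and therefore also track the terms involving $\cS'_j$; both versions reach the same conclusion with the same ingredients (\eqref{eq:spinor-ext-odd}, \eqref{eq:spinor-ext-even}, complete orthogonality of $\cS_j,\cS'_j$, and the semiorthogonality $\Ext^\bullet(\cO_{E_j},\cS_j)=0$).
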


Note that~\eqref{eq:cei-restricted-to-ej} implies that~$\cE_i$ must be locally free of rank~$2^{\left\lfloor (n-2)/2 \right\rfloor}$ in a neighbourhood of the divisor~$E_i$.

\begin{proof}
Consider the crepant categorical resolutions~$\pi_* \colon \cD \to \Db(X)$ constructed in Theorem~\ref{thm:main-nodal} and associated with the choice~$\cS_j$ of spinor bundles on~$E_j$ determined by~\eqref{eq:cei-restricted-to-ej}.  Let~$\rK_1, \dots, \rK_r \in \cD$ be the corresponding (completely orthogonal) sequence of~$(p + 2)$-spherical objects.

To start with, note that the assumptions~\eqref{eq:cei-restricted-to-ej} together with~\eqref{eq:spinor-sequence-odd} and~\eqref{eq:spinor-sequence-even} imply that
\begin{equation*}
\upeta_j^*(\cE_i) \in \langle \cS_j, \cO_{E_j} \rangle
\end{equation*}
for all~$i,j$.  Comparing with the definition~\eqref{eq:def-cD} of~$\cD$, we see that~$\cE_i \in \cD$ for each~$1 \le i \le r$.  Furthermore, if~$\pr_\cD$ denotes the projection functor to~$\cD$ {with respect to~\eqref{eq:nodal-sod-tx} (or its obvious analogue if~$r > 1$)}, it follows from Lemma~\ref{lemma:ck-spherical} that
\begin{equation*}
\Ext^\bullet_{\cD}\left(\cE_i, \rK_j\right) \cong
\Ext^\bullet_{\cD}\left(\cE_i, \pr_\cD\left(\upeta_{j*}\cS_j\right)\right) \cong
\Ext^\bullet_{\tX}\left(\cE_i, \upeta_{j*}\cS_j\right) \cong
\Ext^\bullet_{E_j}\left(\upeta_j^*(\cE_i), \cS_j\right).
\end{equation*}
Using~\eqref{eq:cei-restricted-to-ej} together with~\eqref{eq:spinor-ext-odd}, \eqref{eq:spinor-ext-even}, and the exceptionality of~$\cS_j$, we see that the graded space~$\Ext^\bullet(\cE_i, \rK_i)$ is 1-dimensional, and using the semiorthogonality of the pair~$(\cS_j,\cO_{E_j})$ (see~\eqref{eq:quadric-sod}), we see that~$\Ext^\bullet(\cE_i, {\rK_j}) = 0$ {for~$j \ne i$}.  Thus, the adherence assumption~\eqref{eq:dim-ext-ce-ck} is satisfied.  Also note that~$\Db(X) = \cD / \langle \rK_1, \dots, \rK_r \rangle$ by Theorem~\ref{thm:main-nodal}.  Therefore, Theorem~\ref{thm:contractions}\ref{item:cc} applies to this situation and implies parts~\ref{item:rpi-cci-in-dbx} and~\ref{item:nodal-absorption}.  Similarly, part~\ref{item:nodal-deformation-absorption} follows from Corollary~\ref{cor:deformation-absorption-by-codp}.
\end{proof}

We observe the following nice homological property of the $\P^\infty$-objects~$\rP_i$ constructed in Theorem~\ref{thm:main-nodal-adherence}.  Recall from~\cite[Definition~4.2.1]{Buchweitz} that a coherent sheaf~$\cF$ on a Gorenstein variety~$X$ is called \emph{maximal Cohen--Macaulay} if~$\cExt^i(\cF, \cO_X) = 0$ for~$i > 0$.

\begin{proposition}\label{prop:MCM}
Under the assumptions of Theorem~\textup{\ref{thm:main-nodal-adherence}}, assume that~$\cE_i$ is a locally free sheaf on~$\tX$.  Then each~$\P^\infty$-object~$\rP_i = \pi_*(\cE_i)$ on~$X$ is a maximal Cohen--Macaulay sheaf locally free on~$X \setminus \{x_i\}$.
\end{proposition}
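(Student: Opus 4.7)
The plan is to verify three facts in turn: (a) $\rR^{>0}\pi_*\cE_i = 0$, so that $\rP_i = \pi_*\cE_i$ is a coherent sheaf concentrated in degree zero; (b) $\rP_i$ is locally free on $X \setminus \{x_i\}$; and (c) $\rP_i$ is maximal Cohen--Macaulay. For~(a), I would apply the theorem on formal functions at each singular point $x_j$ to reduce the claim to $\rH^{>0}(mE_j, \cE_i|_{mE_j}) = 0$ for all $m \ge 1$. Filtering $\cE_i|_{mE_j}$ by powers of the ideal $\cI_{E_j}$ and using $\cI_{E_j}^\ell / \cI_{E_j}^{\ell+1} \cong \cO_{E_j}(\ell)$, this further reduces to $\rH^{>0}(E_j, \cE_i|_{E_j} \otimes \cO_{E_j}(\ell)) = 0$ for $\ell \ge 0$. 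For $j \ne i$, the sheaf $\cE_i|_{E_j}$ is free by~\eqref{eq:cei-restricted-to-ej}, so this is the standard vanishing $\rH^{>0}(Q, \cO_Q(\ell)) = 0$ for $\ell \ge 0$ on a smooth quadric; for $j = i$ I would invoke the classical vanishing $\rH^{>0}(Q_d, \cS(k)) = 0$ for $k \ge 1 - d$ for spinor bundles on smooth quadrics of dimension $d = n-1$.

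For~(b), the morphism $\pi$ is an isomorphism over the complement of the singular locus, so $\rP_i$ is automatically locally free there. At $x_j$ with $j \ne i$, the same vanishing $\rH^1(E_j, \cO_{E_j}(\ell)) = 0$ for $\ell \ge 0$ allows one to inductively lift the trivialization $\cE_i|_{E_j} \cong V_{i,j} \otimes \cO_{E_j}$ to each infinitesimal neighborhood $mE_j$, since the obstruction to passing from $mE_j$ to $(m+1)E_j$ lives in $\rH^1(E_j, \End(\cE_i|_{E_j}) \otimes \cO_{E_j}(m))$, which vanishes as $\cE_i|_{E_j}$ is free. It follows that $\cE_i$ becomes free on the formal completion of $\tX$ along $E_j$; pushing forward and using the identification $\pi_*\cO_{\tX} \cong \cO_X$ from Lemma~\ref{lem:normal-cone} on completions, we obtain that $\rP_i$ is free on the formal completion $\widehat X_{x_j}$, hence locally free at $x_j$ by faithful flatness of completion.

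For~(c), I would apply Grothendieck--Verdier duality to rewrite $\cRHom_X(\rP_i, \omega_X) \simeq \RR\pi_*(\cE_i^\vee \otimes \omega_{\tX})$. Since $X$ is Gorenstein, $\rP_i$ is maximal Cohen--Macaulay if and only if this complex is concentrated in degree zero, equivalently $\rR^{>0}\pi_*(\cE_i^\vee \otimes \omega_{\tX}) = 0$. Using the adjunction identity $\omega_{\tX}|_{E_j} \cong \cO_{E_j}(2-n)$ (from $\omega_{E_j} \cong \cO_{E_j}(1-n)$ together with the normal bundle $\cN_{E_j/\tX} \cong \cO_{E_j}(-1)$), the formal-functions argument of~(a) applies verbatim and reduces the required vanishings to $\rH^{>0}(Q, \cO_Q(\ell + 2 - n)) = 0$ for $\ell \ge 0$ when $j \ne i$, and to $\rH^{>0}(Q_{n-1}, \cS(\ell + 3 - n)) = 0$ for $\ell \ge 0$ when $j = i$ (using $\cS^\vee \cong \cS(1)$ or $\cS'(1)$ from~\eqref{eq:spinor-dual}); both are special cases of the cohomology vanishings already invoked. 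The main technical input, on which all three steps ultimately rest, is the spinor cohomology vanishing $\rH^{>0}(Q_d, \cS(k)) = 0$ for $k \ge 1-d$, which follows from the structure of Kapranov's exceptional collection on the quadric together with Serre duality; once this is in hand, all three assertions fall out in parallel from the theorem on formal functions.
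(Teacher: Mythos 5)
Your proof is correct and follows essentially the same route as the paper: establish $\rR^{>0}\pi_*\cE_i = 0$ from cohomology vanishing on the exceptional quadrics, handle local freeness away from $x_i$ via the triviality of $\cE_i|_{E_j}$ for $j \ne i$, and deduce the maximal Cohen--Macaulay property by Grothendieck duality, which by the computation $\omega_{\tX}|_{E} \cong \cO_E(2-n)$ and~\eqref{eq:spinor-dual} reduces to the same spinor vanishing. The only technical variation is that the paper packages the pushforward vanishing in Lemma~\ref{lem:pi-acyclic}, proved by descending induction on twists $\cF(-mE)$ using relative ampleness of $-E$, whereas you invoke the theorem on formal functions together with the filtration of $\cO_{mE_j}$ by the sheaves $\cO_{E_j}(\ell)$ --- these are interchangeable, and your explicit infinitesimal lifting argument for local freeness fills in what the paper delegates to the citation of~\cite[Lemma~2.5]{KKS20}.
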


To prove Proposition~\ref{prop:MCM}, we use a simple criterion for the pushforward of a sheaf~$\cF$ on~$\tX$ to be a sheaf.

\begin{lemma}\label{lem:pi-acyclic}
Let~$X$ be an $n$-dimensional variety with an ordinary double point~$x \in X$, and let~$\upeta \colon E \hookrightarrow \tX$ be the exceptional divisor of the blowup~$\pi \colon \tX = \Bl_x(X) \to X$.  If a sheaf\,~$\cF$ on~$\tX$ has the property
\begin{equation*}
\rH^{>0}(E, \upeta^*\cF(m)) = 0
\qquad
\text{for $m \ge 0$},
\end{equation*}
then $\rR^{>0}\pi_*(\cF) = 0$.  In particular, this holds when $\upeta^*\cF \cong \cS(a)$ with~$a \ge 2-n$.
\end{lemma}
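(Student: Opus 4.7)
The plan is to mirror the template of Lemma~\ref{lem:normal-cone}: reduce to a local statement at~$x$ via the theorem on formal functions, then control the cohomology on each infinitesimal neighborhood of $E$ by a filtration argument.

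Away from $x$, the morphism $\pi$ is an isomorphism, so for $i > 0$ the coherent sheaf $\rR^{i}\pi_*\cF$ is supported set-theoretically at $x$ and therefore equals its own completion at~$x$. By the theorem on formal functions this completion is $\varprojlim_m \rH^{i}(mE, \cF \otimes \cO_{mE})$, where $mE$ denotes the $m$-th infinitesimal neighborhood of $E$ in $\tX$. So it is enough to prove that $\rH^{>0}(mE, \cF \otimes \cO_{mE}) = 0$ for every $m \ge 1$.

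For this I would use the $\cI_E$-adic filtration of $\cO_{mE}$, which endows $\cF \otimes \cO_{mE}$ with a filtration whose graded pieces are
\[
\cF \otimes_{\cO_{\tX}} \cI_E^k/\cI_E^{k+1} \;\cong\; \eta^*\cF \otimes_{\cO_E} \cO_E(k),
\qquad 0 \le k \le m-1,
\]
using the identification $\cO_E(-E) \cong \cO_E(1)$ characteristic of an ordinary double point (and noting that $\cF$ is a sheaf, so the derived issues of $\eta^*$ are harmless in the relevant cases). Each graded piece has vanishing higher cohomology by the hypothesis on~$\cF$, so an induction on~$m$ through the short exact sequences $0 \to \eta^*\cF \otimes \cO_E(m-1) \to \cF \otimes \cO_{mE} \to \cF \otimes \cO_{(m-1)E} \to 0$ delivers the required vanishing. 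This is the only real step, and while conceptually straightforward, tracking Tor-type issues between $\cF$ and $\cO_{mE}$ is the only point that requires some care in the general case.

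For the last clause, I need $\rH^{>0}(E, \cS(b)) = 0$ for every $b \ge 2-n = 1-\dim(E)$, which applies to $\eta^*\cF(m) = \cS(a+m)$ with $a+m \ge 2-n$. Within the range $1-\dim(E) \le b \le 0$ this is exactly the semiorthogonality of~$\cS$ with respect to $\cO_E(1-\dim(E)), \ldots, \cO_E(-1), \cO_E$ built into the exceptional collection~\eqref{eq:quadric-sod}. For $b \ge 1$ I would induct on~$b$ by twisting the Ottaviani sequences~\eqref{eq:spinor-sequence-odd} or~\eqref{eq:spinor-sequence-even} by $\cO_E(b-1)$, using that $\rH^{>0}(E, \cO_E(b-1)) = 0$ throughout this range; the only subtlety is that in even dimensions the two spinor bundles must be tracked simultaneously, but the inductive step still closes.
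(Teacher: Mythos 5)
Your argument is correct and achieves the same conclusion, but it takes a genuinely different route from the paper. You reduce, via the theorem on formal functions, to showing $\rH^{>0}(mE,\cF\otimes\cO_{mE})=0$ for every $m$, and then run an ascending induction on infinitesimal neighborhoods using the $\cI_E$-adic filtration; this is exactly the template of Lemma~\ref{lem:normal-cone}, applied now to an arbitrary~$\cF$ rather than to~$\cO_\tX$. The paper instead tensors $\cF$ by $\cO_\tX(-mE)$, uses the hypothesis to produce surjections $\rR^i\pi_*\cF(-(m+1)E)\twoheadrightarrow\rR^i\pi_*\cF(-mE)$ for $i\ge1$, and then kills the tower at once by Serre vanishing since $-E$ is $\pi$-ample for $m\gg0$ --- a descending recursion rather than an ascending one, with no appeal to formal functions. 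The two proofs carry the same hidden caveat: the paper's claimed short exactness of $0\to\cF(-(m+1)E)\to\cF(-mE)\to\upeta_*(\cF\otimes\cO_E(m))\to0$ on the left, and your identification of the graded pieces of $\cF\otimes\cO_{mE}$ with $\upeta^*\cF(k)$, both require $\cF$ to have no torsion along~$E$; you flag this explicitly, the paper does not, and in all applications $\cF$ is locally free so the issue is moot. For the final clause, the paper simply cites Ottaviani's cohomology vanishing theorem together with Serre duality, whereas you rederive the needed vanishing from the semiorthogonality built into \eqref{eq:quadric-sod} plus induction on the Ottaviani sequences~\eqref{eq:spinor-sequence-odd}--\eqref{eq:spinor-sequence-even}; your derivation is self-contained but requires the extra bookkeeping of $\cS$ versus $\cS'$ in even dimensions that you correctly point out.
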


\begin{proof}
Consider the exact sequence
\begin{equation*}
0 \lra \cF(-(m+1)E) \lra \cF(-mE) \lra \upeta_*(\upeta^*\cF(m)) \lra 0.
\end{equation*}
Since~$\rH^{>0}(E, \upeta^*\cF(m)) = 0$ for~$m \ge 0$ by assumption, it follows that the sheaf~$\rR^i\pi_*\cF(-(m+1)E)$ surjects onto~$\rR^i\pi_*\cF(-mE)$ for all~$i \ge 1$ and~$m \ge 0$.  On the other hand, $\rR^{>0}\pi_*(\cF(-mE)) = 0$ for~$m \gg 0$ because~$-E$ is relatively ample for~$\pi$.  Therefore, $\rR^{>0}\pi_*(\cF(-mE)) = 0$ for all~$m \ge 0$.

For the second part we just note that~$\rH^{>0}(E,\cS(m)) = 0$ for~$m \ge 2 - n$ by~\cite[Theorem~2.3]{Ottaviani} and Serre duality, so the first part applies.
\end{proof}

\begin{proof}[Proof of Proposition~\textup{\ref{prop:MCM}}]
The maximal Cohen--Macaulay property of~$\pi_*\cE$ is local with respect to~$X$, so we may assume that~$X$ has a single ordinary double point.  We will also assume that~$\upeta^*\cE \cong \cS$ (the case where~$\upeta^*\cE \cong \cS'(1)$ is similar).

First, we apply Lemma~\ref{lem:pi-acyclic} to~$\cF = \cE$ and~$a = 0$; since~$n \ge 2$ it implies that~$\rP \coloneqq \pi_*\cE$ is a sheaf.  Next, taking into account that~$\omega_{\tX/X} \cong \cO_\tX((n-2)E)$ and using the Grothendieck duality, we obtain
\begin{equation*}
\cRHom\left(\pi_*\cE, \cO_X\right) \cong
\pi_*\cRHom\left(\cE, \pi^!\cO_X\right) \cong
\pi_*\left(\cE^\vee \otimes \omega_{\tX/X}\right) \cong
\pi_*\left(\cE^\vee((n-2)E)\right).
\end{equation*}
It remains to note that~$\upeta^*(\cE^\vee((n-2)E)) \cong \cS^\vee(2-n)$, and by~\eqref{eq:spinor-dual} this is isomorphic either to~\mbox{$\cS(3-n)$} or to~\mbox{$\cS'(3-n)$}; in both cases using Lemma~\ref{lem:pi-acyclic} with $a = 3-n$, we conclude that~$\pi_*(\cE^\vee((n-2)E))$ is a sheaf.  Therefore,~$\pi_*\cE$ is maximal Cohen--Macaulay.

The fact that~$\rP_i$ is locally free on~$X \setminus \{x_i\}$ follows from the triviality of~$\cE_i$ on the exceptional divisors~$E_j$ for~$i \ne j$; see, \textit{e.g.}, the argument of~\cite[Lemma~2.5]{KKS20}.  Indeed, this claim is local around the points~$x_j$ for~$j \ne i$, so we may again assume that~$X$ has a single ordinary double point~$x$ and~$\cE$ is trivial on the exceptional divisor~$E$ of its blowup~$\tX = \Bl_x(X)$.  Then, on the level of the unbounded from below categories, we have a semiorthogonal decomposition
\begin{equation*}
\bD^-(\tX) = \langle \Ker(\pi_*), \pi^*(\bD^-(X)) \rangle,
\end{equation*}
and since~$\Ker(\pi_*) \subset \bD^-(\tX)$ is generated by~$\upeta_*(\cO_E^\perp)$ (where the orthogonal is taken in~$\bD^-(E)$), the triviality of~$\upeta^*\cE$ implies that~$\cE \cong \pi^*\cF$ for~$\cF \in \bD^-(X)$ (alternatively, the same result follows from~\cite[Proposition~5.5(ii)]{KS:hfd}).  Finally, using the isomorphisms~$\Ext^\bullet(\cE, \cO_{\tilde{x}}) \cong \Ext^\bullet(\cF, \cO_x)$ for~$\tilde{x} \in E$, it is easy to deduce that~$\cF$ is locally free at~$x$ if~$\cE$ is locally free along~$E$, and it remains to note that~$\cF \cong \pi_*\cE$.
\end{proof}

The following proposition shows an example of an application of Theorem~\ref{thm:main-nodal-adherence}.

\begin{proposition}
\label{prop:quadric}
Let~$X \subset \P^{n+1}$ be a nodal $n$-dimensional projective quadric with node~$x \in X$, \textit{i.e.}, a cone over a smooth quadric~$Q^{n-1}$ of dimension~$n-1$.  Assume~$n \ge 2$, and let~$\cS$ be a spinor bundle on~$Q^{n-1}$.  If~$\uprho_0 \colon X \setminus \{x\} \to Q^{n-1}$ is the natural projection, the sheaf~$\uprho_0^*(\cS)$ has a unique maximal Cohen--Macaulay extension~$\rP$ to~$X$.  The sheaf\,~$\rP$ is a~$\Pinfty{p+1}$-object on~$X$, where as usual~$p \in \{0,1\}$ is the parity of~$n$, it absorbs singularities of\,~$X$, and if~$n$ is odd, this is a universal deformation absorption.
\end{proposition}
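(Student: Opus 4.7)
The plan is to apply Theorem~\ref{thm:main-nodal-adherence} with $r = 1$, constructing the required exceptional object on $\tX$ as a pullback along the natural $\P^1$-bundle structure on the blowup of a projective cone.

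First I would set up the geometry. Since $X \subset \P^{n+1}$ is the projective cone over the smooth quadric $Q = Q^{n-1} \subset \P^n$, the blowup $\pi \colon \tX \to X$ of the apex $x$ has a natural structure of a $\P^1$-bundle $\rho \colon \tX \to Q$ (identifying $\tX$ with a projective compactification of the total space of $\cO_Q(-1)$), with the exceptional divisor $\upeta \colon E \hookrightarrow \tX$ realized as the zero section, so that $\rho \circ \upeta \colon E \to Q$ is an isomorphism. On $\tX \setminus E$, which $\pi$ identifies with $X \setminus \{x\}$, the restriction of $\rho$ coincides with $\uprho_0$.

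Next I set $\cE \coloneqq \rho^*\cS$. Projection formula together with $\rho_*\cO_\tX \cong \cO_Q$ gives
\begin{equation*}
\Ext^\bullet_\tX(\cE,\cE) \cong \Ext^\bullet_Q(\cS, \cS \otimes \rho_*\cO_\tX) \cong \Ext^\bullet_Q(\cS,\cS) = \kk,
\end{equation*}
so $\cE$ is exceptional on $\tX$, and $\upeta^*\cE \cong \cS$ because $\rho \circ \upeta$ is the identity of $Q$. Hence condition~\eqref{eq:cei-restricted-to-ej} of Theorem~\ref{thm:main-nodal-adherence} holds, and the theorem yields that $\rP \coloneqq \pi_*\cE$ is a $\Pinfty{p+1}$-object, that $\langle \rP \rangle$ is admissible and absorbs the singularity of $X$, and that this is a universal deformation absorption when $n$ is odd. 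Proposition~\ref{prop:MCM} further shows $\rP$ is maximal Cohen--Macaulay and locally free on $X \setminus \{x\}$.

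To identify $\rP|_{X\setminus\{x\}}$, use that $\pi$ is an isomorphism over $X\setminus\{x\}$ and that $\rho$ restricted to $\tX \setminus E$ agrees with $\uprho_0 \circ \pi$; this gives $\rP|_{X\setminus\{x\}} \cong \uprho_0^*\cS$. For uniqueness of the Cohen--Macaulay extension, note that $X$ is normal (since $n \ge 2$, the singular locus has codimension $\ge 2$), so any maximal Cohen--Macaulay sheaf $\rF$ on $X$ satisfies $\rF \cong j_*j^*\rF$ for $j \colon X \setminus \{x\} \hookrightarrow X$ by depth considerations, so any MCM extension of $\uprho_0^*\cS$ equals $j_*\uprho_0^*\cS$. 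The one step that requires real care is the explicit geometric description of $\tX$ as a $\P^1$-bundle over $Q$ with $E$ as a section compatible with the conventions of Section~\ref{sec:nodal-varieties} (notably $\cO_E(-E) \cong \cO_E(1)$); once this is in place everything else is a direct application of the results already proved.
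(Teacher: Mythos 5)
Your proof is correct and follows essentially the same route as the paper's: identify $\tX$ with the $\P^1$-bundle $\P_{Q}(\cO\oplus\cO(-1))$, take $\cE=\uprho^*\cS$, verify the adherence condition~\eqref{eq:cei-restricted-to-ej}, and invoke Theorem~\ref{thm:main-nodal-adherence} together with Proposition~\ref{prop:MCM}. You supply slightly more detail than the paper on two points the paper treats as obvious (the projection-formula computation of $\Ext^\bullet(\cE,\cE)$ and the depth/normality argument for uniqueness of the Cohen--Macaulay extension), but the substance is identical.
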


\begin{proof}
Let~$\pi \colon \tX \to X$ be the blowup of~$x$.
Then 
\begin{equation*}
\tX \cong \P_{Q^{n-1}}(\cO \oplus \cO(-1)).
\end{equation*}
Note that the exceptional divisor~$E$ of the blowup is the section of the projection~$\uprho \colon \tX \to Q^{n-1}$ corresponding to the summand~$\cO$ in the bundle~$\cO \oplus \cO(-1)$ above.  Since~$\uprho$ is a $\P^1$-bundle, the functor~$\uprho^*$ is fully faithful, so the bundle
\begin{equation*}
\cE \coloneqq \uprho^*(\cS)
\end{equation*}
is exceptional and satisfies~\eqref{eq:cei-restricted-to-ej}.  Therefore, Theorem~\ref{thm:main-nodal-adherence} implies that~$\rP \coloneqq \pi_*(\cE)$ is a~$\Pinfty{p+1}$-object on~$X$ which absorbs singularities of~$X$, and when~$n$ is odd, this is a universal deformation absorption.  Further, Proposition~\ref{prop:MCM} shows that~$\rP$ is maximal Cohen--Macaulay, and since~$\pi$ induces an isomorphism~\mbox{$\tX \setminus E \cong X \setminus \{x\}$} compatible with the projections~$\uprho$ and~$\uprho_0$, it follows that~$\rP\vert_{X \setminus \{x\}} \cong \uprho_0^*(\cS)$.  On the other hand, since maximal Cohen--Macaulay sheaves are reflexive by~\cite[Lemma~4.2.2(iii)]{Buchweitz}, the sheaf~$\rP$ is isomorphic to the pushforward of the sheaf~$\uprho_0^*(\cS)$ along the inclusion~$X \setminus \{x\} \hookrightarrow X$.  Thus, $\rP$ is a maximal Cohen--Macaulay extension of~$\uprho_0^*(\cS)$, and its unicity is obvious.
\end{proof}

The maximal Cohen--Macaulay extension~$\rP$ of~$\uprho_0^*(\cS)$ is known as a \emph{spinor sheaf} on~$X$.

\begin{remark}
\label{rem:quadric-hpd}
Another approach to constructing the same absorption of singularities for~$X$ relies on homological projective duality.  Indeed, consider~$X$ as a singular hyperplane section~$X = Y \cap H$ of a smooth quadric~$Y \subset \P^{n + 2}$.  Then by~\cite[Theorem~1.1]{KP21} the homological projective dual~$Y^\natural$ of~$Y$ is
\begin{itemize}
\item 
either the projectively dual quadric of~$Y$, if~$n$ is odd, 
\item 
or the double covering of~$\check{\P}^{n+2}$ ramified over the projectively dual quadric of~$Y$, if~$n$ is even.
\end{itemize}
Therefore, applying the main theorem of homological projective duality~\cite[Theorem~6.3]{K07}, we obtain a semiorthogonal decomposition of~$\Db(X)$, where one of the components is equivalent to the (derived) fiber~$(Y^\natural)_H$ of~$Y^\natural$ over the point~$[H] \in \check{\P}^{n+2}$ of the dual projective space.  The above description of~$Y^\natural$ implies that~$\Db((Y^\natural)_H) \simeq \Db(\sA_p)$, where recall that the right-hand side is a categorical ordinary point of degree~$p$, where~$p \in \{0,1\}$ is the parity of~$n$.
\end{remark}

\subsection{Obstructions}
\label{ss:obstructions}

If the category~$\Db(X)$ is indecomposable (\textit{e.g.}, if~$X$ is a Calabi--Yau variety or, more generally, if~$X$ is a projective Cohen--Macaulay variety and the dualizing sheaf~$\omega_X$ has small base locus, see~\cite{Spence}, \cite{LopesMartin-deSalas}, or~\cite[Corollary~6.7]{KS:hfd} for details), then~$X$ admits no nontrivial absorption.  This shows that the existence of nontrivial absorption is a global condition on~$X$, not a local condition around singularities.  In this subsection we discuss obstructions to absorption of singularities by categorical ordinary double points.  To state our obstruction in the most general form, we recall the following definition.

Let~$\cT$ be a triangulated category with finite-dimensional $\Hom$-spaces.  Then the triangulated \emph{singularity category}~$\cT_{\sing}$ is defined in~\cite[Definition~1.7]{Orl06} (see also~\cite[Remark~4.9]{KS:hfd}) as the Verdier localization
\begin{equation*}
\cT_\sing \coloneqq \cT / \cT_\hf,
\end{equation*}
where~$\cT_\hf \subset \cT$ is the subcategory of left homologically finite-dimensional objects.  Note that for this notion to behave well, it is better to assume that~$\cT$ is hfd-closed in the sense of~\cite[Definition~4.1]{KS:hfd}.  On the other hand, any admissible subcategory of~$\Db(X)$ is hfd-closed if~$X$ is projective over a perfect field; see~\cite[Propositions~6.1(ii) and~4.6(iii)]{KS:hfd}.

\begin{lemma}
\label{lem:tsg}
Let~$\cT$ be a triangulated category with finite-dimensional $\Hom$-spaces.
\begin{enumerate}[label={\textup{(\roman*)}}]
\item 
\label{it:tsg-additive}
If\,~$\cT = \langle \cT_1,\dots,\cT_m \rangle$ is a semiorthogonal decomposition with admissible components, then
\begin{equation*}
\cT_\sing = \left\langle (\cT_1)_\sing,\dots, (\cT_m)_\sing \right\rangle.
\end{equation*}
\item 
\label{it:tsg-proper}
If\,~$\cT$ is proper, then~$\cT_\sing = 0$.
\end{enumerate}
\end{lemma}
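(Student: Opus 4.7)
Part (ii) is essentially immediate: properness of $\cT$ means that $\Ext^\bullet(F,T)$ is finite-dimensional for every pair $F,T\in\cT$, so every object of $\cT$ is left homologically finite. Hence $\cT_\hf=\cT$ and the Verdier quotient $\cT_\sing = \cT/\cT_\hf$ is zero.

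For part (i), the strategy is to regard $\cT \to \cT_\sing$ as the Verdier localization with kernel $\cT_\hf$ and apply Proposition~\ref{prop:sod-crepancy}. For this one needs to check that the given decomposition is compatible with $\cT_\hf$, namely that $\cT_\hf$ admits a semiorthogonal decomposition with components $\cT_\hf \cap \cT_i$, and then identify these intersections with $(\cT_i)_\hf$ so that the quotient components become $(\cT_i)_\sing$.

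The first step is the identification $\cT_\hf\cap\cT_i = (\cT_i)_\hf$. This uses admissibility: the right adjoint $\pi_i^R\colon\cT\to\cT_i$ of the inclusion is essentially surjective, and for $F\in\cT_i$ and $T\in\cT$ the adjunction gives $\Ext^\bullet_\cT(F,T) \cong \Ext^\bullet_{\cT_i}(F,\pi_i^R(T))$; hence homological finiteness of $F$ measured inside $\cT$ is the same as inside $\cT_i$. The second step is that the decomposition restricts to $\cT_\hf$: given $F\in\cT_\hf$ with semiorthogonal components $F_i\in\cT_i$, semiorthogonality yields $\Ext^\bullet_\cT(F_i,T) \cong \Ext^\bullet_\cT(F,T)$ for $T\in\cT_i$, so each $F_i \in (\cT_i)_\hf$. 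Conversely, if all components $F_i$ of a given $F\in\cT$ lie in $(\cT_i)_\hf$, then decomposing an arbitrary test object $T\in\cT$ into its components $T_j\in\cT_j$ one sees that $\Ext^\bullet_\cT(F, T_j)\cong\Ext^\bullet_\cT(F_j, T_j)$ is finite-dimensional, and an induction along the tower of triangles for $T$ gives finite-dimensionality of $\Ext^\bullet_\cT(F,T)$, so $F\in\cT_\hf$.

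Combining these steps, $\cT_\hf = \langle (\cT_1)_\hf,\dots,(\cT_m)_\hf\rangle$ is a semiorthogonal decomposition compatible with the Verdier localization $\cT\to\cT_\sing$, and Proposition~\ref{prop:sod-crepancy} immediately delivers $\cT_\sing = \langle(\cT_1)_\sing,\dots,(\cT_m)_\sing\rangle$, with each component arising as the Verdier localization of $\cT_i$ along $(\cT_i)_\hf$. The most delicate point is the bookkeeping in the second step — tracking which $\Ext$-groups survive in the long exact sequences coming from the semiorthogonal filtrations on source and target — but this is a routine consequence of semiorthogonality together with the fact that homological finiteness is stable under extensions and shifts.
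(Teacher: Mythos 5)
Part (ii) is correct and identical to the paper's one-line observation. For part (i) the paper just cites \cite[Proposition~1.10]{Orl06}, whereas you re-derive the result by descending the semiorthogonal decomposition along the Verdier quotient $\cT\to\cT_\sing$ via Proposition~\ref{prop:sod-crepancy}; this is a legitimate self-contained route (and essentially the strategy behind Orlov's proof), and your first step, the identification $\cT_\hf\cap\cT_i=(\cT_i)_\hf$ via the right adjoint of the inclusion $\cT_i\hookrightarrow\cT$, is correct.

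Your second step, however, has a genuine gap. The claimed isomorphism $\Ext^\bullet_\cT(F_i,T)\cong\Ext^\bullet_\cT(F,T)$ for $T\in\cT_i$ holds only for the outermost component, and the analogous claim in the converse direction fails for the same reason. For $\cT=\langle\cT_1,\cT_2\rangle$ (so $\Hom(\cT_2,\cT_1)=0$) with the filtration triangle $F_2\to F\to F_1$: when $T\in\cT_1$ the term $\Ext^\bullet(F_2,T)$ vanishes and the claim holds; but for $T\in\cT_2$ the term $\Ext^\bullet(F_1,T)$ need not vanish (take $\cT=\Db(\P^1)=\langle\cO,\cO(1)\rangle$, $F_1=\cO$, $T=\cO(1)$), so the long exact sequence does not collapse. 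The conclusion you are after is still true, but the bookkeeping should go through the triangulated structure of $\cT_\hf$ rather than through isomorphisms of $\Ext$-groups. Forward direction: $F\mapsto F_1$ is the left adjoint of the inclusion of $\cT_1$, so $\Ext^\bullet_{\cT_1}(F_1,T)\cong\Ext^\bullet_\cT(F,T)$ for $T\in\cT_1$, whence $F_1\in(\cT_1)_\hf\subset\cT_\hf$; then $F_2=\Cone(F_1[-1]\to F)$ lies in the triangulated subcategory $\cT_\hf$, and one inducts on $m$ inside the admissible subcategory $\langle\cT_2,\dots,\cT_m\rangle$. Converse: each $F_i\in(\cT_i)_\hf$ is left homologically finite in $\cT$ by step one, and $F$ is built from the $F_i$ by finitely many triangles, hence $F\in\cT_\hf$. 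With this repair, Proposition~\ref{prop:sod-crepancy} applies and gives part~(i).
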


\begin{proof}
Part~\ref{it:tsg-additive} is~\cite[Proposition~1.10]{Orl06}, and part~\ref{it:tsg-proper} is obvious because~$\cT_\hf = \cT$ if~$\cT$ is proper.
\end{proof}

Using this and Proposition~\ref{prop:quadric}, we can state the general obstruction.  In the case of varieties of dimension at most~$3$, it will be later reformulated as an explicit numerical condition (see Proposition~\ref{prop:obstruction123}).

\begin{proposition}
\label{prop:cdpsg-idcomplete}
If~$p \in \{0,1\}$, the singularity category~$\Db(\sA_p)_\sing$ of the categorical ordinary double point~$\Db(\sA_p)$ is idempotent complete.
\end{proposition}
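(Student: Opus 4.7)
The strategy is to identify $\Db(\sA_p)_\sing$ with $\Dp(C)$ for some dg-algebra $C$: any such category is automatically idempotent complete, since by definition it is a thick (hence retract-closed) subcategory of $\bD(C)$.

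First, I would argue that the subcategory of left homologically finite objects in $\Db(\sA_p)$ coincides with $\Dp(\sA_p)$: one direction follows from $\Ext^\bullet(\sA_p, T) = \rH^\bullet(T)$ being finite-dimensional for $T \in \Db(\sA_p)$, and the other is the argument of Lemma~\ref{lem:Pinf}\ref{item:rp-admissible} (appealing to~\cite[Proposition~1.11]{Orl06}). Hence $\Db(\sA_p)_\sing = \Db(\sA_p)/\Dp(\sA_p)$. Next, I would transport this along the Koszul-duality equivalence $\Db(\sA_p) \simeq \Dp(\sB_{p+1})$ of Proposition~\ref{prop:sap-sbq}, which swaps the simple and free modules, thereby sending $\Dp(\sA_p) = \langle \sA_p \rangle$ to $\Db(\sB_{p+1}) = \langle \kk_\sB \rangle$ and giving $\Db(\sA_p)_\sing \simeq \Dp(\sB_{p+1})/\Db(\sB_{p+1})$.

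The remaining task is to identify this quotient with $\Dp(\kk[\uptheta^{\pm 1}])$, where $\deg \uptheta = p+1$. The subcategory $\Db(\sB_{p+1})$ is thickly generated by the cone of $\uptheta \colon \sB_{p+1}[-(p+1)] \to \sB_{p+1}$, see~\eqref{eq:sbq-k}, so the quotient formally inverts $\uptheta$. The base-change functor $-\otimes^L_{\sB_{p+1}} \kk[\uptheta^{\pm 1}] \colon \Dp(\sB_{p+1}) \to \Dp(\kk[\uptheta^{\pm 1}])$ has kernel precisely the $\uptheta$-torsion perfects, i.e.\ $\Db(\sB_{p+1})$, and induces a fully faithful functor on the Verdier quotient with dense image. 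Essential surjectivity, which is exactly the idempotent completeness we want, I would verify by Thomason's classification of dense triangulated subcategories: it suffices to check that the image generates $\rK_0(\Dp(\kk[\uptheta^{\pm 1}]))$. A direct computation, using the sign relation $[X[1]] = -[X]$ together with $\kk[\uptheta^{\pm 1}] \cong \kk[\uptheta^{\pm 1}][p+1]$ coming from multiplication by $\uptheta$, shows $\rK_0(\Dp(\kk[\uptheta^{\pm 1}]))$ is cyclic, generated by the class of $\kk[\uptheta^{\pm 1}]$, which is the image of $[\sB_{p+1}]$; hence the induced functor is an equivalence and $\Db(\sA_p)_\sing \simeq \Dp(\kk[\uptheta^{\pm 1}])$ is idempotent complete.

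The main obstacle is the last step: essential surjectivity of the base-change functor on the quotient is equivalent to the idempotent completeness we seek, so one needs an independent handle on it. Thomason's theorem converts the question into the $\rK_0$ computation above, but an alternative route, if one prefers to avoid $\rK_0$ bookkeeping, is to invoke general Bousfield localization for dg-algebras (Keller, Toën) to obtain the equivalence with $\Dp(\kk[\uptheta^{\pm 1}])$ directly.
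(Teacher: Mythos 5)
Your proposal is correct but takes a genuinely different route from the paper's proof. The paper argues geometrically: it invokes Proposition~\ref{prop:quadric}, which embeds $\Db(\sA_p)$ as an absorbing subcategory of $\Db(X)$ for a nodal quadric $X$, applies Lemma~\ref{lem:tsg} to identify $\Db(\sA_p)_\sing$ with $\Db(X)_\sing$, then uses Kn\"orrer periodicity to reduce to a $0$- or $1$-dimensional nodal quadric, and finally quotes Orlov (for $p=0$) and Kalck--Pavi\'c--Shinder (for $p=1$) for the idempotent completeness of those specific singularity categories. Your argument is purely algebraic: you identify $\Db(\sA_p)_\hf$ with $\Dp(\sA_p)$, transport across Koszul duality (Proposition~\ref{prop:sap-sbq}) to get $\Dp(\sB_{p+1})/\Db(\sB_{p+1})$, realize this as the $\uptheta$-torsion localization which maps fully faithfully and densely into $\Dp(\kk[\uptheta^{\pm 1}])$ by Neeman--Thomason, and then close the gap with Thomason's classification of dense subcategories via $\rK_0$. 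This is essentially the content the paper later acknowledges in Remark~\ref{rem:Ap-sg} (citing Keller's orbit-category paper), and it has the virtue of working uniformly for all $p \ge 0$ rather than just $p \in \{0,1\}$; the paper's route, by contrast, is shorter given the geometric machinery already assembled, but requires reaching for two external references at the final step. Your sketch does elide two points that would need expanding in a full write-up: the compatibility of the two Koszul duality equivalences of Proposition~\ref{prop:sap-sbq}, needed to see that $\Dp(\sA_p)$ goes precisely to $\Db(\sB_{p+1})$, and the justification that $\rK_0(\Dp(\kk[\uptheta^{\pm 1}]))$ is genuinely cyclic on the free module (for which it is worth noting that $\kk[\uptheta^{\pm 1}]$ is a graded field, so all objects of its derived category are formal and split into shifts of the free rank-one module); neither is a real obstacle.
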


\begin{proof}
By Proposition~\ref{prop:quadric} the categorical ordinary double point~$\Db(\sA_p)$ absorbs singularities of a nodal quadric~$X$ of dimension~$n = 2k + p$ for any~$k \ge 1$; \textit{i.e.}, there is a semiorthogonal decomposition
\begin{equation*}
\Db(X) = \left\langle \cT, \Db\left(\sA_p\right) \right\rangle,
\end{equation*}
where~$\cT$ is smooth and proper.  Applying Lemma~\ref{lem:tsg} we conclude that
\begin{equation*}
\Db\left(\sA_p\right)_\sing \simeq \Db(X)_\sing,
\end{equation*}
so it is enough to check that~$\Db(X)_\sing$ is idempotent complete.  Indeed, by Kn\"orrer periodicity (\textit{cf.} \cite[Theorem~2.1]{Orl04}) we may assume~$k = 0$, and then in the case~$p = 0$, the category~$\Db(X)_\sing$ is described explicitly in~\cite[Section~3.3]{Orl04} (as additive category it is equivalent to the category of vector spaces), and its idempotent completeness is obvious.  In the case~$p = 1$, idempotent completeness is proved in~\cite[Lemma~2.20 and Proposition~3.1]{Kalck-Pavic-Shinder}.
\end{proof}

\begin{remark}
\label{rem:Ap-sg}
In fact, the same result holds for any~$p \ge 0$; indeed, one can identify the category~$\Db(\sA_p)_\sing$ with the additive category of~$\ZZ/(p+1)$-graded finite-dimensional vector spaces (where the shift functor acts as the shift of grading), see~\cite[Section~7.1]{Keller-orbits}, so it is obviously idempotent complete.
\end{remark}

\begin{corollary}
\label{cor:absorption-criterion}
Let~$X$ be a projective variety.  If there exists a semiorthogonal collection of admissible subcategories~$\cP_1, \dots, \cP_r \subset \Db(X)$, each equivalent to a categorical ordinary double point and such that the category~$\cP \coloneqq \langle \cP_1, \dots, \cP_r \rangle$ absorbs singularities of~$X$, then the category~$\Db(X)_\sing$ is idempotent complete.
\end{corollary}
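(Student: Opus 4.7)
The plan is to combine the additivity of the singularity category under semiorthogonal decompositions (Lemma~\ref{lem:tsg}\ref{it:tsg-additive}) with the local computation at categorical ordinary double points (Proposition~\ref{prop:cdpsg-idcomplete}).

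First I would unpack the absorption hypothesis: since $\cP = \langle \cP_1,\dots,\cP_r\rangle$ absorbs singularities of $X$, Definition~\ref{def:intro-absorption} gives that $\cP$ is admissible in $\Db(X)$ and its orthogonal $\cD \coloneqq {}^\perp\cP$ is smooth and proper. This yields a semiorthogonal decomposition
\begin{equation*}
\Db(X) = \langle \cD, \cP_1, \dots, \cP_r \rangle,
\end{equation*}
in which every component is admissible (the $\cP_i$ by hypothesis, and $\cD$ by the absorption property).

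Next I would apply Lemma~\ref{lem:tsg}\ref{it:tsg-additive} to this decomposition, obtaining
\begin{equation*}
\Db(X)_\sing = \langle \cD_\sing, (\cP_1)_\sing, \dots, (\cP_r)_\sing \rangle.
\end{equation*}
Since $\cD$ is proper, Lemma~\ref{lem:tsg}\ref{it:tsg-proper} gives $\cD_\sing = 0$, so the singularity category reduces to the semiorthogonal gluing of the $(\cP_i)_\sing$. By hypothesis each $\cP_i \simeq \Db(\sA_{p_i})$ for some $p_i \in \{0,1\}$, and Proposition~\ref{prop:cdpsg-idcomplete} tells us that each $(\cP_i)_\sing$ is idempotent complete.

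The final step, and the only place that requires a small additional observation rather than direct citation of prior results, is to conclude that a triangulated category admitting a semiorthogonal decomposition with finitely many idempotent complete components is itself idempotent complete. This is a routine fact: given an idempotent $e \colon T \to T$ in the glued category, one projects it to each component, splits the projections there, and reassembles the splittings using the semiorthogonal gluing data; I would simply state this and be done. I do not expect any real obstacle in the argument — the entire proof is an assembly of already-proved pieces — and indeed the whole corollary is essentially a formal consequence of Lemma~\ref{lem:tsg} and Proposition~\ref{prop:cdpsg-idcomplete}.
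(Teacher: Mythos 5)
Your proof follows the same route as the paper: apply Lemma~\ref{lem:tsg} to the absorption decomposition, reduce to $\Db(X)_\sing \simeq \langle (\cP_1)_\sing, \dots, (\cP_r)_\sing \rangle$, and invoke Proposition~\ref{prop:cdpsg-idcomplete}. The only difference is at the final step: where you sketch the ``project, split, reassemble'' argument for stability of idempotent completeness under semiorthogonal gluing, the paper simply cites~\cite[Lemma~2.2]{Kalck-Pavic-Shinder}, which proves exactly this fact, so your appeal to it being routine is justified.
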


\begin{proof}
Since the category~$\cP^\perp$ is smooth and proper, Lemma~\ref{lem:tsg} gives a semiorthogonal decomposition
\begin{equation*}
\Db(X)_\sing = \left\langle (\cP_1)_\sing, \dots, (\cP_r)_\sing \right\rangle.
\end{equation*}
By Proposition~\ref{prop:cdpsg-idcomplete} and Remark~\ref{rem:Ap-sg}, every component in the right-hand side is idempotent complete, so~$\Db(X)_\sing$ is idempotent complete as well; see~\cite[Lemma~2.2]{Kalck-Pavic-Shinder}.
\end{proof}

In the rest of this subsection we will make the criterion of Corollary~\ref{cor:absorption-criterion} explicit for nodal varieties of small dimension.  In dimension~$3$ we use the notion of maximal nonfactoriality, \textit{cf.}~\cite{Kalck-Pavic-Shinder}, which we state in terms of the blowup~$\pi\colon \tX \to X$ of the singular locus with exceptional divisors~$E_1, \dots, E_r$.  Note that for each~$i$ we have~$E_i \cong \P^1 \times \P^1$ and
\begin{equation}
\label{eq:normal-3}
\cO_{E_i}(E_i) \cong \cO_{\P^1 \times \P^1}(-1,-1),
\end{equation} 
hence~$\Pic(E_i)/[\cO_{E_i}(E_i)] \cong \ZZ$, and the two choices of isomorphism correspond to two contractions~\mbox{$E_i \to \P^1$}.

\begin{definition}[\textit{cf.}~\cite{Kalck-Pavic-Shinder}]
\label{def:mnf}
A nodal
threefold~$X$ is \emph{maximally nonfactorial}
(respectively, \emph{{$\QQ$}-maximally nonfactorial})
if the morphism
\begin{equation}
\label{eq:class-groups}
\Pic(\tX) \lra
\bigoplus_{i = 1}^r \Pic(E_i) \lra{}
\bigoplus_{i = 1}^r (\Pic(E_i)/[\cO_{E_i}(E_i)]) \cong \Z^r
\end{equation}
is surjective (respectively, has finite cokernel). 
\end{definition}

\begin{remark}
The map~\eqref{eq:class-groups} factors through the quotient~$\Cl(X) \cong \Pic(\tX) / (\oplus_{i=1}^r \Z \cdot [E_i])$, and the induced homomorphism from~$\Cl(X)$ to the right-hand side of~\eqref{eq:class-groups} can be identified with the restriction map to the sum of the class groups of the completions of~$X$ at the singular points.  Thus, Definition~\ref{def:mnf} is equivalent to that of~\cite{Kalck-Pavic-Shinder}.
\end{remark}

We denote by~$\Br(X)$ the Brauer group of~$X$.

\begin{proposition}
\label{prop:obstruction123}
Let~$X$ be a nodal projective variety.  If there exists a semiorthogonal collection of admissible subcategories~$\cP_1, \dots, \cP_r \subset \Db(X)$, each equivalent to a categorical ordinary double point and such that the category~$\cP \coloneqq \langle \cP_1, \dots, \cP_r \rangle$ absorbs singularities of~$X$, then the following hold: 
\begin{renumerate}
\item
If\,~$\dim(X) = 1$, the dual graph of\,~$X$ is a tree.
\item
If\,~$\dim(X) = 2$, then $\Br(X) = 0$.
If in addition $X$ is a rational surface, then $\Cl(X)$ is torsion-free.
\item
If\,~$\dim(X) = 3$, $X$ is maximally nonfactorial.
\end{renumerate}
\end{proposition}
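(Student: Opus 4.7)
The plan is to deduce all three statements from Corollary~\ref{cor:absorption-criterion}, which under the hypothesis guarantees that the singularity category $\Db(X)_\sing$ is idempotent complete. The problem thus reduces to translating this single categorical condition into the explicit numerical or cohomological conditions stated in each dimension.

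First I would set up the general framework. Let $\pi \colon \tX \to X$ be the blowup of the singular locus $\{x_1,\dots,x_r\}$ with exceptional divisors~$E_i$ (which are $\P^0$'s, $\P^1$'s, or~$\P^1\times\P^1$'s according to $\dim X \in \{1,2,3\}$), and let $\hDb(X)_\sing$ denote the idempotent completion of $\Db(X)_\sing$. The strategy is to produce, in each dimension, a natural map out of $\hDb(X)_\sing$ whose image lies in $\Db(X)_\sing$, and to identify its cokernel with an invariant (edges vs vertices of the dual graph, part of $\Br(X)$ or $\Cl(X)$, or the cokernel of~\eqref{eq:class-groups}). Then idempotent completeness will force this cokernel to vanish.

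For~(i), nodal curves: each local singularity category at a node is equivalent to the idempotent-complete category $\Db(\sA_0)_\sing$, which by Remark~\ref{rem:Ap-sg} is the category of $\ZZ/2$-graded finite-dimensional vector spaces. The global $\Db(X)_\sing$ is built from these local pieces via restriction from the normalization; using the computation of the singularity category of a nodal curve (known from e.g.\ Burban--Drozd), the idempotent completion adds exactly one summand for every independent cycle in the dual graph. Hence idempotent completeness forces the dual graph to be a tree.

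For~(ii), nodal surfaces: by Kn\"orrer periodicity each local singularity category at a node of a surface is equivalent to that of an $A_1$-curve singularity, i.e.\ equivalent to $\Db(\sA_1)_\sing$. One then compares the global $\Db(X)_\sing$ with the direct sum over nodes via the natural restriction functor; the cokernel of its idempotent completion can be identified, via Orlov's matrix-factorisation description and Brauer--Severi reasoning, with $\Br(X)$ (and, when $X$ is rational, with the torsion of $\Cl(X)$). Thus vanishing of the cokernel gives the stated assertions. This surface case I expect to require the most care, since one must match the categorical patching data to the arithmetic/cohomological invariant and justify why the Brauer group (respectively torsion in $\Cl(X)$) is the precise obstruction.

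For~(iii), nodal threefolds: this is the direct analogue and essentially the result \cite[Proposition~3.1 and its consequences]{Kalck-Pavic-Shinder}. Each local category at a $3$-dimensional node is equivalent to $\Db(\sA_0)_\sing$, and idempotent completeness of the global singularity category is equivalent to surjectivity of the restriction map from $\Cl(X)$ to the sum of the local class groups, which by construction of~\eqref{eq:class-groups} (together with~\eqref{eq:normal-3}) is precisely the map appearing in Definition~\ref{def:mnf}. So idempotent completeness is equivalent to maximal nonfactoriality, and the conclusion follows immediately from Corollary~\ref{cor:absorption-criterion}.

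The main obstacle will be the surface case~(ii): translating categorical idempotent completeness into the vanishing of $\Br(X)$ (and torsion in $\Cl(X)$ in the rational case) is the least standard of the three and will require a careful identification of the patching data of matrix factorisations around each node with an \'etale/cohomological gerbe-type class on $X$. The curve and threefold cases follow much more mechanically once the local singularity categories of nodes are pinned down via Remark~\ref{rem:Ap-sg}.
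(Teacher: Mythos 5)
Your overall strategy matches the paper's exactly: start from Corollary~\ref{cor:absorption-criterion}, which reduces the problem to idempotent completeness of $\Db(X)_\sing$, and then translate this single condition into the stated dimension-specific invariants. Where you diverge is that the paper's proof is a one-line citation --- \cite[Corollary~3.3]{Kalck-Pavic-Shinder} for curves, \cite[Proposition~3.7]{Kalck-Pavic-Shinder} together with \cite[Proposition~4.4]{KKS20} for surfaces, and \cite[Corollary~3.8]{Kalck-Pavic-Shinder} for threefolds --- whereas you attempt to sketch the content behind those references. Your sketches for (i) and (iii) are in the right spirit and essentially describe what those results prove.

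There are, however, two concrete problems. First, you have the degrees of the categorical ordinary double points swapped across dimensions: by Theorem~\ref{thm:main-nodal-adherence} the relevant category is $\Db(\sA_p)$ where $p$ is the parity of $\dim(X)$, so for curves (dimension~$1$) and threefolds (dimension~$3$) it is $\Db(\sA_1)$ whose singularity category is $\ZZ/2$-graded vector spaces, while for surfaces (dimension~$2$) it is $\Db(\sA_0)$ whose singularity category is $\ZZ/1$-graded, i.e.\ ordinary $\Vect$; you have $\sA_0$ for curves and threefolds and $\sA_1$ for surfaces, which is the opposite pairing, and Knörrer periodicity relates dimensions differing by $2$, not $1$. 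Second, and more seriously, the sketch for the surface case~(ii) does not constitute a proof: the phrase ``via Orlov's matrix-factorisation description and Brauer--Severi reasoning'' is too vague to produce the identification you would need, and it also does not account for the second half of the statement (torsion-freeness of $\Cl(X)$ in the rational case), which requires the separate argument of \cite[Proposition~4.4]{KKS20}. So while your plan is the right one and correctly reduces the work to the cited propositions, the surface translation needs to be either cited outright or worked out with substantially more care than the sketch provides.
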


\begin{proof}
We use~\cite[Corollary~3.3]{Kalck-Pavic-Shinder} in the case~$\dim(X) = 1$, \cite[Proposition~3.7]{Kalck-Pavic-Shinder} and~\cite[Proposition~4.4]{KKS20} in the case~$\dim(X) = 2$, and~\cite[Corollary~3.8]{Kalck-Pavic-Shinder} in the case~$\dim(X) = 3$.
\end{proof}

We conclude this subsection with a discussion of the geometric meaning of maximal nonfactoriality.  Recall from Section~\ref{ss:ccc-odp} a description of small resolutions of a nodal threefold.  The following Proposition~\ref{prop:mnf} shows in particular that $\QQ$-maximal nonfactoriality implies that all~$2^r$ small resolutions of~$X$ are projective.

\begin{proposition}
\label{prop:mnf}
Let~$X$ be a nodal projective threefold with~$r$ nodes.  Consider the following properties:
\begin{aenumerate}
\item 
\label{item:mnf-km1}
$\Db(X)_\sing$ is idempotent complete.
\item 
\label{item:mnf-mnf}
$X$ is maximally nonfactorial.
\item 
\label{item:mnf-qmnf}
$X$ is $\QQ$-maximally nonfactorial.
\item 
\label{item:mnf-all-sr}
$X$ admits $2^r$ projective small resolutions.
\item 
\label{item:mnf-sr}
$X$ admits a projective small resolution.
\item 
\label{item:mnf-nf}
$X$ is nonfactorial.
\end{aenumerate}
We have the following implications:
\ref{item:mnf-km1} $\iff$ 
\ref{item:mnf-mnf} $\implies$ 
\ref{item:mnf-qmnf} $\iff$ 
\ref{item:mnf-all-sr} $\implies$ 
\ref{item:mnf-sr} $\implies$ 
\ref{item:mnf-nf}.
If, moreover, $X$ has a single node, \textit{i.e.}, $r = 1$, then
\ref{item:mnf-qmnf} $\iff$ \ref{item:mnf-all-sr} $\iff$ \ref{item:mnf-sr} $\iff$ \ref{item:mnf-nf}.
\end{proposition}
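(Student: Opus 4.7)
The proof breaks into a chain of implications. The equivalence \ref{item:mnf-km1} $\iff$ \ref{item:mnf-mnf} is a direct application of \cite[Corollary~3.8]{Kalck-Pavic-Shinder}, while \ref{item:mnf-mnf} $\implies$ \ref{item:mnf-qmnf} and \ref{item:mnf-all-sr} $\implies$ \ref{item:mnf-sr} are tautological. For \ref{item:mnf-sr} $\implies$ \ref{item:mnf-nf}, my plan is to argue that a very ample line bundle on a projective small resolution $\varpi \colon \hX \to X$ has positive degree on each exceptional curve $C_i$; since locally at each node the class group of the small resolution is $\ZZ$ while the local Picard group of~$X$ is trivial, such a line bundle cannot lie in the image of $\varpi^* \colon \Pic(X) \to \Pic(\hX)$, and its pushforward yields a Weil divisor on~$X$ which is not Cartier.

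The main step, which I expect to be the principal obstacle, is the equivalence \ref{item:mnf-qmnf} $\iff$ \ref{item:mnf-all-sr}. I will parametrize the $2^r$ small resolutions of~$X$ by sign vectors $\varepsilon \in \{\pm 1\}^r$, where $\varepsilon_i$ selects one of the two rulings of $E_i \cong \P^1 \times \P^1$; the induced blowup $\sigma_\varepsilon \colon \tX \to \hX_\varepsilon$ contracts the ruling complementary to $\varepsilon_i$. A line bundle $L \in \Pic(\tX)$ descends along $\sigma_\varepsilon$ if and only if the component of $L|_{E_i}$ dual to $\varepsilon_i$ vanishes, and after adjusting by integer multiples of $\cO_\tX(E_i)$ (which restricts to $\cO_{E_i}(E_i)$ of bidegree $(-1,-1)$), the remaining invariant is precisely the image of $L|_{E_i}$ in $\Pic(E_i)/[\cO_{E_i}(E_i)] \cong \ZZ$. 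Positivity of this invariant with sign $\varepsilon_i$ at each $i$, combined with a sufficiently positive pullback $\varpi_\varepsilon^*A$ for an ample class $A$ on~$X$, produces an ample class on $\hX_\varepsilon$ by Kleiman's criterion applied to the Mori cone of $\hX_\varepsilon$, which is spanned by the small exceptional curves and the pullback of $\overline{NE}(X)$. Thus $\hX_\varepsilon$ is projective exactly when the image of~\eqref{eq:class-groups} meets the orthant of sign pattern $\varepsilon$ in $\QQ^r$. The conclusion follows from the linear-algebraic fact that a subgroup $V \subset \ZZ^r$ meets every open orthant of $\QQ^r$ if and only if $V \otimes_\ZZ \QQ = \QQ^r$: the nontrivial direction uses that any hyperplane $\{\ell = 0\}$ containing $V$ yields a contradiction upon evaluating $\ell$ on a $v \in V$ with sign pattern $\varepsilon_i = \mathrm{sign}(\ell_i)$.

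For the final case $r = 1$, all implications are already contained in the chain above except \ref{item:mnf-nf} $\implies$ \ref{item:mnf-qmnf}. Using the identification $\Cl(X) \cong \Pic(\tX)/\ZZ[E_1]$, a class $[D] \in \Cl(X)$ is Cartier exactly when $D|_{E_1}$ is a multiple of $\cO_{E_1}(E_1) \cong \cO_{\P^1 \times \P^1}(-1,-1)$, i.e., has balanced bidegree. Nonfactoriality therefore supplies an $L \in \Pic(\tX)$ whose restriction to $E_1$ has unbalanced bidegree; its nonzero image under~\eqref{eq:class-groups} shows that the one-dimensional target is hit $\QQ$-surjectively, proving \ref{item:mnf-qmnf} and closing the chain.
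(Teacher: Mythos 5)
Your proposal is correct and follows essentially the same architecture as the paper's proof: citing Kalck--Pavi\'c--Shinder for \ref{item:mnf-km1}$\iff$\ref{item:mnf-mnf}, noting the trivial implications, reducing the equivalence \ref{item:mnf-qmnf}$\iff$\ref{item:mnf-all-sr} to a ``which orthants does the image of~\eqref{eq:class-groups} meet'' question, and finishing the $r=1$ case by observing that a nonzero subgroup of~$\ZZ$ has finite index. The one genuine difference is in the projectivity criterion for small resolutions: the paper cites Ishii's theorem to go back and forth between projectivity of $\hX_\varepsilon$ and the existence of a globally generated line bundle on~$\tX$ with prescribed restrictions to the $E_i$, whereas you argue directly via relative ampleness, namely that $\hX_\varepsilon$ is projective iff there is $L \in \Pic(\hX_\varepsilon)$ with $L\cdot C_i > 0$ for all $i$ (then $L$ is $\varpi_\varepsilon$-ample and $X$ is projective, so $\hX_\varepsilon$ is projective; conversely an ample bundle on $\hX_\varepsilon$ supplies such an $L$). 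That route is perfectly valid and arguably more self-contained. One caveat: your phrase ``the Mori cone of $\hX_\varepsilon$ is spanned by the small exceptional curves and the pullback of~$\overline{NE}(X)$'' is not a precise or obviously true statement (there is no natural pullback of $1$-cycles along~$\varpi_\varepsilon$, and the decomposition of~$\overline{NE}(\hX_\varepsilon)$ is not a priori clear); you should replace it by the standard fact that if~$\varpi_\varepsilon$ is a proper morphism to a projective variety and $L$ is $\varpi_\varepsilon$-ample, then $mL + \varpi_\varepsilon^*A$ is ample for $A$ ample on~$X$ and $m \gg 0$. With that correction, the argument is complete. Your treatment of \ref{item:mnf-sr}$\implies$\ref{item:mnf-nf} and of the $r=1$ case expands, in a straightforward and correct way, what the paper leaves implicit.
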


We do not know if the equivalence~\ref{item:mnf-mnf} $\iff$~\ref{item:mnf-qmnf} always holds for nodal threefolds.  However, in~\cite[Proposition~A.14]{KS-II} we prove the equivalence~\ref{item:mnf-mnf} $\iff$~\ref{item:mnf-qmnf} for complex nodal Fano threefolds.

\begin{proof}
The equivalence \ref{item:mnf-km1} $\iff$ \ref{item:mnf-mnf} is~\cite[Corollary~3.8]{Kalck-Pavic-Shinder}.  The implication \ref{item:mnf-mnf} $\implies$ \ref{item:mnf-qmnf} is trivial.

For the equivalence~\ref{item:mnf-qmnf} $\iff$ \ref{item:mnf-all-sr} we consider the blowup~$\tX$ of~$X$ at all the nodes and note that each exceptional divisor~$E_i \cong \P^1 \times \P^1$ has two $\P^1$-bundle structures~$p_{i,\pm} \colon E_i \to \P^1$, and by~\eqref{eq:normal-3} the conormal bundle~$\cO_{E_i}(-E_i)$ is relatively ample for both of them.  We will use the following criterion: if~$I \subset \{1,\dots,r\}$ is a subset, then there is a contraction~$\sigma_I \colon \tX \to \hX$ over~$X$ to a smooth projective variety~$\hX$ such that
\begin{equation*}
\sigma_I\vert_{E_i} = 
\begin{cases}
p_{i,+} & \text{for~$i \in I$},\\
p_{i,-} & \text{for~$i \not\in I$}
\end{cases}
\end{equation*}
if and only if there is a globally generated line bundle~$\cL$ on~$\tX$ such that
\begin{equation}
\label{eq:cl-ei}
\cL\vert_{E_i} \cong 
\begin{cases}
p_{i,+}^*\cO_{\P^1}(d_i),\quad d_i > 0, & \text{for~$i \in I$},\\
p_{i,-}^*\cO_{\P^1}(d_i),\quad d_i > 0, & \text{for~$i \not\in I$}
\end{cases}
\end{equation}
(more precisely, the ``if'' part is proved in~\cite[Theorem~1]{Ishii}, while the ``only if'' part is obvious).

Now, assume that~$X$ is $\QQ$-maximally nonfactorial.  Since the cokernel of~\eqref{eq:class-groups} is finite, an appropriate positive multiple of any vector is in the image of~\eqref{eq:class-groups}, so for each subset~$I$ we can find a line bundle~$\cL$ satisfying~\eqref{eq:cl-ei}, and twisting it by the pullback of a sufficiently ample line bundle on~$X$, we can assume that it is globally generated.  Thus, $X$ admits~$2^r$ projective small resolutions.  This proves~\ref{item:mnf-qmnf} $\implies$ \ref{item:mnf-all-sr}.

Conversely, assume that $X$ is not $\QQ$-maximally nonfactorial, \textit{i.e.}, the map~\eqref{eq:class-groups} has infinite cokernel.  Then there is a nonzero linear function~$\upsilon \colon \bigoplus_{i = 1}^r (\Pic(E_i)/[\cO_{E_i}(E_i)]) \to \ZZ$ such that the image of~\eqref{eq:class-groups} is contained in~$\Ker\upsilon$.  Let~$I_\pm$ be the sets of~$i \in \{1,\dots,r\}$ such that~$\upsilon([p_{i,\pm}^*\cO_{\P^1}(1)]) > 0$.  By~\eqref{eq:normal-3} the elements~$[p_{i,\pm}^*\cO_{\P^1}(1)]$ are opposite in~$\Pic(E_i)/[\cO_{E_i}(E_i)]$, so the sets~$I_+$ and~$I_-$ are disjoint; moreover, since~$\upsilon \ne 0$, at least one of these sets is nonempty.  Now consider the small resolution~$\hX \to X$ such that the restriction of the factorization map~$\sigma \colon \tX \to \hX$ to~$E_i$ coincides with~$p_{i,\pm}$ if~$i \in I_\pm$; in other words,~$\sigma = \sigma_I$ as above with~$I_+ \subset I$ and~$I \cap I_- = \varnothing$.  If~$\hX$ is projective, the above criterion shows the existence of a line bundle~$\cL$ on~$\tX$ satisfying~\eqref{eq:cl-ei}, and then the value of~$\upsilon$ on the image of~$\cL$ is positive.  Indeed, if~$I_+ \ne \varnothing$, then the positivity follows from~$I_+ \subset I$, and if~$I_- \ne \varnothing$, the positivity follows from~$I \cap I_- = \varnothing$.  This contradiction proves~\ref{item:mnf-all-sr} $\implies$ \ref{item:mnf-qmnf}.

The implication \ref{item:mnf-all-sr} $\implies$ \ref{item:mnf-sr} is trivial.  Finally, the implication \ref{item:mnf-sr} $\implies$ \ref{item:mnf-nf} follows from the fact that factorial nodal threefolds admit no projective small resolutions (indeed, if~$\hX \to X$ is a small projective resolution, then~$\Pic(X) \subsetneq \Pic(\hX) = \Cl(\hX) = \Cl(X)$, so~$X$ is not factorial).

Now assume that~$X$ has a single node.  It suffices to show \ref{item:mnf-nf}~$\implies$~\ref{item:mnf-qmnf}.  This implication is equivalent to the statement that the map in~\eqref{eq:class-groups} is nonzero if and only it has finite cokernel, which is obvious because its target is~$\ZZ$.
\end{proof}

Finally, we state a criterion for a nodal threefold~$X$ to be maximally nonfactorial in terms of a small resolution by an algebraic space.

\begin{lemma}
\label{lem:mnf-resol}
Let~$\tX \xrightarrow{\ \sigma\ } \hX \xrightarrow{\ \varpi\ } X$ be a factorization of the blowup~$\pi$ through a small algebraic space resolution.  Let~$C_i = \sigma(E_i) \cong \P^1$ be the exceptional curves of~$\varpi$.  Then the map~\eqref{eq:class-groups} factors as the composition
\begin{equation*}
\Pic\left(\tX\right) \xrightarrow{\ \sigma_*\ }
\Pic\left(\hX\right) \xrightarrow{\quad}
\bigoplus_{i=1}^r \Pic(C_i) 
\xrightiso{}
\bigoplus_{i = 1}^r (\Pic(E_i)/[\cO_{E_i}(E_i)]), 
\end{equation*}
where the first map is surjective, the second map is given by the restriction, and the last map is induced by the pullbacks~$(\sigma\vert_{E_i})^*$.  In particular, $X$ is maximally nonfactorial if and only if there exist divisor classes~\mbox{$D_1, \dots, D_r \in \Pic(\hX)$} such that
\begin{equation*}
D_i \cdot C_j = \delta_{ij}.
\end{equation*}
\end{lemma}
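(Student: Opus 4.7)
The plan is a direct diagram chase once~$\sigma$ is identified as a blowup. My first step will be to argue that~$\sigma \colon \tX \to \hX$ is the blowup of the disjoint union~$C_1 \sqcup \cdots \sqcup C_r \subset \hX$. Since~$\varpi$ is a small algebraic-space resolution contracting each~$C_i$ to a node, the normal bundle of~$C_i$ in~$\hX$ is~$\cO_{\P^1}(-1)^{\oplus 2}$ (this is the local structure of the Atiyah flop, compare Corollary~\ref{cor:resolution-choices}), so blowing up~$C_i$ produces an exceptional divisor~$\cong \P^1 \times \P^1$ with conormal bundle~$\cO(1,1)$, consistent with~\eqref{eq:normal-3}; by the universal property of the blowup applied to the Cartier divisor~$\sum E_i$ in~$\tX$, the map~$\sigma$ factors through~$\Bl_{\sqcup C_i}(\hX)$, and this factoring is an isomorphism by comparison of exceptional loci.

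Given this, the standard blowup formula yields
\begin{equation*}
\Pic(\tX) = \sigma^*\Pic(\hX) \oplus \bigoplus_{i=1}^r \Z \cdot [E_i],
\end{equation*}
with~$\sigma_*$ the projection onto the first summand. For a class~$L = \sigma^* M + \sum_j a_j [E_j]$, using pairwise disjointness of the~$E_j$, restriction to~$E_i$ gives~$L\vert_{E_i} = (\sigma\vert_{E_i})^*(M\vert_{C_i}) + a_i [\cO_{E_i}(E_i)]$; modulo~$[\cO_{E_i}(E_i)]$ the~$a_i$-term disappears, so the~$i$-th component of the map~\eqref{eq:class-groups} indeed factors through~$\sigma_*$ and equals~$(\sigma\vert_{E_i})^*$ composed with restriction to~$C_i$, matching the claimed factorization.

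Next I would check that the final arrow~$(\sigma\vert_{E_i})^* \colon \Pic(C_i) \to \Pic(E_i)/[\cO_{E_i}(E_i)]$ is an isomorphism. Using~$E_i \cong \P^1 \times \P^1$ with~$\sigma\vert_{E_i}$ a ruling and~$[\cO_{E_i}(E_i)] = \cO(-1,-1)$ by~\eqref{eq:normal-3}, one has~$\Pic(E_i) \cong \Z^2$, and the pullback of a generator of~$\Pic(C_i) \cong \Z$ is~$\cO(1,0)$, which visibly maps to a generator of the rank-one quotient~$\Pic(E_i)/\langle \cO(-1,-1) \rangle \cong \Z$.

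Finally, for the ``in particular'' clause: maximal nonfactoriality is surjectivity of~\eqref{eq:class-groups}, which via the factorization is equivalent to surjectivity of~$\Pic(\hX) \to \bigoplus_i \Pic(C_i) \cong \Z^r$, whose~$i$-th component sends~$M$ to~$\deg(M\vert_{C_i}) = M \cdot C_i$. Surjectivity onto~$\Z^r$ is then equivalent to the existence of~$D_1, \dots, D_r \in \Pic(\hX)$ hitting the standard basis vectors, i.e., $D_i \cdot C_j = \delta_{ij}$. The main place demanding care is the first step: verifying in the algebraic-space setting that~$\sigma$ is the blowup of the smooth curves~$C_i$ and hence that the blowup formula for Picard groups applies; everything afterwards is routine bookkeeping.
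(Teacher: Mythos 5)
Your proposal is correct and follows essentially the same route as the paper: identify $\sigma$ as the blowup of the $C_i$, decompose $\Pic(\tX) = \sigma^*\Pic(\hX) \oplus \bigoplus_i \Z[E_i]$, and chase the two summands through the restriction maps. The paper simply cites the blowup identification (established earlier in \S\ref{ss:ccc-odp}) rather than re-deriving it from the normal bundle and the universal property as you do, but the core bookkeeping is identical.
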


\begin{proof}
The map~$\sigma \colon \tX \to \hX$ is the blowup of the curves~$C_1,\dots,C_r$ with exceptional divisors~$E_1,\dots,E_r$, so~$\Pic(\tX) = \sigma^*(\Pic(\hX)) \oplus ( \oplus \ZZ[E_i])$.  Thus, it is enough to check that the maps agree on~$\sigma^*(\Pic(\hX))$ and on the~$E_i$.  The first follows from the equalities~\mbox{$\sigma_*(\sigma^*(D)) = D$} (which also proves the surjectivity of~$\sigma_*$) and~$\sigma^*(D)\vert_{E_i} = (\sigma\vert_{E_i})^*(D\vert_{C_i})$, and the second is obvious as~$E_i$ is taken to zero by both maps.
\end{proof}

\subsection{Curves and threefolds}
\label{ss:examples}

In this subsection we collect examples of absorption for nodal curves and threefolds.  We do not discuss the case of nodal surfaces because it essentially reduces to the results obtained in~\cite{KKS20}; see also Example~\ref{ex:surf-node-resol}.

\subsubsection{Curves}
\label{sss:curves}

We start with the case of nodal curves.

\begin{proposition}\label{prop:curve-R}
Assume that $C = C' \cup C''$ is a reducible Gorenstein curve, where~$C' \cong \P^1$ and the scheme intersection~$C' \cap C''$ is a single point~$x$ which is smooth on~$C''$.  Let~$r' \colon C' \to C$ be the embedding.  Then for any line bundle~$\cL'$ on~$C'$ the object
\begin{equation*}
\rP \coloneqq r'_*\cL' \in \Db(C)
\end{equation*}
is a~$\Pinfty{2}$-object.  Moreover, the subcategory~$\cP \coloneqq \langle \rP \rangle \subset \Db(C)$ is admissible, and
\begin{equation}
\label{eq:curve-orthogonals}
{}^\perp\cP \simeq \Db(C'').
\end{equation}
In particular, if~$C''$ is smooth, then~$\cP$ provides a universal deformation absorption for~$C$.
\end{proposition}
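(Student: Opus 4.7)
My plan is to prove the four claims in order: the $\Pinfty{2}$ property of $\rP$, the admissibility of $\cP$, the equivalence ${}^\perp\cP \simeq \Db(C'')$, and the deformation absorption. Throughout I will work \'etale-locally at the node $x$, where $\cO_{C,x} \cong \kk[s,t]/(st)$, and use the $2$-periodic free resolution of $\cO_{C'}$ over $\cO_C$ with differentials alternating between $\cdot s$ and $\cdot t$.

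First I would compute the local Ext sheaves: applying $\cHom_{\cO_C}(-, \cO_{C'})$ to the $2$-periodic resolution and using that $\cdot t$ acts as zero on $\cO_{C'}$ while $\cdot s$ acts injectively, I find $\cExt^0_C(r'_*\cL',r'_*\cL') \cong r'_*\cO_{C'}$, $\cExt^{2j}_C(r'_*\cL',r'_*\cL') \cong \cO_x$ for $j \ge 1$, and odd Ext sheaves vanish. The local-to-global spectral sequence for $\Ext^\bullet_C(\rP,\rP)$ degenerates since $\rH^0(C',\cO_{C'}) = \kk$, $\rH^{>0}(C',\cO_{C'}) = 0$ (as $C' \cong \P^1$), and $\rH^0(C,\cO_x) = \kk$, yielding $\Ext^\bullet_C(\rP,\rP) \cong \kk[\uptheta]$ with $\deg\uptheta = 2$. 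The hocolim condition in Definition~\ref{def:pinfty} is automatic on $\Db(C)$ by Remark~\ref{rem:pinfty}, so $\rP$ is a $\Pinfty{2}$-object. For admissibility, set $\rM \coloneqq \Fiber(\rP \xrightarrow{\uptheta} \rP[2])$: locally at $x$ the class $\uptheta$ is represented by the $2$-extension $0 \to \cO_{C'} \xrightarrow{\cdot s} \cO_C \xrightarrow{\cdot t} \cO_C \to \cO_{C'} \to 0$, and factoring $\uptheta$ as $\rP \to K[1] \to \rP[2]$ through the middle image $K = (t)$ and applying the octahedral axiom exhibits $\rM_x$ as the middle term of a triangle $\cO_C \to \rM_x \to \cO_C[1]$ between free modules, hence perfect. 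Away from $x$, $\uptheta$ vanishes and $\rM$ splits as $\rP \oplus \rP[1]$, which is perfect on the smooth locus of $C'$. Thus $\rM \in \Dp(C)$, and the final statement of Lemma~\ref{lem:Pinf}, using that $C$ is Gorenstein, gives admissibility of $\cP$.

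For the orthogonal, the contraction $\upsigma \colon C \to C''$ collapsing $C'$ to $x$ is well-defined because $C'$ is proper with $\rH^0(\cO_{C'}) = \kk$, and it satisfies $\upsigma_*\cO_C \cong \cO_{C''}$, making $\upsigma^*$ fully faithful. The case $\cL' = \cO_{C'}(-1)$ treated in Example~\ref{ex:geometric}\ref{ex:gluing} gives the semiorthogonal decomposition $\Db(C) = \langle \langle r'_*\cO_{C'}(-1)\rangle, \upsigma^*(\Db(C''))\rangle$, whence ${}^\perp\langle r'_*\cO_{C'}(-1)\rangle = \upsigma^*(\Db(C'')) \simeq \Db(C'')$. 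For general $\cL'$, I would choose a line bundle $\cN$ on $C$ with $\cN|_{C'} \cong \cL' \otimes \cO_{C'}(1)$, which exists since $\Pic(C) \twoheadrightarrow \Pic(C') \times \Pic(C'')$ on a nodal curve, and consider $\Psi_{\cN} \colon \Db(C'') \to \Db(C)$, $G \mapsto \cN \otimes \upsigma^* G$, fully faithful as the composition of fully faithful $\upsigma^*$ with an autoequivalence. Since $\upsigma \circ r'$ is the constant map to $x$, the projection formula gives $\Ext^\bullet(\Psi_\cN(G),\rP) \cong G|_x^\vee \otimes \rH^\bullet(C',\cO_{C'}(-1)) = 0$, placing $\Psi_\cN(\Db(C''))$ in ${}^\perp\cP$. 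Since $\cN^{-1} \otimes \rP \cong r'_*\cO_{C'}(-1)$, tensoring by $\cN^{-1}$ identifies $\langle \Psi_\cN(\Db(C'')), \cP\rangle$ with the decomposition of the $\cL' = \cO_{C'}(-1)$-case, hence $\langle \Psi_\cN(\Db(C'')), \cP\rangle = \Db(C)$; together with the semiorthogonal containment $\Psi_\cN(\Db(C'')) \subset {}^\perp\cP$ this forces ${}^\perp\cP = \Psi_\cN(\Db(C'')) \simeq \Db(C'')$.

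Finally, if $C''$ is smooth (and proper, as required for the existence of a smoothing of $C$), then $\Db(C'')$ is smooth and proper, so ${}^\perp\cP$ and, by admissibility, $\cP^\perp$ are smooth and proper; hence $\cP$ absorbs singularities of $C$, and the universal deformation absorption follows directly from Theorem~\ref{thm:intro-pinfty2}. I expect the main obstacle to be the orthogonal identification in the third step, specifically verifying essential surjectivity of the twisted functor $\Psi_\cN$ onto ${}^\perp\cP$ for arbitrary $\cL'$; the choice $\cN|_{C'} = \cL' \otimes \cO_{C'}(1)$ is dictated by the requirement that twisting by $\cN^{-1}$ transport $\rP$ to the canonical generator $r'_*\cO_{C'}(-1)$ of the introduction's example, so that the general case reduces cleanly to the established one.
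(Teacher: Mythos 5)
Your argument reaches the same conclusions as the paper's proof but by a noticeably different computational route, and two steps are asserted without full justification. The paper twists by a line bundle at the start to reduce to $\cL' = \cO_{C'}(-1)$, builds the self-extension explicitly as $\rM = \Cone(\cL_1\to\cL_0)$ for two line bundles $\cL_0,\cL_1$ on $C$ (so perfectness of $\rM$ is immediate), computes $\Ext^\bullet(\rM,\rP)\cong\kk$ from the defining short exact sequences, and feeds the pair $(\rM,\rP)$ into Lemma~\ref{lem:Pinf}; there $\Ext^\bullet(\rP,\rP)\cong\kk[\uptheta]$ emerges as a \emph{conclusion}, not as an input that needs to be verified. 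You instead compute the local $\cExt$-sheaves from the $2$-periodic resolution at the node, run the local-to-global spectral sequence, and prove perfectness of $\rM$ by a local octahedral argument at $x$. Both approaches work, but the paper's sidesteps exactly the two points where your write-up is incomplete.

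First, the spectral sequence only gives you the graded vector space $\bigoplus_{j\ge 0}\kk[-2j]$; Definition~\ref{def:pinfty}\ref{def:pinfty:ext} requires the \emph{algebra} isomorphism with $\kk[\uptheta]$, i.e.\ that $\uptheta^n\ne 0$ for every $n$. This is true — in degrees $\ge 1$ the edge map $\Ext^{2j}\xrightarrow{\ \sim\ }\rH^0(\cExt^{2j})=(\cExt^{2j})_x$ is a ring isomorphism, and a $2$-periodic chain endomorphism of the resolution lifting $\uptheta$ shows all powers are nonzero — but it must be argued, not read off from the degeneration of the spectral sequence; this ring-structure verification is precisely what Lemma~\ref{lem:Pinf} is designed to replace. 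Second, for the orthogonal you attribute the decomposition $\Db(C)=\langle\langle r'_*\cO_{C'}(-1)\rangle,\sigma^*\Db(C'')\rangle$ to Example~\ref{ex:geometric}\ref{ex:gluing}, but that example is the special case $C''\cong\P^1$; the proposition allows $C''$ to be an arbitrary Gorenstein curve smooth only at $x$. Your twist-by-$\cN$ reduction is fine once you have this decomposition, but establishing $\Ker(\sigma_*)=\langle r'_*\cO_{C'}(-1)\rangle$ in general needs the filtration argument (objects killed by $\sigma_*$ are built from sheaves on $C'\cong\P^1$ with vanishing cohomology, which $\cO_{C'}(-1)$ generates); the paper supplies this by citing~\cite[Lemma~2.1]{KKS20}. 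With those two points filled in, the remainder of your argument is correct.
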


\begin{proof}
For any line bundle~$\cL$ on~$C$ we have an exact sequence
\begin{equation*}
0 \lra \cL \lra \cL\vert_{C'} \oplus \cL\vert_{C''} \lra \cL\vert_x \lra 0
\end{equation*}
(obtained by tensor product of~$\cL$ with~$0 \to \cO_C \to \cO_{C'} \oplus \cO_{C''} \to \cO_x \to 0$).  Conversely, for any pair of line bundles~$\cL'$ on~$C'$ and~$\cL''$ on~$C''$ we can define
\begin{equation*}
\cL \coloneqq \Ker(\cL' \oplus \cL'' \lra \cO_x),
\end{equation*}
where the map is given by trivializations of~$\cL'$ and~$\cL''$ at~$x$, and it is easy to see that~$\cL$ is a line bundle.  This shows that restriction of line bundles gives an isomorphism
\begin{equation}
\label{eq:pic-sum}
\Pic(C) = \Pic(C') \oplus \Pic(C'').
\end{equation}
In particular, any line bundle on~$C'$ is obtained by restriction from~$C$, so, twisting by a line bundle on~$C$ if necessary, we may assume~$\cL' = \cO_{C'}(-1)$.

Now consider the line bundle~$\cL_0$ on~$C$ which restricts to~$C'$ as~$\cO_{C'}(-1)$ and to~$C''$ as~$\cO_{C''}$, and the line bundle~$\cL_1$ which restricts to~$C''$ as~$\cO_{C''}(-x)$ and to~$C'$ as~$\cO_{C'}$.  Then we have exact sequences
\begin{equation*}
0 \lra \cO_{C''}(-x) \lra \cL_0 \lra \cO_{C'}(-1) \lra 0
\qquad\text{and}\qquad 
0 \lra \cO_{C'}(-1) \lra \cL_1 \lra \cO_{C''}(-x) \lra 0.
\end{equation*}
Merging these we obtain a long exact sequence
\begin{equation*}
0 \lra \cO_{C'}(-1) \lra
\cL_1 \lra
\cL_0 \lra
\cO_{C'}(-1) \lra 0.
\end{equation*}
This exact sequence can be considered as triangle~\eqref{def:cm} with~$\rP = \cO_{C'}(-1)$, \mbox{$\rM = \Cone(\cL_1 \to \cL_0)$}, and~\mbox{$q = 2$}.  Applying~$\Ext^\bullet(-, \cO_{C'}(-1))$ to the triangle~$\cL_1 \to \cL_0 \to \rM$, we obtain~$\Ext^\bullet(\rM,\rP) = \kk$.  Besides, the condition~$\hocolim \rP[2i] = 0$ follows from Lemma~\ref{lem:hocolim-shift} (see Remark~\ref{rem:pinfty}).  Therefore, $\rP$ is a $\Pinfty{2}$-object, and the subcategory~$\cP \coloneqq \langle \rP \rangle$ is admissible in~$\Db(C)$ by Lemma~\ref{lem:Pinf} as~$C$ is Gorenstein and~\mbox{$\rM \in \Dp(C)$}.

Now consider the contraction~$\sigma\colon C \to C''$ of the component~$C'$ to the point~$x \in C''$.  Since~$x$ is a smooth point on~$C''$, we have a well-defined functor~$\sigma^* \colon \Db(C'') \to \Db(C)$ which is left adjoint to~$\sigma_*$, and since the curve~$C'$ is rational, we have~$\sigma_*(\cO_C) \cong \cO_{C''}$, hence~$\sigma_* \circ \sigma^* \cong \id$.  Thus, $\sigma^*$ is fully faithful, and we have a semiorthogonal decomposition
\begin{equation}\label{eq:DbC}
\Db(C) = \left\langle \Ker(\sigma_*), \sigma^*\left(\Db(C'')\right)\right \rangle.
\end{equation}
A standard argument (see~\cite[Lemma~2.1]{KKS20}) shows that~$\Ker(\sigma_*)$ is generated by~\mbox{$\rP = \cO_{C'}(-1)$}.  This proves~\eqref{eq:curve-orthogonals}.

The last statement follows from Theorem~\ref{thm:intro-pinfty2}.
\end{proof}

\begin{example}[\textit{cf.}~\cite{Kalck-Pavic-Shinder}]
\label{ex:tree}
Let $C$ be a nodal tree of smooth rational curves with~$r + 1$ components.  Then choosing an appropriate ordering for the components of~$C$ (removing rational tails one by one) and using Proposition~\ref{prop:curve-R} inductively, we obtain an admissible semiorthogonal decomposition of~$\Db(C)$ with~$r$ components generated by~$\Pinfty{2}$-objects and one component equivalent to~$\Db(\P^1)$ (hence generated by two exceptional objects).
\end{example}

Note that conversely, by Proposition \ref{prop:obstruction123}, if~$X$ is a nodal curve which admits absorption of its singularities by a semiorthogonal collection of admissible subcategories generated by $\P^\infty$-objects, then the dual graph of~$X$ is a tree.

\subsubsection{Threefolds}
\label{sss:threefolds}

The most interesting case for our applications in the sequel~\cite{KS-II} to this paper is the case of nodal threefolds.  Recall that~$\delta_{ij}$ is the Kronecker delta and~$\bT_{\cO_{C_i}(-1)}$ are the spherical twists.

\begin{theorem}
\label{thm:threefolds}
Let~$X$ be a threefold with ordinary double points~$x_1, \dots, x_r$ and no other singularities, and let~\mbox{$\varpi \colon \hX \to X$} be a small resolution by an algebraic space with exceptional curves $C_1, \dots, C_r$.  Assume that there is a \textup(nonfull\,\textup) exceptional collection~$\cE_1, \dots, \cE_r$ in~$\Db(\hX)$ such that
\begin{equation}
\label{eq:3-cei-restricted-to-ej}
\cE_i|_{C_j} \cong \cO_{C_j}\left(\pm\delta_{ij}\right).
\end{equation}
Then~$\hX$ is a projective variety, and
\begin{enumerate}[label={\textup{(\roman*)}}]
\item
\label{item:3-rpi-cci-in-dbx}
each~$\rP_i \coloneqq \varpi_*(\cE_i) \in \Db(X)$ is a $\Pinfty{2}$-object, and each~$\cP_i \coloneqq \langle \rP_i \rangle \subset \Db(X)$ is an admissible subcategory, equivalent to a categorical ordinary double point;
\item
\label{item:3-nodal-absorption}
the collection~$\cP_1,\dots,\cP_r$ is semiorthogonal, the subcategory~$\cP \coloneqq \langle \cP_1, \dots, \cP_r\rangle \subset \Db(X)$ provides a universal deformation absorption of singularities for~$X$, and the functor~$\varpi_*$ induces equivalences
\begin{align*}
{}^\perp\cP\hphantom{{}^\perp}
&\simeq 
{}^\perp\left\langle \cE_1, \bT_{\cO_{C_1}(-1)}(\cE_1), \dots,
{\cE_r}, \bT_{\cO_{C_r}(-1)}(\cE_r) \right\rangle,
\\
\cP^\perp
&\simeq 
\hphantom{{}^\perp}
\left\langle \cE_1, \bT_{\cO_{C_1}(-1)}(\cE_1), \dots,
{\cE_r}, \bT_{\cO_{C_r}(-1)}(\cE_r) \right\rangle^\perp,
\end{align*}
\end{enumerate}
where the orthogonals on the left-hand side are taken in~$\Db(X)$ and the orthogonals on the right-hand side are taken in~$\Db(\hX)$.
\end{theorem}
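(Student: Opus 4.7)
The strategy is to apply Theorem~\ref{thm:contractions} directly, taking $\tcT \coloneqq \Db(\hat{X})$ as the ambient smooth and proper category, the structure sheaves $\cO_{C_1}(-1), \dots, \cO_{C_r}(-1)$ as the $(p+2)$-spherical objects (with $p = 1$ since $\dim(X) = 3$ is odd), and the given exceptional collection $\cE_1, \dots, \cE_r$ as the adherent exceptional objects. Parts~(i) and the admissibility/absorption portion of~(ii) will then follow from Theorem~\ref{thm:contractions}, and the universal deformation absorption claim will follow from Corollary~\ref{cor:deformation-absorption-by-codp}, which is applicable precisely when $p=1$.

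The first hypothesis of Theorem~\ref{thm:contractions} is straightforward: each $C_i \cong \P^1$ has normal bundle $\cO(-1) \oplus \cO(-1)$ in $\hat{X}$, so $\cO_{C_i}(-1)$ is a classical $3$-spherical object; complete orthogonality is automatic because the supports $C_i$ are pairwise disjoint. The second hypothesis is that $\varpi_* \colon \Db(\hat{X}) \to \Db(X)$ is a crepant Verdier localization with kernel $\langle \cO_{C_1}(-1), \dots, \cO_{C_r}(-1) \rangle$; this follows by combining Theorem~\ref{thm:main-nodal} (applied to the blowup $\pi \colon \tX \to X$ of all nodes) with Corollary~\ref{cor:resolution-choices}, which supplies a canonical equivalence $\sigma_* \colon \cD \xrightiso{} \Db(\hat{X})$ intertwining $\pi_*\vert_\cD$ with $\varpi_*$ and identifying the $3$-spherical kernel generators $\rK_i$ built in~\S\ref{ss:ccc-odp} with $\cO_{C_i}(-1)$.

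The adherence condition $\dim \Ext^\bullet(\cE_i, \cO_{C_j}(-1)) = \delta_{ij}$ is verified by a one-line local computation on $C_j$. Since $\cO_{C_j}(-1)$ is a sheaf supported on the smooth rational curve $C_j$ and $\cE_i$ is locally free near $C_j$, adjunction together with the hypothesis~\eqref{eq:3-cei-restricted-to-ej} gives
\begin{equation*}
\Ext^\bullet_{\hat{X}}(\cE_i, \cO_{C_j}(-1)) \cong \Ext^\bullet_{C_j}(\cE_i\vert_{C_j}, \cO_{C_j}(-1)) \cong \rH^\bullet(\P^1, \cO_{\P^1}(\mp \delta_{ij} - 1)),
\end{equation*}
which vanishes when $i \ne j$ and is one-dimensional (concentrated in degree $0$ or $1$ depending on the sign) when $i = j$.

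With all three hypotheses of Theorem~\ref{thm:contractions} verified, parts~(i) and (ii) of the theorem (apart from universality of the deformation absorption) drop out immediately: each $\rP_i = \varpi_*(\cE_i)$ is a $\Pinfty{2}$-object, $\cP_i \simeq \Db(\sA_1)$ is admissible, $\cP = \langle \cP_1, \dots, \cP_r \rangle$ is semiorthogonal, and the equivalences of orthogonals are exactly~\eqref{eq:nodal-cc-perp}, which under $\sigma_*$ identify $\cP^\perp$ and ${}^\perp\cP$ with the orthogonals of the subcategories $\langle \cE_i, \bT_{\cO_{C_i}(-1)}(\cE_i) \rangle$. Finally, Corollary~\ref{cor:deformation-absorption-by-codp} upgrades the absorption to a \emph{universal} deformation absorption, completing the proof. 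The only non-routine ingredient is the identification of the kernel spherical objects with $\cO_{C_i}(-1)$, which has already been carried out in Corollary~\ref{cor:resolution-choices}; everything else is a verification of hypotheses.
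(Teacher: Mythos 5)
Your proposal is correct and follows essentially the same route as the paper: both proofs hinge on Corollary~\ref{cor:resolution-choices} to identify $\Db(\hX)$ with the admissible subcategory $\cD$ of Theorem~\ref{thm:main-nodal} (matching the spherical kernel generators $\rK_i$ with $\cO_{C_i}(-1)$) and then invoke the adherence machinery of Theorem~\ref{thm:contractions}. The only stylistic difference is that the paper deduces the result as a special case of Theorem~\ref{thm:main-nodal-adherence}, translating~\eqref{eq:3-cei-restricted-to-ej} into the spinor-bundle condition~\eqref{eq:cei-restricted-to-ej} on $\tX$, whereas you verify the adherence condition $\dim\Ext^\bullet(\cE_i,\cO_{C_j}(-1))=\delta_{ij}$ directly on $\hX$ by a one-line adjunction computation on $C_j$; this is a valid and clean shortcut that unrolls the same underlying argument.
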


\begin{proof}
The projectivity of~$\hX$ follows from Lemma~\ref{lem:mnf-resol} and Proposition~\ref{prop:mnf}.  The remaining part of the theorem is a special case of Theorem~\ref{thm:main-nodal-adherence}; indeed, by Corollary~\ref{cor:resolution-choices} there is an equivalence~\mbox{$\cD \cong \Db(\hX)$} such that the objects~$\rK_i$ correspond to~$\cO_{C_i}(-1)$ and the functor~$\pi_*\vert_\cD$ corresponds to~$\varpi_*$.  Moreover, we have $\cS = \cO_E(-1,0)$, $\cS' = \cO_E(0,-1)$, $\cS'(1) = \cO_E(1,0)$, and it is clear that condition~\eqref{eq:3-cei-restricted-to-ej} on~$\hX$ translates into condition~\eqref{eq:cei-restricted-to-ej} in~$\cD$.  Therefore, Theorems~\ref{thm:main-nodal-adherence} and~\ref{thm:contractions} give the required results.
\end{proof}

Condition~\eqref{eq:3-cei-restricted-to-ej} is subtle in general, but in the case of a single node, it simplifies.

\begin{corollary}\label{cor:mnf-suff}
Let~$X$ be a projective threefold with~$\rH^i(X, \cO_X) = 0$ for~$i > 0$ and a single node.  Assume that~$X$ is maximally nonfactorial.  Then~$\Db(X)$ contains a categorical ordinary double point subcategory~\mbox{$\cP \subset \Db(X)$} which provides a universal deformation absorption of singularities for~$X$.
\end{corollary}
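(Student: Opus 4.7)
The plan is to verify directly the hypotheses of Theorem~\ref{thm:threefolds} in the case $r = 1$, so that the theorem produces the desired absorption. First, since $X$ has a single node and is maximally nonfactorial, the single-node clause of Proposition~\ref{prop:mnf} (equivalence of maximal nonfactoriality, $\QQ$-maximal nonfactoriality, and the existence of a projective small resolution) furnishes a projective small resolution $\varpi \colon \hX \to X$ with smooth exceptional curve $C \cong \P^1$. Second, Lemma~\ref{lem:mnf-resol} translates maximal nonfactoriality (with $r = 1$) into the existence of a divisor class $D \in \Pic(\hX)$ with $D \cdot C = 1$.

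Now I would take $\cE \coloneqq \cO_\hX(D)$ and verify the two conditions required by Theorem~\ref{thm:threefolds}. Condition~\eqref{eq:3-cei-restricted-to-ej} is immediate: $\cE|_C \cong \cO_C(D \cdot C) = \cO_C(1)$. For exceptionality of $\cE$ (equivalently, of $\cO_\hX$), I observe that
\begin{equation*}
\Ext^\bullet_{\hX}(\cE, \cE) \cong \rH^\bullet(\hX, \cO_\hX).
\end{equation*}
Since ordinary double points are rational singularities (by Lemmas~\ref{lemma:odp-ac} and~\ref{lem:normal-cone}, passing through the blowup), one has $\varpi_*\cO_\hX \cong \cO_X$ and $\rR^{>0}\varpi_*\cO_\hX = 0$, so the Leray spectral sequence together with the hypothesis $\rH^{>0}(X, \cO_X) = 0$ yields $\rH^{>0}(\hX, \cO_\hX) = 0$. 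Combined with $\rH^0(\hX, \cO_\hX) = \kk$ (from connectedness of $X$), this gives the exceptionality of $\cE$.

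With both hypotheses of Theorem~\ref{thm:threefolds} verified, that theorem produces a $\Pinfty{2}$-object $\rP \coloneqq \varpi_*\cE \in \Db(X)$ such that $\cP \coloneqq \langle \rP \rangle \subset \Db(X)$ is a categorical ordinary double point providing a universal deformation absorption of singularities of~$X$, which is exactly the statement of the corollary. No step here is really hard: the nontrivial categorical constructions have already been carried out in Theorems~\ref{thm:main-nodal-adherence} and~\ref{thm:threefolds}, and the role of maximal nonfactoriality is precisely to guarantee the existence of the adherent line bundle $\cO_\hX(D)$ with $D \cdot C = 1$ on a projective small resolution; the only mild subtlety is the rationality input used to transfer the cohomology vanishing from $X$ to $\hX$.
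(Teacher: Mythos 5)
Your proof is correct and follows the same route as the paper: Proposition~\ref{prop:mnf} to get a projective small resolution, Lemma~\ref{lem:mnf-resol} to get $D$ with $D\cdot C = 1$, then Theorem~\ref{thm:threefolds} applied to $\cE = \cO(D)$. You additionally spell out the verification that $\cO(D)$ is an exceptional object (via rationality of the node and the hypothesis $\rH^{>0}(X,\cO_X)=0$), which the paper's one-line proof takes for granted; this is a useful addition and is exactly where the cohomological hypothesis enters.
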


\begin{proof}
Since~$X$ is a maximally nonfactorial threefold, Proposition~\ref{prop:mnf} ensures the existence of a projective small resolution~$\hX \to X$.  Let~$C \subset \hX$ be its exceptional curve.  Then Lemma~\ref{lem:mnf-resol} shows that there is a divisor class~\mbox{$D \in \Pic(\hX)$} such that~$D \cdot C = 1$.  Therefore, Theorem~\ref{thm:threefolds} applied to the exceptional line bundle~$\cE = \cO_{\hX}(D)$ implies the corollary.
\end{proof}

In the rest of this section we discuss del Pezzo threefolds over an algebraically closed field $\kk$ of characteristic zero.  In this case a combination of Proposition~\ref{prop:obstruction123} with~\cite[Theorem~1.1]{Pavic-Shinder-delPezzo} proves that nodal del Pezzo threefolds of degree~$d \le 4$ do not admit absorption by categorical ordinary double points.  Therefore, we concentrate on the case of quintic del Pezzo threefolds.  Similar results in terms of Kawamata decompositions have been obtained by~\cite{Fei-delPezzo} and~\cite{Pavic-Shinder-delPezzo}.

\subsubsection{Quintic del Pezzo threefolds}
\label{sss:v5}

Let~$X$ be a \emph{quintic del Pezzo threefold}, \textit{i.e.}, a complete intersection
\begin{equation*}
X \coloneqq \Gr(2,5) \cap \P^6 \subset \P^9.
\end{equation*}
If~$X$ has isolated singularities, then all of them are nodes, see~\cite[Theorem~2.9]{Fuj86}, the number~$r$ of nodes is~$1 \le r \le 3$, and for each such~$r$ there is a unique (up to isomorphism) quintic del Pezzo threefold~$X$ with~$r$ nodes; see~\cite[Theorem~7.1]{KPr22}.

\begin{proposition}
Let~$X$ be a nodal quintic del Pezzo threefold with~$r$ nodes over an algebraically closed field $\kk$ of characteristic zero.  Then there is a semiorthogonal decomposition
\begin{equation*}
\Db(X) = \langle \rP_1, \dots, \rP_r, \cC \rangle,
\end{equation*}
where the~$\rP_i$ are completely orthogonal~$\Pinfty{2}$-objects which provide a universal deformation absorption of singularities and~$\cC$ is a smooth and proper category.  Moreover, each~$\rP_i$ is a maximal Cohen--Macaulay sheaf on~$X$ locally free on the complement of the corresponding node.
\end{proposition}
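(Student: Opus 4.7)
The plan is to apply Theorem~\ref{thm:threefolds} to a projective small resolution $\varpi \colon \hX \to X$ equipped with a suitable exceptional collection on~$\hX$. By~\cite[Theorem~7.1]{KPr22} the nodal quintic del Pezzo threefold with $r$ nodes is unique up to isomorphism for each $r \in \{1,2,3\}$, so I would proceed case by case. The strategy in each case is: (a) produce a projective small resolution~$\hX$ and identify it explicitly as a smooth Fano threefold of Picard rank~$1 + r$; (b) construct an exceptional collection $(\cE_1,\dots,\cE_r)$ on~$\hX$ with $\cE_i|_{C_j} \cong \cO_{C_j}(\pm\delta_{ij})$, where~$C_j$ are the exceptional curves of~$\varpi$.

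For step~(a), when $r = 1$ I would apply Corollary~\ref{cor:mnf-suff} directly, once maximal nonfactoriality of~$X$ is verified (the required vanishing $\rH^{>0}(X,\cO_X) = 0$ holds since~$X$ is Fano). For $r = 2, 3$ I would verify maximal nonfactoriality using Lemma~\ref{lem:mnf-resol} and apply Proposition~\ref{prop:mnf} to obtain a projective small resolution, identifying~$\hX$ via the classical birational geometry of nodal del Pezzo threefolds. For step~(b), maximal nonfactoriality provides divisor classes $D_i \in \Pic(\hX)$ with $D_i \cdot C_j = \pm\delta_{ij}$, and I would set $\cE_i \coloneqq \cO_\hX(D_i)$; exceptionality of the resulting collection reduces to Kodaira-type vanishing on the Fano threefold~$\hX$, after a suitable choice of~$D_i$ within the affine space given by translations by $\varpi^*\Pic(X) \subset \Pic(\hX)$.

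With these inputs, Theorem~\ref{thm:threefolds} immediately yields that $\rP_i \coloneqq \varpi_*(\cE_i)$ are $\Pinfty{2}$-objects generating admissible categorical ordinary double points $\cP_i \subset \Db(X)$, that $\cP \coloneqq \langle \cP_1,\dots,\cP_r\rangle$ provides a universal deformation absorption of singularities, and that $\cC \coloneqq {}^\perp\cP$ is smooth and proper. Proposition~\ref{prop:MCM} then delivers the maximal Cohen--Macaulay property of each~$\rP_i$, while local freeness on $X \setminus \{x_i\}$ follows from the triviality of $\cE_i$ along~$C_j$ for $j \ne i$. The main obstacle is upgrading the semiorthogonality of $(\cP_1,\dots,\cP_r)$ given by Theorem~\ref{thm:threefolds} to \emph{complete} orthogonality, i.e.\ establishing the extra vanishing $\Ext^\bullet_X(\rP_j,\rP_i) = 0$ for $i < j$. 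Translating through the equivalence $\cD \simeq \Db(\hX)$ of Corollary~\ref{cor:resolution-choices}, this reduces to Ext-vanishing on~$\hX$ between the~$\cE_i$ and the spherical twists $\bT_{\cO_{C_j}(-1)}(\cE_j)$; I would address this by further constraining the divisors~$D_i$ so that the collection $(\cE_1,\dots,\cE_r)$ is already completely orthogonal on~$\hX$, then checking that the spherical twists $\bT_{\cO_{C_j}(-1)}$ do not destroy orthogonality, using the known full exceptional collection on each of the three explicit smooth Fano threefolds arising as~$\hX$.
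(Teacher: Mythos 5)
Your high-level strategy matches the paper: use the small resolutions of~$X$ from~\cite[Theorem~7.1]{KPr22}, exhibit an exceptional collection $\cE_1,\dots,\cE_r$ on $\hX$ with $\cE_i|_{C_j}\cong\cO_{C_j}(\pm\delta_{ij})$, feed this into Theorem~\ref{thm:threefolds}, and invoke Proposition~\ref{prop:MCM} for the Cohen--Macaulay and local-freeness claims. However, there are two places where the proposal is materially weaker than the paper's proof.

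First, you identify the passage from semiorthogonality to complete orthogonality of $(\cP_1,\dots,\cP_r)$ as ``the main obstacle'' and propose to attack it by computing $\Ext$-groups on $\hX$ involving the spherical twists $\bT_{\cO_{C_j}(-1)}(\cE_j)$. This is not needed. The paper chooses the~$\cE_i$ to be a \emph{completely} orthogonal exceptional collection on~$\hX$ from the start (easy to check in each of the three explicit cases), and then observes that conditions~\eqref{eq:3-cei-restricted-to-ej} hold for \emph{any} ordering of the nodes. Applying Theorem~\ref{thm:threefolds} once for the ordering $(1,\dots,r)$ and once for the reverse ordering yields semiorthogonality both ways for the $\cP_i$'s, i.e.\ complete orthogonality. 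No further Ext-computation on $\hX$ with spherical twists is required. Missing this observation is the principal gap in your argument.

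Second, your construction of the $\cE_i$ is only a sketch. You suggest taking line bundles $\cO_\hX(D_i)$ with $D_i\cdot C_j=\pm\delta_{ij}$ and tuning them ``within the affine space given by translations by $\varpi^*\Pic(X)$'' until the collection is exceptional. Note that translating all $D_i$ by the \emph{same} class in $\varpi^*\Pic(X)$ leaves the differences $D_i-D_j$ (and hence the cross-$\Ext$'s) unchanged; to have room to maneuver you must adjust the $D_i$ \emph{independently}, and then actually verify the required vanishing on the concrete Fano threefolds. The paper avoids this indeterminacy by writing down explicit line bundles in each case: $\cE_1=\cO(-H)$ on $\P_{\P^2}(\cV)$ for $r=1$; $\cE_i=\cO(-H_i)$ on the blowup of $\Fl(1,2;3)$ at a point for $r=2$; and $\cE_i=\cO(H-D_i)$ on the blowup of $\P^3$ at three points for $r=3$. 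Your plan is not wrong in spirit, but as written it does not verify exceptionality or complete orthogonality and so does not yet constitute a proof.
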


\begin{proof}
It is well known (see, \textit{e.g.}, \cite[Theorem~7.1]{KPr22}) that for each~$1 \le r \le 3$ there is a diagram
\begin{equation*}
\xymatrix{
& \hX \ar[dl]_\upsigma \ar[dr]^\varpi
\\
Y &&
X,
}
\end{equation*}
where the following hold: 
\begin{itemize}
\item 
If~$r = 1$, then~$Y = \P^2$, the map~$\upsigma$ is the projectivization of the vector bundle~$\cV$ on~$\P^2$ defined by the exact sequence
\begin{equation*}
0 \lra \cV \xrightarrow{\quad} \Omega(1) \oplus \cO(-1) \oplus \cO(-1) \xrightarrow{\ \varphi\ } \cO^{\oplus 2} \lra 0,
\end{equation*}
and the map~$\varpi$ contracts a smooth rational curve~$C \subset \P(\cV)$ 
which projects isomorphically to a line in~$\P^2$.
In this case we take~$\cE_1 \coloneqq \cO(-H)$, where~$H$ is the pullback of the hyperplane class of~$Y = \P^2$.
\item 
If~$r = 2$, then~$Y \cong \Fl(1,2;3)$, the map~$\upsigma$ is the blowup of a point~$y \in Y$, and the map~$\varpi$ contracts the strict transforms~$C_1, C_2 \subset \hX$ of the fibers of the two projections~$\Fl(1,2;3) \to \P^2$ passing through the point~$y$.  In this case we take~$\cE_i = \cO(-H_i)$, $i = 1,2$, where~$H_i$ is the pullback of the divisor class on~$\Fl(1,2;3)$ inducing the projection~$\Fl(1,2;3) \subset \P^2 \times \P^2 \to \P^2$ to the $\supth{i}$ factor.
\item 
If~$r = 3$, then~$Y \cong \P^3$, the map~$\upsigma$ is the blowup of three points~$y_1,y_2,y_3 \in Y$, and the map~$\varpi$ contracts the strict transforms~$C_1, C_2, C_3 \subset \hX$ of the lines connecting the points (we label the lines in such a way that~$y_i$ does not lie on~$\upsigma(C_i)$).  In this case we take~$\cE_i \coloneqq \cO(H - D_i)$, where~$H$ is the pullback of the hyperplane class of~$Y = \P^3$ and~$D_i$ is the exceptional divisor of~$\upsigma$ over~$y_i$.
\end{itemize}
In all these cases it is easy to see that condition~\eqref{eq:3-cei-restricted-to-ej} of Theorem~\ref{thm:threefolds} is satisfied for any ordering of the singular points, so we obtain the completely orthogonal collection of~$\Pinfty{2}$-objects~$\rP_i \coloneqq \varpi_*(\cE_i)$.  The objects~$\rP_i$ are maximal Cohen--Macaulay sheaves by Proposition \ref{prop:MCM}.
\end{proof}

\begin{remark}
Similarly to Remark~\ref{rem:quadric-hpd}, one can use homological projective duality to obtain another description of the~$\Pinfty{2}$-objects~$\rP_i$.  Indeed, any~$X$ is a linear section of~$\Gr(2,5) \subset \P^9$ by an appropriate linear subspace~$\P^6 \subset \P^9$, and the homological projective dual variety of the Grassmannian is the dual Grassmannian~$\Gr(3,5) \subset \check{\P}^9$; see~\cite[Section~6.1]{K06}.  Thus, the objects~$\rP_i$ correspond to natural generators of the (derived) intersection~$\Gr(3,5) \cap \P^2$ (which consists of~$1$, $2$, or~$3$ reduced points, respectively) by the linear subspace~$\P^2 \subset \check{\P}^9$ orthogonal to~$\P^6 \subset \P^9$.
\end{remark}

%%%%%%%%%%%%%%%%%%%%%
% References
%%%%%%%%%%%%%%%%%%%%%

\newcommand{\etalchar}[1]{$^{#1}$}

\end{document}